\newtheorem{lemma}{Lemma}[section]
\newtheorem{theorem}[lemma]{Theorem}
\newtheorem{remark}[lemma]{Remark}
\newtheorem{proposition}[lemma]{Proposition}
\newtheorem{corollary}[lemma]{Corollary}
\newtheorem{convention}[lemma]{Convention}
\journal{Journal of Algebra and its Applications.}
\begin{document}
\begin{frontmatter}

\title{Maximal  Subalgebras for   Lie Superalgebras of Cartan Type}
\author{Wei Bai\fnref{fn1}}
\address{Department of Mathematics, Harbin Institute of Technology, \\ Harbin 150006,  P. R. China}
\address{School of Mathematical Sciences, Harbin Normal University, \\ Harbin 150025, P. R. China}
\author{Wende Liu\fnref{fn2}*}
\address{Department of Mathematics, Harbin Institute of Technology, \\ Harbin 150006,  P. R. China}
\author{Xuan Liu}
\address{Applied Mathematics Department, The University of Western Ontario,\\ London, N6A 5B7, Canada}

\author{Hayk Melikyan\fnref{fn3}}
\address{Department of Mathematics and Computer Science, North Carolina Central University\\ Durham, NC 27713, USA}

\fntext[fn1]{Supported by  NSF of the Education Department of HLJP (12521158)}
\fntext[fn2]{Supported by NSF of China (11171055) and NSF of HLJP (JC201004, A2010-03)}
\fntext[fn3]{Supported by part NSF Grant \# 0833184}
\cortext[cor1]{Corresponding author: wendeliu@ustc.edu.cn (W. Liu)}
\markright{Maximal  Subalgebras for   Lie Superalgebras of Cartan Type}

\begin{abstract}
 The maximal graded subalgebras for four families of Lie superalgebras of Cartan type over a field of prime characteristic are studied. All  maximal reducible graded subalgebras are described completely and their isomorphism classes,   dimension formulas are found.
 The classification of  maximal irreducible graded subalgebras is reduced to the classification of the maximal irreducible subalgebras for the classical   Lie superalgebras  $\frak{gl}(m, n)$, $\frak{sl}(m, n)$ and $\frak{osp}(m, n)$.\\
\end{abstract}
\begin{keyword}
 Lie superalgebras; maximal graded subalgebras

Mathematics Subject Classification 2010: 17B50, 17B05.
\end{keyword}
\end{frontmatter}

\setcounter{section}{-1}
\section{Introduction}

Since V. G. Kac \cite{Kac1} classified the finite dimensional simple Lie superalgebras over algebraically
closed fields  of characteristic zero, the theory of Lie superalgebras has undergone  a significant development (for example \cite{Sch, Kac2}).
Over a field of finite characteristic, however, the classification problem is still open for the finite dimensional simple Lie superalgebras \cite{Zh, bl}. Even recently, new simple  Lie superalgebras over a field of characteristic $ p = 3$ were constructed \cite{bl,e1}.

In general, study of the maximal subsystems of an algebraic system, such as finite groups, Lie groups, Lie  (super)algebras, is an essential part of structural characterization of the system. In classical Lie theory, the classification of maximal subalgebras of simple Lie algebras over the field of complex numbers is one of the beautiful results of that theory which was due to E. Dynkin \cite {Dyn1, Dyn2}.
 In classical modular Lie theory there is a series of papers by G. Seitz and his students devoted to the study of the maximal subgroups of simple algebraic groups over fields of  positive characteristic. These investigations were
summarized by G. Seitz in his two publications \cite{Se1,Se2} which generalize E. Dynkin's
 classification of the maximal subgroups of simple Lie groups over the field of complex numbers
 \cite{Dyn2} to simple algebraic groups over fields of characteristic $ p > 7$.
The study of maximal subalgebras of different classes of (super)algebras has been the focus of several researchers.
 The maximal subalgebras of Jordan (super)algebras were studied by M. Racine \cite{Re1, Re2}, A. Elduque, J. Laliena and
 S. Sacristan \cite{ELS1, ELS2}.
 The maximal graded subalgebras of affine Kac-Moody algebras were classified in \cite{BS}.
   The fourth author of the present paper summarized his investigations on maximal subalgebras in Cartan type simple Lie algebras over the field of characteristic $p >3$ in his  paper \cite{HM}.

Let  $L$ be a  finite dimensional
 simple Lie superalgebras of Cartan type $W$, $S$, $H$ or $K$ with a $\mathbb{Z}$-grading  $L=\oplus_{i\geq -2}L_i$.
The present paper is devoted to characterizing the maximal graded subalgebras of $L$.
 To this end,  we construct  a series of    graded subalgebras of $L$ and state the necessary and sufficient conditions for their maximality. Moreover,
  the number of isomorphism classes  and the dimension formulas of all maximal graded subalgebras are completely determined except for maximal irreducible graded subalgebras.
  Note that the null of $L$  is isomorphic to a classical  Lie superalgebra (see Lemma \ref{dibu}(3)).
Thus the classification of the maximal irreducible graded subalgebras of $L$ is reduced to that of the maximal irreducible subalgebras of a  classical  Lie superalgebra.
Moreover,  we give necessary and sufficient conditions for the existence of maximal irreducible graded  subalgebras of $L$.
We should mention that the present work which partially generalizes the results of \cite{HM} is motivated by a paper by A. I. Kostrikin and I. R. Shafarevich \cite{KS} on the structure theory of modular Lie algebras.

We close this introduction by establishing the following  conventions: The underlying field $\mathbb{F}$ is an algebraically closed field of characteristic $p > 3$. In addition to the standard notation $\mathbb{Z},$ we write $\mathbb{N}$ for
the set of nonnegative  integers. The field of two elements is denoted by $\mathbb{Z}_{2}=\{\bar{0}, \bar{1}\}.$
For a proposition $P$, put
$\delta_{P}=1 $ if $ P$  is true and $\delta_{P}=0$ otherwise.
\textit{All subspaces, subalgebras and submodules are assumed
to be $\mathbb{Z}_{2}$-graded and all the homomorphisms of  $\mathbb{Z}$-graded superalgebras are both $\mathbb{Z}_{2}$-homogeneous and $\mathbb{Z}$-homogeneous.}

\section{Basics}
Fix two positive integers $m, n\in\mathbb{N}\backslash\{1\}.$ Put
 $$\mathbf{I}_0=\overline{1,m},\ \mathbf{I}_{1}=\overline{m+1,m+n},\
\mathbf{I}=\mathbf{I}_0\cup\mathbf{I}_1,$$
where $\overline{k, s}=\{k, k+1, \ldots, s\}$
with the convention $\overline{k, s}=\emptyset$ whenever $k>s.$
Write
$$\mathbf{A}(m)=\{\alpha=(\alpha_{1},\ldots,\alpha_{m})\in\mathbb{N}^{m}\mid
0\leq\alpha_{i}\leq p-1,i\in\mathbf{I}_{0} \}.$$
Let $\mathcal {O}(m)$ be
the \textit{divided power algebra} with
$\mathbb{F}$-basis
 $\{x^{(\alpha)}\mid\alpha\in\mathbf{A}(m)\}$ and
 $\Lambda(n)$  be the \textit{exterior
superalgebra} of $n$ variables $
x_{m+1},x_{m+2},\ldots,x_{m+n}$. The tensor product
$$\mathcal {O}(m,n)=\mathcal {O}(m)\otimes\Lambda(n)$$
 is an associative superalgebra with respect to the usual
$\mathbb{Z}_{2}$-grading.
Let
$$\mathbf{B}(n)=\{\langle i_{1},i_{2},\ldots,i_{k}\rangle\mid 0\leq  k\leq  n;
m+1\leq i_{1}<i_{2}<\cdots<i_{k}\leq m+n\}$$
be the set of $k$-tuples of strictly increasing integers in $\mathbf{I}_1$, $0\leq  k\leq  n.$
 For $u=\langle
i_1,i_2,\ldots,i_k\rangle\in \mathbf{B}(n)$,  write $x^{u}=x_{i_{1}}x_{i_{2}}\cdots x_{i_{k}}$ ($x^{\emptyset}=1$).
If $g\in\mathcal {O}(m)$ and $f\in \Lambda(n)$,  we  write $gf$ instead of $g\otimes
f$. Then $\mathcal {O}(m,n)$ has a $\mathbb{Z}_{2}$-homogeneous $\mathbb{F}$-basis
$$\{x^{(\alpha)}x^{u}\mid \alpha\in \mathbf{A}(m),u\in
\mathbf{B}(n)\}.$$
For $i\in \textbf{I}_{0}$ and
$\varepsilon_{i}=(\delta_{i1},\delta_{i2},\ldots,\delta_{im})$, write $x_{i}$ for
  $x^{(\varepsilon_{i})}.$ Let $\partial_{1},\ldots,\partial_{m+n}$ be the \textit{superderivations}
of the superalgebra $\mathcal {O}(m,n)$ such that $\partial_{i}(x_j)=\delta_{i=j}.$
The \textit{parity} of $\partial_{i}$ is
$|\partial_{i}|= \bar{0}$ if $i\in \mathbf{I}_{0}$ and
$\bar{1}$ if $i\in \mathbf{I}_{1}$.
\textit{Hereafter the symbol $|x|$ implies that $x$ is a $\mathbb{Z}_{2}$-homogeneous element.}
Put
$$
W(m,n) =\mathrm{span}_{\mathbb{F}}\left\{a\partial_i \mid a\in \mathcal
{O} ( m,n), i\in \mathbf{I}\right\},
$$
which is a finite dimensional simple Lie superalgebra, called \textit{Witt superalgebra}.
Consider the linear mapping called divergence:
$$\mathrm{div}:W(m,n)\longrightarrow\mathcal {O}(m,n),\quad
\mathrm{div}(f\partial_{i})=(-1)^{|f||\partial_i|}\partial_{i}(f).$$
Set  $S(m,n)=[\overline{S}(m,n),\overline{S}(m,n)]$,
where $\overline{S}(m,n)=\ker(\mathrm{div})$.
Then we have
$$
S(m,n)=\mathrm{span}_{\mathbb{F}}\left\{
D_{ij}(a)\mid a\in \mathcal {O}(m,n),i,j\in\mathbf{I}\right\},
$$
where
$$
D_{ij}(a)=(-1)^{|\partial_i||\partial_j|}\partial_{i}(a)\partial_{j}-(-1)^{(|\partial_i|+|\partial_j|)|a|}
\partial_{j}(a)\partial_{i}\quad\mbox{for } a\in\mathcal{O}(m,n).
$$
 $S(m,n)$ is a  simple
Lie superalgebra, called \textit{special superalgebra}.

For $j\in\{1,\ldots,2\lfloor\frac{m}2\rfloor, m+1,\ldots, m+n\}$, we put
\[
\sigma(j)=\left\{\begin{array}{ll}
 -1, & j\in\overline{\lfloor\frac{m}2\rfloor+1,2\lfloor\frac{m}2\rfloor}; \\
 1, & \mbox { otherwise }
                 \end{array}
\right.
\mbox{ and }\
j'=\left\{\begin{array}{lll}
 j+\lfloor\frac{m}2\rfloor ,  & j\in\overline{1, \lfloor\frac{m}2\rfloor} ;\\
  j-\lfloor\frac{m}2\rfloor, & j\in\overline{\lfloor\frac{m}2\rfloor+1,2\lfloor\frac{m}2\rfloor};\\
  j, & \mbox{ otherwise}.
                 \end{array}
\right.
\]


Suppose $m=2r$ is even.
Define an  even linear mapping $D_{H}: \mathcal{O}(m,n)\longrightarrow
W(m,n)$  by
$D_{H}(a)=\sum_{i\in
\mathbf{I}}\sigma(i)(-1)^{|\partial_i||a|}\partial_i(a)\partial_{i'}.$
Put
$$\overline{H}(m,n)=\mathrm{span}_{\mathbb{F}}\{D_{H}(a)\mid
a\in\mathcal{O}(m,n)\}.$$ Write $\bar {\mathcal{O}}(m,n) $ for the quotient
superspace $\mathcal{O}(m,n)/\mathbb{F}\cdot 1$. We can
view $D_H$ as a linear operator of $\bar {\mathcal{O}}(m,n)$ since the kernel of $D_H$ is $\mathbb{F}\cdot1$. Thus we have
 $\overline{H}(m,n)\cong(\bar
{\mathcal{O}}(m,n), [\;,\;])$, where the
bracket is:
 $$ [a,b]= D_H\left(a\right)\left(b\right) \ \mbox{ for }
   a,b\in\bar{\mathcal{O}}(m,n).$$
Its derived algebra
${H}(m,n)$
 is  simple,  called  \textit{Hamiltonian Lie superalgebra}.

Suppose $m=2r+1$ is odd.
Define an  even linear mapping  $D_{K}: \mathcal{O}(m,n)\longrightarrow
W(m,n)$ by
$D_{K}(a)
=D_H(a)+\partial_{m}(a)\mathfrak{D}+\Delta(a)\partial_{m},$
where $\mathfrak{D}=\sum
_{i\in \mathbf{I}\backslash\{m\}}x_{i}\partial_{i}$ and $\Delta(a)=2a-\mathfrak{D}(a)$.
Put
$$\overline{K}(m,n)=\mathrm{span}_{\mathbb{F}}\{D_K(a)\mid a\in\mathcal{O}(m,n)\}.$$
Since $D_K$ is injective, we have
$\overline{K}(m,n)\cong(\mathcal{O}(m,n),[\;,\;]),$
where the bracket is:
\begin{equation*}
\begin{split}
[a,b]
&=D_H(a)b+\Delta(a)\partial_{m}(b)-\partial_{m}(a)\Delta(b)\ \mbox{ for } a, b\in{\mathcal{O}}(m,n).
\end{split}
\end{equation*}
Its derived algebra
$
K(m,n)$
 is  simple,  called  \textit{contact Lie superalgebra}.

 For simplicity, hereafter,  we write
$X$ for  $X(m,n)$, where $X=\mathcal{O}$,  $\bar{\mathcal{O}}$, $W$, $\overline{S}$, $S$, $\overline{H}$, $H$, $\overline{K}$ or $K$.  Let us consider the standard $\mathbb{Z}$-grading  of $L$, where $L=\mathcal{O}$, $W$, $S$, $\overline{H}$,  $H$  or $K$.
 Define the $\mathbb{Z}$-degrees of $x_i$ and $\partial_i$ to be
$\mathrm{zd}(x_i)=-\mathrm{zd}(\partial_i)=1+\delta_{L=K}\delta_{i=m}$, $i\in\mathbf{I}$.
 \textit{Hereafter, the symbol   $\mathrm{zd}(x)$
always implies that $x$ is
a $\mathbb{Z}$-homogeneous element.} Put $\xi= (m+\delta_{L=K})(p-1)+n$.
Then we have:
\begin{align*}
&  \mathcal{O}=\oplus_{i=-1}^{\xi}\mathcal{O}_i,\  \   \mathcal{O}_i=\mathrm{span}_{\mathbb{F}}\{f\in\mathcal{O}\mid\mathrm{zd}(f)=i\};  \\
&  W=\oplus_{i=-1}^{\xi-1}W_i,\  \   W_i=\mathrm{span}_{\mathbb{F}}\{f\partial_j\mid f\in\mathcal{O}_{i+1}, \ j\in\mathbf{I}\};\\
&  S=\oplus_{i=-1}^{\xi-2}S_i,\   \  S_i=\mathrm{span}_{\mathbb{F}}\{D_{jk}(f)\in W \mid f\in\mathcal{O}_{i+2}, \ j, k\in\mathbf{I}\};\\
&  \overline{H}=\oplus_{i=-1}^{\xi-2}H_i,\  \  H_i=\mathrm{span}_{\mathbb{F}}\{f\mid f\in\mathcal{O}_{i+2}\};\
H=\oplus_{i=-1}^{\xi-3}H_i;\\
&{K}= \oplus_{i=-2}^{\xi-2-\delta_{n-m-3\equiv0 \;( \mathrm{mod}\;p)}}K_i,\ \   K_i=\mathrm{span}_{\mathbb{F}}\{f\mid f\in\mathcal{O}_{i+2}\}.
\end{align*}
 We adopt the following conventions:
\begin{itemize}
\item[(1)] $L=H$ implies that $m=2r$ is even; $L=K$ implies that $m=2r+1$ is odd.
\item[(2)] $K$ can be viewed as a $\mathbb{Z}$-graded subalgebra
of $W$ when $\mathrm{zd}(x_m)=-\mathrm{zd}(\partial_m)=2$ for $W$.  Thus,  $L$ is a $\mathbb{Z}$-graded subalgebra
of $W$, where $L=S$, $H$ or $K$.
\item[(3)] For $L=K$, we write $z$ for $x_{m}.$
\item[(4)] Write $\mathrm{alg}(S)$ for the subalgebra of $L$  generated by a subset $S$.
\end{itemize}
 A proper subalgebra $M$ of a $\mathbb{Z}$-graded Lie superalgebra  $L$ is called a
\textit{maximal graded subalgebra} (MGS) provided that  $M$ is
 $\mathbb{Z}$-graded and no nontrivial $\mathbb{Z}$-graded subalgebras  of $L$
strictly contains $M$.
Since $L_{-1}$ is an irreducible $L_{0}$-module, it is clear that
$\oplus_{i\geq0}L_{i}$ is an MGS of $L$.  Any other MGS, $M$,  must satisfy exactly one of the
following conditions:
\begin{itemize}
\item[$\mathrm{(I)}$] $M_{-1}=L_{-1}$  and  $ M_{0}=L_{0};$
\item[$\mathrm{(II)}$]  $M_{-1} $ is a nontrivial proper subspace of $L_{-1};$
\item[$\mathrm{(III)}$] $ M_{-1}=L_{-1}$ and $M_{0}\neq L_{0}.$
\end{itemize}
Let $\frak{G}_0$ be a  subalgebra of $L_0$.   $\frak{G}_{0}$ is called \textit{reducible}
(\textit{resp.  irreducible}) if the $\frak{G}_{0}$-module $L_{-1}$ is reducible (resp.
irreducible). An MGS $\frak{G}=\sum_{i\geq-2}\frak{G}_i$ of $L$ is called
\textit{maximal reducible graded} (\textit{resp. maximal irreducible graded}) if
$\frak{G}_{0}$  is reducible (resp.
irreducible).

\section{Preliminary Results}
In order to simplify our considerations, in this section, we
establish some technical lemmas.  For $L=H$ or $K$,
we redescribe  $L$ in an appropriate form and
establish  a suitable automorphism of $L$ by virtue of a nondegenerate skew supersymmetric bilinear form on $L_{-1}$.

 As in the case of Lie superalgebras of characteristic 0 \cite{Kac1} or modular Lie algebras \cite{KS,St,SF}, it is easy to show the following:
\begin{lemma}\label{dibu}  Let $L=W, S, H$ or $K$.
\begin{itemize}
\item[$(1)$] $L$ is transitive.
\item[$(2)$] $L$ is generated by its local part,
 $L=\mathrm{alg}(L_{-1}+L_{0}+L_{1})$.
 \item[$(3)$] For the null of $L$, the following conclusions hold:
 \begin{align*}
   & W(m, n)_0\cong \frak{gl}(m, n);\
 S(m, n)_0\cong \frak{sl}(m, n); \\
   & H(2r, n)_0\cong \frak{osp}(2r, n);\
 K(2r+1, n)_{0}\cong \frak{osp}(2r, n)\oplus\mathbb{F}I_{2r+n}.
 \end{align*}
\end{itemize}
\end{lemma}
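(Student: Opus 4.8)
The plan is to prove each of the four isomorphisms in Lemma~\ref{dibu}(3) by explicitly exhibiting the degree-zero component as a matrix (super)algebra acting on $L_{-1}$. Throughout, I will identify $L_{-1}$ with the span of the $\partial_i$, $i\in\mathbf{I}$, so that $L_{-1}$ is an $(m|n)$-dimensional $\mathbb{Z}_2$-graded space, and I will use the adjoint action of $L_0$ on $L_{-1}$, which is faithful because $L$ is transitive (Lemma~\ref{dibu}(1)). This gives in each case an injective homomorphism $L_0\hookrightarrow\mathfrak{gl}(L_{-1})=\mathfrak{gl}(m,n)$, and the whole argument reduces to computing the image.

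First I would handle $L=W$. A typical element of $W_0$ is $x_i\partial_j$ with $i,j\in\mathbf{I}$, and $[\,x_i\partial_j,\partial_k\,]=-\delta_{i=k}\partial_j$ (up to the standard sign for superderivations), so $x_i\partial_j$ acts on $L_{-1}$ exactly as the elementary matrix unit $E_{ji}$ (with the appropriate parity). Since $\{x_i\partial_j\}$ is a basis of $W_0$ and $\{E_{ij}\}$ a basis of $\mathfrak{gl}(m,n)$, the map is an isomorphism. For $L=S$, one notes $S_0$ is spanned by the $D_{jk}(x_ix_l)$ (degree-zero special vector fields); a direct computation of $\mathrm{div}$ or of the bracket shows each such element maps to a traceless matrix, and conversely by a dimension count (or by checking that the image contains all $E_{ij}$ with $i\neq j$ together with the standard traceless diagonal elements) one gets $S_0\cong\mathfrak{sl}(m,n)$. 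For $L=H$ with $m=2r$, I would use the redescription $\overline{H}\cong(\bar{\mathcal{O}},[\,,\,])$ together with the nondegenerate skew-supersymmetric bilinear form on $L_{-1}$ alluded to in the opening of Section~2: the elements of $H_0$ are the images of degree-two monomials $x_ix_j$, and the bracket with $L_{-1}$ shows $H_0$ acts by exactly those matrices preserving that form, i.e. $\mathfrak{osp}(2r,n)$; again a dimension check ($\dim H_0=\binom{m+1}{2}+mn+\binom{n}{2}$, compare with $\dim\mathfrak{osp}(2r,n)$) closes it. Finally for $L=K$ with $m=2r+1$, the extra generator is $z=x_m$, which is central in degree $0$ in the sense that $D_K(z)=2\sum_{i\neq m}x_i\partial_i+\cdots$ acts as a nonzero scalar $I_{2r+n}$ on $L_{-1}$ (note $z$ has $\mathbb{Z}$-degree $2$, so $z\in K_0$), while the remaining degree-zero part, spanned by images of $x_ix_j$ with $i,j\neq m$, gives a copy of $\mathfrak{osp}(2r,n)$ exactly as in the $H$ case; hence $K_0\cong\mathfrak{osp}(2r,n)\oplus\mathbb{F}I_{2r+n}$, and one checks the sum is direct because the $\mathfrak{osp}$-part is perfect (acts with trace zero) while $I_{2r+n}$ does not lie in it when $m+n$ is invertible mod $p$ — which holds since $p>3$ and the relevant computation of $\mathrm{zd}$ rules out the exceptional case.

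The routine obstacles are purely bookkeeping: keeping the Koszul signs consistent in the formulas for $D_{ij}$, $D_H$ and $D_K$ when evaluating brackets against $\partial_k$, and matching the combinatorial dimension formulas for $\mathfrak{gl}(m,n)$, $\mathfrak{sl}(m,n)$, $\mathfrak{osp}(2r,n)$ on the nose. The one genuinely delicate point is the $K$ case: I must verify that $z=x_m$ really does sit in the degree-zero component under the contact grading $\mathrm{zd}(x_m)=2$ and that its image is a nonzero multiple of the identity and is not already contained in the $\mathfrak{osp}$-summand; this is where the characteristic hypothesis $p>3$ (and the accompanying remark that $n-m-3\not\equiv0$ in the cases at hand) is used to guarantee the decomposition is a genuine direct sum rather than a nonsplit extension. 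Since all of this closely parallels the characteristic-zero computations in \cite{Kac1} and the modular Lie algebra computations in \cite{KS,St,SF}, I expect no conceptual difficulty beyond this sign-and-dimension verification.
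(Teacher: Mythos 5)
Your computation for part (3) is the standard one, and it is essentially what the paper itself appeals to: the paper gives no proof at all beyond the remark that, as in the characteristic-zero case \cite{Kac1} and the modular Lie algebra case \cite{KS,St,SF}, the lemma ``is easy to show.'' The identifications of $W_0$, $S_0$, $H_0$, $K_0$ via the adjoint action on $L_{-1}$ (elementary matrices for $W$, the divergence/supertrace condition for $S$, invariance of the form $\beta$ for $H$, and the extra central element $z$ for $K$) are correct in outline. The genuine gap is that the statement you were asked to prove has three parts and you prove only one of them: transitivity (1) and generation by the local part (2) are never addressed, and in fact (1) is an \emph{input} to your argument, since you invoke it to get faithfulness of the $L_0$-action on $L_{-1}$. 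As written the argument is therefore circular unless you supply an independent proof of transitivity --- which is easy (for $W$: if $D=\sum_i f_i\partial_i$ with $\mathrm{zd}(D)\geq 0$ and $[\partial_j,D]=0$ for all $j$, then every $\partial_j(f_i)=0$, so each $f_i$ is a constant, impossible in nonnegative degree unless $D=0$; then restrict to $S,H,K\subset W$), but it must be said. Part (2) likewise requires an argument (e.g. $[L_1,L_i]=L_{i+1}$ for $i\geq 1$, checked on monomials), and it is not completely formal for $S$, $H$ and $K$, which are defined as derived algebras.

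Within part (3) there are also two small inaccuracies worth fixing. First, ``traceless'' should be \emph{supertraceless}: $S_0$ lands in $\mathfrak{sl}(m,n)=\{A\mid \mathrm{str}(A)=0\}$, which is exactly what $\mathrm{div}=0$ gives in degree zero. Second, in the $K$ case your appeal to $p>3$ and to $n-m-3\not\equiv 0\pmod p$ to guarantee a ``genuine direct sum'' is misplaced: $K_0$ is spanned by the degree-two monomials, namely the $x_ix_j$ with $i,j\neq m$ together with $z$, so the sum is direct for trivial linear-algebra reasons (equivalently, $I_{2r+n}\notin\mathfrak{osp}(2r,n)$ already in characteristic $\neq 2$, since the identity cannot leave a nondegenerate form invariant, and $z$ is central in $K_0$ because $\Delta$ kills $\mathcal{O}_2\cap\ker\partial_m$); the condition $n-m-3\equiv 0\pmod p$ only affects the top component of $K$, not $K_0$. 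Also $D_K(z)=\mathfrak{D}+2z\partial_m$ rather than $2\sum_{i\neq m}x_i\partial_i+\cdots$, though your conclusion that $z$ acts on $K_{-1}$ as a nonzero scalar (indeed $[z,y_j]=-y_j$) is correct.
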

When $L=W$ or $S$,  we know that $L_{-1}$ is spanned  by the standard ordered $\mathbb{F}$-basis
\begin{align}
\{\partial_i\mid i\in\mathbf{I}\}.\label{basis 2}\end{align}
For a $\mathbb{Z}_2$-graded subspace $V=V_{\bar{0}}\oplus
V_{\bar{1}}$ of $L_{-1}$,   the super-dimension is denoted by $$\mathrm{superdim}V=(\mathrm{dim}V_{\bar{0}},\mathrm{dim}V_{\bar{1}}).$$

When $L=H$ or $K$, we redescribe $L$ in a desired form.  For  $i\in\mathbb{N}\backslash{\{0\}}$, write $A_i$ for an  $i\times i$ matrix, and particularly, let $I_i$ be the $i\times i$ unit matrix.
Denote by $\sqrt{a}$  a fixed solution of the equation $x^2=a$ in $\mathbb{F}$, where $a=-1, 2$.
Put
\begin{equation*}
y_i= \left\{\begin{array}{lll}
{x_i}, &i\in\mathbf{I}_0\cup \overline{m+2q+1, m+n};\\
\frac{x_i+\sqrt{-1} x_{i+q}}{\sqrt{2}}, &i\in \overline{m+1, m+q};\\
\frac{x_{i-q}-\sqrt{-1} x_{i}}{\sqrt{2}}, &i\in \overline{m+q+1, m+2q},
\end{array}\right.\end{equation*}
 where $0\leq d\leq n$, $q=\lfloor\frac{n-d}2\rfloor$. Then there exists an invertible matrix $A_{m+n}$  such that $(y_1, \ldots, y_{m+n})A=(x_1, \ldots, x_{m+n})$.
Obviously, $|y_i|=|x_i|$ and $\mathrm{zd}(y_i)=\mathrm{zd}(x_i),$ $i\in\mathbf{I}$.
  By  \cite[Lemma 2.5]{LZA}, we have:
$$\{y^{(\alpha)}y^u\mid \alpha\in \mathbf{A}(m), \
u\in\mathbf{B}(n)\}$$
is an $\mathbb{F}$-basis of $\mathcal{O}$, where $y^{(\alpha)}=x^{(\alpha)}$
and $y^u=y_{i_1}y_{i_2}\cdots y_{i_k}$ when $u=\langle i_1,i_2,\ldots,i_k\rangle$.
The basis-element $y^{(\alpha)}y^u$ is called \textit{a monomial}.

Write $(D_1, \ldots, D_{m+n})=(\partial_1, \ldots, \partial_{m+n})A^{t}$. Then we have
\[D_i= \left\{\begin{array}{lll}
\partial_i, &i\in\mathbf{I}_0\cup \overline{m+2q+1, m+n};\\
\frac{\partial_i-\sqrt{-1} \partial_{i+q}}{\sqrt{2}}, &i\in \overline{m+1, m+q};\\
\frac{\partial_{i-q}+\sqrt{-1} \partial_{i}}{\sqrt{2}}, &i\in \overline{m+q+1, m+2q}.
\end{array}\right.\]
By a direct computation, we have:
\[D_i(y_j)=\delta_{i=j}, \ \
{\sum}_{i\in \mathbf{I}\backslash\{2r+1\}}y_{i}D_{i}=\mathfrak{D}.\]
When $m=2r$, define   an  even linear mapping $E_{H}: \mathcal{O}\longrightarrow
W$ by
$$E_{H}(a)={\sum}_{i\in
\mathbf{I}}\sigma(i)(-1)^{|D_i||a|}D_i(a)D_{\widetilde{i}}, $$
where
\[\widetilde{i}= \left\{\begin{array}{lll}
i', &i\in\mathbf{I}_0\cup \overline{m+2q+1, m+n};\\
i+q, &i\in \overline{m+1, m+q};\\
i-q, &i\in \overline{m+q+1, m+2q}.
\end{array}\right.\]
When $m=2r+1$, define    an  even linear mapping $E_{K}: \mathcal{O}\longrightarrow
W$  by
\begin{equation*}
\begin{split}
E_{K}(a)
&=E_H(a)+D_{m}(a)\mathfrak{D}+\Delta(a)D_{m}.
\end{split}
\end{equation*}
A direct computation shows that $D_L=E_L$.
Note that  $L_{-1}$ is spanned  by the standard ordered $\mathbb{F}$-basis
\begin{align}
\{y_i\mid  i\in \overline{1, 2r}\cup\overline{m+1, m+n}\}. \label{basis 1}
\end{align}
Define an even bilinear form $\beta: L_{-1}\times L_{-1}\longrightarrow \mathbb{F}$ satisfying
\begin{equation*}\label{mgseq121215}
\beta(u, v)={\sum}_{i\in
\mathbf{I}}\sigma(i)(-1)^{|D_i||u|}D_i(u)D_{\widetilde{i}}(v)\ \mbox{ for } u, v\in L_{-1}.
 \end{equation*}
 Then the matrix of $\beta$ in the ordered basis (\ref{basis 1}) is
\begin{equation}\label{basismgs120910 1}
J=\left(
\begin{tabular}{c|c}
  \begin{tabular}{c|c}
    0 & $I_r$ \\
    \hline
    $-I_r$ & 0 \\
     \end{tabular}
   & 0 \\
   \hline
  0 & \begin{tabular}{c|c|c}

        0 & $-I_q$ & 0 \\\hline
        $-I_q$ & 0 & 0 \\\hline
       0 & 0 & $-I_{n-2q}$
      \end{tabular}
\end{tabular}
\right).
\end{equation}
Clearly, $\beta$ is a nondegenerate skew supersymmetric  bilinear form on  $L_{-1}$.

An $\mathbb{F}$-basis of $L_{-1}$ in which  the matrix of $\beta$ is $J$ is called \textit{generalized orthosymplectic}.
Let $V=V_{\bar{0}}\oplus V_{\bar{1}}$ be a subspace of $L_{-1}$. Suppose   $2a$ (resp. $d$) is the rank of $\beta$ restricted to  $V_{\bar{0}}$ (resp. $V_{\bar{1}}$).
 A  $\mathbb{Z}_2$-homogeneous basis of $V$
\begin{eqnarray}\label{basis 2}
\{e_1, \ldots, e_a, e_{r+1}, \ldots, e_{r+a}; e_{a+1},\ldots, e_{b}\mid e_{m+1}, \ldots, e_{m+c}; e_{m+n-d+1},\ldots, e_{m+n}\}
\end{eqnarray}
is called a \textit{$\beta$-basis} of $V$, if
 $$\{e_1, \ldots, e_a, e_{r+1}, \ldots, e_{r+a}; e_{a+1},\ldots, e_{b}\}, \ 0\leq a\leq b\leq r$$ is an $\mathbb{F}$-basis
 of $V_{\bar{0}}$ satisfying
 \[
\beta(e_i, e_j)=-\beta(e_j, e_i)=\left\{\begin{array}{ll}
  1,  & 1\leq i\leq a, j=\widetilde{i}; \\
  0,  & \mbox{otherwise}
\end{array}\right.
\]
 and  $$\{e_{m+1}, \ldots, e_{m+c}; e_{m+n-d+1},\ldots, e_{m+n}\}, \ 0\leq d\leq n,\   0\leq c\leq \lfloor\frac{n-d}{2}\rfloor$$
 is
an $\mathbb{F}$-basis  of $V_{\bar{1}}$
 satisfying
  \[
\beta(e_i, e_j)=\beta(e_j, e_i)=\left\{\begin{array}{ll}
  -1,  & m+n-d+1\leq i=j\leq m+n; \\
  0,  & \mbox{otherwise}.
\end{array}\right.
\]
The 4-tuple $(a, b, c, d)$ is called \textit{the $\beta$-dimension} of $V$, denoted by $\beta$-$\dim V=(a, b, c, d)$.
$V$ is  \textit{nondegenerate} (with respect to $\beta$)   if $a=b$ and $c=0$. $V$ is \textit{isotropic} if  $a=0$ and $d=0$.
Clearly, for any $\mathbb{Z}_2$-graded subspace of $L_{-1}$, there exists a $\beta$-basis of it, which  can extend to a generalized orthosymplectic basis of $L_{-1}$.

Now, suppose  $L=W$, $S$, $H$ or $K$. Put
$$\frak{V}^L=\{V\mid V \mbox{ is a nontrivial subspace of } L_{-1}\}.$$
$V\in\frak{V}^L$ is called \textit{a standard element } if $V$ is spanned by
$$\{\partial_{1},\ldots,\partial_k\mid \partial_{m+1},\ldots,\partial_{m+l}\},$$
when $L=W$ or $S$, $0\leq k\leq m$, $0\leq l\leq n$; if $V$ is spanned by
$$\{y_1, \ldots, y_a, y_{r+1}, \ldots, y_{r+a}; y_{a+1},\ldots, y_{b}\mid y_{m+1}, \ldots, y_{m+c}; y_{m+n-d+1},\ldots, y_{m+n}\},$$
when $L=H$ or $K$,  $0\leq a\leq b\leq r$, $ 0\leq d\leq n$,  $ 0\leq c\leq \lfloor\frac{n-d}{2}\rfloor$.
\textit{Hereafter, for $V, V'\in\frak{V}^L$, the symbol  $V\cong V'$  always means $\mathrm{superdim}V=\mathrm{superdim}V'$ when $L=W$ or $S$ and means $\beta$-$\dim V=\beta$-$\dim V'$ when $L=H$ or $K$}.

\begin{lemma}\label{mgsl3}
Let $L=W, S, H$ or $K$. Suppose $V $, $V'\in\frak{V}^L$ satisfying $V\cong V'$. Then there exists
a $\mathbb{Z}$-homogeneous automorphism ${\Phi}_L$ of $L$ such that ${\Phi}_L(V)=V'$.
\end{lemma}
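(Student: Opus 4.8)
The plan is to reduce the statement to the case where one of the two subspaces is a standard element of $\mathfrak{V}^{L}$. Concretely, I will show that every $V\in\mathfrak{V}^{L}$ can be carried, by a $\mathbb{Z}$-homogeneous automorphism of $L$, onto the standard element $V^{\circ}\in\mathfrak{V}^{L}$ having the same invariant as $V$, i.e. the same $\mathrm{superdim}$ when $L=W$ or $S$ and the same $\beta$-$\dim$ when $L=H$ or $K$. Granting this, if $V\cong V'$ then $V$ and $V'$ have the same invariant, hence share the standard element $V^{\circ}$; choosing $\mathbb{Z}$-homogeneous automorphisms $\Psi,\Psi'$ with $\Psi(V)=V^{\circ}=\Psi'(V')$ and setting $\Phi_{L}=(\Psi')^{-1}\Psi$ proves the lemma. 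So everything reduces to producing enough $\mathbb{Z}$-homogeneous automorphisms of $L$ and understanding their action on $L_{-1}$.

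The automorphisms I use come from linear changes of the variables. A linear substitution whose coefficient matrix is block diagonal with respect to the $\mathbb{Z}_{2}$-grading --- hence parity- and $\mathbb{Z}$-degree-preserving --- extends uniquely to a $\mathbb{Z}$-homogeneous automorphism of the associative superalgebra $\mathcal{O}$, and conjugating the superderivations by it yields a $\mathbb{Z}$-homogeneous automorphism of $W$; the induced map on $L_{-1}=\mathrm{span}\{\partial_{i}\}$ ranges over all of $\mathrm{GL}(m)\times\mathrm{GL}(n)$ as the substitution varies. For $L=W$ this finishes the first paragraph's task at once: $\mathrm{GL}(m)\times\mathrm{GL}(n)$ is transitive on the $\mathbb{Z}_{2}$-graded subspaces of $L_{-1}$ of a prescribed $\mathrm{superdim}$, moving any such $V$ onto the corresponding standard element. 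For $L=S$ one checks that such a substitution, having constant invertible Berezinian, sends $\ker(\mathrm{div})$ to itself, hence preserves $\overline{S}$ and its derived subalgebra $S$; since the action on $S_{-1}=L_{-1}$ is unchanged, the same transitivity applies.

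For $L=H$ or $K$ I use the redescription of $L$ built in this section. Because $D_{L}=E_{L}$, a parity-preserving linear substitution of the variables $y_{i}$ whose induced map on $L_{-1}$ lies in the orthosymplectic group of $(L_{-1},\beta)$ --- and, when $L=K$, which fixes $z=x_{m}$, so that $\mathfrak{D}$ and $\Delta$ are preserved --- carries $E_{H}$, resp. $E_{K}$, to itself and therefore restricts to a $\mathbb{Z}$-homogeneous automorphism of $\overline{H}$ and its derived subalgebra $H$, resp. of $\overline{K}$ and its derived subalgebra $K$; its action on $L_{-1}$ is the given orthosymplectic transformation. The required transitivity is then exactly the remark preceding the lemma: given $V\in\mathfrak{V}^{L}$, pick a $\beta$-basis of $V$ and extend it to a generalized orthosymplectic basis $\mathcal{B}$ of $L_{-1}$; the standard basis of $V^{\circ}$ likewise extends to the generalized orthosymplectic basis (\ref{basis 1}); the linear map taking (\ref{basis 1}) onto $\mathcal{B}$ carries one generalized orthosymplectic basis to another, so has the same Gram matrix $J$ of (\ref{basismgs120910 1}), hence is orthosymplectic, hence lifts to a $\mathbb{Z}$-homogeneous automorphism of $L$ mapping $V^{\circ}$ onto $V$. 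Composing as in the first paragraph completes the proof.

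The step I expect to require the real work is showing that a parity-preserving linear substitution genuinely descends to a $\mathbb{Z}$-homogeneous automorphism of each of $S$, $H$, $K$, and pinning down exactly which substitutions do: for $S$ this rests on the transformation behaviour of $\mathrm{div}$ under constant-coefficient substitutions, and for $H$, $K$ it requires unwinding the definitions of $E_{H}$ and $E_{K}$ with the identities $D_{i}(y_{j})=\delta_{i=j}$, $D_{L}=E_{L}$ (and, for $K$, $\sum_{i\neq 2r+1}y_{i}D_{i}=\mathfrak{D}$), to see that the orthosymplectic condition is precisely what makes the substitution commute with the bracket. The case $L=K$ also needs a little extra care because of the distinguished variable $z=x_{m}$ and the convention $\mathrm{zd}(x_{m})=2$.
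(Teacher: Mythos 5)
Your proposal is correct and follows essentially the same route as the paper: reduce to a standard element, realize the required transformation of $L_{-1}$ (an even invertible, resp. orthosymplectic-with-$z$-fixed, change of variables) as a substitution automorphism of $\mathcal{O}$, conjugate to get a $\mathbb{Z}$-homogeneous automorphism of $W$, and check compatibility with $\mathrm{div}$ (for $S$) or with $E_H$, $E_K$ via the Gram-matrix condition (for $H$, $K$) so that it restricts to $L$. The verification you flag as the remaining work is exactly the computation the paper carries out.
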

\begin{proof}
Without loss of generality, we may assume that $V$ is a standard element in $\frak{V}^L$.
When $L=W$ or $S$,  suppose $\mathrm{superdim}V=\mathrm{superdim}V'=(k,l).$ Let $(E_{1},\ldots,E_{k} \mid E_{m+1},\ldots,E_{m+l})$ be a
$\mathbb{Z}_{2}$-homogeneous basis of $V'$. It extends to a
$\mathbb{Z}_{2}$-homogeneous basis of $W_{-1}$:
$$(E_{1},\ldots,E_{m} \mid E_{m+1},\ldots,E_{m+n}),$$ where
$|E_{i}|=|\partial_{i}|$, $i\in\mathbf{I}$. There exists an
 even invertible matrix $A_{m+n}$ such that
\begin{eqnarray}\label{a}
(E_{1},\ldots,E_{m+n})=(\partial_{1},\ldots,\partial_{m+n})A^{t}.
\end{eqnarray}
Let $(\xi_{1},\ldots,\xi_{m+n})=(x_{1},\ldots,x_{m+n})A^{-1}$. Consider the mapping $\phi$
such that $$\phi(x_{i})=\xi_{i}\quad \mbox{for all } i\in \mathbf{I}.$$
Notice that
$|x_{i}|=|\xi_{i}|$, since $A$ is even.
 By  \cite[Lemma 2.5]{LZA}, $\phi$ can  extend to an
endomorphism of $\mathcal{O}$, which is still written as $\phi$.
Then we have:
\begin{eqnarray}\label{c}
(\phi^{-1}(x_{1}),\ldots,\phi^{-1}(x_{m+n}))=(x_{1},\ldots,x_{m+n})A
\end{eqnarray}
We  denote by ${\Phi}$ the automorphism of $W$ which is induced by $\phi$ according to the formula
$${\Phi}(D)=\phi D\phi^{-1}\ \mbox{ for } \ D\in W.$$
Clearly, ${\Phi}$ is $\mathbb{Z}$-homogeneous.
By (\ref{a}) and (\ref{c}), we have:
\begin{eqnarray}\label{e}
\Phi(\partial_{i})=\phi\partial_{i}\phi^{-1}=E_{i}\quad\mbox{for all } i\in\mathbf{I}.
\end{eqnarray}
Furthemore, for $D=\sum_{i\in\mathbf{I}}f_{i}\partial_{i}\in W$, one can
verify that
$${\Phi}(D)=\phi
D\phi^{-1}={\sum}_{i,j\in\mathbf{I}}\partial_{i}(\phi^{-1}(x_{j}))\phi(f_{i})\partial_{j}.$$
By virtue of (\ref{a}) and (\ref{e}), we have:
$$\mathrm{div}({\Phi}(D))=\phi(\mathrm{div}D).$$
This shows that ${\Phi}({S})={S}$ since
$S=[\overline{S},\overline{S}]$.
Then $\Phi_L=\Phi\big|_L$ is desired.

When $L=H$ or $K$, suppose $\beta$-$\dim V=\beta$-$\dim V'=(a, b, c, d)$. Let
$\{e_i\mid  i\in \overline{1, 2r}\cup\overline{m+1, m+n}\}$ be an extension of $\beta$-basis (\ref{basis 2}) of $V'$
to a  generalized orthosymplectic basis of $L_{-1}$.
 Then, there exist two even invertible matrices
\begin{eqnarray*}
A=\left(\begin{array}{c|c}
  A_{2r} & 0 \\\hline
  0 & A_{n}
 \end{array}
 \right)\mbox { and }
A'=\left(\begin{array}{c|c}
 \begin{array}{c|c}
 A_{2r} & 0 \\\hline
 0 & I_1
  \end{array}
  & 0 \\\hline
 0 & A_n
  \end{array}
\right)
\end{eqnarray*}
satisfying
\begin{eqnarray*}
&&(e_1,\ldots,e_{2r}\mid e_{m+1}, \ldots,e_{m+n})A=(y_1,\ldots,y_{2r}\mid y_{m+1},\ldots,y_{m+n}),\\
&&(e_1,\ldots,e_{2r},e_{2r+1}\mid e_{m+1},\ldots,e_{m+n})A'=(y_1,\ldots,y_{2r},y_{2r+1}\mid y_{m+1},\ldots,y_{m+n}).
\end{eqnarray*}
Thus, we obtain that
\begin{eqnarray}\label{mgseq1}
A^{-1}J(A^{t})^{-1}=J.
\end{eqnarray}
By virtue of  \cite[Lemma 2.5]{LZA}, there exists a unique automorphism of $\mathcal{O}$ denoted by $\phi_L$ satisfying
$\phi_L(y_i)=e_i$, $i\in\mathbf{I}.$
As in the case $L=W$, we  denote by $\overline{{\Phi}}_L$ the $\mathbb{Z}$-homogeneous automorphism of $W$ which is induced by $\phi_L$.
 From  (\ref{mgseq1}), we have:
\begin{eqnarray}
&(\overline{{\Phi}}_H(D_1),\ldots,\overline{{\Phi}}_H(D_{m+n}))=(D_1, \ldots, D_{m+n})A^{t}\label{mgseq2}.\\
&(\overline{{\Phi}}_K(D_1),\ldots,\overline{{\Phi}}_K(D_{m+n}))=(D_1, \ldots, D_{m+n})A'^{t}\label{mgseq3}.
\end{eqnarray}
For any $D=\sum_{i\in \mathbf{I}\backslash\{2r+1\}}f_iD_i\in W$ and $fD_{2r+1}\in W$, from (\ref{mgseq1})-(\ref{mgseq3})  we have:
\begin{align}
\label{mgseq4}
&\overline{{\Phi}}_L(D)={\sum}_{i\in \mathbf{I}\backslash\{2r+1\}}\phi_L(f_i)\overline{{\Phi}}_L(D_i),\\
&\overline{{\Phi}}_K(fD_{2r+1})=\phi_K(f)D_{2r+1}.
\end{align}
For any $f\in \mathcal{O}$, we have:
\begin{eqnarray}
&&\overline{{\Phi}}_K(D_{2r+1}(f)\mathfrak{D})=D_{2r+1}(\phi_K(f))\mathfrak{D}.\label{mgseq5}\\
&&\overline{{\Phi}}_K((2-\mathfrak{D})(f)D_{2r+1})=(2-\mathfrak{D})\phi_K(f)D_{2r+1}.\label{mgseq6}
\end{eqnarray}
By virtue of   (\ref{mgseq1})-(\ref{mgseq6}), we have:
$$\overline{{\Phi}}_L(E_L(f))=E_L(\phi_L(f)) \ \mbox{ for any } f\in\mathcal{O}.$$
It follows that $\overline{\Phi}_L({L})={L}$ since
 $L=[\overline{L}, \overline{L}]$.
Then $\Phi_L=\overline{\Phi}_L\big|_L$ is desired.
\end{proof}
For convenience, we introduce the following notations.
Let $L=W$ or $S$. For any $V\in\frak{V}^L$  with $\mathrm{superdim}V=(k, l)$,
put
\begin{eqnarray*}
 \mathbf{I}(k,l)=
\overline{1,k}\cup \overline{m+1,m+l},\qquad
 \overline{\mathbf{I}}(k,l)=\mathbf{I}\backslash
\mathbf{I}(k,l),
\end{eqnarray*}
If $V$ is  standard, we have:
\begin{equation}V=\mathrm{span}_{\mathbb{F}}\{\partial_i\mid i\in \mathbf{I}(k,l)\}.\label{eqsb}\end{equation}
Let $L=H$ or $K$. For any $V\in\frak{V}^L$  with $\beta$-$\dim V=(a, b, c, d)$, put
\begin{align}
&I_{01}=\overline{1, a}; \ \bar{I}_{01}=\overline{r+1, r+a}; \ I_{02}=\overline{a+1, b}; \ \bar{I}_{02}=\overline{r+a+1, r+b};\nonumber\\
&{I}_{11}=\overline{m+n-d+1, m+n};\ {I}_{12}=\overline{m+1, m+c};\ \bar{I}_{12}=\overline{m+q+1, m+q+c};\nonumber\\
&{I}_{03}=\overline{b+1, r}\cup\overline{r+b+1, m}; \ {I}_{13}=\overline{m+c+1, m+q}\cup\overline{m+q+c+1, m+n-d}.\label{12731eq1}
\end{align}
Obviously,  $\mathbf{I}=J_1\cup J_2\cup \bar{J}_2\cup J_3,$ where
\begin{align}\label{mgs120924eq1}
J_1={I}_{01}\cup\bar{I}_{01}\cup{I}_{11}, \ \ J_2={I}_{02}\cup {I}_{12}, \ \ \bar{J}_2=\bar{I}_{02}\cup \bar{I}_{12}\ \mbox{ and }\
J_3={I}_{03}\cup {I}_{13}.
\end{align}
We call $J_i$ to be \textit{single} (resp. \textit{twinned})
if $\mathbf{I}_0\cap J_i=\emptyset$ and there exists only one element in  $\mathbf{I}_1\cap J_i$
(resp. there exist two elements in  $\mathbf{I}_1\cap J_i)$, $i=1, 3$.
 If $V$ is  standard, we have:
\begin{equation}V=\mathrm{span}_{\mathbb{F}}\{y_i\mid i\in J_1\cup J_2\}.\label{eqsb2}\end{equation} For any $i\in \mathbf{I}$, let us assign to each $y_i$
a value as follows:
\begin{equation}\label{mgseq12734}
\nu(y_i)=\left\{\begin{array}{llll}
  1  & i\in J_1; \\
  0 & i\in J_2; \\
  \frac{1}3  & i\in \bar{J}_2; \\
  2 & i\in J_3.
\end{array}\right.
\end{equation}
If $a=y_1^{\alpha_1}y_2^{\alpha_2}\cdots y_m^{\alpha_m}y^u$, define $\nu(a)=\prod_{i\in \mathbf{I}_0}\nu(y_i)^{\alpha_i}\prod_{i\in u}\nu(y_i).$


\begin{remark}\label{mgsr2} Let $T$ be a torus of $L$, $L=W, S, H$ or $K$.
 Consider the weight space decompositions with respect to $T$:
\begin{eqnarray*}
L=L^{\theta}\oplus\oplus_{\gamma\in \Delta}L^{\gamma}, \ \ L_i=L_i^{\theta}\oplus\oplus_{\gamma\in \Delta_i}L^{\gamma}_i,
\end{eqnarray*}
where $\Delta_i\subset\Delta\subset T^*$ and $\theta$  is the zero weight. Notice the standard facts below.
\begin{itemize}
\item[$(1)$] For $t\in T$,  suppose  $x=x_1+x_2+\cdots+x_n\in L$ is a sum of eigenvectors of
$\mathrm{ad}t$ associated with mutually distinct eigenvalues. Then all $x_i$'s lie in $\mathrm{alg}(\{t, x\}) $.
\item[$(2)$]
$T=\mathrm{span}_{\mathbb{F}}\{y_iy_{\widetilde{i}}\mid i\in \overline{1, r}\cup\overline{m+1, m+q}\}$
is a torus  of $L$, $L=H$ or $K$,
 where $0\leq d\leq n$, $q=\lfloor\frac{n-d}2\rfloor$.
 Define  $\epsilon_j$ to  be  the  linear function on $T$ by
$$\epsilon_j(y_iy_{\widetilde{i}})=\delta_{j {\widetilde{i}}}-\delta_{ji}.$$
For $i, j, k\in\mathbf{I}\backslash\{\{2r+1\}\cup\overline{m+2q+1, m+n}\}$, if $\epsilon_i+\epsilon_j\in\Delta_0$, we have:
\[
\dim L^{\epsilon_i}_{-1}=1;\ \ \dim L^{\epsilon_i+\epsilon_j}_{0}=1;
\ \ L^{\epsilon_k}_{0}={\sum}_{l= m+2q+1}^{ m+n}\mathbb{F}y_ky_l.
\]
\end{itemize}
\end{remark}

\section{MGS of Type (I)}\label{001}
To formulate the MGS of type $(\textrm{I})$, we introduce the following notations.

 For $i\geq 1$, write
\begin{equation}\label{L2}
L_{i}'=\overline{S}_i=\{D\in L_{i}\mid \mathrm{div} D=0\}\quad\mbox{and} \quad
L_{i}''=\{f\mathfrak{D}\mid f\in \mathcal {O}_{i}\},
\end{equation}
where  $L=W$ or $S$, $\mathfrak{D}$ is  the \textit{degree derivation} of $\mathcal{O}$; that is,
$\mathfrak{D}=\sum_{k\in \mathbf{I}}x_{k}\partial_{k}$.  Clearly, both $W_{i}'$ and
$W_{i}''$ are nontrivial subspaces of $W_{i}$.

For $i\geq 0$, write
$$K_{ij}=\big\{u\in K_i\mid u=fz^{j}, f\in\mathcal{O}_{i+2-2j}, [1, f]=0\big\}.$$
Clearly,   $K_{ij}$ is a  nontrivial subspace of $K_i$.

\begin{theorem}\label{thm01} All  MGS of type $\mathrm{(I)}$ are characterized as follows:
\begin{itemize}
\item[$\mathrm{(1)}$] If $m-n+1
\equiv0\pmod{p} $ then $W$ has exactly one MGS of type $\mathrm{(I)}:$
$$W_{-1}+W_{0}+W_{1}'+W_{2}'+\cdots+W_{\xi-2}'$$
with dimension $(m+n-1)2^np^{m}+2$;

If $m-n+1
\not\equiv0\pmod{p} $ then $W$ has exactly two MGS of type $\mathrm{(I)}:$
$$ W_{-1}+W_{0}+W_{1}''\quad\mbox{and}\quad
 W_{-1}+W_{0}+W_{1}'+W_{2}'+\cdots+W_{\xi-2}'$$
with dimensions $(m+n)(m+n+2)$ and $(m+n-1)2^np^{m}+2$, respectively.
\item[$\mathrm{(2)}$] If $m-n+1
\equiv0\pmod{p} $ then $S$ has exactly one MGS of type $\mathrm{(I)}:$
$$S_{-1}+S_{0}+S_{1}''$$
with dimension $(m+n)^2+2(m+n)-1$;

If $m-n+1
\not\equiv0\pmod{p} $ then $S$ has exactly one MGS of type $\mathrm{(I)}:$
$$S_{-1}+S_{0}$$
with dimension $(m+n)^2+(m+n)-1$.
 \item[$\mathrm{(3)}$]  $H$ has exactly  one MGS of type $\mathrm{(I)}:$
    $$H_{-1}+H_0 $$
    with dimension $(m+n)^2+m$.
\item[$\mathrm{(4)}$] $K$ has exactly two   MGS of type $\mathrm{(I)}:$
$$K_{-2}+K_{-1}+K_0+{\sum}_{i=1}^{2r(p-1)+n}K_{i0}\ \mbox{ and }\  K_{-2}+K_{-1}+K_0+K_{11}+K_{22}$$
with dimensions  $2^np^{2r}+1$ and $(2r+n)^2+4r+n+3$, respectively.
\end{itemize}
\end{theorem}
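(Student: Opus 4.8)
Fix $L\in\{W,S,H,K\}$ and suppose $M=\bigoplus_i M_i$ is an MGS of type $(\mathrm I)$, so $M_i=L_i$ for all $i\le 0$. The first point is that $M$ is controlled by $M_1$. Since $[L_0,M_1]\subseteq M_1$, the space $M_1$ is an $L_0$-submodule of $L_1$, and it is a \emph{proper} submodule, for otherwise $M\supseteq\mathrm{alg}(L_{-1}+L_0+L_1)=L$ by Lemma \ref{dibu}(2). Conversely, for an $L_0$-submodule $U\subsetneq L_1$ set $\widehat U:=\mathrm{alg}(\sum_{i\le 0}L_i+U)$. Using transitivity (Lemma \ref{dibu}(1)) together with the submodule property one gets $[L_{-1},[U,U]]\subseteq[[L_{-1},U],U]+[U,[L_{-1},U]]\subseteq[L_0,U]\subseteq U$, and inductively $[L_0,\widehat U_k]\subseteq\widehat U_k$ and $[L_{-1},\widehat U_{k+1}]\subseteq\widehat U_k$, with $\widehat U_1=U$; hence $\widehat U$ is a proper graded subalgebra whose degree-$1$ component is exactly $U$, and $\widehat U$ is \emph{transitively saturated}, i.e.\ $\widehat U_{k+1}=\{x\in L_{k+1}\mid[x,L_{-1}]\subseteq\widehat U_k\}$ for $k\ge 1$. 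The same estimate shows that every graded subalgebra with components $L_i$ for $i\le 0$ and with $U$ in degree $1$ is contained in $\widehat U$; applied to $M$ this gives $M\subseteq\widehat{M_1}$, whence $M=\widehat{M_1}$ by maximality. Since $U\mapsto\widehat U$ is monotone, $M_1$ must be a \emph{maximal} submodule: if $M_1\subsetneq\widetilde U\subsetneq L_1$ with $\widetilde U$ maximal, then $M=\widehat{M_1}\subseteq\widehat{\widetilde U}\subsetneq L$ forces $M=\widehat{\widetilde U}$, i.e.\ $M_1=\widetilde U$, a contradiction. Finally, if $U$ is a maximal submodule then $\widehat U$ \emph{is} an MGS: any graded $N\supseteq\widehat U$ with $N\ne L$ has $N_1\supseteq U$ an $L_0$-submodule, so $N_1=U$ (maximality, as $N_1=L_1$ gives $N=L$), whence $N\subseteq\widehat U$, i.e.\ $N=\widehat U$. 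Thus the MGS of type $(\mathrm I)$ are precisely the subalgebras $\widehat U$ with $U$ a maximal $L_0$-submodule of $L_1$, and distinct $U$ yield non-isomorphic $\widehat U$ (different degree-$1$ components).

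It remains to (a) enumerate the maximal $L_0$-submodules of $L_1$ and (b) for each compute $\widehat U$ explicitly and read off $\dim\widehat U$. For (a) one studies $L_1$ as an $L_0$-module. For $L=W$: the divergence gives a surjection $\mathrm{div}\colon W_1\twoheadrightarrow\mathcal O_1$ with kernel $W_1'=\overline S_1$, and $\mathcal O_1\cong W_{-1}^{*}$ is irreducible over $\fgl(m,n)$, so $W_1'$ is maximal; the identity $\mathrm{div}(x_i\mathfrak D)=(m-n+1)x_i$ shows $W_1''\subseteq W_1'$ when $p\mid m-n+1$ (and then $W_1'$ is the unique maximal submodule, $W_1$ having head $\mathcal O_1$), whereas for $p\nmid m-n+1$ one has $W_1=W_1'\oplus W_1''$ with $W_1''\cong\mathcal O_1$ and $W_1'=\overline S_1$ both irreducible, giving exactly the two maximal submodules $W_1',W_1''$. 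For $L=S$: $S_1=\overline S_1$ is irreducible over $\fsl(m,n)$ when $p\nmid m-n+1$ (only maximal submodule $0$), while for $p\mid m-n+1$ it contains $S_1''=\{f\mathfrak D\mid f\in\mathcal O_1\}$ as its socle with simple head, so $S_1''$ is its unique maximal submodule. For $L=H$: $H_1$ (as identified with $\mathcal O_3$) is irreducible over $\mathfrak{osp}(2r,n)$, so $0$ is the only maximal submodule. For $L=K$: filtering by powers of $z$ gives the $K_0$-module splitting $K_1=K_{10}\oplus K_{11}$, where $K_{11}$ is the natural module of $\mathfrak{osp}(2r,n)$ and $K_{10}\cong\overline H(2r,n)_1$, both irreducible, giving exactly the two maximal submodules $K_{10},K_{11}$. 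The irreducibility assertions are the standard fact that these small cyclic highest-weight modules for $\fgl$, $\fsl$, $\mathfrak{osp}$ remain irreducible for $p>3$; they can also be verified by weight arguments with the torus $T$ of Remark \ref{mgsr2}.

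For (b): when $U=W_1'=\overline S_1$, transitive saturation combined with $\{x\in W_{k+1}\mid[x,W_{-1}]\subseteq\overline S_k\}=\overline S_{k+1}$ for $k\ge1$ (since $\mathrm{div}[x,\partial_i]=\pm\partial_i(\mathrm{div}\,x)$ vanishes for all $i$ iff $\mathrm{div}\,x$ is constant, hence $0$ in positive degree) yields $\widehat U=W_{-1}+W_0+W_1'+\cdots+W_{\xi-2}'$; analogously, using $\{x\in K_{k+1}\mid[x,K_{-1}]\subseteq K_{k0}\}=K_{k+1,0}$ for $k\ge1$ (detected by $[1,\cdot]=2\partial_z$ and transitivity), $\widehat{K_{10}}=K_{-2}+K_{-1}+K_0+\sum_{i\ge1}K_{i0}$. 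When $U$ is abelian with $[L_{-1},U]\subseteq L_0$ — as for $W_1''$ and $S_1''$ — or $U=0$ — as for $S$ with $p\nmid m-n+1$ and for $H$ — a short computation gives $\{x\in L_2\mid[x,L_{-1}]\subseteq U\}=0$, so $\widehat U=\sum_{i\le0}L_i+U$; and for $U=K_{11}$ the brackets $[K_{11},K_{11}]=K_{22}$, $[K_{11},K_{22}]=0$, $[K_{-1},K_{22}]\subseteq K_{11}$ give $\widehat{K_{11}}=K_{-2}+K_{-1}+K_0+K_{11}+K_{22}$. Counting the maximal submodules found in (a) gives the stated numbers of MGS, and summing the dimensions of the graded components (using $\dim\mathcal O=2^np^m$, $\dim\mathrm{im}(\mathrm{div})=\dim\mathcal O-1$, $\dim\overline S_0=(m+n)^2-1$, and the analogous facts for $K$) yields the dimension formulas.

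\noindent\textbf{Main difficulty.} The heart of the argument is step (a): proving the exact $L_0$-submodule lattice of $L_1$ uniformly for all $p>3$ and all admissible $m,n$, in particular the irreducibility of $\overline S_1$ over $\fgl(m,n)$ (and $\fsl(m,n)$) and of $H_1$ over $\mathfrak{osp}(2r,n)$. The splitting and embedding statements are immediate from the divergence and power-of-$z$ computations, but the irreducibility is genuine modular representation theory of the classical Lie superalgebras. Everything else — the bracket computations identifying the higher graded components of each $\widehat U$, the transitive-saturation arguments, and the maximality deductions — is routine once (a) is in place.
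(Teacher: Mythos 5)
Your architecture --- reduce the type $(\mathrm I)$ MGS to maximal $L_0$-submodules of $L_1$ and then compute the corresponding hull --- is viable and is in the same spirit as the paper's, but two load-bearing steps are asserted rather than proved. First, the claim that $\widehat U=\mathrm{alg}(\sum_{i\le 0}L_i+U)$ is ``transitively saturated'' does not follow from the computation you give: your induction only yields $[L_{-1},\widehat U_{k+1}]\subseteq\widehat U_k$, i.e.\ $\widehat U_{k+1}\subseteq\{x\in L_{k+1}\mid [x,L_{-1}]\subseteq\widehat U_k\}$, and the reverse inclusion is exactly what is at stake (it is the content of statements like Lemma \ref{lem-liu-liu-Melikyan63}). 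The correct hull is the maximal prolongation $\{x\in L_{k+1}\mid[x,L_{-1}]\subseteq\,\cdot\,\}$; with it your bijection argument goes through, but then for each maximal $U$ you must actually compute that prolongation. You do this for $W_1'$, for $K_{10}$, and for the abelian cases $W_1''$, $S_1''$, $0$, but not for $U=K_{11}$: the relations $[K_{11},K_{11}]=K_{22}$, $[K_{11},K_{22}]=0$, $[K_{-1},K_{22}]\subseteq K_{11}$ only identify the generated subalgebra, and you never show $\{x\in K_2\mid[x,K_{-1}]\subseteq K_{11}\}=K_{22}$ and $\{x\in K_3\mid[x,K_{-1}]\subseteq K_{22}\}=0$, which require a Lemma \ref{lem-liu-liu-Melikyan63}--type sign/mixed-partial computation (compare the paper's Claim B in the proof of part (4) of Theorem \ref{thm01} and the computation in Proposition \ref{1278p408}). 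Without this, the maximality of $K_{-2}+K_{-1}+K_0+K_{11}+K_{22}$ and its dimension are not established within your framework.

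Second, your step (a) --- which you yourself call the heart of the argument --- is deferred to ``standard'' modular representation theory, but the needed facts are precisely the paper's Lemmas \ref{B}, \ref{lemxin1.9}, Corollary \ref{1.10} and Lemma \ref{mgsl2}: irreducibility of $W_1'$ over $W_0$ when $m-n+1\not\equiv 0\pmod p$, the composition series $0\subset W_1''\subset W_1'\subset W_1$ when $m-n+1\equiv 0\pmod p$, the key fact that any nonzero $W_0$-submodule of $W_1$ other than $W_1''$ contains $W_1'$, and the irreducibility of $H_1$, $K_{10}$, $K_{11}$. The paper proves these by the leader/$1$-defective machinery; they are not quotable blanket facts, since the structure genuinely depends on the congruence $m-n+1\equiv 0\pmod p$. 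In particular, your assertion that $W_1'$ is the \emph{unique} maximal submodule in the congruent case needs ``every proper submodule lies in $W_1'$'' (Lemma \ref{B}(5)), not merely that the head $W_1/W_1'$ is irreducible. So the skeleton is right and your final lists and dimension formulas agree with the theorem, but as written the proposal omits both the module-theoretic input and the prolongation computations that constitute the actual proof.
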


 We note that  many preliminary results in this section are analogous to the ones of Lie algebras (see \cite{HM,KS,St}).
We will need the following formulas which are easy to verify by direct calculations.
\begin{lemma}\label{div}
For $f\in\mathcal{O}_{s} $ and $g\in\mathcal{O}_{t},$
\begin{eqnarray}
&&\mathrm{div}(f\mathfrak{D})=(m-n+s)f \quad\mbox{for}~f\in\mathcal{O}_{s}, \nonumber
\\
&&[f\mathfrak{D},g\mathfrak{D}]=(t-s)fg\mathfrak{D}.\label{jisuanlm2}
\end{eqnarray}
\end{lemma}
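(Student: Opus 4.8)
The plan is to verify both identities by direct computation, the only subtlety being the consistent bookkeeping of Koszul signs in the super setting. Throughout I will use the super-Leibniz rule for each $\partial_k$, the supercommutativity of $\mathcal{O}$, the linearity of $\mathrm{div}$, and the one structural fact that $\mathfrak{D}$ is the degree derivation, so that $\mathfrak{D}(f)=sf$ for every $f\in\mathcal{O}_s$; in the present $L=W$ or $S$ setting this is immediate from the divided-power and exterior multiplication rules, since $x_k\partial_k$ returns each monomial $x^{(\alpha)}x^u$ scaled by its multiplicity in the $k$-th variable, and these multiplicities sum to $s$.

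For the divergence formula, first I would rewrite $f\mathfrak{D}=\sum_{k\in\mathbf{I}}(fx_k)\partial_k$ and apply the definition of $\mathrm{div}$ termwise, obtaining $\mathrm{div}(f\mathfrak{D})=\sum_k(-1)^{|fx_k||\partial_k|}\partial_k(fx_k)$. Splitting $\partial_k(fx_k)=\partial_k(f)x_k+(-1)^{|f||\partial_k|}f$ by the super-Leibniz rule (using $\partial_k(x_k)=1$) produces two families of terms. In the second family the prefactors collapse to $(-1)^{|\partial_k|}$, so summing over $k$ contributes $\big(\sum_{k\in\mathbf{I}_0}1-\sum_{k\in\mathbf{I}_1}1\big)f=(m-n)f$. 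In the first family a single supercommutativity swap $\partial_k(f)x_k=(-1)^{|fx_k||\partial_k|}x_k\partial_k(f)$ cancels the prefactor (both factors have exponent $|fx_k||\partial_k|$, whose square is trivial) and reassembles $\sum_k x_k\partial_k(f)=\mathfrak{D}(f)=sf$. Adding the two families yields $(m-n+s)f$, as claimed.

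For the bracket formula I would evaluate the supercommutator $[f\mathfrak{D},g\mathfrak{D}]=f\mathfrak{D}\circ g\mathfrak{D}-(-1)^{|f||g|}\,g\mathfrak{D}\circ f\mathfrak{D}$ on an arbitrary $h\in\mathcal{O}$, using that $\mathfrak{D}$ is an even derivation. Expanding $\mathfrak{D}(g\,\mathfrak{D}(h))=\mathfrak{D}(g)\mathfrak{D}(h)+g\,\mathfrak{D}^2(h)$ and its $f,g$-swapped counterpart, the second-order terms take the form $fg\,\mathfrak{D}^2(h)-(-1)^{|f||g|}gf\,\mathfrak{D}^2(h)$, which vanish because $(-1)^{|f||g|}gf=fg$ in the supercommutative algebra $\mathcal{O}$. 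The surviving first-order terms are $f\,\mathfrak{D}(g)\,\mathfrak{D}(h)-(-1)^{|f||g|}g\,\mathfrak{D}(f)\,\mathfrak{D}(h)$; substituting $\mathfrak{D}(g)=tg$ and $\mathfrak{D}(f)=sf$ and again invoking $(-1)^{|f||g|}gf=fg$ collapses this to $(t-s)fg\,\mathfrak{D}(h)$. As $h$ is arbitrary, $[f\mathfrak{D},g\mathfrak{D}]=(t-s)fg\mathfrak{D}$.

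There is no genuine obstacle here; the only step demanding care is the uniform tracking of the parities $|f|$, $|g|$, $|\partial_k|$ through the Leibniz splits and the commutativity swaps, together with the observations that each resulting Koszul sign either squares to $+1$ or reduces to the elementary count $\sum_{k\in\mathbf{I}}(-1)^{|\partial_k|}=m-n$. The identity $\mathfrak{D}(f)=sf$ is precisely what converts the raw derivation expressions into the stated scalar multiples.
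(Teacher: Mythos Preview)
Your proof is correct and follows precisely the approach the paper indicates: the paper does not spell out a proof but simply states that the formulas ``are easy to verify by direct calculations,'' and your argument is exactly such a direct calculation, with the Koszul signs tracked correctly.
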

\begin{lemma}\label{lem1.3} The following statements hold.
\begin{itemize}
\item[$(1)$]  $W_{s}'$ and $W''_{s}$ are $W_{0}$-submodules of $W_{s}$.
Moreover, $W_{s}''$ is  irreducible.
\item[$(2)$] If $m-n+s\not\equiv 0\pmod{p}$ then
$W_{s}=W_{s}'\oplus W_{s}''$;
\item[$(3)$] If
$m-n+s\equiv 0\pmod{p}$ then $W_{s}''\subset W_{s}'$.
\end{itemize}
\end{lemma}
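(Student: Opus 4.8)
The plan is to derive everything from the divergence formulas of Lemma~\ref{div}, the identification $W_0\cong\mathfrak{gl}(m,n)$ of Lemma~\ref{dibu}(3), and the observation that $\mathfrak{D}$ is the degree derivation of $\mathcal{O}$ for the standard $\mathbb{Z}$-grading of $W$, so that $[\mathfrak{D},D]=\mathrm{zd}(D)\,D$ for every $\mathbb{Z}$-homogeneous $D\in W$; in particular $[\mathfrak{D},D]=0$ for $D\in W_0$. I will also use the Leibniz rule $[D,fE]=D(f)E+(-1)^{|D||f|}f[D,E]$ for $D,E\in W$, $f\in\mathcal{O}$, and the standard super-identity $\mathrm{div}([D,E])=D(\mathrm{div}\,E)-(-1)^{|D||E|}E(\mathrm{div}\,D)$ (the one that makes $\overline{S}=\ker\mathrm{div}$ a subalgebra).

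For part (1), I would first note that any $D\in W_0$ has $\mathrm{div}\,D\in\mathcal{O}_0=\mathbb{F}\cdot 1$, which every element of $W$ annihilates, so the divergence identity reduces to $\mathrm{div}([D,E])=D(\mathrm{div}\,E)$ for $D\in W_0$; hence $E\in W_s'=W_s\cap\ker\mathrm{div}$ forces $[D,E]\in W_s\cap\ker\mathrm{div}=W_s'$, proving $W_s'$ is a $W_0$-submodule. For $W_s''$, the Leibniz rule together with $[D,\mathfrak{D}]=0$ gives $[D,f\mathfrak{D}]=D(f)\mathfrak{D}$, so $W_s''$ is a $W_0$-submodule and $f\mapsto f\mathfrak{D}$ is a $W_0$-module isomorphism $\mathcal{O}_s\to W_s''$. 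It then remains to prove $\mathcal{O}_s$ is an irreducible $\mathfrak{gl}(m,n)$-module. Here I would argue that the monomials $x^{(\alpha)}x^u$ of $\mathbb{Z}$-degree $s$ are weight vectors with pairwise distinct weights for the diagonal torus of $W_0$, so every nonzero submodule is spanned by monomials; and then that the operators $x_i\partial_j$ ($i\neq j$) connect any two degree-$s$ monomials, using the divided-power rules $x_i\cdot x^{(\alpha)}=(\alpha_i+1)x^{(\alpha+\varepsilon_i)}$, $\partial_j(x^{(\alpha)})=x^{(\alpha-\varepsilon_j)}$ (and the exterior analogues) to see that the relevant structure constants, being products of integers in $\{1,\dots,p-1\}$, are nonzero. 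This yields irreducibility of $\mathcal{O}_s$, hence of $W_s''$.

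For parts (2) and (3), everything is immediate from Lemma~\ref{div}. If $m-n+s\not\equiv 0\pmod p$, then for $E\in W_s$ with $f=\mathrm{div}\,E\in\mathcal{O}_s$ the element $E-(m-n+s)^{-1}f\mathfrak{D}$ lies in $\ker\mathrm{div}\cap W_s=W_s'$, so $W_s=W_s'+W_s''$; and $g\mathfrak{D}\in W_s'$ forces $(m-n+s)g=0$, i.e.\ $g=0$, so the sum is direct. If $m-n+s\equiv 0\pmod p$, then $\mathrm{div}(f\mathfrak{D})=(m-n+s)f=0$ for every $f\in\mathcal{O}_s$, so $W_s''\subseteq\ker\mathrm{div}\cap W_s=W_s'$.

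The only step with genuine content is the irreducibility of $\mathcal{O}_s$ as a $W_0$-module, and that is the part I expect to be the real work: in characteristic $p$ one must check that no divided-power structure constant along a connecting path of monomials vanishes mod $p$, which is why the path should transfer one unit of degree at a time (each coefficient $\alpha_i+1$ then lying in $\{1,\dots,p-1\}$). Alternatively this irreducibility can simply be quoted from the references on divided-power and Cartan-type algebras cited earlier. Everything else is a one-line consequence of Lemma~\ref{div} and the Leibniz rule.
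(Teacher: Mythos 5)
Your proof is correct and takes essentially the same route as the paper, whose entire argument is the one-line observation that $\mathrm{div}$ is a derivation from $W$ to $\mathcal{O}$ as $W$-module together with the formulas of Lemma \ref{div}. Your additional weight-space and monomial-connectivity argument for the irreducibility of $W_s''\cong\mathcal{O}_s$ over $W_0\cong\fgl(m,n)$ is sound and merely fills in a detail the paper leaves implicit.
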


\begin{proof}
  Note that $\mathrm{div}$ is a derivation from $W$ to $\mathcal{O}$ as $W$-module. Thus, (1), (2) and (3)  hold by virtue of  Lemma \ref{div}.
\end{proof}

Below, the $1$-component $W_{1}$ will be a focus of our attention. For convenience, we introduce two concepts, by which our arguments are largely simplified:
An element $\mathscr{L}$ in $W_{1}$ is called a \textit{leader} if it is of the form
\begin{eqnarray*}\label{Bdexingshi}
\mathscr{L}
=x_{1}^{2}\partial_{1}+{\sum}_{i=2}^{m+n}f_{i}\partial_{i} \quad\mbox{where}\; f_{i}\in \mathcal{O}_{2};
\end{eqnarray*}

An element in $W_{1}$ is called $1$-\textit{defective} if it is of the form
$${ \sum}_{i=2}^{m+n}f_{i}\partial_{i}\ \mbox{ where }  f_{i}\in \mathcal{O}_{2}.$$

\begin{lemma}\label{lembuchong1}
Let $D\in W_1$.
\begin{itemize}
\item[$(1)$]
  $[x_1\partial_{j},D]=0$ for all $j\geq 2$ if and only if $D=\lambda x_1\mathfrak{D}+x_{1}^{2}\sum_{j\geq 2}k_{j}\partial_{j}$ for some $\lambda, k_j\in \mathbb{F}.$
\item[$(2)$]  $[x_1\partial_{j},D]\in W_1''$ for all $j\geq 2$ if and only if $D=f\mathfrak{D}+x_{1}^{2}\sum_{j\geq 2}k_{j}\partial_{j}$ for some $f\in \mathcal{O}_1$ and $ k_j\in \mathbb{F}.$
    \end{itemize}
 \end{lemma}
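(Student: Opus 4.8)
The plan is to compute $[x_1\partial_j, D]$ explicitly for a general element $D=\sum_{i\in\mathbf{I}}f_i\partial_i\in W_1$ (so each $f_i\in\mathcal{O}_2$) and then impose the stated conditions component by component. First I would record the bracket formula in $W$: for $j\geq 2$,
\[
[x_1\partial_j, f_i\partial_i]=x_1\partial_j(f_i)\partial_i-(-1)^{(\cdot)}f_i\partial_i(x_1)\delta_{j\,?}\,\partial_j,
\]
more precisely $[x_1\partial_j,\,\textstyle\sum_i f_i\partial_i]=\sum_i x_1\partial_j(f_i)\partial_i-f_j\partial_j$ up to the usual sign coming from parities (the term $f_i\partial_i(x_1)\partial_j$ survives only for $i=1$, giving $-f_1\partial_j$, but one must be careful: $\partial_i(x_1)=\delta_{i1}$, so this contributes $-f_1\partial_j$, and I will need to keep track that $x_1$ is even so no signs intrude from that factor). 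So the clean statement is
\[
[x_1\partial_j, D]=\Big(\sum_{i\in\mathbf{I}}x_1\partial_j(f_i)\partial_i\Big)-f_1\partial_j+\text{(sign-adjusted }f_j\partial_j\text{ terms absorbed)}.
\]
I would first nail down this identity carefully, since all of the lemma rides on it; writing $D=\sum_i f_i\partial_i$ with $f_i=\sum_{k,l}c^i_{kl}x_k x_l+\cdots$ a general quadratic (including odd monomials $x_kx_l$ with $k$ or $l$ in $\mathbf{I}_1$), the action of $x_1\partial_j$ lowers the $x_j$-degree by one and raises the $x_1$-degree by one.

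For part (1): $[x_1\partial_j,D]=0$ for all $j\geq 2$ forces, looking at the coefficient of each $\partial_i$ with $i\neq j$, that $x_1\partial_j(f_i)=0$ for all $j\geq 2$, i.e. each $f_i$ ($i\geq 2$) is independent of the variables $x_2,\dots,x_{m+n}$; being degree $2$, this means $f_i\in\mathbb{F}x_1^2$ for $i\geq 2$, say $f_i=k_ix_1^2$. For the coefficient of $\partial_j$ itself we get $x_1\partial_j(f_j)=f_1$ for every $j\geq 2$; since the left side is divisible by $x_1$ and $f_j=k_jx_1^2$ gives $x_1\partial_j(f_j)=0$ unless $f_j$ contains an $x_1x_j$ term — here I need to be slightly more careful and \emph{not} prematurely conclude $f_j=k_jx_1^2$, because the $\partial_j$-component equation couples $f_1$ and $f_j$. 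The correct bookkeeping: for fixed $j$, the $\partial_i$-component ($i\neq 1,j$) of the bracket gives $\partial_j(f_i)\in\mathbb{F}x_1$-span constraints; running over all $j$ one deduces $f_i$ ($i\geq 2$) depends only on $x_1$, hence equals $k_ix_1^2$. Then the $\partial_1$-component gives $x_1\partial_j(f_1)=0$ for all $j\geq2$, so $f_1$ depends only on $x_1$ too; but $f_1$ having degree $2$ and the residual $\partial_j$-component equation $x_1\partial_j(f_j)=f_1$ being satisfiable forces $f_1=\lambda x_1x_?$... Here is the cleaner route: write $f_1=\mu x_1^2+\sum_{s\geq 2}\lambda_s x_1x_s+(\text{terms with no }x_1)$; the condition $x_1\partial_j(f_1)=0$ kills the $\sum_s\lambda_s x_1 x_s$ part and any $x_sx_t$ part, so $f_1=\mu x_1^2$, and then $x_1\partial_j(f_j)=\mu x_1^2$ forces $f_j=\mu x_1 x_j+k_j x_1^2$. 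Assembling: $D=\mu x_1^2\partial_1+\sum_{j\geq2}(\mu x_1x_j+k_jx_1^2)\partial_j=\mu x_1(x_1\partial_1+\sum_{j\geq2}x_j\partial_j)+x_1^2\sum_{j\geq2}k_j\partial_j=\mu\, x_1\mathfrak{D}+x_1^2\sum_{j\geq2}k_j\partial_j$, which is the claimed form with $\lambda=\mu$. The converse is a direct check using $[x_1\partial_j,\,x_1\mathfrak D]=0$ (a one-line computation from Lemma \ref{div}-type identities, or directly) and $[x_1\partial_j,\,x_1^2\partial_i]=0$ for $i,j\geq2$.

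For part (2): now I require $[x_1\partial_j,D]\in W_1''=\{f\mathfrak D\mid f\in\mathcal O_1\}$ for all $j\geq2$. An element of $W_1''$ has the very rigid shape $g\sum_{k}x_k\partial_k$ with $g\in\mathcal O_1$ linear, so its $\partial_i$-coefficient is $g x_i$ — in particular the $\partial_1$-coefficient is $gx_1$ and the $\partial_i$-coefficient is $gx_i$, all sharing the same linear factor $g$. Imposing that $[x_1\partial_j,D]$ has this form: the $\partial_i$-component ($i\neq j$) equals $x_1\partial_j(f_i)$ (for $i\neq 1$) and $x_1\partial_j(f_1)-f_j\cdot(\text{sign})$... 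I would set $[x_1\partial_j,D]=g_j\mathfrak D$ and match. Since $g_j\mathfrak D$'s $\partial_i$-component for $i\geq2,\,i\neq j$ is $g_j x_i$ and must equal $x_1\partial_j(f_i)$, and the $\partial_1$-component $g_jx_1$ must equal $x_1\partial_j(f_1)$, i.e. $\partial_j(f_1)=g_j$, a linear form. Then $\partial_j(f_i)=g_jx_i/x_1$... this forces $g_j\in\mathbb{F}x_1$, say $g_j=\kappa_jx_1$, whence $\partial_j(f_i)=\kappa_j x_i$ for $i\geq2,i\neq j$ and $\partial_j(f_1)=\kappa_j x_1$. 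Running this over all $j\geq2$ and also extracting the $\partial_j$-component equation, one reconstructs: the "$\kappa$-part" of $D$ is $\sum_j\kappa_j(\text{something})$ — in fact a calculation will show $\sum_{j\geq2}$ contributions assemble into $f\mathfrak D$ for some $f\in\mathcal O_1$ (the $\kappa_j$ are the coefficients of $x_j$ in $f$, together with a possible $x_1$-coefficient from the diagonal terms), plus the leftover $f_i$ ($i\geq2$) independent of $x_2,\dots,x_{m+n}$, i.e. $x_1^2\sum_{j\geq2}k_j\partial_j$. So $D=f\mathfrak D+x_1^2\sum_{j\geq2}k_j\partial_j$. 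The converse again reduces to checking $[x_1\partial_j, f\mathfrak D]\in W_1''$: by the identity $[x_1\partial_j, f\mathfrak D]$, since $\mathfrak D$ acts as the degree derivation and $x_1\partial_j$ is degree-$0$, one gets $[x_1\partial_j,f\mathfrak D]=\big(x_1\partial_j(f)\big)\mathfrak D+f\,[x_1\partial_j,\mathfrak D]$ and $[x_1\partial_j,\mathfrak D]=0$ (degree derivation commutes with degree-$0$ derivations), so $[x_1\partial_j,f\mathfrak D]=x_1\partial_j(f)\mathfrak D\in W_1''$; and $[x_1\partial_j,x_1^2\partial_i]=0$ as before.

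The main obstacle is bookkeeping the sign conventions and the mixed even/odd variables consistently — in the superalgebra $W(m,n)$ the bracket $[a\partial_i, b\partial_j]$ carries Koszul signs, and when $f_i$ is an odd quadratic monomial (one factor in $\mathbf{I}_1$) one must verify the argument still goes through verbatim; but since $x_1$ is even (as $1\in\mathbf{I}_0$, recalling $m\geq2$ so index $1$ is a genuine even coordinate) and $\partial_j$ for $j\geq2$ can be odd, the only signs that appear are global and do not affect the \emph{vanishing} or \emph{membership-in-}$W_1''$ conditions, which are homogeneous. So I would (i) fix the bracket formula in $W(m,n)$ with explicit signs, (ii) extract $\partial_i$-components of $[x_1\partial_j,D]$, (iii) solve the resulting linear system on the coefficients of $D$ exactly as sketched, handling $i=1$, $i=j$, and generic $i$ separately, and (iv) verify the two converses by the short commutator identities above. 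I expect the forward directions, not the converses, to be where care is needed, and specifically the coupling between the $\partial_1$- and $\partial_j$-components is the subtle point that prevents a naive "each $f_i\in\mathbb{F}x_1^2$" conclusion.
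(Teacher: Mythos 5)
Your proposal is correct and follows essentially the same route as the paper: extract the $\partial_i$-components of $[x_1\partial_j,D]$, use divisibility by $x_1$ to force $f_1=\mu x_1^2$ in (1) and $g_j\in\mathbb{F}x_1$ in (2), and treat the coupled $\partial_j$-component exactly as you describe; the sign you flag is $(-1)^{|x_1\partial_j|\,|f_1\partial_1|}=+1$ once $f_1=\mu x_1^2$ (an even element), so the assembly into $\lambda x_1\mathfrak{D}$ does go through. The only organizational difference is in (2): after obtaining $[x_1\partial_j,D]=k_jx_1\mathfrak{D}$, the paper subtracts $\bigl(\sum_{i\geq 2}k_ix_i\bigr)\mathfrak{D}$ using the identity $[x_1\partial_j,x_i\mathfrak{D}]=\delta_{i=j}x_1\mathfrak{D}$ and then simply invokes part (1), which replaces your ``a calculation will show'' reconstruction of the $\kappa$-part with a one-line reduction you may prefer.
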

 \begin{proof}  (1)
Suppose $[x_1\partial_{j},D]=0$ for all $j\geq 2$ and write $D=\sum_{i}a_{i}\partial_{i}$. Then
\begin{equation}\label{eqbuliu1}
x_1\partial_j(a_i)-\delta_{i=j}(-1)^{|x_1\partial_j||a_1\partial_1|}a_1=0\quad\mbox{for all}\; j\geq 2.
\end{equation}
Then $a_1=kx_1^2$ for some $k\in \mathbb{F}.$ If $k=0$, it follows from (\ref{eqbuliu1}) that
$\partial_j(a_i)=0$ for all $j,i\geq 2$. That is, $a_j=k_jx_1^2$ for all $j\geq 2.$ Hence $D=x_1^2\sum_{j\geq 2}k_j\partial_{j}.$
If $k\neq 0$ then write $a_{1}=x_1^2.$ From (\ref{eqbuliu1}) one deduces
$$
\partial_j(a_i)=\delta_{i=j}x_{1}\quad \mbox{for all}\; i, j\geq2.
$$
It follows that  $a_j=k_jx_1^2+x_1x_j$ for $j\geq2$ and one direction holds. The other one is clear.

(2) Write $[x_1\partial_{j},D]=f_j\mathfrak{D}$, $f_j\in \mathcal{O}_{1},$ $j\geq 2.$ By acting on $x_i$ with $i\neq 1,j,$ we have $x_1[\partial_j,D](x_i)=f_jx_i$ and then $f_j=k_jx_1$ for some $k_j\in\mathbb{F}.$ Thus \begin{equation}\label{liumbu39}
[x_1\partial_{j},D]=k_jx_1\mathfrak{D}\quad \mbox{for all}\; j\geq2.
\end{equation}
Since $[x_1\partial_j, x_i\mathfrak{D}]=\delta_{i=j}x_1\mathfrak{D},$  from (\ref{liumbu39}) we have
 $\left[x_1\partial_j, D-\left(\sum_{i\geq 2}k_ix_i\right)\mathfrak{D}\right]=0$ for all $ j\geq 2.$
 Now the conclusion follows from (1).
 \end{proof}
\begin{lemma}\label{B}
Let $M$ be a nonzero $W_{0}$-submodule of $W_{1}.$
\begin{itemize}
\item[$\mathrm{(1)}$] $M$ contains  a leader.
\item[$\mathrm{(2)}$] If $M$  contains a leader which does not lie in $W_{1}''$ then $M$ contains a nonzero 1-defective element.
\item[$\mathrm{(3)}$] If $M$ contains a nonzero $1$-defective element then $M\supset W_{1}'$. In particular,
as $W_{0}$-module, $W_{1}'$ is generated by $x_{1}^{2}\partial_{j}$ for any fixed $j\geq 2$.
\item[$\mathrm{(4)}$] As $W_{0}$-module, $W_{1}$ is generated by $x_1^{2}\partial_{1}.$
\item[$\mathrm{(5)}$] Any nonzero $W_{0}$-submodule of $W_1$ different from $W_1''$ must contain $W_1'.$
\end{itemize}
\end{lemma}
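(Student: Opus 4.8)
The plan is to prove the five assertions in sequence; the workhorse throughout is the presentation of $W_0\cong\mathfrak{gl}(m,n)$ as generated by the root operators $\mathrm{ad}(x_a\partial_b)$ ($a,b\in\mathbf I$) together with the torus $T=\mathrm{span}_{\mathbb F}\{x_a\partial_a\mid a\in\mathbf I\}$, combined with Lemmas \ref{div}, \ref{lem1.3}, \ref{lembuchong1} and Remark \ref{mgsr2}(1). The key reduction is: if $M$ is any $W_0$-submodule of $W_1$ and $D\in M$, then by Remark \ref{mgsr2}(1) applied to a generic $t\in T$ every $T$-weight component of $D$ again lies in $M$; and a nonzero $T$-weight vector of $W_1$ is either a scalar multiple of a single monomial $x_ax_b\partial_c$ with $c\notin\{a,b\}$, or an element of the $(m+n)$-dimensional weight space $\mathbb F\,x_i^2\partial_i\oplus\bigoplus_{j\ne i}\mathbb F\,x_ix_j\partial_j$ for some $i$; one also checks that the only $T$-weight vectors of $W_1''$ are the $x_i\mathfrak D$. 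Since $p>3$, all constants arising below are invertible. For (1) I take $0\ne D\in M$ and pass to a nonzero $T$-weight component. If $D=\lambda x_ax_b\partial_c$ is a monomial, a short explicit sequence of brackets with operators of the form $x_1\partial_a,\ x_1\partial_b$ (replacing $x_1$ by another even variable when $a,b$ are both odd) followed by one bracket with $x_c\partial_1$ yields an element of $M$ whose $\partial_1$-component is a nonzero multiple of $x_1^2$, i.e.\ a leader (the case $c=1$ needs two analogous steps). If $D$ lies in a weight space of weight $\epsilon_i$, write $D=\mu x_i\mathfrak D+\sum_{j\ne i}\nu_jx_ix_j\partial_j$; since $[x_i\partial_k,D]=\nu_kx_i^2\partial_k$ for $k\ne i$, either some $\nu_k\ne0$ and we are back to the monomial case, or $D=\mu x_i\mathfrak D$ with $\mu\ne0$, whence $x_i\mathfrak D\in M$ and $[x_1\partial_i,x_i\mathfrak D]=x_1\mathfrak D$ (or $x_i\mathfrak D$ itself when $i=1$) is a leader.

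For (2): if $\mathscr L=x_1^2\partial_1+\sum_{i\ge2}f_i\partial_i\in M$, then $\partial_j(x_1^2)=0$ for $j\ge2$ forces the $\partial_1$-component of $[x_1\partial_j,\mathscr L]$ to vanish, so each of these brackets is $1$-defective; we are done unless all of them vanish, and then Lemma \ref{lembuchong1}(1) gives $\mathscr L=x_1\mathfrak D+x_1^2\sum_{j\ge2}k_j\partial_j$, where $\mathscr L\notin W_1''$ forces some $k_{j_0}\ne0$, so that $[x_{j_0}\partial_{j_0},\mathscr L]=-k_{j_0}x_1^2\partial_{j_0}$ is the required nonzero $1$-defective element. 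For (3): by the weight-component reduction and the monomial/weight-$\epsilon_i$ analysis of (1), any nonzero $1$-defective element of $M$ can be driven by $W_0$ to $x_1^2\partial_j$ for a fixed $j\ge2$, so it suffices (this also gives the ``in particular'' clause) to prove that the $W_0$-submodule $N$ generated by $x_1^2\partial_j$ equals $W_1'$; the inclusion $N\subseteq W_1'$ is immediate from Lemma \ref{div}, and for the reverse I bootstrap inside $W_1'$ — bracketing with the $x_a\partial_1$ gives all $x_1x_a\partial_j$, hence all $x_ax_b\partial_j$; bracketing these with the $x_j\partial_c$ gives all off-diagonal monomials $x_ax_b\partial_c$ with $c\notin\{a,b\}$; and bracketing off-diagonal monomials with suitable $x_c\partial_a$ gives the divergence-free diagonal combinations such as $x_cx_b\partial_c\mp x_ax_b\partial_a$ and $2x_ax_b\partial_b-x_a^2\partial_a$ — and these span $W_1'$. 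Part (4) is the same bootstrap without the divergence constraint: for the submodule $N'$ generated by $x_1^2\partial_1$ one gets $\mathcal O_2\partial_1\subseteq N'$ from $[x_a\partial_1,x_1^2\partial_1]=2x_1x_a\partial_1$ and iteration, and then $x_ax_b\partial_c\in N'$ for every $c$ from $[x_1\partial_c,x_ax_b\partial_1]=x_1\partial_c(x_ax_b)\partial_1\pm x_ax_b\partial_c$, whose first summand already lies in $N'$; hence $N'=W_1$.

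For (5): let $M\ne W_1''$ be a nonzero $W_0$-submodule; by (1) it contains a leader $\mathscr L$. If $\mathscr L\notin W_1''$, then (2) and (3) give $W_1'\subseteq M$ and we are done. If $\mathscr L\in W_1''$, comparison of $\partial_1$-components forces $\mathscr L=x_1\mathfrak D$, so $W_1''\subseteq M$ by the irreducibility of $W_1''$ (Lemma \ref{lem1.3}(1)); since $M\ne W_1''$ there is $D\in M\setminus W_1''$, and as $W_1''$ is $T$-stable some $T$-weight component of $D$ lies outside $W_1''$. Because the only weight vectors of $W_1''$ are the $x_i\mathfrak D$, that component is either a nonzero off-diagonal monomial or an element $\mu x_i\mathfrak D+\sum_{j\ne i}\nu_jx_ix_j\partial_j$ with some $\nu_k\ne0$; in either case the manipulations of (1) extract from it a nonzero $1$-defective element, or a leader which — since $m+n\ge4$ — cannot equal $x_1\mathfrak D$ and hence lies outside $W_1''$. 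Either way (2) and (3) conclude the proof.

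I expect the one genuinely laborious step to be the generation claim inside (3): verifying that the single vector $x_1^2\partial_j$ generates all of $W_1'=\overline S_1$ under $W_0$. This forces one to exhibit a full spanning family of the divergence-free part of $W_1$ — in particular the ``diagonal'' combinations that are not individual monomials — and to keep the super-sign conventions of the bracket and the divided-power identities under control; the hypothesis $p>3$ is used precisely to guarantee that the scalars $2$, $m-n+s$ (where relevant) and the coefficients appearing in the reductions are invertible. All remaining steps come down to bounded case analysis of $T$-weight vectors together with the bracket identities already recorded in Lemmas \ref{div} and \ref{lembuchong1}.
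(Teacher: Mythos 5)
Your overall strategy is viable and, except for part (2) (which coincides with the paper's proof: bracket the leader with the $x_1\partial_j$, apply Lemma \ref{lembuchong1}(1), then one diagonal bracket $[x_{j_0}\partial_{j_0},\cdot]$), it genuinely differs from the paper's. The paper treats (1), (3), (4) as straightforward verifications and proves (5) with no torus at all: after reducing to the case that every leader of $M$ lies in $W_1''$, so that $x_1\mathfrak{D}\in M$ and hence $W_1''\subset M$ by irreducibility (Lemma \ref{lem1.3}(1)), it arranges, by bracketing an element of $M\setminus W_1''$ with the $x_1\partial_i$ and using (3), that some $D\in M\setminus W_1''$ is pulled into $W_1''$ by every $x_1\partial_j$; Lemma \ref{lembuchong1}(2) then gives $D=f\mathfrak{D}+x_1^{2}\sum_{j\geq2}k_j\partial_j$, and subtracting $f\mathfrak{D}\in W_1''\subset M$ produces a nonzero $1$-defective element. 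Your substitute for this — the $T$-weight decomposition of $W_1$ together with explicit monomial manipulations — is legitimate, never uses Lemma \ref{lembuchong1}(2), and has the merit of also supplying the verifications of (1), (3), (4) that the paper omits; the price is a larger case analysis, and that is exactly where your write-up has two concrete gaps.

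First, your key identity $[x_i\partial_k,D]=\nu_k x_i^{2}\partial_k$ for a weight-$\epsilon_i$ vector $D=\mu x_i\mathfrak{D}+\sum_{j\neq i}\nu_j x_i x_j\partial_j$ is vacuous whenever $x_i$ is an odd variable, since then $x_i^{2}=0$; so the dichotomy ``some $\nu_k\neq0$ returns us to the monomial case, otherwise $D=\mu x_i\mathfrak{D}$'' fails for those weights. Concretely, $D=x_{m+1}x_2\partial_2$ is a weight-$\epsilon_{m+1}$ vector, every bracket $[x_{m+1}\partial_k,D]$ vanishes, yet $D$ is not a multiple of $x_{m+1}\mathfrak{D}$; as written, your (1) misclassifies it, and the defect propagates to (3) and (5), where the same analysis is invoked. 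The repair is elementary — bracket such a component first with $x_1\partial_i$ to move it into the weight-$\epsilon_1$ space, where $x_1^{2}\neq0$ and your argument applies — but it must be carried out. Second, in (3) the reduction ``any nonzero $1$-defective element can be driven to $x_1^{2}\partial_j$'' misses the components $\mu(x_i\mathfrak{D}-x_ix_1\partial_1)$ with $i\geq2$ even: there $\nu_k=0$ for all $k\geq2$ and $\nu_1\neq0$, so the only monomial your analysis extracts is $x_i^{2}\partial_1$, which is not $1$-defective and whose treatment in (1) yields a leader rather than $x_1^{2}\partial_j$. Again one extra bracket repairs it, e.g. $[x_1\partial_k,x_i^{2}\partial_1]=-x_i^{2}\partial_k$ for $2\leq k\neq i$ is a $1$-defective monomial. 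With these two patches, and granting the routine but lengthy check (which you correctly single out) that $x_1^{2}\partial_j$ generates $W_1'$ over $W_0$, your proof is complete.
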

\begin{proof} (1), (3) and (4) need only a straightforward verification.

(2)   Let $D=x_{1}^{2}\partial_1+\cdots$ be a leader in $M\setminus W_{1}''$. Then $[x_1\partial_{j}, D]\in M$ are 1-defective for all
$j\geq 2.$ If they are not all zero, we are done. Otherwise, by Lemma  \ref{lembuchong1}(1),
$$\mbox{$D= x_1\mathfrak{D}+x_{1}^{2}\sum_{j\geq 2}k_{j}\partial_{j}$\quad for some $ k_j\in \mathbb{F}.$}$$
Clearly, $\sum_{j\geq 2}k_{j}\partial_{j}\neq 0,$ say, $k_{2}\neq 0.$
Consequently, $x_{1}^{2}\partial_{2}=k_{2}^{-1}[x_2\partial_2, D]\in M$.

(5) Let $M$ be a nonzero $W_{0}$-submodule and  $M\neq W_{1}'$. Let us show that $M\supset W_1'.$
By (1), (2) and (3) we may assume that  all the leaders of $M$ lie in $W''$. Then
$W''\subset M,$ since $W''$ as $W_{0}$-module is irreducible by Lemma \ref{lem1.3}(1). For $D
\in M\setminus W''$, if there is some $i\geq 2$ such that $E=[x_1\partial_{i}, D]\not\in W'',$ then $[x_1\partial_j, E]$ is a leader or 1-defective for any $j\geq 2$. By (3), one may assume that there is $D\in M\setminus W''$ which is pulled into $W''$ by any $x_1\partial_j$ with $j\geq 2.$  Then by Lemma \ref{lembuchong1}(2), $M$ contains a nonzero 1-defective element and then $M\supset W'.$
\end{proof}
\begin{lemma}\label{lemxin1.9} The following statements hold.
\begin{itemize}
\item[$\mathrm{(1)}$] $W_{1}'$ is a maximal
$W_{0}$-submodule of $W_{1}$.
\item[$\mathrm{(2)}$]  If  $m-n+1\not\equiv 0\pmod{p}$, $W_{0}$-module $W_{1}'$ is irreducible. In particular, $W_1$ has a decomposition of irreducible $W_{0}$-submodules:
$$
W_1=W_{1}'\oplus W_{1}''.
$$
\item[$\mathrm{(3)}$] If $m-n+1\equiv 0\pmod{p}$, $W_1$ has exactly a composition series of $W_{0}$-submodules: $$0\subset W_{1}''\subset W_{1}'\subset W_1.$$
\end{itemize}
\end{lemma}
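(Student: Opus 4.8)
The plan is to derive all three parts from the structural input already assembled, the decisive piece being Lemma \ref{B}(5), once we have identified the top quotient $W_1/W_1'$. So I would first prove two auxiliary facts and then read off (1)--(3) formally.

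\emph{First,} I would show that $\mathrm{div}$ induces an isomorphism $W_1/W_1'\cong\mathcal O_1$ of $W_0$-modules and that $\mathcal O_1$ is an irreducible $W_0$-module. Recall $W_1'=\ker(\mathrm{div})\cap W_1$. As used in the proof of Lemma \ref{lem1.3}, $\mathrm{div}$ is a derivation of $W$ into the $W$-module $\mathcal O$; when $a\in W_0$ one has $\mathrm{div}(a)\in\mathcal O_0=\mathbb F\cdot1$, which is annihilated by every element of $W$ (superderivations kill the constants), so the derivation identity collapses to $\mathrm{div}([a,b])=a\cdot\mathrm{div}(b)$ for $a\in W_0$, $b\in W_1$. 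Hence $\mathrm{div}\colon W_1\to\mathcal O_1$ is a $W_0$-module homomorphism with kernel $W_1'$, and it is surjective since $\mathrm{div}(x_i^{(2)}\partial_i)=x_i$ for $i\in\mathbf I_0$ and $\mathrm{div}(x_jx_i\partial_j)=\pm x_i$ for $i\in\mathbf I_1$, $j\in\mathbf I_0$ (using $m\ge2$). This gives $W_1/W_1'\cong\mathcal O_1$. Finally $\mathcal O_1=\mathrm{span}_{\mathbb F}\{x_i\mid i\in\mathbf I\}$ is irreducible: the standard torus $\mathrm{span}_{\mathbb F}\{x_i\partial_i\}\subseteq W_0$ acts on $\mathcal O_1$ with pairwise distinct weights, so any nonzero $W_0$-submodule is the span of a nonempty subset of the $x_i$; and $x_k\partial_j$ sends $x_j$ to a nonzero multiple of $x_k$, forcing the submodule to be all of $\mathcal O_1$. (Equivalently, $\mathcal O_1$ is the contragredient of the irreducible module $W_{-1}$.)

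\emph{Second,} in the case $m-n+1\equiv0\pmod p$ I would check that the chain $0\subset W_1''\subset W_1'\subset W_1$ is strict. By Lemma \ref{lem1.3}(3) we have $W_1''\subseteq W_1'$, and $W_1',W_1''$ are nontrivial. If $W_1''=W_1'$, then $\dim W_1''=\dim W_1-\dim(W_1/W_1')=\dim W_1-\dim\mathcal O_1=(m+n)(\dim\mathcal O_2-1)$ by the first step, whereas $W_1''=\{f\mathfrak D\mid f\in\mathcal O_1\}$ has dimension at most $\dim\mathcal O_1=m+n$; this forces $\dim\mathcal O_2\le2$, contradicting $\dim\mathcal O_2\ge3$ (it contains $x_1^{(2)},x_1x_2,x_2^{(2)}$ since $m\ge2$ and $p>3$). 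Hence the chain is strict.

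\emph{Now the three assertions follow.} For (1): if $W_1'\subsetneq M\subseteq W_1$ is a $W_0$-submodule, then $M/W_1'$ is a nonzero submodule of the irreducible module $W_1/W_1'$, so $M=W_1$; thus $W_1'$ is maximal. For (2): if $m-n+1\not\equiv0\pmod p$ then $W_1=W_1'\oplus W_1''$ with $W_1''$ irreducible by Lemma \ref{lem1.3}; and if $0\ne N\subsetneq W_1'$ were a $W_0$-submodule, then $N\ne W_1''$ (as $N\subseteq W_1'$ and $W_1'\cap W_1''=0$ with $N\ne0$), so Lemma \ref{B}(5) would give $N\supseteq W_1'$, a contradiction; hence $W_1'$ is irreducible, proving (2). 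For (3): the second step gives the strict chain, with factors $W_1''$ (irreducible by Lemma \ref{lem1.3}(1)) and $W_1/W_1'\cong\mathcal O_1$ (irreducible); moreover for any nonzero $W_0$-submodule $M\subseteq W_1$ either $M=W_1''$, or by Lemma \ref{B}(5) $M\supseteq W_1'$ and then $M\in\{W_1',W_1\}$ by irreducibility of $W_1/W_1'$. So the submodule lattice is exactly $0\subset W_1''\subset W_1'\subset W_1$; in particular $W_1'/W_1''$ is irreducible and this is the unique composition series. The only genuinely new work here is the identification and irreducibility of $W_1/W_1'$ in the first step (and there the point to watch is that $\mathrm{div}$ restricts to a $W_0$-module map and surjects onto $\mathcal O_1$); everything else is bookkeeping on top of Lemma \ref{B}, which already does the hard part.
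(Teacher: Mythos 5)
Your proof is correct, but for part (1) it takes a genuinely different route from the paper. The paper proves maximality of $W_1'$ by an element-pushing argument: given a graded $W_0$-submodule $M\supsetneq W_1'$, it uses surjectivity of $\mathrm{div}$ on $\mathrm{span}_{\mathbb F}\{x_kx_1\partial_1\mid k\in\mathbf I\}$ to replace an element of $M\setminus W_1'$ by some $0\neq E=fx_1\partial_1\in M$, then explicit brackets with $x_j\partial_j$ and $x_1\partial_j$ produce $x_1^{2}\partial_1\in M$, and Lemma \ref{B}(4) (generation of $W_1$ by $x_1^{2}\partial_1$) finishes. You instead note that for $a\in W_0$ the derivation identity for $\mathrm{div}$ collapses to $\mathrm{div}([a,b])=a(\mathrm{div}\,b)$ because $\mathrm{div}(W_0)\subset\mathbb F\cdot 1$, so $\mathrm{div}\colon W_1\to\mathcal O_1$ is a surjective $W_0$-module map with kernel $W_1'$, and you prove $\mathcal O_1$ irreducible by a torus-weight argument; maximality of $W_1'$ is then formal from irreducibility of the quotient $W_1/W_1'\cong\mathcal O_1$. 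This is cleaner and bypasses Lemma \ref{B}(4) entirely (and the irreducibility of $\mathcal O_1\cong W_1''$ is in effect already available from Lemma \ref{lem1.3}(1), so you could even have cited it rather than reproving it), whereas the paper's computation stays inside $W_1$ and reuses the leader/Lemma \ref{B} machinery it has already built for later use. For (2) and (3) your argument coincides with the paper's, which simply declares them immediate from Lemmas \ref{lem1.3} and \ref{B}(5); you supply the details the paper leaves implicit, namely the strictness $W_1''\subsetneq W_1'$ (your dimension count $\dim W_1'=(m+n)(\dim\mathcal O_2-1)>m+n\geq\dim W_1''$ is valid since $m,n\geq2$ and $p>3$) and the determination of the full submodule lattice $0\subset W_1''\subset W_1'\subset W_1$, which is exactly what the assertion ``exactly a composition series'' requires.
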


\begin{proof}
(1) Let $M$ be a submodule of $W_{1}$  containing strictly
$W_{1}'$.   Note that
$$\mathrm{div}: \mathrm{span}_{\mathbb{F}}\{x_1x_1\partial_{1}, x_2x_1\partial_{1},\ldots,x_{m+n}x_1\partial_{1}\}\longmapsto \mathcal{O}_1$$  is  surjective.  Pick any $D\in M\setminus W_{1}'$. Then there exists $E=fx_1\partial_1,$ $f\in\mathcal{O}_1,$ such that
$\mathrm{div}E=\mathrm{div}D$. That is, $E-D\in W_1'\subset M$ and then $0\neq E\in M.$ If $\partial_j(f)=0$ for all $j\geq 2$ then  $E=\partial_1(f)x_{1}^{2}\partial_1$ and hence $M=W_1$ by Lemma \ref{B}(4). Suppose $\partial_j(f)\neq 0$ for some $j\neq 1.$ Then
$x_jx_1\partial_1= {\partial_j(f)^{-1}}[x_j\partial_j, E]\in M$. It follows that
$$x_1^{2}\partial_1-(-1)^{|\partial_{j}|}x_jx_1\partial_j=[x_1\partial_j, x_jx_1\partial_1]\in M.$$ Note that
$\frac{1}{2}x_1^{2}\partial_1-(-1)^{|\partial_{j}|}x_jx_1\partial_j$ is in $ W_1'\subset W.$  It follows that $x_1^2\partial_1\in M$ and $M=W_1$ by Lemma \ref{B}(4), showing that $W_1'$ is maximal.

(2)  and (3)   are immediate
  consequences of  Lemmas \ref{lem1.3} and \ref{B}(5).
\end{proof}
\begin{corollary}\label{1.10} The following statements hold.
\begin{itemize}
\item[$\mathrm{(1)}$]
If $m-n+1\not\equiv0\pmod{p}$ then $S_{1}$ is an irreducible
$S_{0}$-module.
\item[$\mathrm{(2)}$] If $m-n+1\equiv0\pmod{p}$ then $S_{1}''$ is the
unique nontrivial $S_{0}$-submodule of $S_{1}$.
\end{itemize}
\end{corollary}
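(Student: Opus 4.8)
The plan is to reduce both statements to the $W_0$--module results already proved in Lemma \ref{lemxin1.9}, by comparing $S$ with $W$ in degrees $0$ and $1$. By Lemma \ref{dibu}(3), $S_0\cong\fsl(m,n)$ is a codimension--one subalgebra of $W_0\cong\fgl(m,n)$; concretely $W_0=S_0+\mathbb{F}\mathfrak{D}$ with $\mathfrak{D}=\sum_kx_k\partial_k$ the degree derivation, and $\mathrm{ad}\,\mathfrak{D}$ acts on $W_1$ as the scalar $1$. Also $S_1=\overline{S}_1=W_1'$ (this is part of the standard description of the grading, cf. \eqref{L2}; alternatively $S_1\subseteq W_1'$ since $S$ is divergence--free, and the reverse inclusion falls out at the end). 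The key point I would establish is: \emph{a subspace $M\subseteq W_1$ is an $S_0$--submodule if and only if it is a $W_0$--submodule.} One implication is trivial; for the converse, if $M$ is $S_0$--stable then, since $W_0=S_0+\mathbb{F}\mathfrak{D}$ and $[\mathfrak{D},M]=M\subseteq M$, it is $W_0$--stable. Hence the $S_0$-- and $W_0$--submodule lattices of $W_1$ (and of $W_1'$) coincide.

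Granting this, both parts are short. For $(1)$: if $m-n+1\not\equiv0\pmod p$, then $W_1'$ is $W_0$--irreducible by Lemma \ref{lemxin1.9}(2), hence $S_0$--irreducible; as $0\neq S_1\subseteq W_1'$ is an $S_0$--submodule ($S_1\neq0$ since $S$ is simple with nontrivial grading), we get $S_1=W_1'$, which is irreducible. For $(2)$: if $m-n+1\equiv0\pmod p$, Lemma \ref{lemxin1.9}(3) gives the unique composition series $0\subset W_1''\subset W_1'\subset W_1$ of $W_0$--submodules (with $W_1''\subset W_1'$ also by Lemma \ref{lem1.3}(3)), so the only nonzero $S_0$--submodules of $W_1'$ are $W_1''$ and $W_1'$. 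Now $S_1$ is one of them and $S_1\neq W_1''$, because $D_{12}(x_1^{(2)}x_2)=x_1x_2\partial_2-x_1^{(2)}\partial_1$ lies in $S_1$ but is not of the form $f\mathfrak{D}$ (here $p>3$ is used); hence $S_1=W_1'$. Finally $\mathrm{div}(f\mathfrak{D})=(m-n+1)f=0$ by Lemma \ref{div}, so $S_1''=\{f\mathfrak{D}\mid f\in\mathcal{O}_1\}$ equals $W_1''$ and is contained in $S_1$; thus $S_1''$ is the unique nontrivial $S_0$--submodule of $S_1$.

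The step I expect to be the main obstacle is the ``$S_0\Leftrightarrow W_0$'' equivalence, i.e. the identity $W_0=S_0+\mathbb{F}\mathfrak{D}$. It holds whenever $\mathfrak{D}\notin S_0$, equivalently $p\nmid(m-n)$; in particular it is automatic in case $(2)$, since $m-n+1\equiv0$ forces $m-n\equiv-1\not\equiv0$. The awkward residual possibility is the degenerate subcase of $(1)$ with $p\mid m-n$ (for instance $m=n$), where $\mathfrak{D}\in S_0$ and the one--line argument breaks. There one would need a separate direct verification that the $W_0$--irreducible module $W_1'=S_1$ is still $S_0$--irreducible, e.g. by adapting the generation argument of Lemma \ref{B} (using that the module generators $x_1^2\partial_j$ of $W_1'$ are obtained from one another via brackets with off--diagonal elements $x_i\partial_j\in S_0$) together with a torus/weight analysis as in Remark \ref{mgsr2}. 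Everything else is bookkeeping on top of Lemma \ref{lemxin1.9}.
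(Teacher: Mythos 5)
Your main line is exactly the paper's proof: the published argument for Corollary \ref{1.10} consists precisely of the observations $W_0=S_0+\mathbb{F}\mathfrak{D}$, $S_1=W_1'$, and $S_1''=W_1''$ when $m-n+1\equiv0\pmod{p}$, followed by an appeal to Lemma \ref{lemxin1.9}, and your formulation via ``$S_0$-submodules of $W_1$ coincide with $W_0$-submodules since $\mathrm{ad}\,\mathfrak{D}$ acts as the scalar $1$'' is the same reduction made explicit. The residual subcase you flag in (1), namely $p\mid m-n$ (where the supertrace of $I_{m+n}$ vanishes, so $\mathfrak{D}\in S_0$ and $W_0\neq S_0+\mathbb{F}\mathfrak{D}$), is not treated in the paper either --- it simply asserts $W_0=S_0+\mathbb{F}\mathfrak{D}$ --- so there you are more careful than the published proof, though your proposed repair (adapting the generation argument of Lemma \ref{B} inside $S_0$ together with a weight analysis) is only sketched rather than carried out.
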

\begin{proof} Note that $W_0=S_0+\mathbb{F}\mathfrak{D}$ and that
$W'_{1}=S_{1}$. If $m-n+1\equiv0\pmod{p}$ then $S_{1}''=W_{1}''$. The lemma  follows directly from  Lemma \ref{lemxin1.9}.
\end{proof}

\begin{lemma}\label{lem-liu-liu-Melikyan63}
$\{D\in W_{2}\mid [W_{-1},D]\subset W_{1}''\}=0.$
\end{lemma}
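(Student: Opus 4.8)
The plan is a short direct computation which, via transitivity of $W$ (Lemma~\ref{dibu}(1)), reduces the statement to showing that the hypothesis forces $[W_{-1},D]=0$. Since $W_1''$ is $\mathbb{Z}_2$-graded, the set in question is $\mathbb{Z}_2$-graded, so it suffices to treat a $\mathbb{Z}_2$-homogeneous element $D\in W_2$ with $[\partial_i,D]\in W_1''$ for every $i\in\mathbf{I}$. By the definition of $W_1''$ there are (homogeneous) $g_i\in\mathcal{O}_1$, say $g_i=\sum_{l\in\mathbf{I}}c_{il}x_l$ with $c_{il}\in\mathbb{F}$, such that $[\partial_i,D]=g_i\mathfrak{D}$; the goal is to prove that every $g_i=0$.

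Fix $i\neq i'$ in $\mathbf{I}$ and compute $[\partial_{i'},[\partial_i,D]]$ in two ways. Since $[\partial_{i'},h\partial_k]=\partial_{i'}(h)\partial_k$ for $h\in\mathcal{O}$ and $\partial_{i'}(g_i)=c_{ii'}$, a direct computation gives
$$[\partial_{i'},[\partial_i,D]]=[\partial_{i'},g_i\mathfrak{D}]=c_{ii'}\mathfrak{D}+(-1)^{|\partial_{i'}||g_i|}g_i\partial_{i'}.$$
On the other hand, the graded Jacobi identity together with $[\partial_{i'},\partial_i]=0$ gives
$$[\partial_{i'},[\partial_i,D]]=(-1)^{|\partial_{i'}||\partial_i|}[\partial_i,[\partial_{i'},D]]=(-1)^{|\partial_{i'}||\partial_i|}\bigl(c_{i'i}\mathfrak{D}+(-1)^{|\partial_i||g_{i'}|}g_{i'}\partial_i\bigr).$$
Now expand both sides in the basis $\{x_l\partial_k\mid l,k\in\mathbf{I}\}$ of $W_1$ and compare the coefficients of $x_l\partial_{i'}$ for $l\neq i'$: the term $\mathfrak{D}=\sum_k x_k\partial_k$ contributes nothing there, and $g_{i'}\partial_i$ contributes nothing because $i\neq i'$, while $g_i\partial_{i'}=\sum_l c_{il}x_l\partial_{i'}$ contributes $c_{il}$. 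Hence $c_{il}=0$ for every $l\neq i'$.

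Finally, fix $i$. Since $m,n\geq2$ we have $|\mathbf{I}|=m+n\geq3$, so $\mathbf{I}\setminus\{i\}$ contains two distinct elements $i_1',i_2'$; applying the previous step with $i'=i_1'$ and with $i'=i_2'$ yields $c_{il}=0$ for all $l\neq i_1'$ and for all $l\neq i_2'$, hence $c_{il}=0$ for all $l$, i.e.\ $g_i=0$. As $i$ was arbitrary, $[\partial_i,D]=g_i\mathfrak{D}=0$ for every $i$, so $[W_{-1},D]=0$ and therefore $D=0$ by transitivity. The only point requiring attention is the bookkeeping of the $\mathbb{Z}_2$-signs in the two expressions for $[\partial_{i'},[\partial_i,D]]$, but this is harmless: the argument only extracts the vanishing of certain scalar coefficients, and a nonzero sign in front of $c_{il}$ still forces $c_{il}=0$.
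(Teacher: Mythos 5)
Your proof is correct, and it takes a mildly different route from the paper's. The paper argues directly on the coefficients of $D$: writing $D=\sum_i a_i\partial_i$, the hypothesis $[\partial_j,D]\subset W_1''$ forces each $a_i$ to be a multiple of $x_i^2$ (hence $a_j=0$ for $j>m$), then writing $D=\sum_{i\le m}f_ix_i^2\partial_i$ it forces $\partial_j(f_i)=0$ for $j>m$, and from this explicit shape one concludes $D=0$. You never touch the coefficients of $D$: you encode the hypothesis as $[\partial_i,D]=g_i\mathfrak{D}$ and exploit the symmetry $[\partial_{i'},[\partial_i,D]]=\pm[\partial_i,[\partial_{i'},D]]$ coming from $[\partial_{i'},\partial_i]=0$ to obtain linear relations killing all coefficients $c_{il}$ of the $g_i$, so that $[W_{-1},D]=0$, and then you invoke transitivity (Lemma \ref{dibu}(1)), which the paper's argument does not need since it pins down $D$ itself. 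Your computation checks out: both evaluations of the double bracket are right; the elements being compared lie in $W_0$, whose basis $\{x_l\partial_k\mid l,k\in\mathbf{I}\}$ you use (you call it a basis of $W_1$ --- a harmless slip); comparing the coefficient of $x_l\partial_{i'}$ with $l\neq i'$ indeed gives $c_{il}=0$, and since $m+n\geq 4$ there are two admissible choices of $i'$ for each fixed $i$, so all $c_{il}$ vanish --- and, as you note, the precise signs are irrelevant for this. Both arguments are short direct computations; yours is arguably cleaner in that it avoids the divisibility analysis of the $a_i$, at the modest cost of appealing to transitivity.
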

\begin{proof}
Write $D=\sum_{i\in\mathbf{I}}a_{i}\partial_{i} \in W_{2}$ and suppose $D$ is pulled into $W_{1}''$ by $W_{-1}$. Then, each $a_{i}$ must be a multiple of $x_{i}^{2}$ and in particular, $a_{j}=0$ for all $j> m.$ Write $D=\sum_{i\leq m}f_ix_{i}^{2}\partial_i$, where $f_i\in \mathcal{O}_1.$ Since $[\partial_j, D] \in W_1''$, one deduces that $\partial_j(f_i)=0$ for  $j>m$ and   $i\leq m.$ Now it is clear that $D$ is not in $ W_1''$ unless it is zero.
\end{proof}

Let
\begin{eqnarray*}
&& M'=W_{-1}+W_{0}+W_{1}'+W_{2}'+\cdots+W_{\xi-2}',\\
&& M''=W_{-1}+W_{0}+W_{1}''.
\end{eqnarray*}
 Using (\ref{jisuanlm2}) and keeping in mind that $\mathrm{div}$ is a derivation from $W$ to $\mathcal{O}$, one may verify that  $M'$ and $M''$
are subalgebras of $W$.
\begin{lemma}\label{llmbuch1616lem}
Suppose $M$ is a proper subalgebra containing $W_{-1}\oplus W_{0}\oplus W_{1}'.$ Then
$M\subset M'$.
 \end{lemma}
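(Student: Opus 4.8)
The goal is to show that any proper subalgebra $M$ with $W_{-1}\oplus W_0\oplus W_1'\subseteq M$ is contained in $M'=W_{-1}+W_0+W_1'+W_2'+\cdots+W_{\xi-2}'$. Since $M$ is $\mathbb{Z}$-graded, it suffices to show $M_i\subseteq W_i'$ for all $i\geq 2$ (we already have equality or inclusion in degrees $-1,0,1$, and degrees $<-1$ are absent for $W$). The plan is to argue by induction on $i\geq 1$, the base case $i=1$ being the hypothesis $M_1\supseteq W_1'$ together with the claim $M_1\subseteq W_1'$ — which is where properness of $M$ enters: if $M_1\not\subseteq W_1'$, then $M_1$ is a $W_0$-submodule of $W_1$ strictly containing $W_1'$, hence by Lemma~\ref{lemxin1.9}(1) equals $W_1$, and then $M\supseteq\mathrm{alg}(W_{-1}+W_0+W_1)=W$ by Lemma~\ref{dibu}(2), contradicting properness. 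So $M_1=W_1'=\overline{S}_1$.

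For the inductive step, suppose $M_i\subseteq W_i'$ for some $i\geq 1$; I want $M_{i+1}\subseteq W_{i+1}'$. Take $D\in M_{i+1}$. The key mechanism is transitivity of $W$ (Lemma~\ref{dibu}(1)): since $W_{-1}\subseteq M$, we have $[W_{-1},D]\subseteq M_i\subseteq W_i'=\overline{S}_i$, i.e.\ $[\partial_j,D]$ has zero divergence for every $j\in\mathbf{I}$. Because $\mathrm{div}$ is a derivation from $W$ to $\mathcal{O}$ as a $W$-module, $\partial_j(\mathrm{div}D)=\pm\,\mathrm{div}([\partial_j,D])=0$ for all $j$, so $\mathrm{div}D$ is a constant; but $\mathrm{div}D\in\mathcal{O}_i$ with $i\geq 1$, forcing $\mathrm{div}D=0$, i.e.\ $D\in W_{i+1}'$. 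This closes the induction and gives $M_i\subseteq W_i'$ for all $i\geq 1$, hence $M\subseteq M'$.

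The only subtlety I anticipate is bookkeeping about the indexing range: $W_i'=\overline{S}_i$ makes sense and is nontrivial for $1\leq i\leq\xi-2$, while $W_{\xi-1}$ is the top nonzero component and one must check that the divergence argument still forces $M_{\xi-1}\subseteq W_{\xi-1}'$ (it does, by the same computation, since $\mathrm{div}D$ lands in $\mathcal{O}_{\xi-2}$ with $\xi-2\geq 1$ under the standing assumption $m,n\geq 2$, $p>3$). One should also note at the outset that $M'$ is indeed a subalgebra (already remarked in the excerpt, via \eqref{jisuanlm2} and the derivation property of $\mathrm{div}$), so the conclusion $M\subseteq M'$ is meaningful. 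I expect the main obstacle — such as it is — to be merely stating the transitivity/divergence step cleanly; there is no deep computation, the whole argument rides on Lemma~\ref{lemxin1.9}(1), Lemma~\ref{dibu}, and the fact that $\mathrm{div}$ is a $W$-module derivation.
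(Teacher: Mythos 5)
Your proof is correct and uses essentially the same ingredients as the paper's: the paper argues contrapositively, bracketing an element of positive degree with nonzero divergence down to degree $1$ and invoking Lemma \ref{lemxin1.9}(1) and Lemma \ref{dibu}(2), while your upward induction on the degree rests on exactly the same two lemmas together with the identity $\mathrm{div}[\partial_j,D]=\partial_j(\mathrm{div}D)$ and the fact that the common kernel of the $\partial_j$ in $\mathcal{O}$ is $\mathbb{F}\cdot 1$. The only blemish is a harmless indexing slip ($\mathrm{div}D\in\mathcal{O}_{i+1}$, not $\mathcal{O}_i$, for $D\in W_{i+1}$), which does not affect the argument since the degree is still positive.
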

\begin{proof}  Assume conversely that $M\not\subset M'$. Then there exists $D\in M\cap\sum_{i\geq 1}W_{i}$ satisfying $\mathrm{div} D\not=0$.
Using the formula $\mathrm{div}[\partial_j, D]=\partial_j(\mathrm{div}D)$ for all $j\in\mathbf{I}$, one sees that
 $M\supset W_1$ by Lemma \ref{lemxin1.9}(1). By Lemma \ref{dibu}(2), $M=W$, a contradiction.
\end{proof}
\noindent
\textbf{Proof of (1) and (2) in Theorem \ref{thm01}}
(1)
\textit{Claim A}: $ M'$ is maximal. This follows immediately from Lemma \ref{llmbuch1616lem}.

 \textit{Claim B}: $ M''$ is maximal if $m-n+1\not\equiv0\pmod{p}.$ Let $M$ be a subalgebra  strictly containing $M''$. By transitivity and Lemma \ref{lem-liu-liu-Melikyan63}, $M\cap W_1$ must strictly
contain $W_1''$.  Lemma \ref{lemxin1.9}(2) forces
$M\supset W_1$ and therefore, $M=W$ by Lemma \ref{dibu}(2).

\textit{Claim C}:  $M'$ and $M''$ exhaust all the maximal  subalgebras of type (I). Let $M$ be a maximal
subalgebra of type (I). By transitivity, $M$ must contain a nonzero element of $W_1$ and therefore, $M\cap W_1\neq 0$ is a nonzero $W_0$-submodule of $W_1$. By Lemma \ref{B}(5), we have $M\cap W_1= W_1''$ or
$M\cap W_1\supset W_1'$.\\

\noindent\textbf{Case 1.} Suppose $m-n+1\not\equiv0\pmod{p}.$  If $M\cap W_1= W_1''$ then Claim B forces $M=M''.$
Suppose $M\cap W_1\supset W_1'$. By Lemma \ref{llmbuch1616lem}, we have $M\subset M'$ and then  $M=M'$ by
the maximality of $M$.\\

\noindent\textbf{Case 2.} Suppose $m-n+1\equiv0\pmod{p}.$ We have $M\supset W''$. Since $W''\subsetneq W'$ in this situation, one sees $M\supsetneq W''$. By transitivity and Lemma \ref{lem-liu-liu-Melikyan63}, $M\cap W_1\supsetneq W_1''$ and hence $M\cap W_1\supset  W_1'$ by Lemma \ref{B}(5). It follows from Lemma \ref{llmbuch1616lem} that $M=M'.$ This completes the proof of (1).

(2)  First of all, $S_{-1}+S_0$ and $S_{-1}+S_0+S_1''$ ($m+n-1\equiv 0\pmod{p}$) are subalgebras of $S$. Let $M$ be a maximal subalgebra of $S$  containing $S_{-1}+S_0$. Note that $W_1''=S_1''$ when  $m-n+1\equiv 0\pmod{p}$.
By the transitivity of $S$, Lemmas \ref{dibu}(2), \ref{lem-liu-liu-Melikyan63} and Corollary  \ref{1.10}, we obtain that
  $M=S_{-1}+S_0+S_1''$ when $m+n-1\equiv 0\pmod{p}$;  $M=S_{-1}+S_0$  when $m+n-1\not\equiv 0\pmod{p}$. The process shows also that these two subalgebras are indeed maximal. This completes the proof of (2).
\qed

\begin{remark}   For $W$ and $S$, the arguments for MGS of the other types will be reduced to the case of type $\mathrm{(I)}$ MGS by the method of  minimal counterexample.
\end{remark}

\begin{lemma}
\label{mgsl2}  The following statements hold.
\begin{itemize}
\item[$\mathrm{(1)}$] $H_1$ is an irreducible $H_0$-module.
\item[$\mathrm{(2)}$] For $i\geq 0$, $K_i$ is a direct sum of $K_0$-submodules $K_{ij}$.
Moreover,   $K_{10}$ and $K_{11}$ are irreducible $K_0$-modules.
\end{itemize}
\end{lemma}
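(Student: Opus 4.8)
The plan is to reduce both statements to facts about the $\mathbb{Z}$-graded Hamiltonian/contact structure realized on $\mathcal{O}$ via the isomorphisms $\overline{H}\cong(\bar{\mathcal{O}},[\,,\,])$ and $\overline{K}\cong(\mathcal{O},[\,,\,])$, together with the description of $H_0\cong\mathfrak{osp}(2r,n)$ and $K_0\cong\mathfrak{osp}(2r,n)\oplus\mathbb{F}I_{2r+n}$ from Lemma \ref{dibu}(3). For (1): identify $H_1$ with the space of elements of $\mathcal{O}$ of $\mathbb{Z}$-degree $3$ (i.e. $\mathcal{O}_3$), which is the appropriate piece of the second symmetric power of the natural $\mathfrak{osp}(2r,n)$-module $L_{-1}$ carrying the form $\beta$; one checks that the action of $H_0$ is the standard one. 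Then irreducibility of $H_1$ as an $H_0$-module follows from the standard fact that, under $\mathfrak{sp}\oplus\mathfrak{so}$-type orthosymplectic symmetry and in characteristic $p>3$ (so that the relevant small-prime degeneracies are excluded), the relevant graded component of the polynomial/exterior algebra is irreducible — this is the super-analogue of the classical statement used for $H(2r)$ in \cite{HM,KS}. Concretely, I would take a highest weight vector with respect to the torus $T$ of Remark \ref{mgsr2}(2) and show that applying root vectors from $H_0$ generates all of $H_1$; the computations are the bracket formula $[a,b]=D_H(a)(b)$ applied to monomials $y^{(\alpha)}y^u$ of $\mathbb{Z}$-degree $3$.

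For (2), the decomposition $K_i=\bigoplus_j K_{ij}$ is essentially bookkeeping: every $u\in K_i$ can be written uniquely as $u=\sum_j f_j z^j$ with $f_j\in\mathcal{O}$ not involving $z$, and the condition defining $K_{ij}$ (namely $[1,f]=0$, i.e. $f$ lies in the centralizer of the unit, equivalently $f$ is $z$-free in the appropriate sense) exactly picks out the summands; one then verifies that the $K_0$-action, and in particular the action of the central element $I_{2r+n}$ (which acts by a scalar depending on the $z$-degree, via the contact grading in which $\mathrm{zd}(z)=2$), preserves each $K_{ij}$. This uses the contact bracket formula $[a,b]=D_H(a)b+\Delta(a)\partial_m(b)-\partial_m(a)\Delta(b)$ and the fact that $\Delta(a)=2a-\mathfrak{D}(a)$ interacts with powers of $z$ in a controlled way. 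For the irreducibility of $K_{10}$ and $K_{11}$ as $K_0$-modules: $K_{11}=\mathbb{F}z$ (or more precisely the span of $z$ times the appropriate $z$-free degree-$0$ piece) which is one-dimensional up to the obvious module structure, hence trivially irreducible; $K_{10}$ is identified with $\mathcal{O}_3$ restricted to $z$-free elements satisfying $[1,f]=0$, on which $K_0$ acts through its $\mathfrak{osp}(2r,n)$-part exactly as $H_0$ acts on $H_1$ in part (1), plus the scalar from $I_{2r+n}$; so irreducibility of $K_{10}$ follows from (1).

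The main obstacle I expect is twofold. First, carefully pinning down the module structure on $H_1$ (resp. $K_{10}$): one must verify that the $H_0\cong\mathfrak{osp}(2r,n)$-action on $\mathcal{O}_3$ really is the expected "Cartan prolongation" action, i.e. that $H_0$ is precisely the stabilizer-type Lie superalgebra whose natural module is $(L_{-1},\beta)$ and that $[H_0,H_1]$ uses the form $\beta$ in the standard way — this is where the redescription via $E_H$, $D_i$, $y_i$ from Section 2 and the matrix $J$ of \eqref{basismgs120910 1} earns its keep. Second, the irreducibility argument itself must avoid the characteristic-$p$ traps: in principle $\mathcal{O}_3$ could fail to be irreducible when certain dimensions vanish mod $p$, and one needs $p>3$ to rule these out. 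I would handle this by an explicit generation argument: fix a lowest-weight (or highest-weight) monomial, and show by repeated application of the root vectors $y_i y_{\widetilde j}\in H_0$ (using Remark \ref{mgsr2}(2) for the weight combinatorics and part (1)'s bracket computations) that one reaches every monomial basis element of $\mathbb{Z}$-degree $3$, the only nontrivial coefficients appearing being integers in the range $\{1,2,3\}$ (the values $\nu(y_i)$ of \eqref{mgseq12734} are suggestive here), all invertible since $p>3$. The contact case adds only the easy check that the central torus element $I_{2r+n}$ acts as a scalar on each $K_{ij}$, which follows from Lemma \ref{dibu}(3) and the contact grading.
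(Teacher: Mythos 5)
Your route is essentially the paper's: part (1) there is disposed of by "results for modular Lie algebras plus a direct computation" (your torus/root-vector generation argument is a reasonable way to carry that out), and part (2) is reduced to (1) and to the irreducibility of the natural module via the identifications $K_{10}\cong H_1$ and $K_{11}\cong H_{-1}$ as modules over the $\mathfrak{osp}$-part of $K_0$. Your bookkeeping for the decomposition $K_i=\oplus_j K_{ij}$ and your treatment of $K_{10}$ agree with this.

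There is, however, a concrete error in your handling of $K_{11}$: you assert $K_{11}=\mathbb{F}z$ is one-dimensional and "hence trivially irreducible". This is a miscount of degrees. In the contact grading $\mathrm{zd}(z)=2$, so $z$ itself lies in $K_0$ (more precisely in $K_{01}$), whereas by definition $K_{11}=\{fz\mid f\in\mathcal{O}_{1},\,[1,f]=0\}$, i.e. $f$ runs over the $z$-free \emph{linear} elements; thus $K_{11}=\mathrm{span}_{\mathbb{F}}\{y_iz\mid i\in\mathbf{I}\backslash\{m\}\}$ has superdimension $(2r,n)$, and its irreducibility is not trivial. The correct (and still short) argument is the one the paper uses: the quadratic ($\mathfrak{osp}$) part of $K_0$ acts on $fz$ through its action on $f$, while the central element $z$ acts on all of $K_1$ by the scalar $1$ (note the scalar depends only on the total degree $i$, not on the power $j$ of $z$, though this does not affect your submodule check); hence $K_{11}\cong H_{-1}=K_{-1}$ as a module over the $\mathfrak{osp}$-part, and irreducibility follows from the standard irreducibility of $L_{-1}$ as $L_0$-module, already invoked earlier in the paper. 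With that substitution your proposal matches the paper's proof; as written, the $K_{11}$ step fails.
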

\begin{proof}
 Using the results in  the case of  modular Lie algebras \cite{KS} and by a direct computation, it is easy to show that  (1) holds. Since $K_{10}\cong H_1$ and $K_{11}\cong H_{-1}$ as $H_0$-modules, by  irreducibilities  of $H_{-1}$ and $H_1$, (2) holds.
\end{proof}
Let
\begin{eqnarray*}
&&M'=K_{-2}+K_{-1}+K_0+{\sum}_{i=1}^{2r(p-1)+n}K_{i0},\\
&&M''= K_{-2}+K_{-1}+K_0+K_{11}+K_{22}.
\end{eqnarray*}
By a standard and direct computation, one may verify that  $M'$ and $M''$
are subalgebras of $K$.\\

\noindent
\textbf{Proof of (3) and (4) in Theorem \ref{thm01}}
(3) This statement follows immediately from  Lemmas \ref{dibu}(2) and  \ref{mgsl2}(1).

(4) \textit{ Claim  A}:  $M'$ is maximal.
For any $0\not=u\in K$,  $u\not\in M'$, put $\overline{M}=\mathrm{alg}(M'+\mathbb{F}{u})$.
Note that there exist $k\in\mathbb{N}$ and $v_1,\ldots,v_s\in K_{-1}$ such that
$$0\not=u_{1}=fz+\alpha z^2=[v_1, [\cdots[v_s, (\mathrm{ad}1)^ku]\cdots]]\in \overline{M}\backslash M',$$
where $f \in \mathcal{O}$ satisfying  $[1,f]=0$ and  $\alpha\in\mathbb{F}$.
Then
 there exists $i\in\mathbf{I}$ such that $[y_{\widetilde{i}}, u_1]-y_{\widetilde{i}}f\not=0$. It follows that $$0\not=(\sigma(\widetilde{i})(-1)^iD_i(f)+\alpha y_{\widetilde{i}})z\in\overline{M}.$$
 From  Lemma \ref{dibu}(1), there exists a nonzero element in $\overline{M}\cap K_{11}.$
By Lemmas \ref{dibu}(2) and \ref{mgsl2}(2),  we have $\overline{M}=K$. Thus $M'$ is maximal.

\textit{Claim B}: $M''$ is maximal.
  For any $0\not=u\in K$,  $u\not\in M''$, put $\overline{M}=\mathrm{alg}(M''+\mathbb{F}{u})$.
 It is sufficient to show that there exists a nonzero element in $K_{10}\cap\overline{M}$.
When $\mathrm{zd}(u)>2$,
by  transitivity, there exist $v_1,\ldots,v_s\in K_{-1}$ such that
$$0\not=u_3=u_{30}+u_{31}+u_{32}=[v_1, [\cdots[v_s, u]\cdots]]\in\overline{M},$$
where $u_{3i}\in K_{3i}$, $i=0,1, 2$. Note that $[1, u_{32}]\in K_{11}$.
If $u_{31}\not=0$, then
$$0\not=[1, u_{31}]=[1, u_{3}-u_{30}-u_{32}]\in\overline{M}\cap K_{10}.$$
If $u_{31}=0$,   there exists $j\in\mathbf{I}$ such that
$$0\not=u_2=\sigma(\widetilde{j})(-1)^jD_j(u_{30})+y_{\widetilde{j}}D_{2r+1}(u_{32})\in\overline{M}\backslash{M''},$$
Note that  $\mathrm{zd}(u_2)=2$.
Thus, it remains  to consider the case
 $\mathrm{zd}(u)=2$. Assume that $u=u_{20}+u_{21}$, where $u_{2i}\in K_{2i}$, $i=0,1$.
If $u_{21}=0$ the conclusion follows.
Notice that ${H_{0}}\cong K_{21}$ as $H_0$-module.
If $u_{21}\not=0$, by Remark \ref{mgsr2} and a direct computation, we obtain that there exists $i\in \mathbf{I}_0$ such that $u_2=u'_{20}+y^{(2\varepsilon_i)}z\in\overline{M}$,
where $u'_{20}\in K_{20}.$
Since  $[K_{-1}, u_2]\subset\overline{M}$, there exists $j\in\mathbf{I}$ such that $$0\not=\sigma(\widetilde{j})(-1)^jD_j(u'_{20})+y_{\widetilde{j}}y^{(2\varepsilon_i)}\in\overline{M}\cap K_{10}.$$
Thus the conclusion holds.

\textit{Claim C}: $M'$ and $M''$  exhaust all  the maximal graded subalgebras of type $\mathrm{(I)}$.
Suppose $N$ is a maximal graded subalgebra of $K$ containing $K_{-1}+K_0$. By  transitivity, there exists
 $0\not=D=D_{10}+D_{11}\in N$, where $D_{1i}\in K_{1i}$, $i=0,1$.
 Since $N\subsetneq K$, we claim that $D_{11}=0$ or $D_{10}=0$.
 Indeed, if $D_{11}\not=0$ and $D_{10}\not=0$,  by the irreducibility of $K_{10}$, we have
 $$w=y^{(2\varepsilon_1)}y_{\widetilde{1}}+{\sum}_{i=1}^{2r+n}\alpha_iy_iz\in N,\ \alpha_i\in\mathbb{F}.$$
 We consider the following cases.\\

 \noindent\textbf{Case 1.} For all $i$, $\alpha_i=0$. Obviously, $N=K$ by the irreducibility of $K_{1i}$ and $D_{1i}\not=0$, $i=0, 1.$\\

 \noindent\textbf{Case 2.} There exists $k$ such that $\alpha_{k}\not=0.$
 If $k\not=1, \widetilde{1}$, for $\widetilde{k}\not=j\in\mathbf{I}_1$, we have:
 $$0\not=[y_jy_{\widetilde{k}}, w]\in N\cap K_{11}.$$
 Similar to Case 1, we have $N=K$.

 If $k=1$ or $\widetilde{1}$, then $w=y^{(2\varepsilon_1)}y_{\widetilde{1}}+\alpha_1y_1z+\alpha_{\widetilde{1}}y_{\widetilde{1}}z$. For $j\in\mathbf{I}_1$, we have:
 $$y_jy_1y_{\widetilde{1}}=[[y_jy_1, w], y^{(2\varepsilon_{\widetilde{1}})}]\in N\cap K_{10}.$$
  Similar to Case 1, we have $N=K$.

 Consequently,
$N=M'$ when  $D_{11}=0$ and  $N=M''$ when $D_{10}=0$.
\qed
\section{MGS of Type (II)}\label{002}

Let $L=W, S, H$ or $K$.
Recall $$\frak{V}^L=\{V\mid V \mbox{ is a nontrivial subspace of } L_{-1}\}.$$
 To  describe the MGS of type $(\textrm{II})$ of $L$, for any
 $V\in\frak{V}^L$, we define
$$\mathcal{M}(V)=\oplus_{i\geq-2}\mathcal{M}_{i}(V),$$
where
\begin{eqnarray}
&&\mathcal{M}_{-1}(V)=V;\ \ \mathcal {M}_{-2}(V)=[\mathcal {M}_{-1}(V), \mathcal {M}_{-1}(V)];\nonumber\\
 &&\mathcal{M}_{i}(V)=\{u\in L_{i}\mid [V, u]\subset
\mathcal{M}_{i-1}(V)\} \qquad \mbox{for}\; i\geq 0.\label{huam}
\end{eqnarray}

\begin{theorem}\label{theorem02llm}  Suppose $L=W$ or $S$.  All    MGS  of type  $\mathrm{(II)}$ of $L$ are characterized as follows:
\begin{itemize}
\item[$\mathrm{(1)}$] All   MGS of type  $\mathrm{(II)}$ of $L$ are precisely:
$$ \{\mathcal {M}(V)\mid V\in\frak{V}^L\}.$$
\item[$\mathrm{(2)}$] For any  $V$ and $V'$ in $\frak{V}^L$,
$$\mathcal{M}(V)
\stackrel{\mathrm{algebra}}{\cong}\mathcal{M}(V')\Longleftrightarrow V{\cong}V'.$$
\item[$\mathrm{(3)}$]  $L$ has exactly $(m+1)(n+1)-2$  isomorphism  classes of  MGS of type $\mathrm{(II)}$.
\item[$\mathrm{(4)}$] If $\mathrm{superdim}V=(k,l),$ then
\begin{align*}
 \mathrm{dim}\mathcal{M}(V)= \left\{\begin{array}{ll}
                                      2^{n}p^{m}(m+n)-2^{l}p^{k}(m+n-k-l),  & L=W; \\
                                      2^{n}p^{m}(m+n-1)+1-2^{l}p^{k}(m+n-k-l),  & L=S.
                                    \end{array}\right.
\end{align*}
 \end{itemize}
\end{theorem}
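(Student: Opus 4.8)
The plan is to treat the four parts in the order (1), (4), (2), (3), since the dimension formula in (4) and the algebra isomorphisms in (2) both feed naturally into the counting in (3), and (4) in turn relies on having a concrete description of $\mathcal{M}(V)$ obtained while proving (1). First I would prove (1) in two directions. That each $\mathcal{M}(V)$ is a graded subalgebra of $L$ is a routine check from the defining recursion (\ref{huam}): the Jacobi identity gives $[\mathcal{M}_i(V),\mathcal{M}_j(V)]\subset\mathcal{M}_{i+j}(V)$ by induction on $i+j$, using transitivity (Lemma \ref{dibu}(1)) to start the induction. For maximality, suppose $N\supsetneq\mathcal{M}(V)$ is a graded subalgebra; since $N_{-1}$ is a nontrivial subspace of $L_{-1}$ properly containing or equal to $V$, and since $\mathcal{M}(V)$ is by construction the largest graded subalgebra with $(-1)$-component exactly $V$, we must have $N_{-1}\supsetneq V$. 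Here I would invoke the same minimal-counterexample reduction to type (I) that the authors flagged in the Remark after the proof of Theorem \ref{thm01}: if $N$ is a proper graded subalgebra with $N_{-1}$ strictly larger than $V$ but still proper, one can build a smaller pair, and the base case $N_{-1}=L_{-1}$ lands in type (I) or (III), which are already handled; combined with transitivity and Lemma \ref{dibu}(2) this forces $N=L$. Conversely any MGS $M$ of type (II) has $M_{-1}=V\in\frak{V}^L$ and then $M\subset\mathcal{M}(V)$ by the maximality of the latter among graded subalgebras with prescribed $(-1)$-part, so $M=\mathcal{M}(V)$.

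Next, for (4) I would compute $\dim\mathcal{M}_i(V)$ degree by degree. Using Lemma \ref{mgsl3}, I may assume $V$ is standard, i.e. $V=\mathrm{span}_{\mathbb{F}}\{\partial_i\mid i\in\mathbf{I}(k,l)\}$ as in (\ref{eqsb}). Then $u=\sum_{j\in\mathbf{I}}a_j\partial_j\in W_i$ lies in $\mathcal{M}_i(V)$ iff $[\partial_s,u]\in\mathcal{M}_{i-1}(V)$ for all $s\in\mathbf{I}(k,l)$; unwinding the recursion shows $\mathcal{M}_i(V)$ consists of those $u$ all of whose iterated $\partial_s$-derivatives ($s\in\mathbf{I}(k,l)$) of the coefficients vanish in the appropriate sense, which is exactly the condition that the "dangerous" coefficients lie in the subalgebra of $\mathcal{O}$ generated by the variables $x_t$ with $t\notin\mathbf{I}(k,l)$ — the standard Kostrikin–Shafarevich picture (cf. \cite{HM,KS,St}). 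Summing the resulting dimensions over all $i\geq -1$ gives $\dim W - \dim(\text{complement})$; the complement has dimension $2^l p^k(m+n-k-l)$ — it is the span of $f\partial_j$ with $f$ a monomial in the $\mathbf{I}(k,l)$-variables and $j\notin\mathbf{I}(k,l)$ — and since $\dim W=2^n p^m(m+n)$ we get the first formula. For $L=S$ one intersects with $\overline{S}=\ker(\mathrm{div})$; using that $\mathrm{div}$ is a $W$-module derivation (Lemma \ref{lem1.3} era) the correction is uniform and yields $\dim\mathcal{M}(V)=\dim S-2^lp^k(m+n-k-l)$ with $\dim S=2^np^m(m+n-1)+1$, giving the second formula. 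The main obstacle is bookkeeping the degree-by-degree count, especially the top-degree and boundary terms and the $\mathrm{div}$-correction in the $S$-case; this is the step I expect to be most delicate.

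For (2), the implication $V\cong V'\Rightarrow\mathcal{M}(V)\cong\mathcal{M}(V')$ is immediate from Lemma \ref{mgsl3}: an automorphism $\Phi_L$ with $\Phi_L(V)=V'$ carries $\mathcal{M}(V)$ onto $\mathcal{M}(V')$ because the recursion (\ref{huam}) is automorphism-equivariant. For the converse, if $\mathcal{M}(V)\cong\mathcal{M}(V')$ as graded algebras, then the isomorphism restricts to an isomorphism of $(-1)$-components, so $\dim V_{\bar0}=\dim V'_{\bar0}$ and $\dim V_{\bar1}=\dim V'_{\bar1}$ (the $\mathbb{Z}_2$-grading is preserved by our convention that all homomorphisms are $\mathbb{Z}_2$-homogeneous), i.e. $\mathrm{superdim}V=\mathrm{superdim}V'$, which is exactly $V\cong V'$ for $L=W,S$; alternatively one reads off $\mathrm{superdim}V$ from $\dim\mathcal{M}(V)$ via the formula in (4), which is injective as a function of $(k,l)$ on the relevant range. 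Finally (3): by (1) and (2) the isomorphism classes of type-(II) MGS are in bijection with the equivalence classes $\{V\cong V'\}$, i.e. with pairs $(k,l)$ with $0\leq k\leq m$, $0\leq l\leq n$, excluding the two cases that do not give type-(II) subspaces — namely $(k,l)=(0,0)$ (where $V$ is trivial) and $(k,l)=(m,n)$ (where $V=L_{-1}$, giving type (I) or (III) instead). Hence the count is $(m+1)(n+1)-2$.
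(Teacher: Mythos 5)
Your overall skeleton (subalgebra property, automorphism equivariance, $M=\mathcal{M}(M_{-1})$ for a type (II) MGS, counting classes by $(k,l)$ with $(0,0)$ and $(m,n)$ excluded) matches the paper, and your treatment of (2) and (3) is fine. But the heart of the theorem is the maximality of $\mathcal{M}(V)$, and there your argument is a gesture, not a proof. Saying ``invoke the minimal-counterexample reduction flagged in the Remark'' and ``the base case $N_{-1}=L_{-1}$ lands in type (I) or (III), which are already handled'' supplies no mechanism: the paper's actual argument (Lemma \ref{yinlimax-noncontain} and Proposition \ref{prowmax}) first proves that $\mathcal{M}_{0}(V)$ is a \emph{maximal} subalgebra of $L_{0}$ and that $V\mapsto\mathcal{M}_{0}(V)$ is injective, which forces any graded $M\supsetneq\mathcal{M}(V)$ into one of two cases, and then kills both cases with the explicit element $A=(-1)^{|x_h|}x_ix_j\partial_j-(-1)^{|x_j|}x_ix_h\partial_h\in\mathcal{M}_1(V)$ ($i\in\mathbf{I}(k,l)$, $j\in\overline{\mathbf{I}}(k,l)$): bracketing $A$ with $\partial_j$ produces the co-basis element $x_i\partial_j$ (contradicting $M_0=\mathcal{M}_0(V)$), and $A$ lies in none of $W_1'$, $W_1''$, $S_1''$, $\{0\}$ (contradicting $M_0=L_0$ via Theorem \ref{thm01}). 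Nothing in your sketch plays the role of $A$ or of Lemma \ref{yinlimax-noncontain}; moreover the appeal to type (III) is unavailable at this point of the paper (Theorem \ref{03} comes later and its proof itself needs Lemma \ref{yinlimax-noncontain}), and even granting it, knowing what type (III) MGS look like does not by itself show $\mathcal{M}(V)$ is not contained in one — that is exactly what the element $A$ is for.

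There is also a concrete error in your derivation of (4). The condition you state — that the coefficients of the ``dangerous'' $\partial_j$, $j\in\overline{\mathbf{I}}(k,l)$, lie in the subalgebra generated by the variables $x_t$ with $t\notin\mathbf{I}(k,l)$ — is not the membership condition for $\mathcal{M}(V)$; it describes the type (III) subalgebra $\mathcal{M}(L_{-1},\mathcal{M}_0(V))$ (compare the basis in the proof of Theorem \ref{03}(4)). For $\mathcal{M}(V)$ the correct condition, and the one the paper's basis encodes, is that each monomial coefficient of such a $\partial_j$ must involve at least one variable with index in $\overline{\mathbf{I}}(k,l)$ (e.g.\ $x_1x_2\partial_2\in\mathcal{M}_1(\mathbb{F}\partial_1)$ although its coefficient is not a function of the outside variables alone). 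Your stated complement, of dimension $2^lp^k(m+n-k-l)$, is the complement for this correct condition and is inconsistent with the characterization you wrote down, so the count in (4) does not follow from your own description as given. Finally, for $L=S$ you assert $\dim S=2^np^m(m+n-1)+1$ and a uniform codimension $2^lp^k(m+n-k-l)$; note that the paper's own decomposition $\overline{S}=S\oplus\sum_{i\in\mathbf{I}_0}\mathbb{F}x^{(\pi-(p-1)\varepsilon_i)}x^{\omega}\partial_i$ makes your quoted number equal to $\dim\overline{S}$ rather than $\dim S$, so this step needs an actual argument (intersecting the $W$-complement with $S$ degree by degree), not the one-line bookkeeping you propose.
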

When $L=H$ or $K$, recall  definitions (\ref{12731eq1})-(\ref{eqsb2}) mentioned in Section 2. Put
\begin{align*}
   &\mathcal{V}^L= \left\{V \in\frak{V}^L, \mbox{ satisfying } J_3 \mbox{  is neither single nor twinned}\right\};\\
   &\mathcal{W}^{K}= \left\{V \in\frak{V}^K, \mbox{ satisfying } J_3 \mbox{  is not twinned}\right\}.
\end{align*}
   Suppose  $V\in\frak{V}^{K}$ is isotropic.  Put
$$\mathcal{M}^K(1, V)=\oplus_{i\geq -2}\mathcal{M}_i^K(1, V),$$
 where
 \begin{align*}
 &\mathcal{M}^K_{-2}(1, V)=\mathbb{F};\ \ \mathcal{M}^K_{-1}(1, V)=V;\\
&\mathcal{M}^K_{i}(1, V)=\{u\in {K}_i\mid [V, u]\subset \mathcal{M}^K_{i-1}(1, V), \ [1, u]\in\mathcal{M}^K_{i-2}(1, V)\}, \ i\geq 0.
\end{align*}

\begin{theorem}\label{mgs1276t1}  All    MGS  of type  $\mathrm{(II)}$ of $H$ and $K$   are characterized as follows:

For ${H}$,
\begin{itemize}
\item[$\mathrm{(1)}$]  All MGS  of type  $\mathrm{(II)}$ are precisely:
$$\left\{\mathcal{M}(V)\mid V\in\mathcal{V}^H\right\}.$$
\item[$\mathrm{(2)}$] For any $V$ and $ V'$ in $\mathcal{V}^H$,
$$\mathcal{M}(V)\stackrel{\mathrm{algebra}}{\cong}
\mathcal{M}(V')\Longleftrightarrow V{\cong}V'.$$
\item[$\mathrm{(3)}$] $H$ has exactly $\phi(r,n)$  isomorphism  classes of  MGS of type $\mathrm{(II)}$, where
\[
\phi(r,n)=\left\{\begin{array}{ll}
                       8^{-1}(r+1)(r(n+2)^2+2n^2+6-r)-2, & n \ \mbox{ is odd};\\
                       8^{-1}(r+1)(r(n+2)^2+2n^2+8)-2, & n \ \mbox{ is even}.
                    \end{array}
\right.
\]
\item[$\mathrm{(4)}$] For $V\in\mathcal{V}^H$,  if  $\beta$-$\mathrm{dim}V=(a, b, c, d),$ then
$$\mathrm{dim}\mathcal{M}(V)=p^{m}2^n+p^{2a}2^d-p^{a+b}2^{c+d}(m-2b+n-d-2c+1)-2.$$
\end{itemize}
For ${K}$,
\begin{itemize}
\item[$(1')$]   All MGS  of type  $\mathrm{(II)}$ are precisely:
\begin{align*}
&\left\{\mathcal{M}(V)\mid
V\in\mathcal{V}^K\mbox{ is neither nondegenerate  nor  isotropic}\right\} \\
\cup
&\left\{\mathcal{M}(V)\mid V\in\mathcal{W}^{K} \mbox{ is nondegenerate  or  isotropic}\right\}\\
\cup
 &\left\{\mathcal{M}^K(1,V)\mid
V\in\mathcal{V}^K\mbox{ is  isotropic}\right\}.
 \end{align*}
 \item[$(2')$] For all MGS  of type  $\mathrm{(II)}$,
\begin{align*}
& \mathcal{M}(V)\stackrel{\mathrm{algebra}}{\cong}
\mathcal{M}(V')\Longleftrightarrow V{\cong}V',\\
& \mathcal{M}^K(1, V)\stackrel{\mathrm{algebra}}{\cong}
\mathcal{M}^K(1, V')\Longleftrightarrow V{\cong}V'.
\end{align*}
\item[$(3')$] $K$ has exactly $\phi{(r, n)}$  isomorphism  classes of MGS  of type  $\mathrm{(II)}$, where
\[
\phi(r, n)=\left\{\begin{array}{ll}
                       8^{-1}(r+1)(r(n+2)^2+2n^2+4n+2-r)+r-1, & n \ \mbox{ is odd};\\
                       8^{-1}(r+1)(r(n+2)^2+2n^2+4n+8)+r-2, & n \ \mbox{ is even}.
                    \end{array}
\right.
\]
 \item[$(4')$] Let $\delta=1$ when $n-m-3=0\pmod{p}$  and $\delta=0$, otherwise.   Suppose
  $\beta$-$\mathrm{dim}V=(a, b, c, d).$
\begin{itemize}
\item[(a)] If $V$ is isotropic, then
\begin{align*}
&\mathrm{dim}\mathcal{M}(V)=p^{m}2^n-p^{b}2^{c}(m-2b+n-2c)-\delta, \mbox{ when }\ V\in\mathcal{W}^{K};\\
&\mathrm{dim}\mathcal{M}^K(1, V)=p^{m}2^n-p^{b+1}2^{c}(m-2b+n-2c)+p-\delta,  \mbox{ when }\ V\in\mathcal{V}^K.
\end{align*}
\item[(b)] If $V$ is not isotropic, then
$$\mathrm{dim}\mathcal{M}(V)= p^m2^n-(2r-2a+n-d)p^{2a+1}2^d-p,$$
when $V\in\mathcal{W}^{K}$  is nondegenerate satisfying  $ J_3$  is single;
$$ \mathrm{dim}\mathcal{M}(V)=p^{m}2^n+p^{2a+1}2^d-p^{a+b+1}2^{c+d}(m-2b+n-d-2c)-\delta,$$
when $V\in\mathcal{W}^{K}$  is nondegenerate satisfying  $ J_3$  is not single or $V\in\mathcal{V}^K$
is not nondegenerate.
\end{itemize}
\end{itemize}
\end{theorem}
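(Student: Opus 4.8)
The plan is to follow the same architecture used for the type $\mathrm{(I)}$ MGS and for the $W,S$ analogue in Theorem \ref{theorem02llm}, adapting it to the Hamiltonian/contact setting where $L_{-1}$ carries the nondegenerate skew supersymmetric form $\beta$. The guiding principle is that a type $\mathrm{(II)}$ MGS is the \emph{maximal} graded subalgebra with a prescribed $(-1)$-component $V\in\frak{V}^L$; by the transitivity of $L$ (Lemma \ref{dibu}(1)) together with $L=\mathrm{alg}(L_{-1}+L_0+L_1)$ (Lemma \ref{dibu}(2)), such an algebra must be of the form $\mathcal{M}(V)$, and the real work is to determine for which $V$ this $\mathcal{M}(V)$ is actually proper and maximal, and to compute its graded dimensions. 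First I would establish the basic closure and functoriality facts: $\mathcal{M}(V)$ (and, for $L=K$ and $V$ isotropic, $\mathcal{M}^K(1,V)$) is a graded subalgebra, the automorphism $\Phi_L$ of Lemma \ref{mgsl3} sends $\mathcal{M}(V)$ to $\mathcal{M}(\Phi_L(V))$, and hence up to isomorphism we may always take $V$ standard, i.e. spanned by the $y_i$ with $i\in J_1\cup J_2$ as in (\ref{eqsb2}). This reduces everything to a computation with monomials $y^{(\alpha)}y^u$ and the value function $\nu$ defined in (\ref{mgseq12734}).

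The heart of the argument is the local analysis at levels $0,1,2$. For $L=H$: one shows that $\mathcal{M}_0(V)$ is the stabilizer in $H_0\cong\mathfrak{osp}(2r,n)$ of the flag determined by $V$ and its $\beta$-orthogonal complement; using the $\beta$-basis (\ref{basis 2}) one writes $\mathcal{M}_i(V)$ explicitly as the span of those monomials all of whose variables in the ``complementary'' index set $\bar J_2\cup J_3$ are sufficiently constrained — concretely, $\mathcal{M}_i(V)=\mathrm{span}\{y^{(\alpha)}y^u\in H_i\mid \nu(y^{(\alpha)}y^u)\le 1\}$ after the reduction, which is why the condition ``$J_3$ is neither single nor twinned'' appears: exactly in the single/twinned cases does $\mathcal{M}(V)$ fail to be proper or fail to be maximal because the extra generators in $J_3$ force the whole algebra (or an even larger proper subalgebra) to appear. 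I would prove maximality by the ``minimal counterexample''/generation method used in Claim A–C of Theorem \ref{thm01}: given any $u\notin\mathcal{M}(V)$, repeatedly bracket with $L_{-1}$ to descend to low $\mathbb{Z}$-degree, use the irreducibility of $H_1$ as $H_0$-module (Lemma \ref{mgsl2}(1)) to conclude $\mathrm{alg}(\mathcal{M}(V)+\mathbb{F}u)$ contains all of $H_1$, and then invoke Lemma \ref{dibu}(2). For $L=K$ the same scheme runs, but now there are three sources of $1$-components ($K_{10}\cong H_1$ and $K_{11}\cong H_{-1}$ as $H_0$-modules, via Lemma \ref{mgsl2}(2)) and the element $z=x_m$ plays a special role: this is why nondegenerate or isotropic $V$ split off the variants $\mathcal{W}^K$ and $\mathcal{M}^K(1,V)$ — the latter being the subalgebra that in addition controls the bracket with $1$, i.e. keeps the $z$-direction under control one degree deeper. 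I would treat the isotropic, nondegenerate, and ``generic'' cases of $V$ separately, in each case pinning down $\mathcal{M}_0,\mathcal{M}_1,\mathcal{M}_2$ by direct bracket computations and then running the generation argument.

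With the explicit monomial description of each $\mathcal{M}_i(V)$ in hand, parts $(4)$ and $(4')$ are a bookkeeping exercise: for $L=H$ one counts monomials $y^{(\alpha)}y^u$ with $\nu\le 1$, which factors as $p^m2^n$ (all monomials) minus the monomials that are excluded, and the excluded ones are counted by splitting the index set into $J_1\sqcup J_2\sqcup\bar J_2\sqcup J_3$ and using the product formula for $\nu$ — this yields the $p^{a+b}2^{c+d}(m-2b+n-d-2c+1)$ correction term, with the $p^{2a}2^d$ and $-2$ coming respectively from the degenerate directions and from the two low components $L_{-2},L_{-1}$ being counted by $\mathbb{F}$ and $V$ rather than by monomial shifts; for $L=K$ the same count, shifted by the $z$-grading $\mathrm{zd}(z)=2$ and corrected by $\delta=\delta_{n-m-3\equiv 0}$ which records whether the top component of $K$ survives, gives $(4')$(a),(b). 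For the isomorphism classification $(2),(2'),(3),(3')$, I would show that two such MGS are isomorphic as graded algebras iff their $(-1)$-components have the same $\beta$-dimension: one direction is Lemma \ref{mgsl3}, the other follows because $\beta$-$\dim V$ is recoverable from the internal structure of $\mathcal{M}(V)$ (e.g. from $\dim\mathcal{M}_{-1}$, $\dim\mathcal{M}_{-2}$, and the isomorphism type of $\mathcal{M}_0$ as a classical Lie superalgebra with its action on $\mathcal{M}_{-1}$, invoking Lemma \ref{dibu}(3)); then $\phi(r,n)$ is the number of admissible $4$-tuples $(a,b,c,d)$ with $0\le a\le b\le r$, $0\le d\le n$, $0\le c\le\lfloor\frac{n-d}{2}\rfloor$, subject to the exclusions defining $\mathcal{V}^H$, $\mathcal{V}^K$, $\mathcal{W}^K$, and (for $K$) the identification of the pair $\{\mathcal{M}(V),\mathcal{M}^K(1,V)\}$ in the isotropic case — a finite sum evaluated by the parity-of-$n$ case split. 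The main obstacle I anticipate is \emph{not} the dimension counts but the careful case analysis proving maximality in the contact case: one must verify that for $V\in\mathcal{W}^K$ nondegenerate with $J_3$ single, or $V$ isotropic, the ``extra'' subalgebra $\mathcal{M}^K(1,V)$ genuinely sits strictly between $\mathcal{M}(V)$-would-be and $K$ and is itself maximal, and conversely that no further intermediate graded subalgebra exists — this requires a delicate use of Remark \ref{mgsr2} (the torus weight-space bookkeeping, especially $L_0^{\epsilon_k}=\sum_{l=m+2q+1}^{m+n}\mathbb{F}y_ky_l$ and the eigenvector-separation fact (1)) to split off offending components and push them into already-known pieces, exactly as in Claims A and B of the proof of Theorem \ref{thm01}(4).
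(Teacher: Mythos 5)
Your plan reproduces the paper's architecture: reduce to a standard $V$ via Lemma \ref{mgsl3} and the functoriality/subalgebra facts of Lemma \ref{mgsl12731} (in particular that any type (II) MGS is $\mathcal{M}(M_{-1})$, or $\mathcal{M}^K(1,M_{-1})$ in the contact isotropic case with $M_{-2}\neq 0$); describe each $\mathcal{M}_i(V)$ by monomials through the value function $\nu$; prove maximality by descending inside $\mathrm{alg}(\mathcal{M}(V)+\mathbb{F}u)$ to degree $-1$, rebuilding $L_{-1}$ and $L_0$, and invoking Lemma \ref{dibu}(2) together with the irreducibility statements of Lemma \ref{mgsl2}; decompose $\mathcal{M}^K_i(V)$ along powers of $z$ with $[1,\cdot]$ controlling the extra direction; then count monomials for $(4)$, $(4')$ and recover $\beta$-$\dim$ for $(2)$--$(3')$. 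This is exactly the route taken in Lemmas \ref{mgs1273l2}, \ref{mgs1277l1} and Propositions \ref{mgs1276p1}, \ref{mgs1277p3}, \ref{mgs1282p1}, followed by the bookkeeping in the proof of the theorem.

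There is, however, a concrete error at the step on which both the maximality analysis and the dimension formula rest: $\mathcal{M}_i(V)$ is \emph{not} the span of the monomials with $\nu\le 1$. By definition (\ref{huam}), a monomial $y_jy_k$ with $j,k\in J_3$ has $\nu=4>1$ yet annihilates $V$ and so lies in $\mathcal{M}_0(V)$, while (say for $i\in J_1$, $j\in\bar J_2$) the monomial $y_{\tilde i}y_j$ has $\nu=\tfrac13\le 1$ but $[y_i,y_{\tilde i}y_j]$ is a nonzero multiple of $y_j\notin V$, so it does not; likewise monomials with two $\bar J_2$-factors (value $\tfrac19$) are excluded. The correct characterization, which you need, is that of Lemma \ref{mgs1273l2}(1): $\nu(u)\in\{0,1\}$ or $\nu(u)=(\tfrac13)^k2^l$ with $l>1$, equivalently the complement of $\mathcal{M}(V)$ in $H$ is $\mathcal{O}^+_{J_1}\mathcal{O}_{\bar J_2}\oplus\mathcal{O}^+_{J_1\cup\bar J_2}\mathcal{Q}_{J_3}$. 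With your set, which is not even closed under the $\mathcal{M}_0(V)$-action on $V$, the single/twinned analysis and the count in $(4)$ would not reproduce the stated formulas, so this step must be replaced rather than tightened. Two smaller slips: for $H$ the terminal $-2$ in $(4)$ comes from $\dim H=p^m2^n-2$ (there is no $H_{-2}$), and the $+p^{2a}2^d$ is an inclusion--exclusion term from requiring at least one $\bar J_2$-factor, not a count of low components; and in $(3')$ the isotropic pair $\mathcal{M}(V)$, $\mathcal{M}^K(1,V)$ must not be identified --- they are non-isomorphic MGS of different dimensions (compare $(4')$(a)) and are counted as separate isomorphism classes.
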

\begin{lemma}\label{mgsl12731}  Suppose $L=W, S, H$ or $K$.
\begin{itemize}
\item[$\mathrm{(1)}$] $\mathcal{M}(V) $
is a $\mathbb{Z}$-graded subalgebra of $L$.   $\mathcal{M}^K(1, V)$ is a $\mathbb{Z}$-graded subalgebra of $K$.
\item[$\mathrm{(2)}$]
Suppose  $\Phi$ is a $\mathbb{Z}$-homogeneous automorphism of $L$. Then
$$\Phi(\mathcal{M}_i(V))=\mathcal{M}_i(\Phi(V))\ \mbox{ for all }\ i\geq -2.$$
 Moreover, $$\Phi(\mathcal{M}(V))=\mathcal{M}(\Phi(V)).$$
  For $K$,
$$\Phi(\mathcal{M}^K_i(1, V))=\mathcal{M}^K_i(1, \Phi(V)) \ \mbox{ for all }\ i\geq -2.$$
 Moreover,
$$\Phi(\mathcal{M}^K (1, V))=\mathcal{M}^K(1, \Phi(V)).$$
\item[$\mathrm{(3)}$]  If $M$ is an MGS of type $\mathrm{ (II)}$ of $L$,  then $M=\mathcal{M}(M_{-1})$ unless $L=K$, $M_{-1}$ is isotropic and $M_{-2}\not=0$. However,  in the latter case,  $M=\mathcal {M}^K(1, M_{-1})$.
\end{itemize}
\end{lemma}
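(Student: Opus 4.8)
The plan is to verify the three parts in turn, each of which reduces to an elementary computation with the defining filtration formulas \eqref{huam}.

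For part (1), I would show $\mathcal{M}(V)$ is closed under the bracket by checking $[\mathcal{M}_i(V),\mathcal{M}_j(V)]\subset\mathcal{M}_{i+j}(V)$ for all $i,j\geq-2$. The cases where $i$ or $j$ equals $-1$ are built into the definition (for $j=-1$, $i\geq0$ this is exactly the condition defining $\mathcal{M}_i(V)$; the case $i=j=-1$ is the definition of $\mathcal{M}_{-2}(V)$; the case $i=-2$ follows since $[\mathcal M_{-2}(V),L_{-1}]$ lands in $L_{-1}=\mathcal M_{-1}(V)$ for $L=W,S$, while for $L=H,K$ one uses that $\mathcal M_{-2}(V)=[V,V]$ together with the Jacobi identity and transitivity of $L$). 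For $i,j\geq0$ one proceeds by induction on $i+j$: given $u\in\mathcal M_i(V)$, $v\in\mathcal M_j(V)$ and $w\in V=L_{-1}$, the Jacobi identity gives $[w,[u,v]]=[[w,u],v]\pm[u,[w,v]]$, and since $[w,u]\in\mathcal M_{i-1}(V)$, $[w,v]\in\mathcal M_{j-1}(V)$, the inductive hypothesis places both terms in $\mathcal M_{i+j-1}(V)$; hence $[u,v]\in\mathcal M_{i+j}(V)$. The argument for $\mathcal M^K(1,V)$ is identical but carries the extra bookkeeping condition $[1,u]\in\mathcal M^K_{i-2}(1,V)$; one checks simultaneously by induction that both $[V,[u,v]]\subset\mathcal M^K_{i+j-1}(1,V)$ and $[1,[u,v]]\subset\mathcal M^K_{i+j-2}(1,V)$, the latter again via Jacobi applied to $1\in K_{-2}$, using that $[1,\mathcal M^K_{-1}(1,V)]=[1,V]\subset K_{-1}=\mathcal M^K_{-1}(1,V)$ since $\mathrm{zd}(1)=2$ for $K$.

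For part (2), let $\Phi$ be a $\mathbb{Z}$-homogeneous automorphism of $L$. I would prove $\Phi(\mathcal M_i(V))=\mathcal M_i(\Phi(V))$ by induction on $i$. For $i=-1$ this is $\Phi(V)=\Phi(V)$, and for $i=-2$ it follows from $\Phi([V,V])=[\Phi(V),\Phi(V)]$. For $i\geq0$: since $\Phi$ preserves $L_i$ (being $\mathbb Z$-homogeneous), $u\in\mathcal M_i(V)$ iff $[V,u]\subset\mathcal M_{i-1}(V)$ iff $\Phi([V,u])=[\Phi(V),\Phi(u)]\subset\Phi(\mathcal M_{i-1}(V))=\mathcal M_{i-1}(\Phi(V))$ (inductive hypothesis) iff $\Phi(u)\in\mathcal M_i(\Phi(V))$; as $\Phi$ is bijective this gives the claimed equality, and summing over $i$ gives $\Phi(\mathcal M(V))=\mathcal M(\Phi(V))$. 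For $K$ one adds the observation that $\Phi(1)$ spans $K_{-2}$ (again by homogeneity, $\dim K_{-2}=1$), and in fact $\Phi(1)=1$ up to scalar, so the auxiliary condition $[1,u]\in\mathcal M^K_{i-2}(1,V)$ is also transported correctly; the same inductive scheme then yields $\Phi(\mathcal M^K_i(1,V))=\mathcal M^K_i(1,\Phi(V))$.

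For part (3), let $M=\sum_{i\geq-2}M_i$ be an MGS of type (II), so $M_{-1}$ is a nontrivial proper subspace of $L_{-1}$. Set $V=M_{-1}\in\frak V^L$. From $[M_{-1},M_i]\subset M_{i-1}$ (valid since $M$ is a graded subalgebra) one gets by a downward-free induction on $i$ that $M_i\subset\mathcal M_i(V)$ for every $i$, whence $M\subset\mathcal M(V)$; if moreover $L=K$ and $M_{-2}\neq0$, then $M_{-2}=K_{-2}=\mathbb F$ and the relation $[1,M_i]\subset M_{i-2}$ gives $M\subset\mathcal M^K(1,V)$ as well. Since $\mathcal M(V)$ (resp. $\mathcal M^K(1,V)$) is a proper graded subalgebra by part (1)—properness because $\mathcal M_{-1}(V)=V\subsetneq L_{-1}$—maximality of $M$ forces equality, \emph{provided} the containing algebra is proper and we are in the right case. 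The one subtlety is distinguishing the two cases for $K$: if $M_{-1}$ is isotropic and $M_{-2}\neq0$ then $M\subset\mathcal M^K(1,M_{-1})\subsetneq L$ forces $M=\mathcal M^K(1,M_{-1})$; otherwise (either $M_{-1}$ not isotropic, or $M_{-2}=0$) one shows $M\subset\mathcal M(M_{-1})$ is the tightest available containment and concludes $M=\mathcal M(M_{-1})$. I expect the main obstacle to be precisely this last bookkeeping—verifying that in the excluded case $M_{-2}=0$ one genuinely has $M=\mathcal M(M_{-1})$ rather than something smaller, which requires knowing that $\mathcal M(M_{-1})_{-2}=[M_{-1},M_{-1}]=0$ when $M_{-1}$ is isotropic, together with the fact that for $K$ with $M_{-1}$ isotropic the algebra $\mathcal M(M_{-1})$ is still proper and maximal among graded subalgebras with that $(-1)$-component and trivial $(-2)$-component; these facts are the content of the later theorems, so here I would simply invoke the definitions and defer the maximality statements.
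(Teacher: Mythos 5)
Your proposal is correct and follows essentially the route the paper intends: its own proof of this lemma is just a pointer to the modular Lie algebra case in \cite{HM}, and your Jacobi-identity induction for (1), the homogeneity-plus-bijectivity induction for (2) (with $\Phi(K_{-2})=K_{-2}=\mathbb{F}1$), and the containment-plus-maximality argument for (3) are exactly that standard route. Two wording slips do not affect the substance: in (3) the blanket claim $M_i\subset\mathcal{M}_i(M_{-1})$ ``for every $i$'' fails at $i=-2$ precisely in the exceptional case ($L=K$, $M_{-1}$ isotropic, $M_{-2}\neq 0$, where $M_{-2}=\mathbb{F}\not\subset[M_{-1},M_{-1}]=0$), which your subsequent case split correctly repairs via $\mathcal{M}^K(1,M_{-1})$; and nothing needs to be deferred to later theorems, since properness of $\mathcal{M}(M_{-1})$ (its degree $-1$ component is $M_{-1}\subsetneq L_{-1}$) together with the assumed maximality of $M$ already forces the stated equalities.
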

\begin{proof} The approach is analogous to that used in the  case of modular Lie algebras \cite{HM}.
\end{proof}
\begin{remark}\label{mgsstandard1} Suppose $L=W, S, H$ or $K$.
 In view of Lemmas \ref{mgsl3} and \ref{mgsl12731}, for any  $V\in\frak{V}^L$, we may assume that $V$ is  standard [see (\ref{eqsb}), (\ref{eqsb2})].
\end{remark}
Now, we consider the case  $L=W$ or $S$. Suppose $V\in \frak{V}^L$ with $\mathrm{superdim}V=(k, l)$.
For $L=W$, it is easy to verify that
$\mathcal{M}_{0}(V)$ has a standard $\mathbb{F}$-basis
 $\mathcal{A}_{1}\cup\mathcal {A}_{2},$
where
\begin{eqnarray*}
&&\mathcal{A}_{1}=\{x_{i}\partial_{j}\mid i,j\in
\mathbf{I}(k,l)\},\\
&&\mathcal {A}_{2}=\{x_{i}\partial_{j}\mid
i\in\overline{\mathbf{I}}(k,l),j\in \mathbf{I}\}.
\end{eqnarray*}
Similarly, for $L=S$, $\mathcal{M}_{0}(V)$ has a standard  $\mathbb{F}$-basis $\mathcal{C}_{1}\cup
\mathcal {C}_{2}\cup \mathcal {C}_{3}$,  where
\begin{eqnarray*}
&&\mathcal
{C}_{1}=\{x_{i}\partial_{j}\mid i,j\in\mathbf{I}(k,l),i\neq j\},
\\
&&\mathcal {C}_{2}= \{x_{i}\partial_{j}\mid
i\in\overline{\mathbf{I}}(k,l),j\in\mathbf{I},i\neq j\},\\
&&\mathcal {C}_{3}=\{x_{1}\partial_{1}-(-1)^{|\partial_i|}x_{i}\partial_{i}\mid
i\in\mathbf{I}\backslash\{1\}\}.
\end{eqnarray*}
Moreover, in any case of $L=W$ or $S$,  $\mathcal{M}_{0}(V)$ has a standard co-basis in $W_{0}$:
\begin{equation}\label{liuliumelikyan311}\mathcal {A}_{3}=\{x_{i}\partial_{j}\mid i\in
\mathbf{I}(k,l),j\in \overline{\mathbf{I}}(k,l)\}.
\end{equation}

\begin{lemma}\label{yinlimax-noncontain} Suppose $U$, $V\in\frak{V}^L$, $L=W$ or $S$.
\begin{itemize}
\item[$\mathrm{(1)}$] $\mathcal{M}_{0}(V)$ is a maximal subalgebra of $L_{0}$.
\item[$\mathrm{(2)}$] $\mathcal{M}_{0}(U)=\mathcal{M}_{0}(V)$ if and only if $U=V.$
\end{itemize}
\end{lemma}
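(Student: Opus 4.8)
The plan is to identify, via Lemma~\ref{dibu}(3), the null $L_{0}$ with $\mathfrak{gl}(m,n)$ (when $L=W$) or $\mathfrak{sl}(m,n)$ (when $L=S$), acting on $L_{-1}$ as the natural module, so that $\mathcal{M}_{0}(V)=\{u\in L_{0}\mid [V,u]\subseteq V\}$ becomes the stabilizer $\mathfrak{p}$ of the subspace $V$ inside $L_{0}$; that $\mathfrak{p}$ is a subalgebra is already contained in Lemma~\ref{mgsl12731}(1). By Remark~\ref{mgsstandard1} (legitimate because a $\mathbb{Z}$-homogeneous automorphism of $L$ restricts to an automorphism of $L_{0}$ and carries $\mathcal{M}_{0}(V)$ to $\mathcal{M}_{0}(\Phi(V))$ by Lemma~\ref{mgsl12731}(2)) I would assume $V$ standard, $V=\mathrm{span}_{\mathbb{F}}\{\partial_{i}\mid i\in\mathbf{I}(k,l)\}$, with a complement $V'=\mathrm{span}_{\mathbb{F}}\{\partial_{i}\mid i\in\overline{\mathbf{I}}(k,l)\}$. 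Then $\mathfrak{p}=\mathcal{M}_{0}(V)$ is exactly the ``block upper triangular'' maximal parabolic with basis $\mathcal{A}_{1}\cup\mathcal{A}_{2}$ (resp.\ $\mathcal{C}_{1}\cup\mathcal{C}_{2}\cup\mathcal{C}_{3}$) written above, its co-basis $\mathcal{A}_{3}$ spans a complement $\mathfrak{n}^{-}\cong\mathrm{Hom}(V,V')$ of $\mathfrak{p}$ in $L_{0}$, and the Levi factor $\mathfrak{l}\subseteq\mathfrak{p}$ (the diagonal-block part: $\mathfrak{gl}(V)\oplus\mathfrak{gl}(V')$ for $W$, the corresponding $\mathfrak{sl}$-Levi for $S$) satisfies $[\mathfrak{l},\mathfrak{n}^{-}]\subseteq\mathfrak{n}^{-}$, realizing $\mathfrak{n}^{-}$ as the $\mathfrak{l}$-module $V^{*}\otimes V'$.

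For part (1), note $\mathcal{M}_{0}(V)\ne L_{0}$ since $\mathfrak{n}^{-}\ne 0$. Given a subalgebra $N$ with $\mathcal{M}_{0}(V)\subsetneq N\subseteq L_{0}$, I would pick a $\mathbb{Z}_{2}$-homogeneous $u\in N\setminus\mathcal{M}_{0}(V)$ and decompose $u=p+a$ with $p\in\mathcal{M}_{0}(V)$ and $0\ne a\in\mathfrak{n}^{-}$ homogeneous, so that $a\in N$; then the $\mathfrak{l}$-submodule of $\mathfrak{n}^{-}$ generated by $a$ lies in $N$. Since $\mathfrak{n}^{-}\cong V^{*}\otimes V'$ is an external tensor product of the (absolutely) irreducible natural and dual-natural modules over the two factors of $\mathfrak{l}$, it is irreducible, whence that submodule is all of $\mathfrak{n}^{-}$; therefore $N\supseteq\mathcal{M}_{0}(V)+\mathfrak{n}^{-}=L_{0}$, so $N=L_{0}$ and $\mathcal{M}_{0}(V)$ is maximal. (If one prefers to avoid module theory, one can first split $a$ into a single root vector $x_{i_{0}}\partial_{j_{0}}$ using the diagonal torus as in Remark~\ref{mgsr2}(1), and then bracket it with suitable elements of $\mathcal{A}_{1},\mathcal{A}_{2}$, resp.\ the $\mathcal{C}_{i}$, to recover every element of $\mathcal{A}_{3}$.)

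For part (2) the implication ``$\Leftarrow$'' is trivial, and for ``$\Rightarrow$'' I would recover $V$ intrinsically from the subalgebra $\mathcal{M}_{0}(V)$ by showing $V=\mathrm{soc}_{\mathcal{M}_{0}(V)}L_{-1}$, the socle of $L_{-1}$ as an $\mathcal{M}_{0}(V)$-module. On one side, $\mathcal{M}_{0}(V)$ acts on $V$ through all of $\mathfrak{gl}(V)$ --- in the $\mathfrak{sl}$-case because $\dim V'\ge 1$ allows the supertrace to be corrected --- so $V$ is an irreducible $\mathcal{M}_{0}(V)$-submodule. On the other side, $\mathcal{M}_{0}(V)$ contains every ``strictly lower'' operator in $\mathrm{Hom}(V',V)$, so any $\mathcal{M}_{0}(V)$-submodule $W$ not contained in $V$ already contains $V$ (pick $w=u+u'\in W$ with $0\ne u'\in V'$; applying the operators of $\mathrm{Hom}(V',V)$ to $w$ yields all of $V$). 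Hence $V$ is the unique minimal nonzero $\mathcal{M}_{0}(V)$-submodule and therefore equals the socle, which depends only on the subalgebra $\mathcal{M}_{0}(V)$. Consequently $\mathcal{M}_{0}(U)=\mathcal{M}_{0}(V)$ forces $U=\mathrm{soc}_{\mathcal{M}_{0}(U)}L_{-1}=\mathrm{soc}_{\mathcal{M}_{0}(V)}L_{-1}=V$; this half needs no standardness hypothesis.

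The one genuinely delicate point is the irreducibility of $\mathfrak{n}^{-}\cong V^{*}\otimes V'$ over $\mathfrak{l}$ in prime characteristic and in the super setting: here one must use that $\mathbb{F}$ is algebraically closed (so external tensor products of finite-dimensional irreducibles remain irreducible) and check that the boundary cases --- $m$ or $n$ equal to $0$, a one-dimensional tensor factor, or $p\mid(m-n)$ in the $\mathfrak{sl}$-case --- do not spoil irreducibility of the natural modules; in each such case either the relevant natural module stays irreducible or the corresponding factor is one-dimensional and the claim is vacuous. Everything else --- the parabolic block description and the socle computation --- is routine once the bases $\mathcal{A}_{i}$, $\mathcal{C}_{i}$ displayed above are in hand.
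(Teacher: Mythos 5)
Your proposal is correct, and it reaches the conclusion by a more module-theoretic route than the paper. For (1) the paper argues directly with the bases: it takes a nonzero element $B=\sum_{h,t}\alpha_{ht}x_{i_h}\partial_{j_t}$ of the complement spanned by $\mathcal{A}_3$ and recovers every $x_i\partial_j\in\mathcal{A}_3$ by explicit double brackets with elements of $\mathcal{A}_1\cup\mathcal{A}_2$ (resp.\ $\mathcal{C}_1\cup\mathcal{C}_2$, with the degenerate cases $|\mathbf{I}(k,l)|=1$ or $|\overline{\mathbf{I}}(k,l)|=1$ handled separately); your argument packages exactly this computation as irreducibility of $\mathfrak{n}^-\cong V^{*}\otimes V'$ over the Levi, which does hold here since both factors are absolutely irreducible (type M) and $\mathbb{F}$ is algebraically closed, and your parenthetical root-vector alternative is essentially the paper's proof. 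The subtleties you flag are genuinely harmless: for $L=S$ the Levi differs from $\mathfrak{gl}(V)\oplus\mathfrak{gl}(V')$ only by a one-dimensional piece that acts by a scalar on $V^{*}\otimes V'$, so the submodule lattice is unchanged, and natural modules of $\mathfrak{gl}(a,b)$ stay irreducible in every occurring case. For (2) the paper is terse, choosing compatible bases of $U$, $U\cap V$, $V$ and repeating the bracket argument, whereas you characterize $V$ intrinsically as the unique minimal nonzero $\mathcal{M}_0(V)$-submodule (socle) of $L_{-1}$, using that every map killing $V$ with image in $V$ lies in $\mathcal{M}_0(V)$ (such maps are square-zero, hence supertraceless, so this also works inside $S_0$); this is basis-free, so your claim that no standardness reduction is needed for this half is justified and is a small gain in cleanness over the paper's version, at the modest cost of invoking Schur/density for the tensor-product irreducibility where the paper only multiplies out brackets.
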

\begin{proof}
(1) Let $\mathfrak{G}_{0}$ be a subalgebra of $L_0$ which strictly contains $\mathcal{M}_{0}(V)$. It is clear that $\mathfrak{G}_{0}$   contains a nonzero element of form
$
B=\sum_{h,t\geq1}
\alpha_{ht}x_{i_{h}}\partial_{j_{t}},$
where $0\neq\alpha_{ht}\in\mathbb{F},i_{h}\in\mathbf{I}(k,l),
j_{t}\in\overline{\mathbf{I}}(k,l)$.

When $L=W$,  for any  $i\in \mathbf{I}(k,l)$ and $j\in \overline{\mathbf{I}}(k,l),$ one has
$x_{i}\partial_{i_{1}}\in\mathcal {A}_{1}$ and
$x_{j_{1}}\partial_{j}\in\mathcal {A}_{2}$. Then
$$
x_i\partial_j=\alpha_{11}^{-1}[x_{i}\partial_{i_{1}},[B,x_{j_{1}}\partial_{j}]]\in \mathfrak{G}_{0},
$$
 showing that the co-basis $\mathcal {A}_{3}\subset\mathfrak{G}_{0}$. Hence $\mathfrak{G}_{0}=W_0$.

When $L=S$, suppose $|\mathbf{I}(k,l)|>1$ and $|\overline{\mathbf{I}}(k,l)|>1$.
Choosing
$x_{j_{1}}\partial_{j}$ in $\mathcal {C}_{2}$  with  $j\in
\overline{\mathbf{I}}(k,l)\backslash \{j_{1}\}$ and  $x_{i}\partial_{i_{1}}$ in $\mathcal {C}_{1}$
with $i\in \mathbf{I}(k,l)\backslash \{i_{1}\}$, we have
$$
x_i\partial_j=[x_{i}\partial_{i_{1}},[B,x_{j_{1}}\partial_{j}]]\in \mathfrak{G}_{0},
$$
 showing that the co-basis $\mathcal {A}_{3}\subset\mathfrak{G}_{0}$ and then $\mathfrak{G}_{0}=S_0$.
For the remaining case $|\mathbf{I}(k,l)|=1$ or  $|\overline{\mathbf{I}}(k,l)|=1$, the argument is similar and much easier.


 (2) One direction is obvious. Note that  one may choose bases  of $U$ and $V$ as follows:
 $$
 \overbrace{E_1,\ldots, E_r}^{\textrm{cobasis in}\; U},
  \overbrace{F_1,\ldots, F_s}^{\textrm{basis of}\; U\cap V},
  \overbrace{G_1,\ldots, G_t}^{\textrm{cobasis in}\;V}
 $$
 where $(E_1,\ldots, E_r,  F_1,\ldots, F_s,  G_1,\ldots, G_t)$ is a permutation of   $\partial_i$'s.
 Keeping in mind the standard co-basis  (\ref{liuliumelikyan311}), we are done by a similar argument as in (1).
\end{proof}

\begin{proposition}\label{prowmax} $\mathcal{M}(V)$ is maximal in $L$ for any $V\in\frak{V}^L$, $L=W$ or $S$.
\end{proposition}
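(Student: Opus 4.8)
The plan is as follows. Since $\mathcal{M}(V)$ is a proper $\mathbb{Z}$-graded subalgebra of $L$ (Lemma \ref{mgsl12731}(1), together with $V\subsetneq L_{-1}$), it suffices to prove that every $\mathbb{Z}$-graded subalgebra $M$ with $\mathcal{M}(V)\subseteq M\subseteq L$ equals $\mathcal{M}(V)$ or $L$. By Remark \ref{mgsstandard1} we may take $V$ standard with $\mathrm{superdim}\,V=(k,l)$; then $\mathbf{I}(k,l)$ and $\overline{\mathbf{I}}(k,l)$ are nonempty, the elements $x_i\partial_j$ with $i,j\in\overline{\mathbf{I}}(k,l)$ lie in $\mathcal{M}_0(V)$, and $\mathcal{M}_0(V)$ has the co-basis $\mathcal{A}_3$ in $L_0$ by (\ref{liuliumelikyan311}). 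The first, routine, observation is that if $M_{-1}=V$ and $M_0=\mathcal{M}_0(V)$, then an induction on $i\geq 1$ using $[V,M_i]\subseteq M_{i-1}$ and the defining formula (\ref{huam}) yields $M_i\subseteq\mathcal{M}_i(V)$ for all $i$, so $M=\mathcal{M}(V)$. Hence it is enough to show that $M\supsetneq\mathcal{M}(V)$ forces $M=L$, and for this we may assume $M_0\supsetneq\mathcal{M}_0(V)$ or $M_{-1}\supsetneq V$.

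The central step upgrades this to $M_0\supsetneq\mathcal{M}_0(V)$ in both cases. The point is that $L_{-1}/V$ is an irreducible $\mathcal{M}_0(V)$-module: the $x_i\partial_j$ with $i,j\in\overline{\mathbf{I}}(k,l)$ act on $L_{-1}/V\cong\mathrm{span}_{\mathbb{F}}\{\partial_i\mid i\in\overline{\mathbf{I}}(k,l)\}$ as all of $\mathfrak{gl}$ (for $L=W$) or $\mathfrak{sl}$ (for $L=S$) on its standard module, which is irreducible; hence $V$ is a \emph{maximal} $\mathcal{M}_0(V)$-submodule of $L_{-1}$. Now if $M_{-1}\supsetneq V$ while $M_0=\mathcal{M}_0(V)$, then $M_{-1}$ is an $\mathcal{M}_0(V)$-submodule strictly containing $V$, so $M_{-1}=L_{-1}$; but then $[L_{-1},\mathcal{M}_1(V)]\subseteq[M_{-1},M_1]\subseteq M_0=\mathcal{M}_0(V)$, which is false. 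Indeed, for $i_0\in\mathbf{I}(k,l)$, $j_0\in\overline{\mathbf{I}}(k,l)$ the element $u=x_{i_0}x_{j_0}\partial_{j_0}$ (when $L=W$), and a divergence-free variant of it — e.g.\ $D_{j_0 j_1}(x_{i_0}x_{j_0}x_{j_1})$ with $j_1\in\overline{\mathbf{I}}(k,l)\backslash\{j_0\}$ (when $L=S$) — lies in $\mathcal{M}_1(V)$, while $[\partial_{j_0},u]$ is a nonzero multiple of $x_{i_0}\partial_{j_0}\in\mathcal{A}_3$, hence not in $\mathcal{M}_0(V)$. (The finitely many corner values of $(k,l)$, where $|\overline{\mathbf{I}}(k,l)|=1$ or where $\mathbf{I}(k,l)$ or $\overline{\mathbf{I}}(k,l)$ is contained in $\mathbf{I}_1$, are handled the same way with indices chosen accordingly.) So $M\supsetneq\mathcal{M}(V)$ forces $M_0\supsetneq\mathcal{M}_0(V)$.

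Now conclude. By Lemma \ref{yinlimax-noncontain}(1) the maximality of $\mathcal{M}_0(V)$ in $L_0$ gives $M_0=L_0$; since $L_{-1}$ is an irreducible $L_0$-module and $0\neq V\subseteq M_{-1}$, we get $M_{-1}=L_{-1}$. Thus $M\supseteq L_{-1}+L_0$, and $M_1\supseteq\mathcal{M}_1(V)$ is a nonzero $L_0$-submodule of $L_1$. For $L=W$ the element $u$ above has nonzero divergence and is not of the form $f\mathfrak{D}$, so $\mathcal{M}_1(V)\not\subseteq W_1'$ and $\mathcal{M}_1(V)\not\subseteq W_1''$; Lemma \ref{B}(5) then forces $M_1\supseteq W_1'$, and $M_1=W_1$ follows (otherwise Lemma \ref{llmbuch1616lem} would give $M\subseteq M'$, contradicting $\mathcal{M}_1(V)\not\subseteq W_1'$). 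For $L=S$, Corollary \ref{1.10} together with $\mathcal{M}_1(V)\not\subseteq S_1''$ gives $M_1=S_1$. In either case $M\supseteq L_{-1}+L_0+L_1$, so $M=L$ by Lemma \ref{dibu}(2), as required.

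I expect the main obstacle to be the explicit computation in the middle step: one has to write down, uniformly for every standard $V$, an element of $\mathcal{M}_1(V)$ whose bracket with a suitable $\partial_{j_0}$, $j_0\in\overline{\mathbf{I}}(k,l)$, lands in the co-basis $\mathcal{A}_3$ — keeping track of the parities of $i_0$ and $j_0$, the divergence-freeness constraint in the case $L=S$, and the low-dimensional corner cases for $(k,l)$. The same elements are what make the final step go through. Everything else is a formal consequence of Lemmas \ref{yinlimax-noncontain}, \ref{B}, \ref{llmbuch1616lem}, Corollary \ref{1.10}, and the transitivity and local-generation statements in Lemma \ref{dibu}.
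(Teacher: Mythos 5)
Your argument is correct, and its engine is the same as the paper's: an explicit element of $\mathcal{M}_1(V)$ (your $x_{i_0}x_{j_0}\partial_{j_0}$ and its divergence-free variant are, up to signs, the paper's element $A=(-1)^{|x_h|}x_ix_j\partial_j-(-1)^{|x_j|}x_ix_h\partial_h$) whose bracket with $\partial_{j_0}$ lands in the co-basis $\mathcal{A}_3$ of (\ref{liuliumelikyan311}), combined with the maximality of $\mathcal{M}_0(V)$ in $L_0$ (Lemma \ref{yinlimax-noncontain}(1)) and the submodule structure of $L_1$. You deviate from the paper in two local ways, both legitimate. First, in the branch $M_0=\mathcal{M}_0(V)$, $M_{-1}\supsetneq V$, the paper gets $M_{-1}=L_{-1}$ by invoking Lemma \ref{mgsl12731}(3) (hence it works with a maximal $M$, of type (II)) together with Lemma \ref{yinlimax-noncontain}(2), whereas you prove directly that $L_{-1}/V$ is an irreducible $\mathcal{M}_0(V)$-module; this lets you handle an arbitrary graded subalgebra containing $\mathcal{M}(V)$ at the cost of an easy extra check — note that for $L=S$ only the off-diagonal $x_i\partial_j$ with $i,j\in\overline{\mathbf{I}}(k,l)$, $i\neq j$, are available in $\mathcal{M}_0(V)$, which still suffices for irreducibility of the quotient. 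Second, in the branch $M_0=L_0$, the paper quotes Theorem \ref{thm01} and contradicts the resulting list of possibilities for $M_1$, while you rerun its ingredients (Lemma \ref{B}(5), Lemma \ref{llmbuch1616lem}, Corollary \ref{1.10}) and finish with Lemma \ref{dibu}(2); same content, different packaging. The only soft spot is the deferred corner case $|\overline{\mathbf{I}}(k,l)|=1$ for $L=S$, where your $j_1$ does not exist; it is handled exactly as you claim, and in fact the paper's $A$, in which the third index $h$ is only required to satisfy $h\neq i,j$ (so it may lie in $\mathbf{I}(k,l)$), covers all cases uniformly, so you could use that single element from the start and drop the case distinction.
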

\begin{proof}
 Let $M$ be an MGS containing $\mathcal{M}(V)$. Then $\mathcal{M}_{i}(V)\subset M_i$ for all $i\geq -1.$
In particular, because of the maximality of $\mathcal{M}_{0}(V),$ it must be $M_0=\mathcal{M}_{0}(V)$ or $M_0=L_0.$\\

\noindent\textbf{Case 1.} Suppose $M_0=\mathcal{M}_{0}(V)$. By induction, it is routine to verify that  $M_i=\mathcal{M}_i(V)$ for all
$i\geq 0.$ Assume on the contrary that $M$ strictly contains  $\mathcal{M}(V)$. Then $M_{-1}\supsetneq \mathcal{M}_{-1}(V)=V.$
 Note that $\mathcal{M}_{0}(V)= M_{0}=\mathcal{M}_{0}(M_{-1})$ from Lemma \ref{mgsl12731}(3).  Thus,
Lemma \ref{yinlimax-noncontain}(2) forces $M_{-1}=L_{-1} $. Pick any $i\in \mathbf{I}(k,l)$, $j\in \overline{\mathbf{I}}(k,l)$ and any $h\neq i,j.$ We are able to check that
 $$A=(-1)^{|x_h|}x_ix_j\partial_j-(-1)^{|x_j|}x_ix_h\partial_h\in S_1\subset W_1.$$ Moreover,
$A\in \mathcal{M}_1(V)=M_1.$ Since $M_{-1}=W_{-1} =S_{-1}$, we have
$$x_i\partial_j=(-1)^{(|x_h|+|x_i||x_j|)}[\partial_j, A]\in M_0=\mathcal{M}_0(V).$$  This contradicts
the fact  that $x_i\partial_j\in\mathcal{A}_3$  [see (\ref{liuliumelikyan311})]. Therefore, $M=\mathcal{M}(V).$\\

\noindent\textbf{Case 2.} Suppose $M_0=L_0$. In this case, since $L_{-1}$ is irreducible as $L_0$-module, we have $M_{-1}=L_{-1}.$ Hence $M$ is an MGS of type (I). By Theorem \ref{thm01}(1) and (2),
$M_1=W_1'$, $W_1'',$ $S_1''$, or $\{0\}.$ In Case 1, we have shown that $A\in \mathcal{M}_1(V)$. However, it is
clear that $A$ does not belong to $W_1'$, $W_1'',$ $S^{''}$, or $\{0\}.$  Hence $\mathcal{M}_1(V)\not\subset M_1.$ This contradicts
the assumption that $M$ is a graded subalgebra containing $\mathcal{M}(V).$
\end{proof}

\noindent
\textbf{Proof of Theorem \ref{theorem02llm}}
(1), (2) and (3) are  immediate consequences  of  Lemmas \ref{mgsl3},  \ref{mgsl12731}(3) and  Proposition \ref{prowmax}.
It remains to show the dimension formulas.
 For $W$,  $\mathcal{M}(V)$ has a standard $\mathbb{F}$-basis
which is a disjoint union:
\begin{align*}
&\{x^{(\alpha)}x^{u} \partial_{i}\mid   \alpha\in
\mathbf{A}(m), u\in \mathbf{B}(n);\;
i\in\mathbf{I}(k,l)\}\\
\cup
&\{x^{(\alpha)}x^u
\partial_{i}\mid \;
i\in
\overline{\mathbf{I}}(k,l) \;\mbox{and}\; \exists j\in \overline{\mathbf{I}}(k,l)\;\mbox{such that}\; \partial_{j}(x^{(\alpha)}x^u)\neq 0\}.
\end{align*}
A standard and direct computation shows that:
$$\mathrm{dim}\mathcal{M}(V)=2^{n}p^{m}(m+n)-2^{l}p^{k}(m+n-k-l).$$
Similarly, for $S$, we have:
$$\mathrm{dim}\mathcal{M}(V)=2^{n}p^{m}(m+n-1)+1-2^{l}p^{k}(m+n-k-l).$$
   \qed

Next, we consider the case  $L=H$ or $K$. In this case, we shall frequently use the standard facts mentioned in Remark \ref{mgsr2} without notice. Suppose $V\in\frak{V}^L$ with $\beta$-$\dim V=(a,b,c,d)$.
In order to prove Theorem \ref{mgs1276t1} we list the following assertions. For simplicity, we write $\lambda_{i, j}$ for a nonzero element in $\mathbb{F}$, where $i, j\in\mathbf{I}$. Recall definitions (\ref{12731eq1})-(\ref{mgseq12734}).
Put  \begin{align}
&\mathcal{V}_{\frak{i}}^L=\left\{V\in\frak{V}^L\mid V \mbox{  is  isotropic  and } J_3 \mbox{ is not twinned}\right\};\nonumber\\
& \mathcal{V}_{\frak{n}}^L=\left\{V\in\frak{V}^L\mid V \mbox{ is nondegenerate and } J_i \mbox{ is not twinned, } i=1, 3\right\};\nonumber\\
&\mathcal{V}_{\frak{d}}^L=\left\{V\in\frak{V}^L\mid V \mbox{ is degenerate}, J_3 \mbox{ is empty and } J_1 \mbox{ is not twinned} \right\}.\label{mgs130407notion}
\end{align}
\begin{lemma}\label{mgs1273l2} For $H$,  put
$$A_i=\mathrm{span}_{\mathbb{F}}\{u\in H_i\mid \nu(u)=0, 1 \mbox{ or } (\frac13)^{k}2^l, \ k, l\in\mathbb{N}, \ l>1\}.$$ Then
\begin{itemize}
\item[$(1)$]   $A_i=\mathcal{M}_i(V)$, $i\geq -1$.
\item[$(2)$] The subalgebra $A_0=\mathcal{M}_0(V)$ is maximal in $H_0$  if and only if
$$V\in\mathcal{V}_{\frak{i}}^H\cup\mathcal{V}_{\frak{n}}^H\cup\mathcal{V}_{\frak{d}}^H.$$
\end{itemize}
\end{lemma}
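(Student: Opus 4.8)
The plan is to prove $(1)$ by induction on $i$ and then to deduce $(2)$ by recognizing $\mathcal{M}_0(V)$ as the stabilizer of $V$ in $H_0\cong\mathfrak{osp}(2r,n)$ and determining when such a stabilizer is a maximal subalgebra. Throughout I work in the realization $\overline{H}\cong\bar{\mathcal{O}}$, in which $[y_j,\mu]=\pm\,\sigma(j)D_{\widetilde{j}}(\mu)$; for a monomial $\mu$ this is, up to sign, again a monomial (or zero), and when it is nonzero its value under $\nu$ is $\nu(\mu)/\nu(y_{\widetilde{j}})$, since $\nu$ is multiplicative and $D_{\widetilde{j}}$ deletes one factor $y_{\widetilde{j}}$. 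The two structural facts that drive everything are that $j\in J_1$ forces $\widetilde{j}\in J_1$ (so $\nu(y_{\widetilde{j}})=1$) and $j\in J_2$ forces $\widetilde{j}\in\bar{J}_2$ (so $\nu(y_{\widetilde{j}})=\tfrac13$); in particular a bracket with an element of $V$ never creates or destroys a $J_2$- or a $J_3$-factor. The base case $i=-1$ is immediate: $\nu(y_i)\in\{0,1\}$ exactly for $i\in J_2\cup J_1$, so $A_{-1}=V=\mathcal{M}_{-1}(V)$.

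For the inductive step, assume $\mathcal{M}_{i-1}(V)=A_{i-1}$, which is a span of basis monomials. Since for fixed $j$ the map $\mu\mapsto[y_j,\mu]$ is injective on the monomials it does not annihilate, a linear combination $u=\sum c_\mu\mu$ lies in $\mathcal{M}_i(V)$ iff each of its monomials does; hence $\mathcal{M}_i(V)$ is again a span of monomials, and it suffices to decide, for a monomial $\mu$ of $\mathbb{Z}$-degree $i+2$, whether $\mu\in\mathcal{M}_i(V)$. Unwinding the definition, this holds iff $\nu(\mu)\in S$ whenever $\mu$ has a $J_1$-factor and $3\nu(\mu)\in S$ whenever $\mu$ has a $\bar{J}_2$-factor, where $S=\{0,1\}\cup\{(\tfrac13)^k 2^l\mid l\ge 2\}$. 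I would then verify that this is equivalent to $\nu(\mu)\in S$: the implication from $\nu(\mu)\in S$ is a direct check, and for the converse one supposes $\nu(\mu)=(\tfrac13)^k2^l\notin S$ (so $l\le 1$ and $(k,l)\ne(0,0)$) and runs through the few resulting values $\tfrac13$, $2$, $(\tfrac13)^k$ with $k\ge 2$, $(\tfrac13)^k2$, exhibiting in each a single bracket $[y_j,\mu]$ whose $\nu$-value is still not in $S$; here the hypothesis $\mathrm{zd}(\mu)=i+2\ge 2$ is used to ensure that a monomial carrying only a $\bar{J}_2$- or a single $J_3$-factor must carry an additional $J_1$-factor to bracket against. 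This yields $\mathcal{M}_i(V)=A_i$ and proves $(1)$.

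For $(2)$, specializing $(1)$ to $i=0$ shows that $\mathcal{M}_0(V)=A_0$ is spanned precisely by the degree-two monomials $y_iy_j$ (and $y_i^{(2)}$) with both indices in $J_1$, with an index in $J_2$, or with both indices in $J_3$; since $\mathcal{M}_0(V)=\{u\in H_0\mid[u,V]\subseteq V\}$, this is exactly $\mathrm{stab}_{H_0}(V)$. Put $R=V\cap V^\perp$, the radical of $\beta|_V$. Then $\mathrm{stab}_{H_0}(V)$ preserves the flag $R\subseteq V\subseteq V^\perp=R^\perp$, hence lies in $\mathrm{stab}_{H_0}(R)$; using that the image of $V/R$ is a proper nonzero nondegenerate subspace of the induced-nondegenerate space $R^\perp/R$, one sees this inclusion is proper, while $\mathrm{stab}_{H_0}(R)\ne H_0$ by irreducibility of $L_{-1}$, unless $V$ is isotropic ($R=V$), nondegenerate ($R=0$ and $V+V^\perp=L_{-1}$), or co-isotropic ($V^\perp\subseteq V$, i.e. $J_3=\varnothing$); the co-isotropic case reduces, via $\mathrm{stab}_{H_0}(V)=\mathrm{stab}_{H_0}(V^\perp)$, to the isotropic case applied to $V^\perp$ (whose analogue of $J_3$ is the $J_1$ of $V$). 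In each of these cases $\mathrm{stab}_{H_0}(V)$ is, modulo its nilradical, a sum of a $\mathfrak{gl}$-part with an orthosymplectic part $\mathfrak{osp}(\bar W)$ on the residual nondegenerate space $\bar W$ indexed by $J_3$ (respectively $J_1$); as in the modular Lie-algebra situation (cf. \cite{HM,KS}) such a subalgebra is maximal in $\mathfrak{osp}(2r,n)$ precisely when $\bar W$ is not a two-dimensional odd hyperbolic plane---for then $\mathfrak{osp}(\bar W)$ is a one-dimensional torus with an invariant line, so the isotropic part of the flag can be enlarged to a Lagrangian and $\mathrm{stab}_{H_0}(V)$ sits properly in a larger parabolic---and this is exactly the requirement that the relevant $J_i$ be not twinned. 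Matching this against $\mathcal{V}_{\mathfrak{i}}^H$, $\mathcal{V}_{\mathfrak{n}}^H$, $\mathcal{V}_{\mathfrak{d}}^H$, and conversely (when $V$ lies outside these families) exhibiting an intermediate subalgebra---$\mathrm{stab}_{H_0}(R)$, or the stabilizer of the enlarged isotropic subspace---completes $(2)$.

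The part of $(1)$ that is lengthy but routine is the finite $\nu$-case analysis; the real obstacle is $(2)$, namely organizing the subalgebra bookkeeping inside $\mathfrak{osp}(2r,n)$ so as to pin down exactly which residual orthosymplectic pieces obstruct maximality. The delicate points are that ``twinned'' (a hyperbolic odd plane) is the \emph{only} obstruction---``single'' is harmless since a one-dimensional odd space has no nontrivial isometries---and that the $\mathbb{Z}_2$-grading forces the even part of $\bar W$ to be even-dimensional, which eliminates the remaining candidate exceptions; the maximality assertions themselves may be taken from the classification of maximal reducible subalgebras of orthosymplectic Lie superalgebras or reproved here by the weight-space method of Remark \ref{mgsr2}.
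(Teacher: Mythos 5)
Your part (1) is fine and is the same induction on $i$ that the paper intends (it gives no details); your reduction to monomials and the finite $\nu$-case check, with the observation that degree $\geq 2$ rescues the values $\tfrac13$ and $2$, is exactly what is needed. Your ``only if'' half of (2) is also essentially correct and is in fact more conceptual than the paper's: where the paper exhibits, in its conditions (i)--(iii), explicit elements $h$ whose generated subalgebra $\mathrm{alg}(\mathcal{M}_0(V)+\mathbb{F}h)$ stays proper, you pass to the intermediate stabilizer of the radical $R=V\cap V^{\bot}$, respectively of the isotropic subspace enlarged by the invariant isotropic line of the odd hyperbolic plane. (Two small repairs: the flag is $R\subseteq V\subseteq R^{\bot}$, not $V\subseteq V^{\bot}=R^{\bot}$, which holds only for isotropic $V$; and the properness of the inclusions $\mathrm{stab}_{H_0}(V)\subsetneq\mathrm{stab}_{H_0}(R)$ and $\mathrm{stab}_{H_0}(V)\subsetneq\mathrm{stab}_{H_0}(V')$ needs a short lifting argument, not just the assertion.)

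The genuine gap is the ``if'' half of (2). Having identified $\mathcal{M}_0(V)$ with $\mathrm{stab}_{H_0}(V)$ in $H_0\cong\mathfrak{osp}(2r,n)$, you reduce maximality for $V\in\mathcal{V}_{\mathfrak{i}}^H\cup\mathcal{V}_{\mathfrak{n}}^H\cup\mathcal{V}_{\mathfrak{d}}^H$ to the claim that the stabilizer of an isotropic or nondegenerate subspace is maximal unless a residual orthogonal piece is a two-dimensional odd hyperbolic plane, and then propose to ``take this from the classification of maximal reducible subalgebras of orthosymplectic Lie superalgebras'' or to reprove it by the weight-space method of Remark \ref{mgsr2}. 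No such classification exists in the sources you lean on: \cite{HM} and \cite{KS} concern modular Lie algebras, not Lie superalgebras, and this maximality statement for $\mathfrak{osp}(2r,n)$ in characteristic $p$ is precisely the content the lemma is supposed to establish --- invoking it is circular. The paper proves it by the explicit generation computations of its Cases 1--3: for every $h\in H_0\setminus\mathcal{M}_0(V)$ one shows $\mathrm{alg}(\mathcal{M}_0(V)+\mathbb{F}h)=H_0$, using the torus of Remark \ref{mgsr2} to reduce $h$ to a monomial of a prescribed $\nu$-value and then producing the co-basis monomials by iterated brackets, with separate care for the degenerate subcases ($I_{03}$ empty, $J_3$ or $J_1$ single). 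This is the bulk of the proof, and your proposal contains no actual argument for it; until you either carry out that weight-space computation or supply a genuine reference covering stabilizers in modular $\mathfrak{osp}(m,n)$, part (2) is not proved.
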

\begin{proof}
(1) It follows by using induction on $i$, $i\geq-1$.

(2) Obviously, the torus $T$ mentioned in Remark \ref{mgsr2}(2) is contained in $\mathcal{M}_0(V)$.
For any $h\in H_0$ and $h\not\in \mathcal{M}_0(V)$, put $\overline{M}=\mathrm{alg}(\mathcal{M}_0(V)+\mathbb{F}h)$.
Firstly, we show the maximality of $\mathcal{M}_0(V).$ It suffices to prove $H_0=\overline{M}$.\\

\noindent\textbf{Case 1.} $V\in\mathcal{V}_{\frak{i}}^H$. Notice that
\begin{equation*}
\nu(y_i)=\left\{\begin{array}{llll}
  0 & i\in J_2; \\
  \frac{1}3  & i\in \bar{J}_2; \\
  2 & i\in J_3
\end{array}\right.\ \
\mbox{ and }\ \
\mathcal{M}_0(V)=\mathrm{span}_{\mathbb{F}}\{y_iy_j\mid (i, j)\in J_2\times\mathbf{ I}\cup J_3\times J_3\}.
\end{equation*}
We may assume that $h$ is a monomial  with $\nu(h)=\frac{1}9$ or $ \frac{2}3$.
When $h=y_iy_j$, $(i,j)\in \bar{J}_2\times \bar{J}_2$, we have:
\begin{align*}
&y_ky_l=\lambda_{k,l}[[y_iy_j, y_{\widetilde{j}}y_k], y_{\widetilde{i}}y_l]\in\overline{M} \ \mbox { for all } \ (k, l)\in \bar{J}_2\times \bar{J}_2,\\
&y_ky_s=\lambda_{k,s}[y_ky_l, y_{\widetilde{l}}y_s]\in\overline{M}\ \mbox { for all } \ s\in {J}_3.
\end{align*}
 Thus,  $H_0=\overline{M}$.
When $h=y_iy_j$, $(i,j)\in \bar{J}_2\times {J}_3$, if $j\in I_{03}$ or $I_{03}$ is not empty, we get $H_0=\overline{M}$ in an analogous way as above.
Otherwise, we may assume that $I_{03}$ is empty.  If $J_3$ is single, we have:
\begin{align*}
&y_ky_{m+n-d}=\lambda_{k,m+n-d}[y_iy_{m+n-d}, y_{\widetilde{i}}y_k]\in\overline{M} \ \mbox { for all } \ k\in \bar{J}_2,\\
&y_ky_{l}=\lambda_{k,l}[y_ky_{m+n-d}, y_{m+n-d}y_l]\in\overline{M}\ \mbox { for all } \ l\in \bar{J}_2.
\end{align*} It follows that $H_0=\overline{M}$.
If $J_3$ is neither single nor twinned, for $s\in J_{13}$, $s\not=\widetilde{j}$, we have:
\begin{align*}
&y_ky_s=\lambda_{k,s}[[y_iy_j, y_{\widetilde{i}}y_k], y_{\widetilde{j}}y_s]\in\overline{M}\ \mbox { for all } \ k\in \bar{J}_2,\\
&y_ky_{\widetilde{j}}=\lambda_{k,\widetilde{j}}[y_ky_s, y_{\widetilde{s}}y_{\widetilde{j}}]\in\overline{M} \ \ s\not=j \mbox{ and } \widetilde{j},\\
&y_ky_l=\lambda_{k,l}[y_jy_k, y_{\widetilde{j}}y_l]\in\overline{M}\ \mbox { for all } l\in \bar{J}_2.
\end{align*}
Thus,  $H_0=\overline{M}$.\\

 \noindent\textbf{Case 2.} $V\in\mathcal{V}_{\frak{n}}^H$. Notice that
\begin{equation*}
\nu(y_i)=\left\{\begin{array}{llll}
 1 & i\in J_1; \\
  2 & i\in J_3
\end{array}\right.\ \
\mbox{ and }\ \
\mathcal{M}_0(V)=\mathrm{span}_{\mathbb{F}}\{y_iy_j\mid (i, j)\in J_1\times J_1\cup J_3\times J_3\}.
\end{equation*}
We may assume that $h$
is a linear combination of monomials  with value 2.
When $h=y_iy_j$, $(i,j)\in (I_{01}\cup\bar{I}_{01})\times {J}_3$, using the same method as in  Case 1, we get $H_0=\overline{M}$.
When  $h=\sum_{i\in I_{11}}a_iy_ky_i$, where $k\in J_{3}$, $a_i\in \mathbb{F}$, $a_j\not=0$,
we get $H_0=\overline{M}$  if $I_{01}$ is not empty or  $J_{1}$ is single by a similar argument as in Case 1.
Thus, it suffices to  consider the condition  that  $I_{01}$ is  empty and  $J_{1}$ is neither single nor twinned.
For distinct $l, s, j\in I_{11}$,  we have
\begin{align*}
&y_ky_s=(a_j)^{-1}\lambda_{k,s}[y_ly_s, [y_jy_l, h]]\in \overline{M},\\
&y_ey_k=\lambda_{e,k}[y_ky_s, y_sy_e]\in\overline{M}\  \mbox{ for any }\ s\not=e\in I_{11}.
\end{align*}
For any $i\in I_{11}$, $f\in I_{03}$ and $t\in I_{13}$, we have
$$y_fy_i=\lambda_{f,i}[y_ky_i, y_{\widetilde{k}}y_f]\in\overline{M}\ \mbox{ and }\ y_ty_i=\lambda_{t,i}[y_fy_i, y_{\widetilde{f}}y_t]\in\overline{M}.$$
Thus,  $H_0=\overline{M}.$ \\

 \noindent\textbf{Case 3.} $V\in\mathcal{V}_{\frak{d}}^H$. Notice that
\begin{equation*}
\nu(y_i)=\left\{\begin{array}{llll}
 1 & i\in J_1; \\
 0 & i\in J_2; \\
  \frac{1}3  & i\in \bar{J}_2
\end{array}\right.\ \
\mbox{ and }\ \
\mathcal{M}_0(V)=\mathrm{span}_{\mathbb{F}}\{y_iy_j\mid (i, j)\in {J}_2\times \mathbf{I}\cup J_1\times {J}_1\}.
\end{equation*}
We have $H_0=\overline{M}$ by the same method  as in  Cases 1 and 2.

Conversely,  we consider the co-basis of $\mathcal{M}_0(V)$ in $H_0$:
 \begin{equation}\label{co-basis}\{y_iy_j\mid (i, j)\in {J}_1\times \bar{J}_2\cup J_1\times {J}_3\cup \bar{J}_2\times \bar{J}_2\cup \bar{J}_2\times {J}_3\}.\end{equation}
Notice that if $V\not\in\mathcal{V}_{\frak{i}}^H\cup\mathcal{V}_{\frak{n}}^H\cup\mathcal{V}_{\frak{d}}^H$ , then $V\in\frak{V}^H$ must satisfy one of the following conditions:\\

 (i) None of $J_1, J_2, J_3$ is empty. In this case, we choose a monomial $h$ of $H_0$ with $\nu(h)=2$. Then
 there do not exist monomials with value $\frac{1}3$ in
 $\overline{M}$.\\

 (ii) $J_3$ is twinned, i.e., $J_3=\{j, {\widetilde{j}}\}$, where $j\not=\widetilde{j}\in\mathbf{I}_1$. In this case, let $h=y_iy_j$, where
 \[
 i\in\left\{\begin{array}{ll}
              \bar{J}_2, & J_1\ \mbox{ is empty}; \\
              {J}_1, & \ \mbox{ otherwise}.
            \end{array}
            \right.
  \]
 Then
 $y_iy_{\widetilde{j}}\not\in\overline{M}$.\\

  (iii) $J_1$ is twinned, i.e., $J_1=\{m+n-1, m+n\}$. In this case, let $h=y_k(y_{m+n-1}+\sqrt{-1}y_{m+n})$, where
 \[
 k\in\left\{\begin{array}{ll}
              \bar{J}_2, & J_3\ \mbox{ is empty}; \\
              {J}_3, & \ \mbox{ otherwise}.
            \end{array}
            \right.
  \]
 Then
  $y_ky_{m+n}\not\in\overline{M}$.



 Therefore, $\overline{M}$ is a nontrivial  subalgebra of $H_0$ strictly containing $\mathcal{M}_0(V)$
 when (i), (ii) or (iii) holds, which implies that $\mathcal{M}_0(V)$
 is not a maximal subalgebra of $H_0$.
\end{proof}

\begin{proposition}\label{mgs1276p1}
The subalgebra $\mathcal{M}(V)$ is maximal in $H$ if and only if $V\in\mathcal{V}^H$.
\end{proposition}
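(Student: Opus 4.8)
The plan is to prove the equivalence in two directions, leaning heavily on the technology already assembled. Recall that $\mathcal{V}^H$ is the set of $V\in\frak{V}^H$ for which $J_3$ is neither single nor twinned, and that by Remark \ref{mgsstandard1} together with Lemmas \ref{mgsl3} and \ref{mgsl12731}(2) we may assume $V$ is standard throughout: the isomorphism $\Phi_H$ carries $\mathcal{M}(V)$ to $\mathcal{M}(\Phi_H(V))$, so maximality is a $\cong$-invariant. The key observation linking Proposition \ref{mgs1276p1} to the previous lemma is that, by Lemma \ref{mgs1273l2}(1), $\mathcal{M}_0(V)=A_0$, and one checks from the defining conditions that
\[
\mathcal{V}^H=\mathcal{V}_{\frak{i}}^H\cup\mathcal{V}_{\frak{n}}^H\cup\mathcal{V}_{\frak{d}}^H,
\]
i.e.\ ``$J_3$ is neither single nor twinned'' is exactly the union of the three cases (isotropic, nondegenerate, degenerate with $J_3$ empty) appearing in Lemma \ref{mgs1273l2}(2). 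Establishing this set-theoretic identity is the first step; it is a short case analysis on whether $V$ is isotropic, nondegenerate, or neither, using the definitions in \eqref{mgs130407notion}.

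For the ``only if'' direction, suppose $V\notin\mathcal{V}^H$, so $J_3$ is single or twinned. Then $V\notin\mathcal{V}_{\frak{i}}^H\cup\mathcal{V}_{\frak{n}}^H\cup\mathcal{V}_{\frak{d}}^H$, so by Lemma \ref{mgs1273l2}(2) the subalgebra $\mathcal{M}_0(V)$ is \emph{not} maximal in $H_0$; pick a subalgebra $\frak{G}_0$ with $\mathcal{M}_0(V)\subsetneq\frak{G}_0\subsetneq H_0$. Then $\mathcal{M}_{-1}(V)+\mathcal{M}_{-2}(V)+\frak{G}_0+\sum_{i\geq1}\mathcal{M}_i(V)$ — or more carefully, the subalgebra generated by $V$, $\mathcal{M}_{-2}(V)$ and $\frak{G}_0$ inside $H$ — sits strictly between $\mathcal{M}(V)$ and $H$ (it is proper since its $0$-component is $\frak{G}_0\ne H_0$, using transitivity Lemma \ref{dibu}(1) to see the negative part cannot grow past $L_{-1}$ without forcing the whole algebra). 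Hence $\mathcal{M}(V)$ is not maximal. I should double-check that this intermediate object is genuinely a graded subalgebra and that its $(-1)$-component really is $V$ and not all of $H_{-1}$ — this is where transitivity and the structure of $\frak{G}_0$-action on $H_{-1}$ enter.

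For the ``if'' direction, assume $V\in\mathcal{V}^H$ and let $M$ be an MGS of $H$ with $\mathcal{M}(V)\subseteq M$. Since $\mathcal{M}_0(V)$ is maximal in $H_0$ by Lemma \ref{mgs1273l2}(2), either $M_0=\mathcal{M}_0(V)$ or $M_0=H_0$. If $M_0=H_0$ then $M$ is an MGS of type (I); by Theorem \ref{thm01}(3), $M=H_{-1}+H_0$, but $\mathcal{M}_1(V)\ne 0$ in general (one exhibits a nonzero element of $A_1$, e.g.\ a monomial of $\nu$-value $0$), contradicting $\mathcal{M}(V)\subseteq M$ — so this case is excluded, at least once one verifies $\mathcal{M}_1(V)\neq 0$, which holds because $J_2$ is nonempty or, if $V$ is nondegenerate, $\mathcal M_1(V)$ still contains suitable products of $J_1$-variables. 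Thus $M_0=\mathcal{M}_0(V)$, and then an induction on $i\geq 0$ (identical in spirit to Case 1 of Proposition \ref{prowmax}) gives $M_i=\mathcal{M}_i(V)$ for all $i\geq0$. It remains to rule out $M_{-1}\supsetneq V$. By Lemma \ref{mgsl12731}(3), $M=\mathcal{M}^H(M_{-1})$ (note $H$ has no ``type (II) with $M_{-2}\ne0$'' exception — that phenomenon is special to $K$), so $\mathcal{M}_0(M_{-1})=M_0=\mathcal{M}_0(V)$; the analogue of Lemma \ref{yinlimax-noncontain}(2) for $H$ — injectivity of $V\mapsto\mathcal{M}_0(V)$ on the relevant subspaces — then forces $M_{-1}=V$, giving $M=\mathcal{M}(V)$. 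I expect the main obstacle to be exactly this last point: an $H$-analogue of Lemma \ref{yinlimax-noncontain}(2) is needed (to the effect that distinct $\beta$-non-degenerate-data subspaces give distinct stabilizers $\mathcal{M}_0$), and if it is not already available one must either prove it directly by a co-basis argument in $H_0$ as in that lemma, or argue instead that enlarging $V$ to any $V'$ with $\mathcal{M}_0(V')=\mathcal{M}_0(V)$ would force $V'$ non-standard-incompatible with the $\beta$-dimension, contradiction. The other delicate check is confirming $\mathcal M_1(V)\ne 0$ uniformly across the three cases comprising $\mathcal V^H$.
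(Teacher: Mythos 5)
Your proof hinges on the set-theoretic identity $\mathcal{V}^H=\mathcal{V}_{\frak{i}}^H\cup\mathcal{V}_{\frak{n}}^H\cup\mathcal{V}_{\frak{d}}^H$, i.e.\ on the claim that ``$\mathcal{M}(V)$ maximal in $H$'' can be detected by ``$\mathcal{M}_0(V)$ maximal in $H_0$'' via Lemma \ref{mgs1273l2}(2). That identity is false, in both directions. (a) Take $r\geq 3$ and $\beta$-$\dim V=(1,2,0,0)$: then $a\neq 0$ and $a<b$, so $V$ is neither isotropic nor nondegenerate, and $J_3\neq\emptyset$, so $V$ lies in none of $\mathcal{V}_{\frak{i}}^H,\mathcal{V}_{\frak{n}}^H,\mathcal{V}_{\frak{d}}^H$; indeed none of $J_1,J_2,J_3$ is empty, so by case (i) in the converse part of Lemma \ref{mgs1273l2}(2) the subalgebra $\mathcal{M}_0(V)$ is \emph{not} maximal in $H_0$. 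Yet $J_3$ is neither single nor twinned, so $V\in\mathcal{V}^H$ and the proposition asserts $\mathcal{M}(V)$ \emph{is} maximal. Hence in your ``if'' direction the dichotomy $M_0=\mathcal{M}_0(V)$ or $M_0=H_0$ is simply not available. (b) Conversely, take $V$ nondegenerate with $\beta$-$\dim V=(r,r,0,n-1)$: then $J_3=\{m+1\}$ is single, so $V\notin\mathcal{V}^H$, but $V\in\mathcal{V}_{\frak{n}}^H$ and $\mathcal{M}_0(V)$ \emph{is} maximal in $H_0$; your ``only if'' argument, which needs a subalgebra strictly between $\mathcal{M}_0(V)$ and $H_0$, cannot start. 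In this case the failure of maximality of $\mathcal{M}(V)$ is invisible in degree $0$: the paper shows that $\mathrm{alg}(\mathcal{M}(V)+\mathbb{F}y_{m+n-d})$ (and $\mathrm{alg}(\mathcal{M}(V)+\mathbb{F}y_{j})$ in the twinned case) is a proper graded subalgebra strictly containing $\mathcal{M}(V)$ --- the enlargement happens in degree $-1$, the degree-$0$ part being unchanged. The fact that the paper uses $\mathcal{V}^H$ for type (II) but $\mathcal{V}_{\frak{n}}^H\cup\mathcal{V}_{\frak{i}}^H$ (equivalently, maximality of $\mathcal{M}_0(V)$, Lemmas \ref{lem331ok} and \ref{mgs120906l2}) for the maximal R-subalgebras in Theorem \ref{mgs12710t1} is itself a signal that the two maximality conditions are genuinely different.

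For the record, the paper's argument avoids degree $0$ altogether. Necessity: when $J_3$ is single or twinned, Lemma \ref{mgs1273l2}(1) and the value function $\nu$ give an explicit description of $\mathcal{M}_i(V)$, from which one checks directly that adjoining $y_{m+n-d}$ (resp.\ $y_j$) produces a proper intermediate subalgebra. Sufficiency: any graded subalgebra strictly containing $\mathcal{M}(V)$ must, by the defining property (\ref{huam}), contain some $y_i$ with $i\in\bar{J}_2\cup J_3$; explicit bracket computations (split according to whether $J_2$ is empty) then show $H_{-1},H_0\subset\overline{M}=\mathrm{alg}(\mathcal{M}(V)+\mathbb{F}y_i)$, and one concludes $\overline{M}=H$ from $\mathcal{M}_1(V)\neq 0$, the irreducibility of $H_1$ as $H_0$-module (Lemma \ref{mgsl2}(1)) and generation by the local part (Lemma \ref{dibu}(2)). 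Finally, the auxiliary step you flagged --- an $H$-analogue of Lemma \ref{yinlimax-noncontain}(2) asserting injectivity of $V\mapsto\mathcal{M}_0(V)$ --- is also not available as stated: by Lemma \ref{mgs120906l2}(2), $\mathcal{M}_0(V)=\mathcal{M}_0(V^{\bot})$ for nondegenerate $V$, so even the endgame of your ``if'' direction would need a different argument.
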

\begin{proof}

If $J_3=\{m+n-d\}$, from Lemma \ref{mgs1273l2}(1), we know that
$$\mathcal{M}_i(V)=\mathrm{span}_{\mathbb{F}}\{u\in H_i\mid \nu(u)=1, 0\}, \ i\geq -1,$$
which implies that
$$\mathrm{alg}(\mathcal{M}(V)+\mathbb{F}y_{m+n-d})\subset\mathrm{span}_{\mathbb{F}}\{u\in H\mid \nu(u)=1, 0\}+\mathbb{F}y_{m+n-d}.$$
 Thus,
$y_i, y_jy_{m+n-d}\not\in\mathrm{alg}(\mathcal{M}(V)+\mathbb{F}y_{m+n-d})$ if  $\nu(y_i)={\frac13}$  or   $\nu(y_j)=1,$ which contradicts the maximality of $\mathcal{M}(V)$.

If $J_3=\{j, {\widetilde{j}}\}$, where $ j\not=\widetilde{j}\in\mathbf{I}_1$, from Lemma \ref{mgs1273l2}(1),
we know that
$$\mathcal{M}_i(V)=\mathrm{span}_{\mathbb{F}}\{u\in H_i\mid \nu(u)=1, 0, (\frac{1}3)^{k}4\}, \ i\geq 0.$$
Then  for any monomial  $u\in \mathcal{M}_i(V)$,   we have:
\[
[y_{j}, u]=0\ \ \mbox { or } \ \ [y_{j}, u]=wy_j,
\] where  $0\not=w\in H_{i-1}$ with $D_{\widetilde{j}}(w)=0$,
 which implies that
 $$y_{\widetilde{j}} \not\in \mathrm{alg}(\mathcal{M}(V)+\mathbb{F}y_{j}).$$
This contradicts the maximality of $\mathcal{M}(V)$.

Conversely, let us prove the maximality of $\mathcal{M}(V)$.
By  definition (\ref{huam}), it is sufficient  to show  that $\overline{M}=\mathrm{alg}(\mathcal{M}(V)+\mathbb{F}h)=H$,
where $h=y_i$, $i\in \bar{J}_2\cup J_3$.
Note that $\mathcal{M}_{1}(V)\not=0$ for $|\mathbf{I}_0|\geq2$.
From Lemmas \ref{dibu}(2) and \ref{mgsl2}(1), it suffices to prove $H_{-1}, H_0\subset\overline{M}$.
For $V\in\mathcal{V}^H$, we discuss the following cases:\\

\noindent\textbf{Case 1.}  $J_2$ is  not empty.
When $i\in \bar{J}_2$, since
$$y_{\widetilde{i}}\in V\ \mbox{ and }\ y_j=\lambda_{i, j}[y_i, y_{\widetilde{i}}y_j]\in \overline{M} \ \mbox{ for }
\ \widetilde{i}\not= j\in\mathbf{I},$$ we have $H_{-1}\subset\overline{M}$.
 When  $i\in {J}_3$, for all $j\in J_3$ with $j\not=i, \widetilde{i}$, we have:
\[y_j=\lambda_{i,j}[y_i, y_{\widetilde{i}}y_j]\ \ \mbox { and }\ \ y_{\widetilde{i}}=\lambda_{\widetilde{i},j}[y_j, y_{\widetilde{i}}y_{\widetilde{j}}].\]
Note that
$$y_l=\lambda_{l,i}[y_i,[y_{\widetilde{i}}, y_iy_{\widetilde{i}}y_l]]\in \overline{M}\ \mbox{ for all }\ l\in \bar{J}_2.$$
Thus we have
$H_{-1}\subset\overline{M}$.
Note that for an arbitrary monomial $u\in H_0$, there exists $k\in \mathbf{I}$ such that $uy_k\not=0$ and $\nu(uy_k)=0$.  Then we have
$$u=\lambda_{\widetilde{k}, k}[y_{\widetilde{{k}}}, uy_k]\in\overline{M},$$
 which implies that $H_0\subset\overline{M}$.
Thus, we have $\overline{M}=H$.\\

\noindent\textbf{Case 2.}  $J_2$ is empty. Obviously,  $J_1$ and $J_3$ are not empty.
Then we have $H_{-1}, H_0\subset\overline{M}$ by the same method as in Case 1.
\end{proof}


To avoid confusion, we rewrite $\mathcal{M}^L_i(V)$ for $\mathcal{M}_i(V)$,  $\mathcal{M}^L(V)$ for $\mathcal{M}(V)$, $L=H$ or $K$.
\begin{lemma}\label{mgs1277l1} Let $\gamma=\lfloor\frac{i+2}2\rfloor$ for $i>0$. Put
\[
\widetilde{\mathcal{M}}_{j}(V)=\left\{\begin{array}{ll}
                                        0, & j>\eta-2; \\
                                        \mathcal{M}^H_{j}(V), & j<\eta-2;
 \end{array}
\right.
\
\widetilde{\mathcal{M}}_{\eta-2}(V)=\left\{\begin{array}{ll}
                                        0, & V\mbox{ is nondegenerate} \\
                                        & \mbox{and}\ J_3\ \mbox{is single};\\
                            \mathbb{F}y^{(\pi)}y^{\omega}, & \mbox{otherwise, }
 \end{array}
\right.
\]
where $\pi=(p-1, \ldots, p-1)\in\mathbb{N}^{2r}$, $\eta=2r(p-1)+n$ and $\omega=\langle m+1, \ldots, m+n\rangle$. Then
\begin{itemize}
\item[$\mathrm{(1)}$] $\mathcal{M}^K_0(V)=\mathcal{M}^H_0(V)\oplus\mathbb{F}z$.
\item[$\mathrm{(2)}$] If $V$ is not isotropic, for $i>0$,
$$\mathcal{M}^K_i(V)=\widetilde{\mathcal{M}}_i(V)\oplus
\widetilde{\mathcal{M}}_{i-2}(V)z\oplus\cdots\oplus\widetilde{\mathcal{M}}_{i-2\gamma}(V)z^\gamma.$$
\item[$\mathrm{(3)}$] If $V$ is isotropic, for $i>0$,
$$\mathcal{M}^K_i(V)=\widetilde{\mathcal{M}}_i(V)\oplus
\overline{H}_{i-2}z\oplus\cdots\oplus \overline{H}_{i-2\gamma}z^\gamma.$$
\end{itemize}
\end{lemma}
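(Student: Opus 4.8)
\emph{Setup.} Using the realization $K=(\mathcal O,[\,,\,])$ of Section~1, write $z=x_m$ and expand every $u\in K_i$ uniquely as $u=\sum_{j\ge 0}u_jz^{j}$ with $u_j\in\overline H_{i-2j}$ (reading $\overline H_{-2}:=\mathbb F\cdot 1=K_{-2}$; the sum stops at $j=\lfloor(i+2)/2\rfloor$). Since $\mathcal M^K(V)$ is stable under $\mathbb Z$-homogeneous automorphisms (Lemma~\ref{mgsl12731}(2)) we may take $V$ standard as in (\ref{eqsb2}). Everything is then driven by two bracket identities: because a $v\in V$ involves no $z$, one has $\Delta(v)=v$ and $D_H(v)$ commutes with $z$, whence
\[
[v,u]=\sum_{j\ge 0}\bigl([v,u_j]_H+(j+1)\,v\,u_{j+1}\bigr)z^{j},\qquad [1,u]=2\,\partial_z u ,
\]
where $[\,,\,]_H$ is the bracket of $\overline H$ and $v\,u_{j+1}$ is the associative product in $\mathcal O(2r,n)$ (the scalars $j+1$ are irrelevant). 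Part (1) is immediate from these: $[v,z]=\Delta(v)=v$ gives $z\in\mathcal M^K_0(V)$, and for $w\in\overline H_0=H_0$ the bracket $[V,w]$ in $K$ equals the bracket $[V,w]$ in $H$, so $w\in\mathcal M^K_0(V)\iff w\in\mathcal M^H_0(V)$; hence $\mathcal M^K_0(V)=\mathcal M^H_0(V)\oplus\mathbb F z$.

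\emph{The induction for (2) and (3).} These go by induction on $i\ge 1$ via the recursion $\mathcal M^K_i(V)=\{u\in K_i\mid[V,u]\subseteq\mathcal M^K_{i-1}(V)\}$, starting from $\mathcal M^K_{-1}(V)=V=\widetilde{\mathcal M}_{-1}(V)$ and $\mathcal M^K_{-2}(V)=[V,V]_K=\beta(V,V)\cdot 1$, which is $\mathbb F\cdot 1$ if $V$ is not isotropic and $0$ if it is. In the non-isotropic case $1\in\mathcal M^K(V)$, and since $\mathcal M^K(V)$ is a subalgebra (Lemma~\ref{mgsl12731}(1)) it is stable under $\tfrac12\mathrm{ad}\,1=\partial_z$; hence $\partial_z u\in\mathcal M^K_{i-2}(V)$, and the inductive description pins down $u_j\in\widetilde{\mathcal M}_{i-2j}(V)$ for all $j\ge1$. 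The $z^{0}$-coefficient of $[v,u]$ then reduces membership $u\in\mathcal M^K_i(V)$ to $[v,u_0]_H+v\,u_1\in\widetilde{\mathcal M}_{i-1}(V)$ for all $v\in V$; and because $v=y_k$ with $k\in J_1\cup J_2$ has $\nu(y_k)\in\{0,1\}$, the $\nu$-criterion of Lemma~\ref{mgs1273l2} shows that multiplication by $v$ carries $\mathcal M^H_{i-2}(V)$ into $\mathcal M^H_{i-1}(V)$, so the $v\,u_1$ term is harmless and only $u_0\in\mathcal M^H_i(V)=\widetilde{\mathcal M}_i(V)$ survives (with $u_0=0$ forced when $i>\eta-2$, as $\overline H_i=0$); the reverse inclusion runs the same computation backwards. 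In the isotropic case $J_1=\emptyset$, so every $v\in V$ has $\nu(v)=0$ and $v\cdot(\text{anything})$ is a sum of $\nu=0$ monomials lying in $\mathcal M^H(V)$; this annihilates all of the $v\,u_{j+1}$ terms, the $z^{k}$-conditions for $k\ge1$ become vacuous (their left-hand sides lie in $\overline H_{(i-1)-2k}$ automatically), and only $u_0\in\widetilde{\mathcal M}_i(V)$ is constrained, the higher $u_j$ being free — which is exactly (3).

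\emph{The main obstacle.} The delicate point is the boundary degree $i=\eta-2$, where $H_{\eta-2}=0$ but $\overline H_{\eta-2}=\mathbb F\,y^{(\pi)}y^{\omega}$, so the reduction to $H$ says nothing and $\widetilde{\mathcal M}_{\eta-2}(V)$ must be computed by hand: $y^{(\pi)}y^{\omega}\in\mathcal M^K_{\eta-2}(V)$ iff $[y_k,y^{(\pi)}y^{\omega}]_H\in\mathcal M^H_{\eta-3}(V)$ for every $y_k\in V$. Now $[y_k,y^{(\pi)}y^{\omega}]_H$ is a nonzero scalar times the monomial $y^{(\pi)}y^{\omega}$ with one factor $y_{\widetilde k}$ removed, so its $\nu$-value is $\nu(y^{(\pi)}y^{\omega})$ divided by $\nu(y_{\widetilde k})$; running through the possibilities for $\beta\text{-}\dim V=(a,b,c,d)$ shows this quotient always lies in $\{0,1\}\cup\{(\tfrac13)^s2^t\mid t>1\}$ — so $y^{(\pi)}y^{\omega}\in\mathcal M^K_{\eta-2}(V)$ — \emph{except} when $V$ is nondegenerate with $J_3$ single (forcing $J_2=\bar J_2=\emptyset$, $q=0$, $d=n-1$, $J_3=\{m+1\}$), in which case the quotient equals $2=2^1$ and $\widetilde{\mathcal M}_{\eta-2}(V)=0$. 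I expect this boundary analysis — together with the bookkeeping of the three interlocking gradings on $H$, $\overline H$ and $K$ and the $\delta$-shift at the very top of $K$ — to be essentially the only real case work; everything else follows formally from the two bracket identities and Lemma~\ref{mgs1273l2}.
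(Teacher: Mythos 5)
Your proof is correct and follows essentially the same route as the paper's: the same expansion of $u\in K_i$ into powers of $z$ with coefficients in $\overline H$, the same use of $[1,u]=2\partial_z u$ (available since $1\in\mathcal M^K_{-2}(V)$ exactly when $V$ is not isotropic) to pin down the higher coefficients by induction, and the same appeal to the $\nu$-criterion of Lemma \ref{mgs1273l2} to see that multiplication by elements of $V$ is harmless, with the isotropic case handled by $\nu(v)=0$. Your explicit analysis at the boundary degree $\eta-2$ (the quotient $2^{1}$ excluding $y^{(\pi)}y^{\omega}$ precisely when $V$ is nondegenerate and $J_3$ is single) is the one step the paper compresses into its ``Clearly'' and ``Consequently,'' and it is correct.
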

\begin{proof}
(1)  It is obvious.

(2) Use induction on $i$. Clearly, $\widetilde{\mathcal{M}}_i(V)\subset\mathcal{M}^K_i(V)$.

$``\supset"$:
For $gz^k\in\widetilde{\mathcal{M}}_{i-2k}(V)z^k$,  $0<k\leq \gamma$, we know that
$$[y_l, gz^k]=[y_l, g]z^k+y_lgz^{k-1}.$$
Note that
$$y_lg\in \widetilde{\mathcal{M}}_{i-2(k-1)-1}(V)\ \mbox{ for } \nu(y_l)=1, 0.$$
 By induction on $\mathrm{zd}(g)$,
we have:
$$y_lgz^{k-1},\ [y_l, g]z^k\in \mathcal{M}^K_{i-1}(V).$$
 Thus, $gz^k\in \mathcal{M}^K_i(V)$.

$``\subset"$: For any $u\in \mathcal{M}^K_i(V)$, by Lemma \ref{mgsl2}(2), we may assume that
$$u=u_i+u_{i-2}z+\cdots+u_{i-2\gamma}z^\gamma,$$
 where $u_j\in \overline{H}_j$ for $i-2\gamma\leq j \leq i$.
Note that $\mathcal{M}^K_{-2}(V)\not=0$, since $V$ is not isotropic. Then we have:
$$u_{i-2}+u_{i-4}z+\cdots+u_{i-2\gamma}z^{\gamma-1}=2^{-1}[1, u]\in \mathcal{M}^K_{i-2}(V).$$
By induction, we have $u_{j}\in \widetilde{\mathcal{M}}_{j}(V)$ for $i-2\gamma\leq j \leq i-2$.
Moreover,
$$u_{i-2}z+u_{i-4}z^2+\cdots+u_{i-2\gamma}z^{\gamma}\in \mathcal{M}^K_{i}(V).$$
Consequently, $u_i\in \widetilde{\mathcal{M}}_{i}(V)$.

(3) When  $V$ is isotropic, note that  $\nu(y_k)=0$ for all $y_k\in V$. The remaining  discussion is analogous to that of the condition (2).
\end{proof}
\begin{proposition}\label{mgs1277p3}
The subalgebra $\mathcal{M}^K(V)$ is maximal in $K$ if and only if
 $V\in \mathcal{V}^K$ when $V$ is neither nondegenerate    nor isotropic; $V\in\mathcal{W}^{K}$, otherwise.
\end{proposition}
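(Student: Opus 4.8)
The strategy parallels the proof of Proposition \ref{mgs1276p1}, now carrying along the contact variable $z$, the $z$-graded layers of $\mathcal{M}^K(V)$ described in Lemma \ref{mgs1277l1}, and the operator $\mathrm{ad}\,1$ on $K_{-2}=\mathbb{F}\cdot 1$. By Lemmas \ref{mgsl3} and \ref{mgsl12731}(2) together with Remark \ref{mgsstandard1} I would take $V$ standard, with $\beta$-$\dim V=(a,b,c,d)$. As for $H$, the basic reduction is that any $\mathbb{Z}$-graded subalgebra $M$ with $M\supsetneq\mathcal{M}^K(V)$ must satisfy $M_{-1}\supsetneq V$, or else $M_{-1}=V$ with $V$ isotropic and $M_{-2}=K_{-2}=\mathbb{F}\cdot 1$ (the case $[V,V]=0$); this follows by comparing components in increasing degree against the recursion (\ref{huam}). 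Throughout I would use the $K_0$-module decomposition $K_i=\oplus_j K_{ij}$ and the irreducibility of $K_{10}$ and $K_{11}$ from Lemma \ref{mgsl2}(2), together with the generation of $K$ by its local part (Lemma \ref{dibu}(2)), so that it is enough to put $K_{-1}$, $K_0$ and a nonzero element of $K_{10}$ into $M$ (note $\mathcal{M}^K_1(V)\neq 0$ since $m=2r+1\geq 3$), $K_{11}$ then being automatically present and $K_1=K_{10}+K_{11}$ following by irreducibility.

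For necessity I would show that $\mathcal{M}^K(V)$ fails to be maximal whenever $J_3$ is twinned, and whenever $J_3$ is single with $V$ degenerate and not isotropic. In the twinned case $J_3=\{j,\widetilde{j}\}$, $j\neq\widetilde{j}\in\mathbf{I}_1$: combining Lemma \ref{mgs1273l2}(1) with Lemma \ref{mgs1277l1} shows that every monomial in $\mathcal{M}^K_i(V)$ is killed by $D_{\widetilde{j}}$ after its $z$-factors are removed, and a direct check shows this property is preserved by $\mathrm{ad}\,y_j$ and by $\mathrm{ad}\,1$; hence $\mathrm{alg}(\mathcal{M}^K(V)+\mathbb{F}y_j)$ is a proper subalgebra of $K$ not containing $y_{\widetilde{j}}$. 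In the single case $J_3=\{m+n-d\}$ with $V$ degenerate and not isotropic: as in the $H$-computation, Lemmas \ref{mgs1273l2}(1) and \ref{mgs1277l1}(2) confine the $\nu$-values occurring in $\mathcal{M}^K_i(V)$ to $\{0,1\}$, and since $\mathrm{ad}\,1$ merely strips $z$-factors this confinement persists; therefore $\mathrm{alg}(\mathcal{M}^K(V)+\mathbb{F}y_{m+n-d})$ lies inside the span of the $z$-layers over $\{u\mid\nu(u)\in\{0,1\}\}$ together with $\mathbb{F}y_{m+n-d}$, which is proper. In either case maximality fails, so the stated conditions are necessary.

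For sufficiency, assume $V\in\mathcal{V}^K$ when $V$ is neither nondegenerate nor isotropic and $V\in\mathcal{W}^K$ otherwise, take a graded $M\supsetneq\mathcal{M}^K(V)$, and show $M=K$. If $V$ is isotropic with $M_{-1}=V$, $M_{-2}=\mathbb{F}\cdot 1$, I would use the isotropic description $\mathcal{M}^K_i(V)=\widetilde{\mathcal{M}}_i(V)\oplus\bigoplus_{k\geq 1}\overline{H}_{i-2k}z^k$ from Lemma \ref{mgs1277l1}(3): applying $\mathrm{ad}\,1$ to the layers $\overline{H}_{i-2}z\subset\mathcal{M}^K_i(V)$ recovers all of $\overline{H}_{i-2}$, hence, together with the layers already present and $z\in\mathcal{M}^K_0(V)$, all of $K_i$ for $i\geq -1$, and then $M=K$ by Lemma \ref{dibu}(2). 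If instead $M_{-1}\supsetneq V$, then after bracketing by $\mathcal{M}^H_0(V)\subset\mathcal{M}^K(V)$ one reduces to $M$ containing one of the extra generators $y_i$, $i\in\bar{J}_2\cup J_3$, and runs the bracket chains of Proposition \ref{mgs1276p1} inside $K$, splitting on whether $J_2$ is empty, so as to place $K_{-1}$, $K_0$ and a nonzero element of $K_{10}$ in $M$. The genuinely new configuration is $J_3$ single with $V$ nondegenerate or isotropic, admitted by $\mathcal{W}^K$ but not by $\mathcal{V}^K$; there one exploits $\mathcal{M}^K_{-2}(V)=\mathbb{F}\cdot 1$ (nondegenerate case) or the layers $\overline{H}_{i-2k}z^k$ (isotropic case) to manufacture, via brackets with $1$ and with $z$, precisely the $\nu$-value $\frac13$ and $2$ directions that are out of reach inside $H$, so that the chain of brackets no longer stalls.

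I expect the main obstacle to be this last verification: one must check, configuration by configuration for $(J_1,J_2,J_3)$ and separately for $V$ degenerate-non-isotropic, nondegenerate, and isotropic, that the $z$-graded layers of Lemma \ref{mgs1277l1} neither obstruct the bracket chains imported from the $H$-case nor produce an invariant proper subalgebra. The borderline $J_3$-single situation, where the contact grading rather than merely its Hamiltonian part must be used, together with the subtle fact that $\mathcal{M}^K(1,V)$ is incomparable with $\mathcal{M}^K(V)$, are exactly what make the maximality threshold for $K$ (involving $\mathcal{W}^K$) differ from that for $H$ (involving $\mathcal{V}^H$).
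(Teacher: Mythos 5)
Your sufficiency half follows essentially the paper's route: standardize $V$ (Lemmas \ref{mgsl3}, \ref{mgsl12731}), reduce a strictly larger graded subalgebra to one containing a new element of $K_{-1}$ (or the element $1$ when $V$ is isotropic), invoke Proposition \ref{mgs1276p1} when $J_3$ is neither single nor twinned, use the $z$-layers of Lemma \ref{mgs1277l1} in the borderline single cases, and conclude via the irreducibility of $K_{10}$, $K_{11}$ and Lemma \ref{dibu}(2). The paper's explicit brackets in the single case are exactly of the type you indicate (elements $fz$ of the layer bracketed against the new generator), though in the nondegenerate single case the operative elements are $Vz\subset\mathcal{M}^K_1(V)$ rather than $1\in\mathcal{M}^K_{-2}(V)$; and your explicit handling of the escape to $K_{-2}$ via $[1,\overline{H}_{i-2}z]=\overline{H}_{i-2}$ is correct and is in fact glossed over in the paper's text.

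The genuine gap is in your necessity argument, which is where the $K$-specific content lies (the paper only says ``similar to Proposition \ref{mgs1276p1}''). In the twinned case your invariant is simply false: $y_jy_{\widetilde{j}}$ has $\nu$-value $4=(\frac13)^0 2^2$, hence lies in $\mathcal{M}^H_0(V)\subset\mathcal{M}^K_0(V)$ by Lemma \ref{mgs1273l2}(1), yet $D_{\widetilde{j}}(y_jy_{\widetilde{j}})=\pm y_j\neq0$; and for isotropic $V$ Lemma \ref{mgs1277l1}(3) places all of $\overline{H}_{i-2}z$ (e.g.\ $y_{\widetilde{j}}y_kz$) inside $\mathcal{M}^K_i(V)$, so ``killed by $D_{\widetilde{j}}$ after removing $z$-factors'' fails wholesale; the correct obstruction must be of the divisibility type used in Proposition \ref{mgs1276p1} ($[y_j,u]$ lands in $y_j$-divisible elements), extended to cover the $z$-layers, and in the isotropic case one needs an invariant such as ``every monomial is divisible by $z$ or by a variable indexed in $J_2\cup\{j\}$.'' In the single case with $V$ degenerate and not isotropic, your claimed containment is also false: since $V$ is not isotropic, $V\cap\mathcal{Q}_{J_1}\neq0$, and for $y_p\in V\cap\mathcal{Q}_{J_1}$ the contact correction gives $[y_{m+n-d}, y_pz]=y_{m+n-d}y_p$ (up to a nonzero scalar), a monomial of $\nu$-value $2$ lying outside the span of the $z$-layers over $\{u\mid\nu(u)\in\{0,1\}\}$ plus $\mathbb{F}y_{m+n-d}$; so the set you describe is not a subalgebra and cannot witness non-maximality. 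This interaction of $y_{m+n-d}$ with the $z$-layers is precisely the new phenomenon in $K$ (and the reason single $J_3$ is admitted in $\mathcal{W}^K$ for nondegenerate or isotropic $V$), so the $H$-invariant cannot be transcribed verbatim: the non-maximality in the degenerate-non-isotropic single case requires a genuinely different intermediate subalgebra or invariant, presumably one isolating the $\frac13$-directions $y_l$, $l\in\bar{J}_2$, which are present exactly because $V$ is degenerate. As written, the necessity half of your proposal does not go through.
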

\begin{proof}
The proof of the necessity is similar to the one of  Proposition \ref{mgs1276p1}. We only consider the sufficiency.
For any $u\in K$, $u\not\in \mathcal{M}^K(V)$, put $\overline{M}=\mathrm{alg}(\mathcal{M}^K(V)+\mathbb{F}u)$.
Then there exist $v_1,\ldots, v_i\in V$ such that
$$0\not=h=[v_1,[\cdots,[v_i, u]\cdots]]\in\overline{M}\cap K_{-1}\  \mbox{ and  }\ h\not\in V.$$
When $J_3$ is neither single nor twinned, by Proposition \ref{mgs1276p1},
we have $ H\subset\overline{M}.$
When $J_3$ is single and $V$ is isotropic, we may assume that $h=y_i$, $i\in\bar{J}_2\cup\{m+n-d\}$.
If $i\in\bar{J}_2$, for any $j, k\in\bar{J}_2$, we  obtain that
$$y_{m+n-d}=\lambda_{i, \widetilde{i}}[y_{\widetilde{i}}, [y_i, y_{m+n-d}z]], \ \
y_{m+n-d}y_jy_k=\lambda_{j, k}[y_{m+n-d}, y_jy_kz]$$
are in $\overline{M}$ from Lemma \ref{mgs1277l1}(3).
Moreover,
$$
y_jy_k=-[y_{m+n-d}, y_{m+n-d}y_jy_k],\ \ y_{m+n-d}y_k=\lambda_{m+n-d, k}[y_{\widetilde{j}}, y_{m+n-d}y_jy_k] $$
are in $\overline{M}$. Keeping in mind the co-basis (\ref{co-basis}),
 we have  $H_0\subset\overline{M}$, which also holds
when $J_3$ is single and $V$ is nondegenerate.
From  the irreducibility of $K_{-1}$, $K_{10}$ and  $K_{11}$, as well as Lemma \ref{mgs1277l1}, we obtain that
 $\mathcal{M}^K(V)$ is maximal in $K$.
\end{proof}

In the same way as in  Proposition \ref{mgs1277p3}, one may check the following proposition.
\begin{proposition}\label{mgs1282p1}
If  $V$ is isotropic, the subalgebra $\mathcal{M}^K(1, V)$ is maximal in $K$ if and only if $V\in\mathcal{V}^K$.
\end{proposition}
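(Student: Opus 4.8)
The plan is to follow the proof of Proposition~\ref{mgs1277p3} closely, the only new ingredients being the presence of $K_{-2}=\mathbb{F}$ at the bottom of $\mathcal{M}^K(1,V)$ and the extra defining requirement $[1,u]\in\mathcal{M}^K_{i-2}(1,V)$; by Lemma~\ref{mgsl12731} these are precisely the data by which $\mathcal{M}^K(1,V)$ differs from the type $\mathrm{(II)}$ subalgebra $\mathcal{M}^K(V)$, and this difference is exactly what forces the maximality condition to tighten from $\mathcal{W}^{K}$ (Proposition~\ref{mgs1277p3}) to $\mathcal{V}^K$ here.

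For the necessity, suppose $V$ is isotropic with $V\notin\mathcal{V}^K$, so $J_3$ is single or twinned, and adapt the necessity arguments of Propositions~\ref{mgs1276p1} and \ref{mgs1277p3}. Using Lemma~\ref{mgs1277l1}(3) together with Lemma~\ref{mgs1273l2}(1) one writes $\mathcal{M}^K_i(1,V)=\widetilde{\mathcal{M}}_i(V)\oplus\overline{H}_{i-2}z\oplus\cdots$, the $z$-free layer $\widetilde{\mathcal{M}}_i(V)$ being controlled by the admissible $\nu$-values of Lemma~\ref{mgs1273l2}. In the twinned case $J_3=\{j,\widetilde{j}\}$ one checks, exactly as in Proposition~\ref{mgs1276p1}, that for every monomial $u$ occurring in $\mathcal{M}^K(1,V)$ the bracket $[y_j,u]$ is either $0$ or of the form $wy_j$ with $D_{\widetilde{j}}(w)=0$; hence $y_{\widetilde{j}}\notin\mathrm{alg}(\mathcal{M}^K(1,V)+\mathbb{F}y_j)$, so $\mathrm{alg}(\mathcal{M}^K(1,V)+\mathbb{F}y_j)$ is a graded subalgebra lying strictly between $\mathcal{M}^K(1,V)$ and $K$. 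The single case is handled in the same spirit, producing the corresponding proper intermediate subalgebra; the care needed here is to see that the $z$-layers occurring in $\mathcal{M}^K_i(1,V)$ do not allow the excluded directions to reappear, and this is exactly where the condition $[1,u]\in\mathcal{M}^K_{i-2}(1,V)$ is used.

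For the sufficiency, assume $V\in\mathcal{V}^K$ is isotropic, let $M$ be a graded subalgebra with $M\supsetneq\mathcal{M}^K(1,V)$, and set $\overline{M}=\mathrm{alg}(M)$, so $1\in\overline{M}$ and $V\subset\overline{M}$; the goal is $\overline{M}=K$. First I would run a descent: pick a homogeneous $u\in M_i\setminus\mathcal{M}^K_i(1,V)$; since $u$ fails the defining condition, either $[v,u]\in K_{i-1}\setminus\mathcal{M}^K_{i-1}(1,V)$ for some $v\in V$, or $[1,u]\in K_{i-2}\setminus\mathcal{M}^K_{i-2}(1,V)$. Because $\mathcal{M}^K_{-2}(1,V)=K_{-2}$ and $\mathcal{M}^K_{-1}(1,V)=V$, iterating strictly lowers the degree and never drops below $-1$, so it yields a nonzero $h\in\overline{M}\cap K_{-1}$ with $h\notin V$. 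Next, by Lemma~\ref{mgs1277l1}(3) the $z$-free layer of $\mathcal{M}^K(1,V)$ is a copy of the subalgebra $\mathcal{M}(V)$ of $H$ from Proposition~\ref{mgs1276p1}, and since $J_3$ is neither single nor twinned we have $V\in\mathcal{V}^H$, so $\mathcal{M}(V)$ is maximal in $H$. The $z$-free part of $K$ is a subalgebra which is a one-dimensional central extension of $\overline{H}(2r,n)$, identified with $\overline{H}(2r,n)$ in all degrees $\geq-1$; there $\mathcal{M}(V)$ together with $h\notin V$ generates a graded subalgebra strictly containing $\mathcal{M}(V)$, hence all of $H$. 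This forces $K_{-1}\subset\overline{M}$ and $K_{i0}\subset\overline{M}$ for every $i\geq 0$, in particular $K_{00}\subset\overline{M}$ and $K_{10}\subset\overline{M}$. Since $z\in\mathcal{M}^K_0(1,V)$ by Lemma~\ref{mgs1277l1}(1) we get $K_0\subset\overline{M}$, and since $K_{11}=\overline{H}_{-1}z\subset\mathcal{M}^K_1(1,V)\subset\overline{M}$, the irreducibility of the $K_0$-modules $K_{10}$ and $K_{11}$ (Lemma~\ref{mgsl2}(2)) gives $K_1\subset\overline{M}$; then $\overline{M}=\mathrm{alg}(K_{-1}+K_0+K_1)=K$ by Lemma~\ref{dibu}(2). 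Throughout, the standard facts about the torus in Remark~\ref{mgsr2} are invoked as in Propositions~\ref{mgs1276p1} and \ref{mgs1277p3}.

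The step I expect to be the main obstacle is the descent in the sufficiency: it must be organised through the \emph{two}-part membership condition defining $\mathcal{M}^K(1,V)$ so that it terminates at $K_{-1}\setminus V$ and does not stall in $K_{-2}=\mathbb{F}$, and one must make the identification of the $z$-free part of $K$ with a central extension of $\overline{H}(2r,n)$ precise enough to invoke Proposition~\ref{mgs1276p1} verbatim. A second delicate point is the single case of the necessity, where the $z$-layers of $\mathcal{M}^K(1,V)$ --- which are absent in the purely Hamiltonian situation of Proposition~\ref{mgs1276p1} --- must be shown not to recover the forbidden directions, so that the intermediate subalgebra really is proper.
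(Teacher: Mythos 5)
Your overall skeleton does follow the route the paper intends (it proves this proposition only by the remark ``in the same way as in Proposition~\ref{mgs1277p3}''): descend to a nonzero $h\in\overline{M}\cap K_{-1}$ with $h\notin V$, use Proposition~\ref{mgs1276p1} inside the $z$-free part of $K$, and finish with the irreducibility statements of Lemma~\ref{mgsl2}(2) and Lemma~\ref{dibu}(2). However, your central structural claim is wrong. Lemma~\ref{mgs1277l1}(3) describes $\mathcal{M}^K_i(V)$, \emph{not} $\mathcal{M}^K_i(1,V)$: since $[1,u]=2\partial_m(u)$, the extra requirement $[1,u]\in\mathcal{M}^K_{i-2}(1,V)$ cuts the $z$-layers down from $\overline{H}_{i-2k}z^k$ to layers of $\widetilde{\mathcal{M}}$-type, in parallel with Lemma~\ref{mgs1277l1}(2). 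Your displayed decomposition $\mathcal{M}^K_i(1,V)=\widetilde{\mathcal{M}}_i(V)\oplus\overline{H}_{i-2}z\oplus\cdots$ cannot hold: it would put $y_{\widetilde{j}}z$ into $\mathcal{M}^K_1(1,V)$, whence $y_{\widetilde{j}}=\tfrac12[1,y_{\widetilde{j}}z]\in\mathcal{M}^K_{-1}(1,V)=V$, a contradiction; it also conflicts with the paper's own count $\dim\mathcal{M}^K(1,V)=p(\dim\mathcal{M}^H(V)+2)$ as against the isotropic formula for $\dim\mathcal{M}^K(V)$. The same error produces the false inclusion $K_{11}=\overline{H}_{-1}z\subset\mathcal{M}^K_1(1,V)$: for $y_l\notin V$ one has $[1,y_lz]=2y_l\notin V$, so $y_lz\notin\mathcal{M}^K_1(1,V)$. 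That particular step of your sufficiency is repairable --- $Vz\subset\mathcal{M}^K_1(1,V)$ is a nonzero subspace of the irreducible $K_0$-module $K_{11}$, which suffices once $K_0\subset\overline{M}$ --- but you must first establish the correct analogue of Lemma~\ref{mgs1277l1}(2) for $\mathcal{M}^K(1,V)$ rather than quote Lemma~\ref{mgs1277l1}(3).

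The more serious gap is in the necessity, exactly where this proposition differs from Proposition~\ref{mgs1277p3}. For isotropic $V$ with $J_3$ single, $\mathcal{M}^K(V)$ \emph{is} maximal (this is why $\mathcal{W}^K$ appears there), so the non-maximality of $\mathcal{M}^K(1,V)$ in that case cannot be obtained by transplanting the necessity arguments of Propositions~\ref{mgs1276p1} and \ref{mgs1277p3} ``in the same spirit''; it must come precisely from the reduced $z$-layers, i.e.\ from the correct version of the structure you misquoted. You flag this as the delicate point but do not exhibit the proper intermediate subalgebra, nor the $\nu$-value bookkeeping (now involving monomials $gz^k$) showing that $\mathrm{alg}(\mathcal{M}^K(1,V)+\mathbb{F}u)$ misses a whole class of elements for a suitable $u$. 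Likewise, in the twinned case your verification that $[y_j,u]=0$ or $wy_j$ with $D_{\widetilde{j}}(w)=0$ must be redone for the $z$-dependent monomials, and with your (too large) layer description it actually fails, e.g.\ $[y_j,y_{\widetilde{j}}z]$ contains a multiple of $z$ not of that form; with the correct description it is plausible but still has to be checked. So the proposal takes the paper's intended approach, but the two points that carry the content of the ``only if'' direction --- the precise $z$-layer structure of $\mathcal{M}^K(1,V)$ and the single-$J_3$ obstruction --- are respectively misstated and left unproved.
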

\begin{convention}\label{mgsconvention3} For simplicity,
put $\mathcal{O}_{X}=\mathrm{span}_{\mathbb{F}}\{y_{i_1}\cdots y_{i_s}\mid i_1, \ldots, i_s\in X, s\geq 1\}$,
$\mathcal{Q}_X=\mathrm{span}_{\mathbb{F}}\{y_{i}\mid i\in X\}$ and $\mathcal{Y}^+_{X}=\mathcal{Y}_{X}\oplus\mathbb{F}\cdot1$,
where $X$ is a subset of $\mathbf{I}$ and $\mathcal{Y}=\mathcal{O} $ or $\mathcal{Q}$.
\end{convention}
\noindent
\textbf{Proof of Theorem \ref{mgs1276t1}}
For (1) and $ (1')$, the proofs  follow  from Lemma \ref{mgsl12731}(3), Propositions \ref{mgs1276p1}, \ref{mgs1277p3}
and \ref{mgs1282p1}.

For $H$, from Lemma \ref{mgs1273l2}(1) and (1), we obtain that
$$\dim{\mathcal{M}^H(V)}=\dim H-
\dim(\mathcal{O}^+_{J_1}\mathcal{O}_{\bar{J}_2}\oplus\mathcal{O}^+_{J_1\cup\bar{J}_2}\mathcal{Q}_{{J}_3}).$$

For $K$, from Lemma \ref{mgs1277l1} and $(1')$, we obtain that
\begin{align*}
 &\dim{\mathcal{M}^K(1, V)}=p(\dim\mathcal{M}^H(V)+2);\\
 &\dim{\mathcal{M}^K(V)}=\left\{\begin{array}{ll}
                          \dim\mathcal{M}^H(V)+1+(p-1)(\dim H+2), & V\in\mathcal{W}^{K} \mbox{  is isotropic}; \\
                          p(\dim\mathcal{M}^H(V)+2), &  \mbox{  otherwise}.
                        \end{array}
\right.
\end{align*}
By a standard and direct computation we get the formulas (4) and $(4')$.  Noting that $\dim\mathcal{M}_0(V)=\dim\mathcal{M}_0(V')$ if $\mathcal{M}(V) \cong$$
\mathcal{M}(V')$  and using the same method as in Theorem \ref{theorem02llm}(2),   (2) and $(2')$ hold. From (1), $(1')$ and (2), $(2')$,  we obtain  that (3) and $(3')$ hold.\qed

\section{MGS of Type (III)}\label{003}
Suppose $L=W, S, H$ or $K$. Recall that an MGS of type (III) of $L$, $M$,  satisfies the condition
\begin{equation*}\label{yubeijiiuhao}
 M_{-1}=L_{-1}\quad \mbox{and}\quad  M_{0}\neq L_{0}.
\end{equation*}
Let $\mathfrak{G}_{0}$ be a nontrivial subalgebra of $L_{0}$.
 Define a graded subspace of $L$ as follows:
\begin{equation*}
\mathcal {M}(L_{-1},\mathfrak{G}_{0})=\oplus_{i\geq -2}\mathcal
{M}_{i}(L_{-1},\mathfrak{G}_{0}),
\end{equation*}
where
\begin{eqnarray}
&&\mathcal {M}_{-i}(L_{-1},\mathfrak{G}_{0})=L_{-i},\ i<0; \quad\mathcal
{M}_{0}(L_{-1},\mathfrak{G}_{0})=\mathfrak{G}_{0};
\nonumber\\
&&
\mathcal {M}_{i}(L_{-1},\mathfrak{G}_{0})=\{u\in L_{i}\mid
[L_{-1},u]\subset\mathcal
{M}_{i-1}(L_{-1},\mathfrak{G}_{0})\}\ \mbox{ for } i>0.\label{m33}
\end{eqnarray}
It is easy to see that $\mathcal
{M}(L_{-1},\mathfrak{G}_{0})$ is a graded subalgebra satisfying the condition (III).
 We call  $\mathfrak{G}$
\textit{a maximal R-subalgebra (resp. maximal S-subalgebra)} of $L$  if
$\mathfrak{G}$  is maximal reducible (resp.
irreducible) graded and satisfies the condition (III).  All the MGS of type (III) can be split into the disjoint union of  maximal R-subalgebras and maximal S-subalgebras.
\begin{theorem}\label{03} Suppose $L=W$ or $S$.
 \begin{itemize}
 \item[$\mathrm{(1)}$] All maximal R-subalgebras of $L$ are precisely:
$$\{\mathcal{M}(L_{-1},\mathcal{M}_{0}(V))\mid V \in\frak{V}^L\}.$$
\item[$\mathrm{(2)}$]  For any   $V, V' \in\frak{V}^L$,
$$\mathcal{M}(L_{-1},\mathcal{M}_{0}(V))
\stackrel{\mathrm{algebra}}{\cong}\mathcal{M}(L_{-1},\mathcal{M}_{0}(V'))
\Longleftrightarrow V{\cong} V'.$$
\item[$\mathrm{(3)}$]  $L$ has exactly $(m+1)(n+1)-2$
   isomorphism  classes of   maximal R-subalgebras.
 \item[$\mathrm{(4)}$] Suppose $V\in\frak{V}^L$ with $\mathrm{superdim}V=(k,l),$ then\begin{align*}
                                                             &   \mathrm{dim}\mathcal{M}(L_{-1},\mathcal{M}_{0}(V)) \\
                                                            =& \left\{\begin{array}{ll}
                                                                    2^{n-l}p^{m-k}(m+n-k-l)+2^{n}p^m (k+l), & L=W \\
                                                                   2^{n-l}p^{m-k}(m+n)+2^{n}p^m (k+l-1)-k-1, & L=S.
                                                                 \end{array}
    \right.
                                                          \end{align*}
 \end{itemize}
\end{theorem}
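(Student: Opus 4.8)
The plan is to reduce everything to the analysis of $\mathcal{M}(V)$ (Theorem \ref{theorem02llm}) and the maximality of $\mathcal{M}_0(V)$ in $L_0$ (Lemma \ref{yinlimax-noncontain}), together with the structure of $L_1$ as an $L_0$-module (Lemmas \ref{lem1.3}, \ref{lemxin1.9}, Corollary \ref{1.10}). First I would prove that each $\mathcal{M}(L_{-1},\mathcal{M}_0(V))$ is genuinely an MGS of type (III): by construction it satisfies $M_{-1}=L_{-1}$, $M_0=\mathcal{M}_0(V)\neq L_0$, and it is a graded subalgebra (already noted after \eqref{m33}). For maximality, suppose $M\supsetneq \mathcal{M}(L_{-1},\mathcal{M}_0(V))$ is graded. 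Since $M_{-1}=L_{-1}$ and $M_0\supseteq\mathcal{M}_0(V)$, the maximality of $\mathcal{M}_0(V)$ in $L_0$ (Lemma \ref{yinlimax-noncontain}(1)) forces $M_0=\mathcal{M}_0(V)$ or $M_0=L_0$. If $M_0=L_0$ then $M$ is an MGS of type (I); but then $M_1$ is one of $W_1'$, $W_1''$, $S_1''$ or $0$ by Theorem \ref{thm01}, whereas $\mathcal{M}_1(L_{-1},\mathcal{M}_0(V))$ contains the element $A=(-1)^{|x_h|}x_ix_j\partial_j-(-1)^{|x_j|}x_ix_h\partial_h$ exhibited in the proof of Proposition \ref{prowmax}, which lies in none of those submodules — contradiction. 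Hence $M_0=\mathcal{M}_0(V)$, and then by induction on $i$ (using transitivity, exactly as in Case 1 of Proposition \ref{prowmax}) one gets $M_i=\mathcal{M}_i(L_{-1},\mathcal{M}_0(V))$ for all $i\geq 1$, so $M=\mathcal{M}(L_{-1},\mathcal{M}_0(V))$. This proves (1) in one direction; the converse (every maximal R-subalgebra arises this way) follows because a maximal R-subalgebra $\mathfrak{G}$ has $\mathfrak{G}_0$ reducible, hence $\mathfrak{G}_0\subseteq\mathcal{M}_0(V)$ for $V=$ a proper $\mathfrak{G}_0$-invariant subspace of $L_{-1}$, giving $\mathfrak{G}\subseteq\mathcal{M}(L_{-1},\mathcal{M}_0(V))$, and maximality plus the already-proved maximality of the latter forces equality. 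One subtlety: $\mathcal{M}_0(V)$ must actually be reducible and a \emph{proper} subalgebra for this to be an MGS of type (III) — this is where $V\in\frak{V}^L$ (nontrivial proper subspace) is used, and one checks $\mathcal{M}_0(V)$ stabilizes $V$ so it is reducible, and $\mathcal{M}_0(V)\neq L_0$ since the cobasis $\mathcal{A}_3$ in \eqref{liuliumelikyan311} is nonempty.

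For (2), the implication $V\cong V'\Rightarrow$ isomorphic algebras follows from Lemmas \ref{mgsl3} and \ref{mgsl12731}(2): the $\mathbb{Z}$-homogeneous automorphism $\Phi_L$ with $\Phi_L(V)=V'$ satisfies $\Phi_L(\mathcal{M}_0(V))=\mathcal{M}_0(\Phi_L(V))=\mathcal{M}_0(V')$ (this needs the evident equivariance $\Phi(\mathcal{M}_i(L_{-1},\mathfrak{G}_0))=\mathcal{M}_i(L_{-1},\Phi(\mathfrak{G}_0))$, which is proved by the same induction as in Lemma \ref{mgsl12731}(2)), hence $\Phi_L(\mathcal{M}(L_{-1},\mathcal{M}_0(V)))=\mathcal{M}(L_{-1},\mathcal{M}_0(V'))$. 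For the converse, any $\mathbb{Z}$-graded algebra isomorphism carries the $(-1)$-component to the $(-1)$-component and the $0$-component to the $0$-component; restricting to degree $0$ it must come from an automorphism of $L_0$ (since $L_0$ acts faithfully on $L_{-1}$, up to the center in the $K$-case), and one reads off that it sends $\mathcal{M}_0(V)$ to $\mathcal{M}_0(V')$, forcing $\mathcal{M}_0(V)\cong\mathcal{M}_0(V')$ as subalgebras of $L_0$; by the same permutation-of-$\partial_i$ argument used in Lemma \ref{yinlimax-noncontain}(2), this forces $\mathrm{superdim}\,V=\mathrm{superdim}\,V'$, i.e. $V\cong V'$. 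Then (3) is immediate: the number of isomorphism classes equals the number of possible $(k,l)$ with $(k,l)\neq(0,0),(m,n)$, which is $(m+1)(n+1)-2$, exactly matching Theorem \ref{theorem02llm}(3).

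It remains to establish the dimension formulas in (4). The clean way is to compute $\dim\mathcal{M}(L_{-1},\mathcal{M}_0(V))$ degree by degree using the explicit description of each $\mathcal{M}_i(L_{-1},\mathfrak{G}_0)$. Because $M_{-1}=L_{-1}$ and $M_0=\mathcal{M}_0(V)$, I would show $\mathcal{M}_i(L_{-1},\mathcal{M}_0(V))$ for $i\geq 1$ consists of those $D\in L_i$ all of whose iterated brackets with $L_{-1}$ land back in $\mathcal{M}_0(V)$ at degree $0$; in the standard basis $\{x^{(\alpha)}x^u\partial_j\}$ this means $x^{(\alpha)}x^u\partial_j\in M_i$ precisely when for every way of differentiating down to degree $1$ the resulting $x_s\partial_j$ lies in $\mathcal{A}_1\cup\mathcal{A}_2$ — equivalently $j\notin\overline{\mathbf{I}}(k,l)$ whenever the monomial $x^{(\alpha)}x^u$ involves a variable $x_s$ with $s\in\mathbf{I}(k,l)$. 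A bookkeeping count then gives, for $L=W$, that $M$ is spanned by all $x^{(\alpha)}x^u\partial_j$ with $j\in\mathbf{I}(k,l)$ together with those $x^{(\alpha)}x^u\partial_j$ with $j\in\overline{\mathbf{I}}(k,l)$ and $x^{(\alpha)}x^u\in\mathcal{O}_{\overline{\mathbf{I}}(k,l)}$ (only the complementary variables occur), which has dimension $2^n p^m(k+l)+2^{n-l}p^{m-k}(m+n-k-l)$; for $L=S$ one subtracts the single divergence condition inside each piece, yielding $2^{n-l}p^{m-k}(m+n)+2^n p^m(k+l-1)-k-1$. The main obstacle is precisely getting this last combinatorial identification of $\mathcal{M}_i(L_{-1},\mathcal{M}_0(V))$ right — in particular checking that no ``higher-degree leakage'' occurs (an element whose first bracket with $L_{-1}$ already fails to be in $\mathcal{M}_{i-1}$ but which one might naively keep), and handling the $S$-case divergence-correction carefully — but this is the same style of argument already carried out for $\mathcal{M}(V)$ in the proof of Theorem \ref{theorem02llm}, so it is routine rather than deep.
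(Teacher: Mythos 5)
Your overall strategy is the paper's: reduce to the maximality of $\mathcal{M}_0(V)$ in $L_0$, rule out the case $M_0=L_0$ by exhibiting a degree-one element incompatible with Theorem \ref{thm01}, recover uniqueness via $\mathcal{M}_0$, and count a monomial basis for (4) (your basis description and both formulas agree with the paper's). But there is a concrete error at the pivotal step of (1): the witness element you borrow from Proposition \ref{prowmax}, $A=(-1)^{|x_h|}x_ix_j\partial_j-(-1)^{|x_j|}x_ix_h\partial_h$ with $i\in\mathbf{I}(k,l)$, $j\in\overline{\mathbf{I}}(k,l)$, does \emph{not} lie in $\mathcal{M}_1(L_{-1},\mathcal{M}_{0}(V))$. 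Indeed $[\partial_j,A]=\pm\, x_i\partial_j$, which belongs to the co-basis $\mathcal{A}_3$ of (\ref{liuliumelikyan311}) and hence not to $\mathcal{M}_0(V)$; so $[L_{-1},A]\not\subset\mathcal{M}_0(V)$. The element $A$ works in the type (II) setting because there one only brackets with $V$ (spanned by $\partial_s$, $s\in\mathbf{I}(k,l)$), whereas in the type (III) construction (\ref{m33}) one must bracket with all of $L_{-1}$; this is exactly the strict inclusion $\mathcal{M}_i(L_{-1},\frak{G}_0)\subset\mathcal{M}_i(V)$ recorded in Lemma \ref{lem331ok}. As written, your contradiction with $\overline{M}_1\in\{W_1',W_1'',S_1'',0\}$ therefore evaporates. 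The fix is the one the paper uses in Proposition \ref{Prollm54}: for $L=W$ take $x_jx_i\partial_i$ with $i\in\mathbf{I}(k,l)$ and $j$ arbitrary (its brackets with every $\partial_s$ land in $\mathcal{A}_1\cup\mathcal{A}_2$, yet it has nonzero divergence and is not of the form $f\mathfrak{D}$); for $L=S$ take $x_jx_r\partial_i$ with $i,j,r$ distinct and $i\in\mathbf{I}(k,l)$.

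Two smaller points. First, in the same step you say ``if $M_0=L_0$ then $M$ is an MGS of type (I)'' although your $M$ is only assumed to be a graded subalgebra strictly containing $\mathcal{M}(L_{-1},\mathcal{M}_0(V))$; you should either take $M$ to be an MGS containing it from the start (as in Proposition \ref{Prollm54}) or pass to an MGS of type (I) containing $M$ before invoking Theorem \ref{thm01}. Second, in (2) the phrase ``restricting to degree $0$ it must come from an automorphism of $L_0$'' is unjustified (an isomorphism of the proper subalgebras $\mathcal{M}_0(V)\to\mathcal{M}_0(V')$ need not extend a priori); the paper's cleaner route is that a graded isomorphism $\Phi$ satisfies $\Phi(L_{-1})=L_{-1}$ and $\Phi(\mathcal{M}_0(V))=\mathcal{M}_0(V')\subseteq\mathcal{M}_0(\Phi(V))$, and maximality together with Lemma \ref{yinlimax-noncontain}(2) then gives $\Phi(V)=V'$, hence $\mathrm{superdim}\,V=\mathrm{superdim}\,V'$.
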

Let $L=H$ or $K$ and  put $V^{\bot}=\{u\in L_{-1}\mid \beta(u, V)=0\}$ for $V\in\frak{V}^L$. Recall definitions (\ref{12731eq1})--(\ref{eqsb2}) and (\ref{mgs130407notion}).
\begin{theorem}\label{mgs12710t1}
All    maximal R-subalgebras  of $H$ and $K$   are characterized as follows:

For $H$,
\begin{itemize}
\item[$(1)$]  All maximal  R-subalgebras of $H$  are precisely:
$$\left\{\mathcal{M}(H_{-1}, \mathcal{M}_0(V))\mid V\in\mathcal{V}_{\frak{n}}^H\cup\mathcal{V}_{\frak{i}}^H\right\}.$$
\item[$(2)$] Suppose $V, V'\in \mathcal{V}_{\frak{n}}^H\cup \mathcal{V}_{\frak{i}}^H$.
Then
$$\mathcal{M}(H_{-1}, \mathcal{M}_0(V))\stackrel{\mathrm{algebra}}{\cong}
\mathcal{M}(H_{-1}, \mathcal{M}_0(V'))$$
if and only if one of the following conditions holds.
\begin{itemize}
\item[(i)] $ V{\cong}V'.$
\item[(ii)] $V^{\bot}{\cong}V'$ when  $V$ and $V'$ are both nondegenerate.
  \end{itemize}
\item[$(3)$] $H$ has exactly $\phi(r, n)$  isomorphism  classes of    maximal R-subalgebras,  where
\[
\phi(r, n)=\left\{\begin{array}{ll}
                       2^{-1}(nr+3n+2r-2)+\lfloor\frac{r}2\rfloor (n+1), & n \ \mbox{ is even};\\
 2^{-1}(nr+3n+r-1)+\lfloor\frac{r}2\rfloor (n+1), & n \ \mbox{ is odd}.
                    \end{array}
\right.
\]
\item[$(4)$] Suppose $V\in \mathcal{V}_{\frak{n}}^H\cup \mathcal{V}_{\frak{i}}^H$ with $\beta$-$\mathrm{dim}V=(a, b, c, d),$ then
\[
\dim\mathcal{M}(H_{-1}, \mathcal{M}(V))=\left\{\begin{array}{ll}
                      p^{2a}2^d+p^{2(r-a)}2^{(n-d)}-2, & V\in\mathcal{V}_{\frak{n}}^H;\\
                      p^{m-b}2^{n-c}+(b+c)p^b2^c-1, &  V\in\mathcal{V}_{\frak{i}}^H.
                    \end{array}
\right.
\]
\end{itemize}

For $K$,
\begin{itemize}
\item[$(1')$]
All   maxima R-subalgebras of $K$  are precisely:
$$\left\{\mathcal{M}(K_{-1}, \mathcal{M}_0(V))\mid V\in\mathcal{V}_{\frak{i}}^K\right\}.$$
\item[$(2')$] Suppose $V, V'\in \mathcal{V}_{\frak{i}}^K$.
Then
$$\mathcal{M}(K_{-1}, \mathcal{M}_0(V))\stackrel{\mathrm{algebra}}{\cong}
\mathcal{M}(K_{-1}, \mathcal{M}_0(V'))\Longleftrightarrow V{\cong}V'.$$

\item[$(3')$] $K$ has exactly $\phi(r, n)$  isomorphism  classes of maximal R-subalgebras, where
\[
\phi(r, n)=\left\{\begin{array}{ll}
                       2^{-1}(rn+n+2r-2), & n \ \mbox{ is even};\\
2^{-1}(rn+n+r-1), & n \ \mbox{ is odd}.
                    \end{array}
\right.
\]
\item[$(4')$] Suppose $V\in\mathcal{V}_{\frak{i}}^K$ with  $\beta$-$\mathrm{dim}V=(0, b, c, 0),$ then
\begin{align*}
 \dim\mathcal{M}(K_{-1}, \mathcal{M}_0(V))
= \left\{\begin{array}{ll}
                      p^{b+1}2^c(b+c+1), & J_3 \ \mbox{ is empty };\\
                     p^{b}2^{c}(p^{m-2b}2^{n-2c}+b+c), &  \mbox{ otherwise}.
                    \end{array}
\right.
\end{align*}
\end{itemize}
\end{theorem}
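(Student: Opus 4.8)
The plan is to run, for $L=H$ or $K$, the reduction already used for Theorem \ref{03} and then to determine which of the resulting candidates are genuinely maximal. First I would record the correspondence with maximal subalgebras of the null $L_0$. If $\mathfrak{G}$ is a maximal R-subalgebra then $\mathfrak{G}_{-1}=L_{-1}$, $\mathfrak{G}_0\neq L_0$ is reducible, and $\mathfrak{G}\subseteq\mathcal{M}(L_{-1},\mathfrak{G}_0)\subsetneq L$ from the definition of the right-hand side, so $\mathfrak{G}=\mathcal{M}(L_{-1},\mathfrak{G}_0)$; moreover $\mathfrak{G}_0$ must be a maximal subalgebra of $L_0$, for otherwise any $\mathfrak{H}_0$ with $\mathfrak{G}_0\subsetneq\mathfrak{H}_0\subsetneq L_0$ produces a type-$\mathrm{(III)}$ graded subalgebra $\mathcal{M}(L_{-1},\mathfrak{H}_0)$ strictly between $\mathfrak{G}$ and $L$. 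Conversely, for a maximal subalgebra $\mathfrak{G}_0\neq L_0$ of $L_0$ the graded subalgebra $\mathcal{M}(L_{-1},\mathfrak{G}_0)$ can sit in no type-$\mathrm{(II)}$ MGS (its $(-1)$-part is all of $L_{-1}$) and in no strictly larger type-$\mathrm{(III)}$ subalgebra, so the only remaining obstruction to its maximality is containment in a type-$\mathrm{(I)}$ MGS from Theorem \ref{thm01}; and it is reducible precisely when $\mathfrak{G}_0$ is.

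Next I would pin down the reducible maximal subalgebras of $L_0$. By Lemma \ref{dibu}(3), $H_0\cong\frak{osp}(2r,n)$ and $K_0\cong\frak{osp}(2r,n)\oplus\mathbb{F}I_{2r+n}$ act on $L_{-1}$ as the natural module carrying the nondegenerate form $\beta$, so a proper subalgebra is reducible iff it stabilizes some $V\in\frak{V}^L$, i.e. lies in $\mathcal{M}_0(V)$, and a reducible maximal subalgebra is thus exactly an $\mathcal{M}_0(V)$ that is maximal in $L_0$. Since $\beta$ is nondegenerate one has $\mathcal{M}_0(V)=\mathcal{M}_0(V^{\bot})$ and $\mathcal{M}_0(V)\subseteq\mathcal{M}_0(\mathrm{rad}\,V)$, and by Lemma \ref{mgs1273l2}(2) (with its $K$-version via $\mathcal{M}_0^K(V)=\mathcal{M}_0^H(V)\oplus\mathbb{F}z$, Lemma \ref{mgs1277l1}(1)) the subalgebra $\mathcal{M}_0(V)$ is maximal in $L_0$ exactly for $V\in\mathcal{V}_{\frak{n}}^L\cup\mathcal{V}_{\frak{i}}^L\cup\mathcal{V}_{\frak{d}}^L$. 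A short inspection of $\beta$-bases shows that for $V\in\mathcal{V}_{\frak{d}}^L$ the complement $V^{\bot}$ is isotropic with $\mathcal{M}_0(V)=\mathcal{M}_0(V^{\bot})$, so the reducible maximal subalgebras of $L_0$ are precisely the $\mathcal{M}_0(V)$ with $V\in\mathcal{V}_{\frak{n}}^L\cup\mathcal{V}_{\frak{i}}^L$, a nondegenerate $V$ giving the same subalgebra as $V^{\bot}$.

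Third is the maximality test in $L$. For $H$ the explicit description of $\mathcal{M}_i(V)$, $i\ge1$, in Lemma \ref{mgs1273l2}(1) shows $\mathcal{M}_1(V)\neq0$, so $\mathcal{M}(H_{-1},\mathcal{M}_0(V))$ is not contained in the unique type-$\mathrm{(I)}$ MGS $H_{-1}+H_0$ of $H$; hence every $V\in\mathcal{V}_{\frak{n}}^H\cup\mathcal{V}_{\frak{i}}^H$ gives a maximal R-subalgebra, which with the first paragraph yields $(1)$. For $K$ the degree-$2$ element $z$ interferes: by Lemma \ref{mgs1277l1} the positive part of $\mathcal{M}(K_{-1},\mathcal{M}_0(V))$ is built from the truncated spaces $\widetilde{\mathcal{M}}_j(V)$ together with their $z$-powers, and a direct computation shows that for non-isotropic $V$ this subalgebra is absorbed into one of the two type-$\mathrm{(I)}$ MGS of $K$ in Theorem \ref{thm01}(4), hence is not maximal; for isotropic $V$ maximality follows from the irreducibility of $K_{-1},K_{10},K_{11}$ (Lemma \ref{mgsl2}(2)) exactly as in Propositions \ref{mgs1277p3} and \ref{mgs1282p1}, and no $\bot$-ambiguity appears because $V^{\bot}$ is then coisotropic, lying in $\mathcal{V}_{\frak{d}}^K$ rather than $\mathcal{V}_{\frak{i}}^K$. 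This is $(1')$.

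Finally, $(2)$ and $(2')$ go as in Theorems \ref{theorem02llm}(2) and \ref{03}(2): an algebra isomorphism of two such subalgebras forces $\dim\mathcal{M}_0(V)=\dim\mathcal{M}_0(V')$, and a short orbit-dimension computation leaves only $V\cong V'$ or, when both are nondegenerate, $V^{\bot}\cong V'$, the converse directions coming from Lemma \ref{mgsl3} together with the identity $\mathcal{M}_0(V)=\mathcal{M}_0(V^{\bot})$; $(3)$ and $(3')$ are then the count of $\beta$-dimensions $(a,b,c,d)$ over $\mathcal{V}_{\frak{n}}^L\cup\mathcal{V}_{\frak{i}}^L$ (resp. $\mathcal{V}_{\frak{i}}^K$) with a nondegenerate datum identified with its $\bot$, a routine summation producing $\phi(r,n)$; and $(4)$, $(4')$ follow by reading off a monomial basis of $\mathcal{M}(L_{-1},\mathcal{M}_0(V))$ from Lemmas \ref{mgs1273l2}(1) and \ref{mgs1277l1}, split into the cases $V$ nondegenerate / isotropic and, for $K$, $J_3$ empty / nonempty, in the spirit of Theorem \ref{theorem02llm}(4). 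I expect the main obstacle to be the third paragraph: recognizing that a maximal subalgebra of $L_0$ need not yield a maximal graded subalgebra of $L$ — which is what forces the non-isotropic subspaces out for $K$ — and that the subspaces of $\mathcal{V}_{\frak{d}}^L$ must be traded for their isotropic $\bot$-partners; both rest on the delicate $\beta$-basis and $z$-filtration bookkeeping of Lemmas \ref{mgs1273l2} and \ref{mgs1277l1}, in the manner of Propositions \ref{mgs1276p1}, \ref{mgs1277p3} and \ref{mgs1282p1}.
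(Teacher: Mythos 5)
Your overall architecture coincides with the paper's: you reduce maximal R-subalgebras to $\mathcal{M}(L_{-1},\frak{G}_0)$ with $\frak{G}_0$ a maximal reducible subalgebra of $L_0$ (Lemmas \ref{lem51}, \ref{lem52llm}, \ref{lem331ok}), you fold the degenerate class $\mathcal{V}_{\frak{d}}^L$ into the isotropic one via the $\bot$-partner (in substance Lemma \ref{mgs120906l2}), and for $K$ you correctly identify that nondegenerate $V$ must be discarded because $\mathcal{M}(K_{-1},\mathcal{M}_0(V))$ is absorbed by the type-$\mathrm{(I)}$ MGS $K_{-2}+K_{-1}+K_0+\sum_{i}K_{i0}$ (Proposition \ref{1278p408}(2)); the isomorphism and counting parts (2), $(2')$, (3), $(3')$ also follow the paper's line.

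The genuine gap is your recurring conflation of the type-$\mathrm{(II)}$ spaces $\mathcal{M}_i(V)$ with the type-$\mathrm{(III)}$ spaces $\mathcal{M}_i(L_{-1},\mathcal{M}_0(V))$: by Lemma \ref{lem331ok} one only has $\mathcal{M}_i(L_{-1},\frak{G}_0)\subset\mathcal{M}_i(V)$, and the inclusion is strict in general. First, for $H$ you deduce non-containment in $H_{-1}+H_0$ from ``$\mathcal{M}_1(V)\neq 0$'' (Lemma \ref{mgs1273l2}(1)); what is needed is $\mathcal{M}_1(H_{-1},\mathcal{M}_0(V))\neq 0$, which does not follow from that and requires exhibiting an element of $H_1$ pulled into $\mathcal{M}_0(V)$ by \emph{all} of $H_{-1}$, e.g. $y^{(3\varepsilon_k)}$ with $k$ in a nonempty $I_{0i}$, as in the proof of Proposition \ref{1278p408}(1) --- a small but necessary repair. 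Second, and more seriously, you propose to obtain (4) and $(4')$ by ``reading off a monomial basis'' from Lemmas \ref{mgs1273l2}(1) and \ref{mgs1277l1}, which describe only $\mathcal{M}(V)$ and $\mathcal{M}^K(V)$. For isotropic $V$ in $K$, Lemma \ref{mgs1277l1}(3) lets the $z$-coefficients of $\mathcal{M}^K_i(V)$ run over all of $\overline{H}$, which is far larger than the type-$\mathrm{(III)}$ algebra, whose elements with $[1,u]\neq 0$ are precisely linear combinations of $f(z+x)^j+g$ with $f\in\mathcal{O}^+_{J_2}\mathcal{Q}^+_{\bar{J}_2}$ (or $\mathcal{O}^+_{J_2\cup J_3}$) and $x=\sum_{i\in J_2}y_iy_{\widetilde{i}}$; similarly for $H$ the correct description is $\mathcal{O}_{J_1}\oplus\mathcal{O}_{J_3}$, resp. $\mathcal{O}_{J_2\cup J_3}\oplus\mathcal{O}^+_{J_2}\mathcal{Q}_{\bar{J}_2}$, not the $\nu$-value spans of Lemma \ref{mgs1273l2}(1). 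This explicit determination is the content of the paper's Lemmas \ref{mgs121113l2} and \ref{mgs12713l1} and constitutes the bulk of the work behind the dimension formulas; taken literally, your reading-off would yield much larger, incorrect dimensions, so this step is a missing piece of the argument rather than routine bookkeeping.
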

Unfortunately, for maximal S-subalgebras, we have not
obtained a similar description   as for the maximal graded subalgebras of type $(\mathrm{I})$ or $(\mathrm{II})$   as well as for the maximal R-subalgebras. However, the classification of maximal S-subalgebras of $L$ can be reduced to that of the maximal irreducible subalgebras of the classical Lie superalgebras  (see Lemma \ref{dibu}(3)).
\begin{theorem}\label{mgsS-iii} Suppose $L=W, S, H$ or $K$.
All   maximal S-subalgebra of $L$    are characterized as follows:
\item[$\mathrm{(1)}$]  Every maximal S-subalgebra of $L$ is of the form $\mathcal {M}(L_{-1},\mathfrak{G}_{0}),$
where $\mathfrak{G}_{0}$ is a maximal irreducible subalgebra of $L_{0}$.
\item[$\mathrm{(2)}$] Suppose $\mathfrak{G}_{0}$ is a maximal irreducible subalgebra of $L_{0}$.
\begin{itemize}
\item[$\mathrm{(a)}$] For $L=W$,  $\mathcal {M}(W_{-1},\mathfrak{G}_{0})$ is maximal in $W$ if and only if
 $$\mathrm{div}(\mathcal {M}_{1}(W_{-1},\mathfrak{G}_{0}))\neq 0.$$
\item[$\mathrm{(b)}$] For $L=S$ or $H$, $\mathcal {M}(L_{-1},\mathfrak{G}_{0})$ is maximal in $L$ if and only if
$$\mathcal {M}_{1}(L_{-1},\mathfrak{G}_{0})\neq 0.$$
\item[$\mathrm{(c)}$] For $L=K$,  $\mathcal{M}(K_{-1}, \frak{G}_0)$ is a maximal  in $K$ if and only if  there exists $u\in\mathcal{M}_1(K_{-1}, \frak{G}_0)$ satisfying
$$[1, u]\not=0.$$
\end{itemize}
\end{theorem}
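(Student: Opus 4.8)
\emph{Overview and Part (1).} The plan is to transfer the problem to the $L_0$-module $L_{-1}$, in the spirit of the modular Lie algebra case. Write $\mathcal{M}(\mathfrak{G}_0)=\mathcal{M}(L_{-1},\mathfrak{G}_0)$; by the remark preceding the theorem this is always a graded subalgebra of type $(\mathrm{III})$. Two elementary observations recur: \emph{monotonicity} --- $\mathfrak{G}_0\subseteq\mathfrak{G}_0'$ implies $\mathcal{M}_i(\mathfrak{G}_0)\subseteq\mathcal{M}_i(\mathfrak{G}_0')$ for all $i$, by induction from the recursion~(\ref{m33}), and strictly so in degree $0$; and \emph{saturation} --- if a graded subalgebra $N$ satisfies $N_{-1}=L_{-1}$ and $N_0=\mathfrak{G}_0$, then $N\subseteq\mathcal{M}(\mathfrak{G}_0)$ (induction: $[L_{-1},N_i]\subseteq N_{i-1}\subseteq\mathcal{M}_{i-1}(\mathfrak{G}_0)$ forces $N_i\subseteq\mathcal{M}_i(\mathfrak{G}_0)$). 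Now let $M$ be a maximal S-subalgebra: by definition $M_{-1}=L_{-1}$, $M_0\neq L_0$, and $M_0$ acts irreducibly on $L_{-1}$. Saturation gives $M\subseteq\mathcal{M}(M_0)$, which is proper of type $(\mathrm{III})$, so $M=\mathcal{M}(M_0)$ by maximality. If $\mathfrak{H}$ were a subalgebra with $M_0\subsetneq\mathfrak{H}\subsetneq L_0$, monotonicity would make $\mathcal{M}(\mathfrak{H})$ a proper graded subalgebra of type $(\mathrm{III})$ strictly containing $M$, a contradiction; so $M_0$ is a maximal subalgebra of $L_0$ acting irreducibly, which is what ``maximal irreducible subalgebra'' means. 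This proves (1).

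\emph{Part (2), necessity.} Suppose the displayed condition for $\mathfrak{G}_0$ fails; the defect propagates to all positive degrees. By transitivity (Lemma~\ref{dibu}(1)), $\mathcal{M}_1(\mathfrak{G}_0)=0$ forces $\mathcal{M}_i(\mathfrak{G}_0)=0$ for all $i\geq1$. Since $\mathrm{div}$ is a $W$-module derivation into $\mathcal{O}$ and $\mathcal{O}_i$ ($i\geq1$) has no nonzero constant, $\mathrm{div}\,\mathcal{M}_1(\mathfrak{G}_0)=0$ forces $\mathrm{div}\,\mathcal{M}_i(\mathfrak{G}_0)=0$, i.e. $\mathcal{M}_i(\mathfrak{G}_0)\subseteq W_i'$, for all $i\geq1$. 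Using $[1,K_{-1}]=0$ and the Jacobi identity, $[1,\mathcal{M}_1(\mathfrak{G}_0)]=0$ forces $[1,\mathcal{M}_i(\mathfrak{G}_0)]=0$, i.e. $\mathcal{M}_i(\mathfrak{G}_0)\subseteq K_{i0}$, for all $i\geq1$. Hence $\mathcal{M}(\mathfrak{G}_0)$ lies inside $L_{-1}+L_0$ when $L=S,H$; inside $W_{-1}+W_0+W_1'+\cdots+W_{\xi-2}'$ when $L=W$; and inside $K_{-2}+K_{-1}+K_0+\sum_{i\geq1}K_{i0}$ when $L=K$. Each of these is a proper subalgebra of $L$ (Theorem~\ref{thm01}) and, having degree-$0$ part $L_0\neq\mathfrak{G}_0$, strictly contains $\mathcal{M}(\mathfrak{G}_0)$; so $\mathcal{M}(\mathfrak{G}_0)$ is not maximal.

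\emph{Part (2), sufficiency --- the reduction.} Assume the condition holds and let $N\supsetneq\mathcal{M}(\mathfrak{G}_0)$ be a graded subalgebra; I must show $N=L$. Then $N_{-1}=L_{-1}$ and $N_0\supseteq\mathfrak{G}_0$; saturation excludes $N_0=\mathfrak{G}_0$, so $N_0=L_0$ by maximality of $\mathfrak{G}_0$. Now $N_1=N\cap L_1$ is an $L_0$-submodule of $L_1$ containing $\mathcal{M}_1(\mathfrak{G}_0)$, and by Lemma~\ref{dibu}(2) it is enough to prove $N_1=L_1$. For $L=H$ this is immediate: $H_1$ is $H_0$-irreducible (Lemma~\ref{mgsl2}(1)) and $\mathcal{M}_1(\mathfrak{G}_0)\neq0$. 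For $L=W$, $S$, $K$ the $L_0$-submodule lattice of $L_1$ (Lemmas~\ref{lem1.3},~\ref{lemxin1.9}(1),~\ref{B}(5); Corollary~\ref{1.10}; Lemma~\ref{mgsl2}(2), respectively, noting $K_{10}\not\cong K_{11}$) shows that the only proper possibility for $N_1$ is the distinguished irreducible submodule $P=W_1''$, $S_1''$, $K_{11}$ respectively --- the hypothesis already rules out the still smaller submodules since $\mathcal{M}_1(\mathfrak{G}_0)\not\subseteq W_1'$ (resp. $\neq0$, $\not\subseteq K_{10}$). Thus everything comes down to showing $\mathcal{M}_1(\mathfrak{G}_0)\not\subseteq P$.

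\emph{The main obstacle.} This last step is where the real content sits. The submodule $P$ is $L_0$-irreducible (by the cited lemmas) and, up to a character twist, is $L_{-1}^{\ast}$ when $L=W,S$ and $L_{-1}$ when $L=K$ (for $K$ the central element of $K_0$ acts on $K_{11}$ by a scalar); consequently $P$ is already irreducible over $\mathfrak{G}_0$, since $\mathfrak{G}_0$ acts irreducibly on $L_{-1}$. As $\mathcal{M}_1(\mathfrak{G}_0)$ is a nonzero $\mathfrak{G}_0$-submodule of $L_1$, the inclusion $\mathcal{M}_1(\mathfrak{G}_0)\subseteq P$ would force $\mathcal{M}_1(\mathfrak{G}_0)=P$, hence $[L_{-1},P]\subseteq\mathcal{M}_0(\mathfrak{G}_0)=\mathfrak{G}_0$. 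It remains to see this cannot happen, and here one computes brackets directly: for $K$, the contact bracket gives $[K_{-1},K_{11}]=K_0$ outright; for $W,S$, the identity $[\partial_j,x_k\mathfrak{D}]=\delta_{jk}\mathfrak{D}\pm x_k\partial_j$ shows $[L_{-1},P]$ contains every off-diagonal root vector, whence $[L_{-1},P]$ equals $\mathfrak{gl}(m,n)=W_0$ when $m-n+1\not\equiv0\pmod p$ and $\mathfrak{sl}(m,n)=S_0$ when $m-n+1\equiv0\pmod p$ (keeping track of the divergence of the diagonal terms $\mathfrak{D}\pm x_k\partial_k$). In the first case $\mathfrak{G}_0\supseteq L_0$, contradicting properness; in the second case $W_1''\subseteq W_1'$ (Lemma~\ref{lem1.3}(3)), so for $L=W$ the hypothesis $\mathrm{div}\,\mathcal{M}_1(\mathfrak{G}_0)\neq0$ already forbids $\mathcal{M}_1(\mathfrak{G}_0)\subseteq W_1''$, while for $L=S$ --- the only case in which $S_1''$ is a proper submodule --- $[S_{-1},S_1'']=S_0$ again contradicts $\mathfrak{G}_0\neq L_0$. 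Therefore $N_1=L_1$ and $N=L$, which with (1) and the necessity completes the classification. The delicate ingredient throughout is exactly the pin-pointing of $[L_{-1},P]$ together with the modular bookkeeping; the remainder is transitivity, monotonicity, and the $L_0$-module structure of $L_1$ established earlier.
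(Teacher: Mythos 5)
Your proposal is correct, and it is built from the same three pillars as the paper's proof: necessity by trapping $\mathcal{M}(L_{-1},\mathfrak{G}_{0})$ inside one of the type-(I) subalgebras of Theorem~\ref{thm01}, the $\mathfrak{G}_{0}$-irreducibility of the distinguished submodule ($W_{1}''\cong(W_{-1})^{*}$, $S_1''$, resp. $K_{11}\cong K_{-1}$ up to a character), and the bracket facts $[W_{-1},W_{1}'']\subseteq\mathfrak{G}_{0}\Rightarrow\mathfrak{G}_{0}=L_0$ and $[K_{-1},K_{11}]=K_{0}$. Where you genuinely diverge is in how the sufficiency is organized. For $W$ and $S$ the paper takes an MGS $M\supsetneq\mathcal{M}(L_{-1},\mathfrak{G}_{0})$, notes it is of type (I), and reads $M_1\in\{W_1',W_1''\}$ off Theorem~\ref{thm01}; you instead take an arbitrary graded oversubalgebra $N$, force $N_0=L_0$ by saturation plus maximality of $\mathfrak{G}_{0}$, and run $N_1$ through the $L_0$-submodule lattice of $L_1$ (Lemmas~\ref{lem1.3}, \ref{B}(5), \ref{lemxin1.9}, Corollary~\ref{1.10}, Lemma~\ref{mgsl2}) --- logically equivalent, and your sharper span claim $[W_{-1},W_1'']=\mathfrak{gl}(m,n)$ when $m-n+1\not\equiv0\pmod p$ does hold (the paper only needs $\mathrm{alg}([W_{-1},W_1''])=W_0$, which is the safer statement and suffices). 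The real payoff is in case (c): the paper first proves $u_0\neq0$ and then performs the torus/weight-vector manipulation of Remark~\ref{mgsr2} (its Cases 1--3) to produce nonzero elements of both $K_{10}$ and $K_{11}$ inside $\mathrm{alg}(\mathcal{M}(K_{-1},\mathfrak{G}_{0})+\mathbb{F}h)$ before invoking irreducibility, whereas you simply observe that $N_1$ is a $K_0$-submodule of $K_1=K_{10}\oplus K_{11}$ not contained in $K_{10}$ (here $K_{10}\not\cong K_{11}$ matters, as you note), so $N_1=K_{11}$ or $K_1$, and the alternative $N_1=K_{11}$ dies on $[K_{-1},K_{11}]=K_{0}\not\subseteq\mathfrak{G}_{0}$ --- the very computation the paper uses only to get $u_0\neq0$. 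So your route is a streamlined, more uniform version of the paper's, noticeably shorter in the contact case, at the modest cost of carrying the extra span computation for $W$ and $S$.
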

Let $L=W, S, H$ or $K$. As in the case of modular Lie algebras \cite{HM}, it is easy to show the following lemmas.
\begin{lemma}\label{lemma113551} Let $\mathfrak{G}_{0}$ be a nontrivial  subalgebra of $L_0$.
  If $\Phi$ is a $\mathbb{Z}$-homogeneous automorphism of $L$. Then
$$\Phi(\mathcal{M}_i(L_{-1},\frak{G}_0))=\mathcal{M}_i(L_{-1}, \Phi(\frak{G}_0))\ \mbox{ for all } i\geq -2.$$
Moreover,
$$\Phi(\mathcal{M}(L_{-1},\frak{G}_0))=\mathcal{M}(L_{-1}, \Phi(\frak{G}_0)).$$
\end{lemma}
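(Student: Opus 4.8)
The plan is to prove the graded identities $\Phi(\mathcal{M}_i(L_{-1},\mathfrak{G}_0))=\mathcal{M}_i(L_{-1},\Phi(\mathfrak{G}_0))$ by induction on $i$, starting from the bottom of the grading and working up, and then to sum over $i$ to get the last assertion. Two elementary facts about a $\mathbb{Z}$-homogeneous automorphism $\Phi$ are all that is needed: it preserves each $\mathbb{Z}$-component, so $\Phi(L_j)=L_j$ for every $j$ (in particular for $j=-1$ and $j=-2$), and it commutes with the bracket, $\Phi([x,y])=[\Phi(x),\Phi(y)]$ for all $x,y\in L$. I would also note at the outset that $\Phi^{-1}$ is again a $\mathbb{Z}$-homogeneous automorphism; this will supply the reverse inclusions for free, so it suffices to prove one inclusion in each degree.

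For the base of the induction, in every negative degree $\mathcal{M}_j(L_{-1},\mathfrak{G}_0)$ equals the corresponding component of $L$ and does not involve $\mathfrak{G}_0$, so $\Phi(\mathcal{M}_j(L_{-1},\mathfrak{G}_0))=\Phi(L_j)=L_j=\mathcal{M}_j(L_{-1},\Phi(\mathfrak{G}_0))$; and in degree $0$ the statement is the definitional identity $\Phi(\mathcal{M}_0(L_{-1},\mathfrak{G}_0))=\Phi(\mathfrak{G}_0)=\mathcal{M}_0(L_{-1},\Phi(\mathfrak{G}_0))$. For the inductive step I would fix $i>0$, assume the equality for $i-1$, and take $u\in\mathcal{M}_i(L_{-1},\mathfrak{G}_0)$. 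Then $\Phi(u)\in\Phi(L_i)=L_i$, and using bracket-compatibility together with $\Phi(L_{-1})=L_{-1}$ and the induction hypothesis one gets $[L_{-1},\Phi(u)]=\Phi([L_{-1},u])\subset\Phi(\mathcal{M}_{i-1}(L_{-1},\mathfrak{G}_0))=\mathcal{M}_{i-1}(L_{-1},\Phi(\mathfrak{G}_0))$, which by definition (\ref{m33}) says exactly that $\Phi(u)\in\mathcal{M}_i(L_{-1},\Phi(\mathfrak{G}_0))$. Hence $\Phi(\mathcal{M}_i(L_{-1},\mathfrak{G}_0))\subseteq\mathcal{M}_i(L_{-1},\Phi(\mathfrak{G}_0))$; applying this same inclusion to $\Phi^{-1}$ in place of $\Phi$ and to $\Phi(\mathfrak{G}_0)$ in place of $\mathfrak{G}_0$ yields $\Phi^{-1}(\mathcal{M}_i(L_{-1},\Phi(\mathfrak{G}_0)))\subseteq\mathcal{M}_i(L_{-1},\mathfrak{G}_0)$, i.e. the reverse inclusion, so the two spaces coincide. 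Taking the direct sum over all $i\geq-2$ then gives $\Phi(\mathcal{M}(L_{-1},\mathfrak{G}_0))=\mathcal{M}(L_{-1},\Phi(\mathfrak{G}_0))$.

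I do not expect any genuine obstacle here; the argument is the exact analogue of Lemma \ref{mgsl12731}(2) for the construction $\mathcal{M}(L_{-1},\mathfrak{G}_0)$, and of the corresponding statement in the modular Lie algebra setting of \cite{HM}. The only points worth a word of care are that $\Phi$ must be taken $\mathbb{Z}$-homogeneous — otherwise $\Phi(L_{-1})$ need not coincide with $L_{-1}$ and the degree-by-degree induction breaks down at once — and that one should obtain the reverse inclusions by invoking $\Phi^{-1}$ rather than attempting both directions simultaneously. Accordingly, in the write-up I would present the induction in a few lines and, if space is tight, simply refer to the parallel with Lemma \ref{mgsl12731}(2).
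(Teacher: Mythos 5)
Your argument is correct and is exactly the routine induction the paper has in mind: the paper gives no details for Lemma \ref{lemma113551}, merely remarking that it follows ``as in the case of modular Lie algebras \cite{HM}.'' Your write-up (degree-by-degree induction using $\Phi(L_j)=L_j$ and bracket-compatibility, with the reverse inclusion obtained from $\Phi^{-1}$) supplies precisely that intended proof, so there is nothing to correct.
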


\begin{lemma}\label{lem51}
Let $M=L_{-1}+M_{0}+M_{1}+M_{2}+\cdots$ be any MGS of
$L$.  Then
$M_{0}$ is maximal in $L_{0}$ unless   $M_{0}=L_{0}$.
\end{lemma}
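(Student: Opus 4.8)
The plan is to argue by contradiction, exploiting the prolongation $\mathcal{M}(L_{-1},-)$ introduced at the start of this section. Let $M=L_{-1}+M_0+M_1+\cdots$ be an MGS, so in particular $M_{-1}=L_{-1}$. I would assume, contrary to the claim, that $M_0\neq L_0$ and yet $M_0$ is not maximal in $L_0$. Then there exists a subalgebra $\mathfrak{G}_{0}$ of $L_0$ with $M_0\subsetneq\mathfrak{G}_{0}\subsetneq L_0$; note that $\mathfrak{G}_{0}$ is nontrivial, since it properly contains $M_0$ and is properly contained in $L_0$. The idea is to compare $M$ with the graded subalgebra $\mathcal{M}(L_{-1},\mathfrak{G}_{0})$ defined by (\ref{m33}). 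By the remark preceding Theorem \ref{03} this is a graded subalgebra (satisfying condition $(\mathrm{III})$), and its $0$-component is exactly $\mathfrak{G}_{0}$; in particular it is a \emph{proper} subalgebra of $L$ because $\mathfrak{G}_{0}\subsetneq L_0$.

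The central step is to establish the inclusion $M\subseteq\mathcal{M}(L_{-1},\mathfrak{G}_{0})$ by induction on the $\mathbb{Z}$-degree. In every negative degree the inclusion is automatic, since $M$ is a graded subspace of $L$ while $\mathcal{M}(L_{-1},\mathfrak{G}_{0})$ carries the full negative part of $L$ by definition. In degree $0$ it holds because $M_0\subseteq\mathfrak{G}_{0}=\mathcal{M}_0(L_{-1},\mathfrak{G}_{0})$. For the inductive step, suppose $M_{i-1}\subseteq\mathcal{M}_{i-1}(L_{-1},\mathfrak{G}_{0})$ for some $i\geq 1$ and take $u\in M_i$. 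Since $M$ is a graded subalgebra with $M_{-1}=L_{-1}$, we have $[L_{-1},u]=[M_{-1},u]\subseteq M_{i-1}\subseteq\mathcal{M}_{i-1}(L_{-1},\mathfrak{G}_{0})$, which is precisely the defining condition in (\ref{m33}) for $u$ to lie in $\mathcal{M}_i(L_{-1},\mathfrak{G}_{0})$. Hence $M_i\subseteq\mathcal{M}_i(L_{-1},\mathfrak{G}_{0})$, completing the induction.

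Combining these observations produces the desired contradiction: $\mathcal{M}(L_{-1},\mathfrak{G}_{0})$ is a proper graded subalgebra of $L$ that contains $M$, and it contains $M$ \emph{strictly} because the $0$-components satisfy $M_0\subsetneq\mathfrak{G}_{0}$. This violates the maximality of $M$ as a graded subalgebra. Therefore the assumption is untenable, and $M_0$ must be maximal in $L_0$ whenever $M_0\neq L_0$.

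I expect the only real subtlety to be the inductive containment $M\subseteq\mathcal{M}(L_{-1},\mathfrak{G}_{0})$, which amounts to recognizing that the prolongation is the \emph{largest} graded subalgebra whose part in degrees $\leq 0$ is pinned down by $L_{-1}$ (together with the lower components of $L$) and $\mathfrak{G}_{0}$. Everything else---properness of the prolongation, nontriviality of $\mathfrak{G}_{0}$, and the strictness of $M\subsetneq\mathcal{M}(L_{-1},\mathfrak{G}_{0})$---follows at once by comparing $0$-components. The argument is moreover uniform over $L=W,S,H,K$, since it uses only that $M$ is graded with full $(-1)$-component and that $\mathcal{M}(L_{-1},\mathfrak{G}_{0})$ is a graded subalgebra.
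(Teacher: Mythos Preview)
Your argument is correct and is precisely the standard prolongation argument the paper has in mind when it says ``as in the case of modular Lie algebras \cite{HM}''; the paper does not spell out a proof here, and your inductive containment $M\subseteq\mathcal{M}(L_{-1},\mathfrak{G}_0)$ followed by the $0$-component comparison is exactly how one fills in the details.
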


\begin{lemma}\label{lem52llm}
If $M$ is an MGS of type $\mathrm{(III)}$ of $L$ then $M_0$ is
maximal in $L_{0}$ and
$M=\mathcal{M}(L_{-1}, M_0)$.
\end{lemma}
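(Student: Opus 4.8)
The plan is to derive both assertions from Lemma~\ref{lem51} and the recursive shape of $\mathcal{M}(L_{-1},\mathfrak{G}_0)$, with essentially no new computation. First I would dispose of the maximality of $M_0$: by hypothesis $M$ is an MGS of type $\mathrm{(III)}$, so $M_0\neq L_0$, and then Lemma~\ref{lem51} says at once that $M_0$ is maximal in $L_0$. This is the first assertion.

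For the identity $M=\mathcal{M}(L_{-1},M_0)$ I would proceed in two steps. Step one: show that $N:=\mathcal{M}(L_{-1},M_0)$ is a \emph{proper} $\mathbb{Z}$-graded subalgebra of $L$. It is a graded subalgebra satisfying condition $\mathrm{(III)}$ by the observation following $(\ref{m33})$, and it is proper because $N_0=\mathcal{M}_0(L_{-1},M_0)=M_0\subsetneq L_0$. Step two: show $M\subseteq N$ degree by degree. In negative degrees $M_i=L_i=N_i$, since $M_{-1}=L_{-1}$ is part of the hypothesis and $M_{-2}=L_{-2}$ is forced (vacuously for $L=W,S,H$; for $L=K$ because $M$ is a subalgebra with $M_{-1}=L_{-1}$ and $[L_{-1},L_{-1}]=L_{-2}$ by the nondegeneracy of $\beta$). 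In degree $0$, $M_0=N_0$ by construction. For $i>0$ I would induct on $i$: if $u\in M_i$ then $[L_{-1},u]=[M_{-1},u]\subseteq M_{i-1}\subseteq N_{i-1}$ by the inductive hypothesis, whence $u\in\mathcal{M}_i(L_{-1},M_0)=N_i$ by the defining relation $(\ref{m33})$. Thus $M\subseteq N$. Finally, since $N$ is a $\mathbb{Z}$-graded subalgebra with $M\subseteq N\subsetneq L$ and $M$ is an MGS (so that no $\mathbb{Z}$-graded subalgebra strictly contains $M$), we conclude $M=N=\mathcal{M}(L_{-1},M_0)$.

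I do not expect a serious obstacle here. The only points that need a little care are that $N$ must be verified to be proper before the maximality of $M$ can be invoked — immediate from $N_0=M_0\subsetneq L_0$ — and that for $L=K$ the degree $-2$ component of $M$ automatically fills up all of $L_{-2}$, so that $M$ and $N$ agree in every negative degree. Note also that one never proves the reverse inclusion $N\subseteq M$ directly (which would be delicate, since $M_i$ could a priori be smaller than $N_i$): it is obtained for free from the maximality of $M$. The genuine content is packaged in Lemma~\ref{lem51}, which we are entitled to cite.
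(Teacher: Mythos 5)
Your argument is correct, and it is exactly the routine argument the paper has in mind: the paper itself offers no written proof of this lemma (it is grouped with Lemmas \ref{lemma113551} and \ref{lem51} as ``easy to show, as in \cite{HM}''), and your two steps --- maximality of $M_0$ straight from Lemma \ref{lem51}, then $M\subseteq\mathcal{M}(L_{-1},M_0)\subsetneq L$ by induction on the degree via (\ref{m33}) and the properness of $\mathcal{M}_0(L_{-1},M_0)=M_0$, forcing equality by maximality of $M$ --- are precisely the expected ones. No gaps; the side remark that $M_{-2}=L_{-2}$ for $L=K$ is not even needed, since only the inclusion $M_{-2}\subseteq L_{-2}$ enters the argument.
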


\begin{lemma}\label{lem331ok}
 If $\mathfrak{G}_{0}$ is a maximal reducible subalgebra of $L_{0}$ then there exists a $V\in\frak{V}^L$ such that $\mathfrak{G}_{0}=\mathcal
{M}_{0}(V)$ and   $\mathcal{M}_i(L_{-1}, \frak{G}_0)\subset\mathcal{M}_i(V)$
for $i\geq0$.
Conversely,   $\mathcal
{M}_{0}(V)$ is a reducible maximal subalgebra of $L_0$  if  $V\in\frak{V}^L$ when $L=W$ or $S$; if $V\in\mathcal{V}_{\frak{n}}^L\cup\mathcal{V}_{\frak{i}}^L\cup\mathcal{V}_{\frak{d}}^L$ when $L=H$ or $K$.
\end{lemma}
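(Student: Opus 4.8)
The plan is to prove the two implications separately, using only the description of $\mathcal{M}_0(V)$ already obtained and the fact (recorded in Section~1) that $L_{-1}$ is an irreducible $L_0$-module.

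\emph{The forward direction.} Suppose $\mathfrak{G}_0$ is a maximal reducible subalgebra of $L_0$. Since the $\mathfrak{G}_0$-module $L_{-1}$ is reducible, it possesses a submodule $V$ with $0\subsetneq V\subsetneq L_{-1}$, so that $V\in\frak{V}^L$, and the defining property (\ref{huam}) of $\mathcal{M}_0(V)$ gives $\mathfrak{G}_0\subseteq\mathcal{M}_0(V)$. Because $L_{-1}$ is an irreducible $L_0$-module, $\mathcal{M}_0(V)\neq L_0$ (else $V$ would be an $L_0$-submodule), so $\mathcal{M}_0(V)$ is a proper subalgebra of $L_0$; the maximality of $\mathfrak{G}_0$ then forces $\mathfrak{G}_0=\mathcal{M}_0(V)$. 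The inclusion $\mathcal{M}_i(L_{-1},\mathfrak{G}_0)\subseteq\mathcal{M}_i(V)$ for $i\geq0$ follows by induction on $i$: the case $i=0$ is the equality just established together with $\mathcal{M}_0(L_{-1},\mathfrak{G}_0)=\mathfrak{G}_0$ from (\ref{m33}), and for $i\geq1$, if $u\in\mathcal{M}_i(L_{-1},\mathfrak{G}_0)$ then by (\ref{m33}) and the inductive hypothesis $[V,u]\subseteq[L_{-1},u]\subseteq\mathcal{M}_{i-1}(L_{-1},\mathfrak{G}_0)\subseteq\mathcal{M}_{i-1}(V)$, whence $u\in\mathcal{M}_i(V)$ by (\ref{huam}).

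\emph{The converse.} Let $V\in\frak{V}^L$, subject to the stated restriction when $L=H$ or $K$. From (\ref{huam}) we have $[V,\mathcal{M}_0(V)]\subseteq V$, and since $V$ is a nontrivial proper subspace of $L_{-1}$ this exhibits $L_{-1}$ as a reducible $\mathcal{M}_0(V)$-module, so $\mathcal{M}_0(V)$ is reducible. It remains to see that $\mathcal{M}_0(V)$ is maximal in $L_0$. For $L=W$ or $S$ this is exactly Lemma~\ref{yinlimax-noncontain}(1). For $L=H$ it is Lemma~\ref{mgs1273l2}(2), whose hypothesis is precisely $V\in\mathcal{V}_{\frak{n}}^H\cup\mathcal{V}_{\frak{i}}^H\cup\mathcal{V}_{\frak{d}}^H$. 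For $L=K$ I would reduce to the Hamiltonian case: by Lemma~\ref{mgs1277l1}(1), $\mathcal{M}_0^K(V)=\mathcal{M}_0^H(V)\oplus\mathbb{F}z$, while $K_0=H_0\oplus\mathbb{F}z$ with $z$ central by Lemma~\ref{dibu}(3); hence every subalgebra of $K_0$ containing $\mathcal{M}_0^K(V)$ has the form $\mathfrak{a}\oplus\mathbb{F}z$ with $\mathfrak{a}\supseteq\mathcal{M}_0^H(V)$ a subalgebra of $H_0$, so $\mathcal{M}_0^K(V)$ is maximal in $K_0$ iff $\mathcal{M}_0^H(V)$ is maximal in $H_0$, and one concludes again by Lemma~\ref{mgs1273l2}(2), the membership conditions defining $\mathcal{V}_{\frak{n}}^L,\mathcal{V}_{\frak{i}}^L,\mathcal{V}_{\frak{d}}^L$ depending only on $\beta$-$\dim V$ and on whether the index sets $J_i$ are single or twinned, hence transferring verbatim between $H$ and $K$.

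\emph{Main obstacle.} The argument is essentially bookkeeping on top of the earlier lemmas; the one point needing genuine care is the $K$-case of the converse, where the central direction $\mathbb{F}z$ must be cleanly separated from the $\mathfrak{osp}$-part so that maximality in $K_0$ is seen to be equivalent to maximality of $\mathcal{M}_0^H(V)$ in $H_0$, and where one must check that the classes $\mathcal{V}_{\frak{n}}^K,\mathcal{V}_{\frak{i}}^K,\mathcal{V}_{\frak{d}}^K$ match the hypothesis of Lemma~\ref{mgs1273l2}(2) transported from $H$. A minor secondary subtlety is verifying, in the forward direction, that $\mathcal{M}_0(V)$ is a proper subalgebra of $L_0$, which is exactly where irreducibility of $L_{-1}$ as an $L_0$-module enters.
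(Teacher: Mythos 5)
Your proposal is correct and takes essentially the same route as the paper: the forward direction is the definitional argument via (\ref{huam})--(\ref{m33}) together with the maximality of $\mathfrak{G}_0$, and the converse is delegated to Lemma \ref{yinlimax-noncontain}(1) for $W,S$ and Lemma \ref{mgs1273l2}(2) for $H,K$. Your explicit handling of the $K$-case via $\mathcal{M}^K_0(V)=\mathcal{M}^H_0(V)\oplus\mathbb{F}z$ with $z$ central (Lemmas \ref{mgs1277l1}(1) and \ref{dibu}(3)), and your remark that irreducibility of $L_{-1}$ as an $L_0$-module forces $\mathcal{M}_0(V)\neq L_0$, merely make explicit steps the paper's brief proof leaves implicit.
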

\begin{proof}
Since $\mathfrak{G}_{0}$ is reducible,  $L_{-1}$   has a nontrivial $\mathfrak{G}_{0}$-submodule $V$.
   From definition (\ref{huam}) and the maximality of $\mathfrak{G}_{0}$, we have
$\mathfrak{G}_{0}=\mathcal {M}_{0}(V)$.
From definitions (\ref{huam}) and (\ref{m33}), we have $\mathcal{M}_i(L_{-1}, \frak{G}_0)\subset\mathcal{M}_i(V)$
for $i\geq0$.
The second statement follows immediately from Lemmas \ref{yinlimax-noncontain}(1) and \ref{mgs1273l2}(2).
\end{proof}
\begin{remark}\label{mgsstandard2}
 In view of Lemmas \ref{mgsl3},  \ref{lemma113551} and \ref{lem331ok}, if
  $\mathfrak{G}_{0}$ is a maximal reducible subalgebra of $L_{0}$, we may assume that $V$ is a standard element in $\frak{V}^L$ [see (\ref{eqsb}), \ref{eqsb2})] such that $\frak{G}_0=\mathcal{M}_0(V)$.
\end{remark}

\begin{proposition}\label{Prollm54}  Suppose  $L=W$ or $S$. $\mathcal{M}(L_{-1},\mathfrak{G}_{0})$ is   a maximal R-subalgebra,  if $\mathfrak{G}_{0}$ is a maximal reducible subalgebra
 of $L_{0}$.
\end{proposition}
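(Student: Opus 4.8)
The statement to prove is that $\mathcal{M}(L_{-1},\mathfrak{G}_{0})$ is a maximal graded subalgebra; the other requirements in the definition of a maximal $R$-subalgebra are automatic, since $\mathcal{M}(L_{-1},\mathfrak{G}_{0})$ is by construction a graded subalgebra satisfying condition $\mathrm{(III)}$, and it is reducible because $\mathfrak{G}_{0}$ fixes the nontrivial subspace $\mathcal{M}_{-1}(V)=V$ of $L_{-1}$. By Remark \ref{mgsstandard2} and Lemma \ref{lem331ok} I may assume $\mathfrak{G}_{0}=\mathcal{M}_{0}(V)$ for a \emph{standard} $V\in\frak{V}^{L}$ with $\mathrm{superdim}V=(k,l)$; in particular $\mathbf{I}(k,l)\neq\emptyset$, and $\mathcal{M}_{0}(V)$ is the span of $\mathcal{A}_{1}\cup\mathcal{A}_{2}$ (for $L=W$) or $\mathcal{C}_{1}\cup\mathcal{C}_{2}\cup\mathcal{C}_{3}$ (for $L=S$), with co-basis $\mathcal{A}_{3}=\{x_{i}\partial_{j}\mid i\in\mathbf{I}(k,l),\ j\in\overline{\mathbf{I}}(k,l)\}$ in $L_{0}$; see (\ref{liuliumelikyan311}).

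Now let $M$ be a graded subalgebra of $L$ with $\mathcal{M}(L_{-1},\mathfrak{G}_{0})\subsetneq M\subseteq L$; it suffices to show $M=L$. First, $M_{-1}=L_{-1}$ and $M_{0}\supseteq\mathfrak{G}_{0}$, so by Lemma \ref{lem51} either $M_{0}=\mathfrak{G}_{0}$ or $M_{0}=L_{0}$. If $M_{0}=\mathfrak{G}_{0}$, an easy induction on $i\geq0$ gives $M_{i}=\mathcal{M}_{i}(L_{-1},\mathfrak{G}_{0})$: the relation $[L_{-1},M_{i}]\subseteq M_{i-1}=\mathcal{M}_{i-1}(L_{-1},\mathfrak{G}_{0})$ together with definition (\ref{m33}) gives $M_{i}\subseteq\mathcal{M}_{i}(L_{-1},\mathfrak{G}_{0})$, while the reverse inclusion holds because $\mathcal{M}(L_{-1},\mathfrak{G}_{0})\subseteq M$; hence $M=\mathcal{M}(L_{-1},\mathfrak{G}_{0})$, contradicting $\subsetneq$. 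So $M_{0}=L_{0}$, and $M$ then contains $L_{-1}+L_{0}$ together with $\mathcal{M}_{1}(L_{-1},\mathfrak{G}_{0})$, so $M_{1}$ is an $L_{0}$-submodule of $L_{1}$ containing $\mathcal{M}_{1}(L_{-1},\mathfrak{G}_{0})$.

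The crux is to show that $\mathcal{M}_{1}(L_{-1},\mathfrak{G}_{0})$ lies in no proper $L_{0}$-submodule of $L_{1}$; granting this, $M_{1}=L_{1}$, whence $M=L$ by Lemma \ref{dibu}(2). The plan is to exhibit one explicit element $u$ of $\mathcal{M}_{1}(L_{-1},\mathfrak{G}_{0})=\{u\in L_{1}\mid [L_{-1},u]\subseteq\mathcal{M}_{0}(V)\}$ that is ``large enough''. For $L=W$: take $u=x_{i}^{2}\partial_{i}$ for some $i\in\mathbf{I}(k,l)\cap\mathbf{I}_{0}$ when this set is nonempty, and $u=x_{i}x_{1}\partial_{i}$ (with $i\in\mathbf{I}(k,l)\cap\mathbf{I}_{1}$ and $1\in\mathbf{I}_{0}\subseteq\overline{\mathbf{I}}(k,l)$) otherwise; checking $[\partial_{t},u]\in\mathcal{M}_{0}(V)$ for all $t$ against $\mathcal{A}_{1},\mathcal{A}_{2}$ is routine, and $\mathrm{div}\,u\neq0$, so $u\notin W_{1}'$, while Lemma \ref{div} forces $u\notin W_{1}''$. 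Then Lemmas \ref{B}(5) and \ref{lemxin1.9}(1) show the $W_{0}$-submodule generated by $u$ contains $W_{1}'$ properly, hence equals $W_{1}$. For $L=S$, every element of $S_{1}=W_{1}'$ is divergence-free, so one instead uses an element of the form $u=D_{ij}(x^{(\gamma)}x^{v})\in S_{1}$ built from a well-chosen cubic monomial (for instance $u=D_{12}(x_{1}^{(2)}x_{2})$ when $1,2\in\mathbf{I}(k,l)$, and minor variants such as $D_{12}(x_{1}x_{2}x_{3})$ or $D_{12}(x_{1}x_{2}x_{m+1})$ otherwise); one verifies $u\in\mathcal{M}_{1}(S_{-1},\mathfrak{G}_{0})\setminus S_{1}''$ and $u\neq0$, whence by Corollary \ref{1.10} the $S_{0}$-submodule generated by $u$ is all of $S_{1}$.

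The main obstacle is precisely this last step: exhibiting the ``large'' element $u\in\mathcal{M}_{1}(L_{-1},\mathfrak{G}_{0})$ uniformly, since the admissible choice depends on the parities of the indices in $\mathbf{I}(k,l)$ and on whether $\mathbf{I}(k,l)$ reduces to a single index, and for $S$ the constraint $\mathrm{div}\,u=0$ rules out the simplest candidates; the verifications that these $u$ lie in $\mathcal{M}_{1}(L_{-1},\mathfrak{G}_{0})$ while avoiding $W_{1}'$, $W_{1}''$, $S_{1}''$ are the only genuinely computational content. Everything else parallels the type-$\mathrm{(II)}$ argument of Proposition \ref{prowmax}.
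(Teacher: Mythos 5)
Your overall architecture is the paper's: reduce to a standard $V$ with $\mathfrak{G}_{0}=\mathcal{M}_{0}(V)$, split according to whether the enlarged algebra has $0$-component $\mathfrak{G}_{0}$ or $L_{0}$, dispose of the first case, and in the second case exhibit an explicit element of $\mathcal{M}_{1}(L_{-1},\mathfrak{G}_{0})$ that is ``too big''. Two remarks on the frame. First, the dichotomy $M_{0}=\mathfrak{G}_{0}$ or $M_{0}=L_{0}$ should not be attributed to Lemma \ref{lem51}, which is stated for an MGS, while your $M$ is an arbitrary graded subalgebra; the correct source is the maximality of $\mathcal{M}_{0}(V)$ in $L_{0}$ (Lemma \ref{yinlimax-noncontain}(1), via Lemma \ref{lem331ok}), exactly as the paper argues. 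Second, your ending of the case $M_{0}=L_{0}$ differs mildly from the paper's: the paper passes to a maximal graded subalgebra $\overline{M}\supseteq\mathcal{M}(L_{-1},\mathfrak{G}_{0})$, notes it is of type (I), and contradicts Theorem \ref{thm01} ($\overline{M}_{1}\in\{W_{1}',W_{1}'',S_{1}'',0\}$), whereas you show directly that the $L_{0}$-submodule generated by $u$ is all of $L_{1}$ (Lemmas \ref{B}(5), \ref{lemxin1.9}(1), Corollary \ref{1.10}) and finish with Lemma \ref{dibu}(2). Both endings rest on the same exhibited element and the same module-structure lemmas, so this is a cosmetic, legitimate variation; your $W$-elements $x_{i}^{2}\partial_{i}$ and $x_{i}x_{1}\partial_{i}$ do check out (though ``Lemma \ref{div} forces $u\notin W_{1}''$'' needs one extra line when $m-n+1\not\equiv0\pmod p$: compare $u$ with $(m-n+1)^{-1}\mathrm{div}(u)\,\mathfrak{D}$).

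The genuine gap is the construction of $u$ for $L=S$, which you yourself flag as the main obstacle but do not resolve: the sample elements do not lie in $\mathcal{M}_{1}(S_{-1},\mathfrak{G}_{0})$ in the remaining cases. For instance, if $k=0$ (so $\mathbf{I}(k,l)\subset\mathbf{I}_{1}$), then $D_{12}(x_{1}x_{2}x_{m+1})=x_{2}x_{m+1}\partial_{2}-x_{1}x_{m+1}\partial_{1}$ has $[\partial_{2},u]=x_{m+1}\partial_{2}$, which lies in the co-basis (\ref{liuliumelikyan311}) and hence outside $\mathcal{M}_{0}(V)$; the same objection kills $D_{12}(x_{1}^{(2)}x_{2})$ and $D_{12}(x_{1}x_{2}x_{3})$ there, and $D_{12}(x_{1}x_{2}x_{m+1})$ also fails when $k=1$, $m=2$, $l\geq1$. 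So ``minor variants'' is carrying real weight. The paper's choice removes the case analysis entirely, and this is the one idea your proposal is missing: $\mathcal{M}_{0}(V)$ contains every $x_{a}\partial_{b}$ with $b\in\mathbf{I}(k,l)$ and $a\neq b$, so any degree-one element all of whose $\partial$-components point into $\mathbf{I}(k,l)$ automatically lies in $\mathcal{M}_{1}(L_{-1},\mathfrak{G}_{0})$. Thus $x_{j}x_{i}\partial_{i}$ with $i\in\mathbf{I}(k,l)$ works for $W$, and $x_{j}x_{r}\partial_{i}$ with $i\in\mathbf{I}(k,l)$ and $i,j,r$ distinct (possible since $m+n\geq4$) is divergence-free, hence lies in $S_{1}=W_{1}'$, belongs to $\mathcal{M}_{1}(S_{-1},\mathfrak{G}_{0})$, and is visibly neither $0$ nor of the form $f\mathfrak{D}$ — uniformly in $(k,l)$, with no appeal to parities or to whether $\mathbf{I}(k,l)$ is a singleton. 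With that substitution your argument closes completely.
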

\begin{proof}
 Let us show that $M=\mathcal{M}(L_{-1},\mathfrak{G}_{0})$  is maximal. Assume that $\overline{M}$ is a maximal graded subalgebra containing  $M$. Clearly, $\overline{M}_{-1}=L_{-1}$. Since $\mathfrak{G}_{0}$ is a maximal subalgebra of $L_{0}$, we have $\overline{M}_{0}=\mathfrak{G}_{0}$ or $L_{0}$.
If $\overline{M}_{0}=\mathfrak{G}_{0}$ then $\overline{M}$ is an MGS of type (III). By Lemma \ref{lem52llm},
$$\overline{M}=\mathcal{M}(L_{-1},\mathfrak{G}_{0})=M $$
 and we are done. Let us consider the remaining case; $\overline{M}_{0}=L_{0}$. Clearly, $\overline{M}$ is an MGS of type (I) and  by  Theorem \ref{thm01},
\begin{equation}\label{eqllm1326}
M_1\subset\overline{M}_{1}=W_1', W_1'', S_1'', \;\mbox{or}\; \{0\}.
\end{equation}
On the other hand, by Lemma \ref{lem331ok}, there exists a  $V\in\frak{V}^L$ such that $\mathfrak{G}_{0}=\mathcal{M}_0(V)$.  Assume that $V$  has a standard basis:
$$
(\partial_{1},\ldots,\partial_{k}\mid \partial_{m+1},\ldots,\partial_{m+l}).
$$
Hence $\mathfrak{G}_{0}=\mathcal{M}_{0}(V)$ has a standard co-basis (\ref{liuliumelikyan311}) in $W_{0}$:
\begin{equation*}\mathcal {A}_{3}=\{x_{i}\partial_{j}\mid i\in
\mathbf{I}(k,l),j\in \overline{\mathbf{I}}(k,l)\}.
\end{equation*}
To reach a contradiction, in view of (\ref{eqllm1326}), it is sufficient to find an element belonging to $M_1$ but not
$W', W''$ for $W$,  but not $S''$ or $\{0\} $ for $S$.  For $L=W$, $x_jx_i\partial_i$ with $i\in \mathbf{I}(k,l)$ and  an arbitrarily chosen $j$ is a desired element. Here we have used the fact that both $|\mathbf{I}(k,l)|\geq 1$ and $|\overline{\mathbf{I}}(k,l)|\geq 1$, since $V\in\frak{V}^W$. For $L = S$, pick distinct $i,j,r$ with $i\in \mathbf{I}(k,l)$ and with $j,r$ chosen arbitrarily. Here note that the general assumption ensures $|\mathbf{I}|\geq 4.$ Then $x_jx_r\partial_i\in S_1$ is a desired candidate for $S$.  The proof is complete.
 \end{proof}

\noindent
\textbf{Proof of Theorem \ref{03}}
 (1) This follows  from Lemmas \ref{lem51}, \ref{lem52llm},  \ref{lem331ok} and Proposition \ref{Prollm54}.

 (2) One implication is obvious. Suppose $\Phi$ is an isomorphism of $\mathcal{M}(L_{-1},\mathcal{M}_{0}(V))$ onto
$\mathcal{M}(L_{-1},\mathcal{M}_{0}(V'))$. Consequently, $\Phi(L_{-1})=L_{-1}$ and
$\Phi(\mathcal{M}_{0}(V))=\mathcal{M}_{0}(V').$ A standard verification shows that
$\Phi(\mathcal{M}_{0}(V))=\mathcal{M}_{0}(\Phi(V))$. By Lemma \ref{yinlimax-noncontain}(2), we have $\Phi(V)=V'$.

(3) This is a direct consequence of (2).

(4) Suppose $V$ is a  standard element in $\frak{V}^L$. Then
$\mathcal{M}(W_{-1},\mathcal{M}_{0}(V))$ has a standard $\mathbb{F}$-basis
\begin{align*}
&\{x^{(\alpha)}x^{u} \partial_{i}\mid \alpha\in
\mathbf{A}(m), u\in \mathbf{B}(n);\;
i\in\mathbf{I}(k,l)\}\\
\cup&\{x^{(\alpha)}x^u
\partial_{i}\mid\alpha_{1}=\cdots=\alpha_{k}=0, u\subset\overline{m+l+1,m+n};\; i\in
\overline{\mathbf{I}}(k,l)\}.
\end{align*}
Thus, we have:
$$\mathrm{dim}\mathcal{M}(W_{-1},\mathcal{M}_{0}(V))=2^{n-l}p^{m-k}(m+n-k-l)+2^{n}p^m (k+l).$$
Note that $\overline{S}=S\oplus\sum_{i\in\mathbf{I}_0}x^{(\pi-(p-1)\varepsilon_i)}x^\omega\partial_i$,
where $\pi=(p-1, \ldots, p-1)\in\mathbb{N}^{m}$ and $\omega=\langle m+1, \ldots, m+n\rangle$.
Then we have: $$\mathrm{dim}\mathcal{M}(S_{-1},\mathcal{M}_{0}(V))=2^{n-l}p^{m-k}(m+n)+2^{n}p^m (k+l-1)-k-1.$$
\qed

We call $\frak{G}_0=\mathcal{M}_0(V)$ is \textit{degenerate}  if $V\in\mathcal{V}_{\frak{i}}^L\cup\mathcal{V}_{\frak{d}}^L$.
\begin{proposition}\label{1278p408}
Let $\frak{G}_0$  be a maximal reducible subalgebra of $H_{0}$ or $K_0$.
 \begin{itemize}
\item[$(1)$] $\mathcal{M}(H_{-1}, \frak{G}_0)$ is maximal in $H$.
\item[$(2)$] $\mathcal{M}(K_{-1}, \frak{G}_0)$ is maximal in $K$ if and only if ${\frak{G}_0}$ is degenerate.
\end{itemize}
\end{proposition}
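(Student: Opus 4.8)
Here is how I would attack this, following the template of Proposition~\ref{Prollm54}.

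Set $M=\mathcal{M}(L_{-1},\frak{G}_0)$ for $L=H$ or $K$. By Lemma~\ref{lem331ok} and Remark~\ref{mgsstandard2} we may write $\frak{G}_0=\mathcal{M}_0(V)$ with $V$ a standard element, which then lies in $\mathcal{V}_{\frak{n}}^L\cup\mathcal{V}_{\frak{i}}^L\cup\mathcal{V}_{\frak{d}}^L$ because $\frak{G}_0$ is maximal reducible in $L_0$. Let $\overline{M}$ be any MGS containing $M$. Then $\overline{M}_{-1}=L_{-1}$, and maximality of $\frak{G}_0$ in $L_0$ forces either $\overline{M}_0=\frak{G}_0$ — whence $\overline{M}$ is of type~(III) and $\overline{M}=\mathcal{M}(L_{-1},\frak{G}_0)=M$ by Lemma~\ref{lem52llm} — or $\overline{M}_0=L_0$, i.e.\ $\overline{M}$ is an MGS of type~(I). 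So everything reduces to deciding whether $M$ can sit inside an MGS of type~(I), and by Theorem~\ref{thm01} this is decided by comparing $\mathcal{M}_1(L_{-1},\frak{G}_0)$ with the degree-$1$ component of each such MGS.

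For~(1), by Theorem~\ref{thm01}(3) the only MGS of type~(I) of $H$ is $H_{-1}+H_0$, whose degree-$1$ component is $0$; so it suffices to exhibit a nonzero element of $\mathcal{M}_1(H_{-1},\frak{G}_0)$. I claim $y_i^{3}\in\mathcal{M}_1(H_{-1},\frak{G}_0)$ for any $i\in\mathbf{I}_0\setminus\bar{J}_2$, and such an $i$ exists since $|\bar{J}_2\cap\mathbf{I}_0|=b-a\le r<2r=|\mathbf{I}_0|$. Indeed $[y_l,y_i^{3}]=\pm 3\,\delta_{\widetilde{l}=i}\,y_i^{2}$, where $\nu(y_i^{2})=\nu(y_i)^{2}\in\{0,1,4\}$, and all three values are admissible for membership in $\mathcal{M}_0(V)$ by Lemma~\ref{mgs1273l2}(1); hence $[H_{-1},y_i^{3}]\subseteq\frak{G}_0$. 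Thus $M_1\ne 0$, which rules out $\overline{M}_0=H_0$, and $\overline{M}=M$ is maximal.

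For~(2), observe first that for $K$ one always has $2r+1\in I_{03}\subseteq J_3$, so $\mathbf{I}_0\cap J_3\ne\emptyset$; consequently $\mathcal{V}_{\frak{d}}^K=\emptyset$, the clause ``$J_3$ not twinned'' is vacuous, and ``$\frak{G}_0$ degenerate'' just means $V\in\mathcal{V}_{\frak{i}}^K$, i.e.\ $V$ is isotropic. By Theorem~\ref{thm01}(4) the MGS of type~(I) of $K$ are $M'=K_{-2}+K_{-1}+K_0+\sum_{i\ge 1}K_{i0}$, with degree-$1$ part $K_{10}=\{u\in K_1\mid[1,u]=0\}$, and $M''=K_{-2}+K_{-1}+K_0+K_{11}+K_{22}$, with degree-$1$ part $K_{11}$, which consists of multiples of $z$. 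If $V$ is isotropic then $J_1=\emptyset$ and $\nu(y_k)=0$ for every $y_k\in V$. As in~(1), $y_i^{3}\in\mathcal{M}_1(K_{-1},\frak{G}_0)$ for every $i\in\overline{1,r}$, and being $z$-free it is not in $K_{11}$; on the other hand, for $y_k\in V$ one computes $[y_j,y_kz]=\pm\,\delta_{\widetilde{j}=k}\,z+y_jy_k$, whose $z$-free part $y_jy_k$ has $\nu$-value $0$ and hence lies in $\mathcal{M}_0^H(V)$, and whose $z$-part lies in $\mathbb{F}z\subseteq\mathcal{M}_0^K(V)=\frak{G}_0$, so $y_kz\in\mathcal{M}_1(K_{-1},\frak{G}_0)$; and $[1,y_kz]=2y_k\ne 0$ shows $y_kz\notin K_{10}$. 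Thus $\mathcal{M}_1(K_{-1},\frak{G}_0)$ lies in neither $K_{10}$ nor $K_{11}$, so $M$ lies in neither $M'$ nor $M''$; hence $\overline{M}_0=K_0$ is impossible and $\overline{M}=M$ is maximal.

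Finally, suppose $\frak{G}_0$ is \emph{not} degenerate, so $V\in\mathcal{V}_{\frak{n}}^K$ is nondegenerate; I must show $M$ is not maximal, and the plan is to prove $M\subseteq M'$ (which suffices, since $M'$ is a proper graded subalgebra of $K$ and $M_0=\frak{G}_0\subsetneq K_0=M'_0$). As $M$ agrees with $M'$ in degrees $-2,-1$ and has $M_0\subseteq M'_0$, this amounts to showing $\mathcal{M}_i(K_{-1},\frak{G}_0)\subseteq K_{i0}$, i.e.\ that its elements are $z$-free (equivalently annihilated by $[1,\cdot\,]=2D_m$), for all $i\ge 1$. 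I would do this by induction on $i$: for $i\ge 2$ one expands $u\in\mathcal{M}_i(K_{-1},\frak{G}_0)$ in powers of $z$ (only powers $<p$ occur, since $z^p=0$) and uses $[1,u]\in\mathcal{M}_{i-2}(K_{-1},\frak{G}_0)$ together with the inductive $z$-freeness of the latter — plus, at $i=2$, the fact that no nonzero scalar $c$ admits $u_0\in\mathcal{O}_2(2r,n)$ with $D_l(u_0)=\pm c\,y_{\widetilde{l}}$ for all $l$, an obstruction forced by the odd variables — to kill the positive $z$-components of $u$, after which $[y_j,u]\in\mathcal{M}_{i-1}(K_{-1},\frak{G}_0)$ being $z$-free finishes the step. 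The base case $i=1$ is the crux and the main obstacle: for $u=u_0+u_{-1}z$ the $z$-part of each $[y_j,u]$ automatically lands in $\mathbb{F}z\subseteq\frak{G}_0$ and gives no information, so one must instead confront the $z$-free part $\pm D_{\widetilde{j}}(u_0)+y_ju_{-1}$ against the co-basis of $\mathcal{M}_0^H(V)$ in $H_0$; for nondegenerate $V$ one has $J_2=\bar{J}_2=\emptyset$, $\nu\equiv 1$ on $J_1$, $\nu\equiv 2$ on $J_3$, and $\mathcal{M}_0^H(V)$ omits exactly the monomials $y_ky_l$ with $\{\nu(y_k),\nu(y_l)\}=\{1,2\}$, so writing $u_{-1}=\sum_k c_ky_k$ the inadmissible part of $y_ju_{-1}$ equals $y_j\sum_{k\in J_3}c_ky_k$ for $j\in J_1$ and $y_j\sum_{k\in J_1}c_ky_k$ for $j\in J_3$, and comparing, over all $j$, what these force against what a partial of the single fixed $u_0\in\mathcal{O}_3(2r,n)$ can supply — in particular that a partial of $u_0$ cannot create the products involving an odd variable demanded by the equations attached to indices $j\in\mathbf{I}_1$ — forces $c_k=0$ for every $k$, i.e.\ $u_{-1}=0$. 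Carrying out this compatibility analysis (and its analogue at $i=2$) is where the real work of the ``only if'' direction lies.
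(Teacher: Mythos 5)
Part (1) and the ``if'' half of (2) of your argument are sound, and they take a mildly different (and legitimate) route from the paper: instead of generating all of $L$ from $\mathcal{M}(L_{-1},\frak{G}_0)$ plus one extra element, you invoke the classification of type-(I) MGS (Theorem \ref{thm01}) and check that $\mathcal{M}_1(L_{-1},\frak{G}_0)$ is not contained in the degree-one part of any of them --- exactly the template of Proposition \ref{Prollm54}; the elements $y_i^3$ and $y_kz$ you exhibit do lie in $\mathcal{M}_1$, so this works. (One caveat: your claim that $2r+1\in I_{03}$ makes $\mathcal{V}_{\frak{d}}^K=\emptyset$ clashes with the paper's intended reading, under which $J_3$ can be empty for $K$; but since for $V\in\mathcal{V}_{\frak{d}}^L$ one has $\mathcal{M}_0(V)=\mathcal{M}_0(V')$ with $V'\subset V$ isotropic (Lemma \ref{mgs120906l2}(1)), the reduction of ``degenerate'' to ``isotropic'' is harmless.)

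The genuine gap is in the ``only if'' half of (2), which is where all the content of the statement lies. Your plan --- prove $\mathcal{M}_i(K_{-1},\frak{G}_0)\subset K_{i0}$ for $i\geq 1$ and conclude $\mathcal{M}(K_{-1},\frak{G}_0)\subsetneq K_{-2}+K_{-1}+K_0+\sum_i K_{i0}$ --- is exactly the paper's strategy, but you explicitly defer the base case $i=1$, and that case is the crux: for $u=f_{-1}z+f_1\in\mathcal{M}_1$ the $z$-component of each $[y_j,u]$ lands in $\mathbb{F}z\subset\frak{G}_0$ and gives nothing, so one must extract a contradiction from the $\overline{H}$-part $\pm D_{\widetilde{j}}(f_1)+y_jf_{-1}\in\mathcal{M}_0^H(V)$. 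Your sketched mechanism (``a partial of $u_0$ cannot create the products involving an odd variable demanded by the indices $j\in\mathbf{I}_1$'') is not the right obstruction and is not even true as stated: a partial of a cubic can certainly produce quadratic monomials containing odd variables. The paper's argument is different in kind: using Lemmas \ref{mgs1277l1}(2) and \ref{lem331ok} it first pins $f_{-1}\in V$ and $f_1\in\widetilde{\mathcal{M}}_1(V)$, splits $f_1=f^1+f^4+f^8$ by $\nu$-value, and for $i\in J_3$ (with the ``$J_3$ single'' case treated separately) isolates the inadmissible value-$2$ component to get $\sigma(i)(-1)^iD_{\widetilde{i}}(f^4)=-y_if_{-1}$; integrating this as $f^4=-\sigma(i)(-1)^iy_{\widetilde{i}}y_if_{-1}+g_1$ with $D_{\widetilde{i}}(g_1)=0$ and then applying $D_i$ yields $D_i(g_1)=(-1)^i2y_{\widetilde{i}}f_{-1}$, an integrability contradiction unless $f_{-1}=0$. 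Nothing in your proposal supplies this (or any equivalent) computation, nor the separate treatment of the single-$J_3$ case, so the ``only if'' direction remains unproved; by contrast, your $i=2$ obstruction (no $u_0$ with $D_l(u_0)=\pm c\,y_{\widetilde{l}}$ for all $l$, because the odd coordinate derivations anticommute) is correct but is the easy part of the induction.
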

\begin{proof}
 For any $0\neq h\in L$, $h\not\in\mathcal{M}(L_{-1}, \frak{G}_0)$, put $\overline{M}=\mathrm{alg}(\mathcal{M}(L_{-1}, \frak{G}_0)+\mathbb{F}h)$.
 By the maximality of $\frak{G}_0$, we have $L_0\subset \overline{M}$.
For $H$, choose $k\in I_{0i}$ if $I_{0i}$ is not  empty  where  $i=1, 2$ or 3.
 It follows that  $y^3_k\in\mathcal{M}_1(H_{-1}, \frak{G}_0)$.
For $K$, suppose  ${\frak{G}_0}$ is degenerate. Using the same method as for $H$, we can find $0\not=v_i\in\overline{M}\cap K_{1i}$,  where $i=0, 1$. From Lemmas \ref{dibu}(2) and \ref{mgsl2}, we have $\overline{M}=L$.

It remains to show that  ${\frak{G}_0}$ is degenerate if $\mathcal{M}_1(K_{-1}, \frak{G}_0)$ is maximal.
 Assume on the contrary that $V_{\frak{G}_0}\in\mathcal{V}_{\frak{n}}^K$ is a nondegenerate  irreducible $\frak{G}_0$-module. For any $u\in \mathcal{M}_1(K_{-1}, \frak{G}_0)$,  by Lemmas \ref{mgs1277l1}(2) and \ref{lem331ok}, we may assume that
\[u=f_{-1}z+f_1, \mbox{  where }\ f_{-1}\in V_{\frak{G}_0}\ \mbox{ and }\ f_1\in \widetilde{\mathcal{M}}_1(V_{\frak{G}_0}).\]
 Note that ${f_{-1}}$ is a linear combination of monomials with value 1. Let $f_1=f^1+f^4+f^8$ where $f^i$ is a linear combination of monomials with value $i$, $i=1, 4$ or $ 8$. We  claim that $f_{-1}=0$.
 Indeed, for any $y_i\in K_{-1}$ with value 2, we have
 $$\sigma(i)(-1)^i(D_{\widetilde{i}}(f_1)+D_{\widetilde{i}}(f_{-1})z)+y_if_{-1}=[y_i, u]\in \mathcal{M}_0(V_{\frak{G}_0})=\frak{G}_0,$$
 which implies that $f_{-1}=0$ when $J_3$ is single. Otherwise, the following equation holds:
 \begin{align}
 \sigma(i)(-1)^i D_{\widetilde{i}}(f^4)=-y_if_{-1}.\label{mgs12781837eq1}
 \end{align}
 Then there exists $g_1\in K_1$ with $D_{\widetilde{i}}(g_1)=0$ satisfying
 \begin{align}
\sigma(i)(-1)^i f^4=-y_{\widetilde{i}}y_if_{-1}+g_1.\label{mgs12781856eq2}
\end{align}
 By equations (\ref{mgs12781837eq1}) and (\ref{mgs12781856eq2}), we have
 $D_i(g_1)=(-1)^i2y_{\widetilde{i}}f_{-1}$ which  contradicts $D_{\widetilde{i}}(g_1)=0$ if  $f_{-1}\not=0$.
Consequently, $ \mathcal{M}_1(K_{-1}, \frak{G}_0)\subset K_{10}$. Using  induction on $k$ and the transitivity of $K$, we have
 $ \mathcal{M}_k(K_{-1}, \frak{G}_0)\subset K_{k0}$ for $k>0.$ It follows that
 $\mathcal{M}(K_{-1}, \frak{G}_0)$ is strictly contained in
 $K_{-2}+K_{-1}+K_0+\sum_{i=1}^{2r(p-1)+n}K_{i0}$.  The latter is a maximal graded subalgebra (see Theorem \ref{thm01}(4)). This contradicts the maximality of $\mathcal{M}(K_{-1}, \frak{G}_0)$.
 The proof is complete.
  \end{proof}

\begin{lemma}\label{mgs120906l2} Suppose $L=H$ or $K$.
\begin{itemize}
\item[$(1)$] Suppose $V\in\mathcal{V}_{\frak{d}}^L$. Then $V$ contains a subspace $V'\in\mathcal{V}_{\frak{i}}^L$ such that
$$\mathcal{M}(L_{-1}, \mathcal{M}_0(V))=\mathcal{M}(L_{-1}, \mathcal{M}_0(V')).$$
\item[$(1)$] If $V, V'\in \mathcal{V}_{\frak{n}}^L\cup \mathcal{V}_{\frak{i}}^L$,
then$$\mathcal{M}_0(V)=
\mathcal{M}_0(V') 
$$ if and only if one of the following conditions holds.
\begin{itemize}
\item[$(i)$] $ V=V'.$
\item[$(ii)$] $ V^{\bot}=V'$ when  $V$ and $V'$ are nondegenerate.
\end{itemize}
\end{itemize}
\end{lemma}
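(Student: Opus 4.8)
The plan is to recast everything in terms of stabilizers. For a subspace $W\subseteq L_{-1}$ one has, by definition (\ref{huam}), $\mathcal{M}_0(W)=\{u\in L_0\mid [W,u]\subseteq W\}$, i.e. $\mathcal{M}_0(W)$ is the stabilizer of $W$ in $L_0$; since $L_0$ acts on $L_{-1}$ preserving the form $\beta$ (exactly for $L=H$, and up to the scalar action of the center for $L=K$, which does not affect stabilizers), one gets the basic identity $\mathcal{M}_0(W)=\mathcal{M}_0(W^{\bot})$ for every $W$. Because $\mathcal{M}(L_{-1},\frak{G}_0)$ depends only on $\frak{G}_0$ through (\ref{m33}), it suffices for $(1)$ to produce $V'\in\mathcal{V}_{\frak{i}}^L$ with $V'\subseteq V$ and $\mathcal{M}_0(V')=\mathcal{M}_0(V)$, and for $(2)$ to determine which graded subspaces share a stabilizer with $V$.

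For $(1)$ I would take $V'=V\cap V^{\bot}=\mathrm{rad}(\beta|_V)$. By Remark \ref{mgsstandard2} (via Lemmas \ref{mgsl3} and \ref{mgsl12731}(2)) we may assume $V$ is standard, so $V=\mathrm{span}\{y_i\mid i\in J_1\cup J_2\}$; inspection of the matrix $J$ and of the index sets (\ref{12731eq1})--(\ref{mgs120924eq1}) gives $V^{\bot}=\mathrm{span}\{y_i\mid i\in J_2\cup J_3\}$, whence $V'=\mathrm{span}\{y_i\mid i\in J_2\}$, and, since $J_3=\emptyset$ for $V\in\mathcal{V}_{\frak{d}}^L$, one gets $(V')^{\bot}=\mathrm{span}\{y_i\mid i\in J_1\cup J_2\}=V$. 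Therefore $\mathcal{M}_0(V')=\mathcal{M}_0((V')^{\bot})=\mathcal{M}_0(V)$, giving the displayed equality. It remains to check $V'\in\mathcal{V}_{\frak{i}}^L$: $V'$ is nonzero (degeneracy of $V$ forces $J_2=I_{02}\cup I_{12}\ne\emptyset$) and proper, $\beta|_{V'}=0$ so $V'$ is isotropic with $\beta$-$\dim V'=(0,\,b-a,\,c,\,0)$, and $J_3(V')$ is not twinned — a short arithmetic check, since $J_3(V')$ twinned would force $b-a=r$ and $n-2c=2$, which together with $J_3(V)=\emptyset$ (i.e. $b=r$, $n-d$ even, $c=(n-d)/2$) gives $a=0$ and $d=2$, making $J_1(V)$ twinned and contradicting $V\in\mathcal{V}_{\frak{d}}^L$.

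For $(2)$ the "if" direction is immediate: $V=V'$ is trivial, and if $V,V'$ are nondegenerate with $V^{\bot}=V'$ then $\mathcal{M}_0(V)=\mathcal{M}_0(V^{\bot})=\mathcal{M}_0(V')$. For "only if", assume $\mathcal{M}_0(V)=\mathcal{M}_0(V')$ and (again by Lemmas \ref{mgsl3} and \ref{mgsl12731}(2)) take $V$ standard. The torus $T$ of Remark \ref{mgsr2}(2) lies in $\mathcal{M}_0(V)$ (a direct check on $\nu$-values), so $V'$, being $\mathcal{M}_0(V)$-stable, is a sum of $T$-weight spaces; I would then show that the only nonzero proper $\mathcal{M}_0(V)$-stable graded subspaces of $L_{-1}$ are $V$ and $V^{\bot}$. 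When $V$ is isotropic, $\mathcal{M}_0(V)$ stabilizes the flag $0\subset V\subset V^{\bot}\subset L_{-1}$ and its Levi acts irreducibly on each of $V$, $V^{\bot}/V$, $L_{-1}/V^{\bot}$ (natural $\frak{gl}$- and $\frak{osp}$-modules), so the stable graded subspaces form exactly this chain; when $V$ is nondegenerate, $L_{-1}=V\oplus V^{\bot}$ with $\mathcal{M}_0(V)\cong\frak{osp}(V)\oplus\frak{osp}(V^{\bot})$ acting irreducibly on the two summands, so the stable graded subspaces are $0,V,V^{\bot},L_{-1}$. Hence $V'\in\{V,V^{\bot}\}$; if $V'=V^{\bot}$, then, $V'$ lying in $\mathcal{V}_{\frak{i}}^L\cup\mathcal{V}_{\frak{n}}^L$, $V^{\bot}$ must be isotropic or nondegenerate — $V^{\bot}$ isotropic forces $V^{\bot}=V$ (reducing to $V'=V$), and $V^{\bot}$ nondegenerate forces $\mathrm{rad}(V^{\bot})=V^{\bot}\cap V=0$, i.e. $V$ nondegenerate, which is the stated alternative.

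The main obstacle is the module-theoretic step in $(2)$: pinning down the $\mathcal{M}_0(V)$-stable graded subspaces of $L_{-1}$ as exactly $0,V,V^{\bot},L_{-1}$. This rests on the irreducibility of the natural module of $\frak{osp}(2r,n)$ and of the subquotients cut out by the relevant parabolic over a field of characteristic $p>3$; here the hypothesis "$J_3$ is not twinned" built into $\mathcal{V}_{\frak{i}}^L$ (and "$J_i$ not twinned" into $\mathcal{V}_{\frak{n}}^L$) is exactly what excludes the lone degenerate case — a two-dimensional orthogonal factor — in which the relevant natural module is not irreducible. If one prefers to avoid this representation theory, the same conclusion follows from a longer elementary argument: expand $V'$ in $T$-weight vectors and, tracking the action of each $y_iy_j\in\mathcal{M}_0(V)$ on the $y_k$'s, force the index set of $V'$ to be either $J_1\cup J_2$ (so $V'=V$) or $J_2\cup J_3$ (so $V'=V^{\bot}$).
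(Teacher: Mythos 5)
Your part (1) arrives at exactly the paper's subspace $V'=\mathrm{span}_{\mathbb{F}}\{y_i\mid i\in J_2\}$ (the radical of $\beta|_V$), but by a cleaner route: the paper merely exhibits $V'$ and leaves the equality of the two $\mathcal{M}_0$'s implicit (a comparison of the spanning sets computed in Lemma \ref{mgs1273l2}), whereas you deduce it from the identity $\mathcal{M}_0(W)=\mathcal{M}_0(W^{\bot})$, valid since $L_0$ preserves $\beta$ (up to the scalar center when $L=K$) and $W^{\bot\bot}=W$; this identity also disposes of the ``if'' half of (2). Your check that $V'\in\mathcal{V}_{\frak{i}}^L$ is correct, and could be shortened by observing directly that $J_3(V')=J_1(V)\cup J_3(V)=J_1(V)$, which is not twinned by the definition of $\mathcal{V}_{\frak{d}}^L$. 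For the ``only if'' half of (2) the paper argues elementarily (``as in Lemma \ref{yinlimax-noncontain}(2)''), i.e.\ by expanding a putative $V'$ in the basis and hitting it with the spanning elements $y_iy_j$ of $\mathcal{M}_0(V)$; you instead identify $\mathcal{M}_0(V)$ with the stabilizer of $V$ in the orthosymplectic algebra (a parabolic, resp.\ $\frak{osp}(V)\oplus\frak{osp}(V^{\bot})$) and classify its invariant graded subspaces, and your endgame reducing $V'\in\{V,V^{\bot}\}$ to alternatives (i)/(ii) via $V^{\bot\bot}=V$ is fine. This is a legitimate, more conceptual alternative to the paper's computation.

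The one step that is not yet a proof is the inference ``the Levi acts irreducibly on $V$, $V^{\bot}/V$, $L_{-1}/V^{\bot}$, so the stable graded subspaces form exactly this chain.'' Irreducibility of the layers only shows that, for a stable $U$, the pieces $U\cap V$, $(U\cap V^{\bot}+V)/V$ and $(U+V^{\bot})/V^{\bot}$ are zero or full; to exclude stable complements (for instance an $\mathcal{M}_0(V)$-stable complement to $V$ inside $V^{\bot}$) you must also invoke the nilradical: for every $w\in V^{\bot}\setminus V$ there is $y_iy_k\in\mathcal{M}_0(V)$ with $i\in J_2$, $k\in J_3$ and $0\neq[y_iy_k,w]\in V$, and similarly one level up, so the filtration is in fact uniserial. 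Likewise, in the nondegenerate case $L_{-1}=V\oplus V^{\bot}$ you must rule out ``graph'' submodules coming from a module isomorphism $V\cong V^{\bot}$; this is immediate because each $\frak{osp}$ factor acts nontrivially on its own summand and trivially on the other (the degenerate factor $\frak{osp}(0,2)$ is exactly the twinned case you excluded, and both summands cannot be trivial modules since $m\geq 2$), but it needs saying. With these two observations inserted --- or by falling back on the elementary weight-vector argument you sketch at the end, which is precisely the paper's route via Lemma \ref{yinlimax-noncontain}(2) --- your proof is complete.
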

\begin{proof}
For (1),  we may assume that $V=\mathrm{span}_{\mathbb{F}}\{y_i\mid i\in J_1\cup J_2\}$. Then $V'=\mathrm{span}_{\mathbb{F}}\{y_i\mid i\in J_2\}$ is desired.
For (2),
by a similar argument as in Lemma \ref{yinlimax-noncontain}(2), we get the desired conclusion.
\end{proof}

\begin{lemma}\label{mgs121113l2}
The following statements hold.
\begin{itemize}
\item[$(1)$]  If $V\in\mathcal{V}_{\frak{n}}^H$, then
$\mathcal{M}(H_{-1}, \mathcal{M}_0(V))=\mathcal{O}_{J_1}\oplus\mathcal{O}_{{J}_3}.$
\item[$(2)$] If $V\in\mathcal{V}_{\frak{i}}^H$, then
 $\mathcal{M}(H_{-1}, \mathcal{M}_0(V))=\mathcal{O}_{J_2\cup J_3}\oplus\mathcal{O}^+_{J_2}\mathcal{Q}_{\bar{J}_2}.$
\end{itemize}
\end{lemma}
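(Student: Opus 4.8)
The plan is to reduce to a standard $V$ and then compute the graded pieces of $\mathcal{M}(H_{-1},\mathcal{M}_0(V))$ one degree at a time by induction. By Remark \ref{mgsstandard2} we may take $V$ standard, so $V=\mathrm{span}_{\mathbb{F}}\{y_i\mid i\in J_1\cup J_2\}$ and, by Lemma \ref{mgs1273l2}(1), $\mathcal{M}_0(V)=A_0$. Since in $\overline{H}\cong(\overline{\mathcal{O}},[\,,\,])$ one has $[y_k,\,\cdot\,]=\pm\sigma(k)D_{\widetilde{k}}$ and $k\mapsto\widetilde{k}$ permutes $\mathbf{I}$, bracketing with $H_{-1}$ is, up to scalars, the same as applying the partial derivations $D_1,\dots,D_{m+n}$; hence for $i\ge 1$,
\[
\mathcal{M}_i(H_{-1},\mathcal{M}_0(V))=\{u\in H_i\mid D_j(u)\in\mathcal{M}_{i-1}(H_{-1},\mathcal{M}_0(V))\ \text{for all}\ j\in\mathbf{I}\}.
\]
Throughout I will use the elementary fact that a single $D_j$ carries distinct basis monomials $y^{(\alpha)}y^u$ to distinct basis monomials (or to $0$), so that no cancellation occurs when $D_j$ is applied to a linear combination of monomials.

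For (1): nondegeneracy of $V$ forces $a=b$ and $c=0$, hence $J_2=\bar J_2=\varnothing$ and $\mathbf{I}=J_1\sqcup J_3$; reading off $A_0$ gives $\mathcal{M}_0(V)=\mathrm{span}_{\mathbb{F}}\{y_iy_j\mid (i,j)\in J_1\times J_1\cup J_3\times J_3\}$, which is exactly $(\mathcal{O}_{J_1}\oplus\mathcal{O}_{J_3})_0$, while $H_{-1}=\mathcal{Q}_{J_1}\oplus\mathcal{Q}_{J_3}=(\mathcal{O}_{J_1}\oplus\mathcal{O}_{J_3})_{-1}$. For ``$\supseteq$'', each $D_j$ with $j\in J_1$ preserves $\mathcal{O}_{J_1}$ and annihilates $\mathcal{O}_{J_3}$ (and symmetrically for $j\in J_3$), so $\mathcal{O}_{J_1}\oplus\mathcal{O}_{J_3}$ meets the defining conditions of $\mathcal{M}(H_{-1},\mathcal{M}_0(V))$ degree by degree. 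For ``$\subseteq$'', suppose $u\in\mathcal{M}_i$ with $i\ge 1$ has a \emph{mixed} monomial $\mu$, i.e. one involving at least one $J_1$-variable and at least one $J_3$-variable. Since $\mu$ has total degree $i+2\ge 3$, its $J_1$-degree or its $J_3$-degree is $\ge 2$; applying a corresponding $D_j$ (of type $J_1$ or $J_3$) with $y_j\mid\mu$ produces, with nonzero coefficient, a mixed monomial inside $D_j(u)\in\mathcal{M}_{i-1}$, contradicting the inductive hypothesis $\mathcal{M}_{i-1}\subseteq\mathcal{O}_{J_1}\oplus\mathcal{O}_{J_3}$.

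For (2): isotropy of $V$ forces $a=0$ and $d=0$, hence $J_1=\varnothing$ and $\mathbf{I}=J_2\sqcup\bar J_2\sqcup J_3$; now $A_0$ becomes $\mathcal{M}_0(V)=\mathrm{span}_{\mathbb{F}}\{y_iy_j\mid (i,j)\in J_2\times\mathbf{I}\cup J_3\times J_3\}$, and one checks directly that this coincides with the degree-$0$ part of $N:=\mathcal{O}_{J_2\cup J_3}\oplus\mathcal{O}^+_{J_2}\mathcal{Q}_{\bar J_2}$ and that $N_{-1}=H_{-1}$. A short case analysis according to whether $j\in J_2$, $\bar J_2$, or $J_3$ shows that $D_j$ maps $N$ into $N$ (modulo $\mathbb{F}\cdot 1$), giving ``$N\subseteq\mathcal{M}(H_{-1},\mathcal{M}_0(V))$'' by induction. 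For the reverse inclusion, observe that a monomial $\mu$ fails to lie in $N$ exactly when its $\bar J_2$-degree is $\ge 2$, or its $\bar J_2$-degree is $1$ and its $J_3$-degree is $\ge 1$. Given $u\in\mathcal{M}_i$ with $i\ge1$ possessing such a ``bad'' monomial, choose one, $\mu_0$, of minimal $J_2$-degree. If $\mu_0$ had a $J_2$-factor $y_s$, then $D_s(u)\in\mathcal{M}_{i-1}$ would contain the bad monomial $D_s(\mu_0)$ of strictly smaller $J_2$-degree, contradicting minimality; so $\mu_0$ has $J_2$-degree $0$. Then reducing the $\bar J_2$-degree by a $D_a$ ($a\in\bar J_2$, $y_a\mid\mu_0$), and, in the remaining subcase, the $J_3$-degree by a $D_c$ ($c\in J_3$), keeps the monomial bad unless $\mu_0$ already has $\mathcal{O}$-degree $2$, i.e. $H$-degree $0<i$, which is impossible. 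Hence $\mathcal{M}_i\subseteq N_i$, finishing the induction.

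The delicate part in both items is the ``$\subseteq$'' inclusion: one must follow a bad monomial through repeated differentiations and be certain the witnessing monomial does not cancel. The no-cancellation property of a single $D_j$, together with the trick of first using the ``neutral'' derivatives $D_s$ ($s\in J_2$) in (2) to shrink the $J_2$-degree down to the degree-$2$ situation already handled by $\mathcal{M}_0(V)$, is what makes the bookkeeping go through; I expect the edge cases in (2) where $J_3$ or $\bar J_2$ is single or empty to need the most care, although the argument sketched above is meant to be uniform over them.
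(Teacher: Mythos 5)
Your proof is correct and takes essentially the same route as the paper's: reduce to a standard $V$, read off $\mathcal{M}_0(V)$ from Lemma \ref{mgs1273l2}(1), translate bracketing with $H_{-1}$ into the derivations $D_j$, and determine each $\mathcal{M}_i(H_{-1},\mathcal{M}_0(V))$ by induction on $i$ as a span of monomials (the paper encodes the answer via the values $\nu$, you via degrees in $J_1,J_2,\bar{J}_2,J_3$ together with the no-cancellation property of a single $D_j$). One cosmetic slip: in (2) the bad monomial $D_s(\mu_0)$ appearing in $D_s(u)$ contradicts the inductive hypothesis $\mathcal{M}_{i-1}(H_{-1},\mathcal{M}_0(V))\subseteq N_{i-1}$ rather than the minimality of $\mu_0$ (which was chosen among monomials of $u$, not of $D_s(u)$), so the minimal-$J_2$-degree selection is in fact superfluous.
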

\begin{proof}
(1) For $V\in\mathcal{V}_{\frak{n}}^H$,
 a direct computation shows that
\begin{equation*}\label{mgs120924eq2}
\mathcal{M}_i(H_{-1}, \mathcal{M}_0(V))=\mathrm{span}_{\mathbb{F}}\{u\in H_i\mid u\ \mbox{ is a monomial with }\ \nu(u)=1, 2^{i+2}\}.
\end{equation*}

(2) For $V\in\mathcal{V}_{\frak{i}}^H$,   using induction on $i$,
we  obtain that
$\mathcal{M}_i(H_{-1}, \mathcal{M}_0(V))$ is spanned by  monomials in $H$  as follows:
\begin{itemize}
  \item[$(a)$]  $u_1u_2\in H_{i}$, where $u_1$ is a monomial with the variables of  value 0 and  $u_2$ is a monomial with  the variables of  value $2$.
    \item[$(b)$] $y_ju_3\in H_{i}$, where $j\in\bar{J}_2$  and  $u_3$ is a monomial with  the variables of  value $0$.
\end{itemize}
Then,  the conclusions hold.
\end{proof}
For $u\in K$, put $Z(u)=i$ if $(ad1)^{i+1}u=0$ and $(ad1)^{i}u\not=0$.
\begin{lemma}\label{mgs12713l1} Suppose $V\in\mathcal{V}_{\frak{i}}^K$.
For any
element $u\in K$ with $[1, u]\not=0$, $u\in \mathcal{M}(K_{-1}, \mathcal{M}_0(V))$ if and only if
$u$ is a linear combination of elements of the  form $f(z+x)^j+g$,  where
$g\in \mathcal{O}^+_{J_2\cup J_3}\oplus\mathcal{O}^+_{J_2}\mathcal{Q}_{\bar{J}_2}$,
\[
f\in\left\{
\begin{array}{ll}
    \mathcal{O}^+_{J_2}\mathcal{Q}^+_{\bar{J}_2}, & V\in\mathcal{V}_{\frak{i}}^K \mbox { satisfying }  J_3 \ \mbox { is empty}; \\
  \mathcal{O}^+_{J_2\cup J_3}, & V\in\mathcal{V}_{\frak{i}}^K \mbox { satisfying }  J_3 \ \mbox { is not  empty},
\end{array}
\right.
\]
 $x=\sum_{i\in J_2}y_iy_{\widetilde{i}}$ and $0< j<p$.
\end{lemma}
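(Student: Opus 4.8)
The plan is to establish the description of $\mathcal{M}(K_{-1},\mathcal{M}_0(V))$ by an induction on the $\mathbb{Z}$-degree that settles both implications at once. First, by Lemma \ref{mgsl3} and Lemma \ref{lemma113551} (see Remark \ref{mgsstandard2}) we may assume $V$ is the standard isotropic element of $\frak{V}^K$, so that $J_1=\emptyset$, $\mathbf{I}=J_2\cup\bar{J}_2\cup J_3$, and by Lemma \ref{mgs1277l1}(1), $\frak{G}_0:=\mathcal{M}_0(V)=\mathcal{M}^H_0(V)\oplus\mathbb{F}z$ with $\mathcal{M}^H_0(V)=\mathrm{span}_{\mathbb{F}}\{y_iy_j\mid i\in J_2,\ j\in\mathbf{I};\ \text{or}\ i,j\in J_3\}$. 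The argument runs on a few identities in $K\cong(\mathcal{O},[\;,\;])$: one has $[1,a]=2D_m(a)$, so $[1,u]\neq0$ means precisely that $z$ occurs in $u$; since $1\in K_{-2}=\mathcal{M}_{-2}(K_{-1},\frak{G}_0)$, the subalgebra $\mathcal{M}(K_{-1},\frak{G}_0)$ is stable under $\mathrm{ad}\,1$; the element $w:=z+x$, with $x=\sum_{i\in J_2}y_iy_{\widetilde{i}}$, lies in $\frak{G}_0$ (indeed $\frak{G}_0=\mathcal{M}^H_0(V)\oplus\mathbb{F}w$ since $x\in\mathcal{M}^H_0(V)$), satisfies $D_m(w)=1$ and $[y_k,w]\in\mathbb{F}y_k$ for every $y_k\in K_{-1}$ (the scalar being $0$ when $k\in\bar{J}_2$ and nonzero otherwise), and $w^p=z^p+x^p=0$. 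Consequently every $u\in K$ has a unique expansion $u=\sum_{j=0}^{p-1}h_jw^j$ with $D_m(h_j)=0$, and $[1,u]=2\sum_{j\ge1}jh_jw^{j-1}$; this already explains the range $0<j<p$ in the statement.

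For the ``if'' part I would show, by induction on $\mathrm{zd}$, that every asserted generator $fw^j+g$ ($1\le j\le p-1$, $f$ and $g$ as listed) is carried into the span of such elements of strictly smaller degree upon bracketing with $K_{-1}$ and with $1$. Since $E_H(y_k)=\pm D_{\widetilde{k}}$ is a superderivation of $\mathcal{O}$ and $[y_k,w^j]=j[y_k,w]w^{j-1}\in\mathbb{F}y_kw^{j-1}$, the bracket $[y_k,fw^j+g]$ decomposes into terms $[y_k,f]w^j$, $(\text{scalar})\,fy_kw^{j-1}$ and $[y_k,g]$ (plus a Koszul-sign correction when $f$ or $g$ is odd), while $[1,fw^j+g]=2jfw^{j-1}+2D_m(g)$; each summand is again of the asserted form because the subspaces $\mathcal{O}^+_{J_2\cup J_3}$ and $\mathcal{O}^+_{J_2}\mathcal{Q}^+_{\bar{J}_2}$ are stable under the derivations $D_i$ entering here and under left multiplication by the relevant $y_k$. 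The terms free of $w$, and the $j$-independent ``Hamiltonian'' contributions, are absorbed by Lemma \ref{mgs121113l2}(2); the base cases reduce to $w\in\frak{G}_0$ and $[K_{-1},w]\subset K_{-1}$.

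For the ``only if'' part, take $u\in\mathcal{M}(K_{-1},\frak{G}_0)$ with $D_m(u)\neq0$, write $u=\sum_{j=0}^{p-1}h_jw^j$ as above, and induct on $\mathrm{zd}(u)$. Applying $\mathrm{ad}\,1$ gives $[1,u]=2\sum_{j\ge1}jh_jw^{j-1}\in\mathcal{M}(K_{-1},\frak{G}_0)$ of degree $\mathrm{zd}(u)-2$, so by the inductive hypothesis and the uniqueness of the $w$-expansion (and $p\nmid j$) the coefficient $h_j$ is of the claimed $f$-type for $j\ge2$ and of the weaker $g$-type for $j=1$. When $J_3\neq\emptyset$ one must still exclude a $\bar{J}_2$-variable from $h_1$: bracketing $u$ with $y_k$ for some $k\in J_3$, the $w^0$-component of $[y_k,u]$ has the shape $[y_k,h_0]+(\text{scalar}\neq0)h_1y_k$, it must lie in $\mathcal{M}(K_{-1},\frak{G}_0)$ (degree $\mathrm{zd}(u)-1$) and be $z$-free, hence in $\mathcal{M}(K_{-1},\frak{G}_0)\cap\{z\text{-free}\}$, which an easy induction identifies with $\mathcal{M}(H_{-1},\mathcal{M}^H_0(V))=\mathcal{O}_{J_2\cup J_3}\oplus\mathcal{O}^+_{J_2}\mathcal{Q}_{\bar{J}_2}$ by Lemma \ref{mgs121113l2}(2); since $[y_k,h_0]$ carries no $\bar{J}_2$-variable, the $\bar{J}_2$-part of $h_1y_k$ must lie in $\mathcal{O}^+_{J_2}\mathcal{Q}_{\bar{J}_2}$, which, as $k\in J_3$, forces it to vanish, so $h_1\in\mathcal{O}^+_{J_2\cup J_3}$. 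Finally $h_0=u-\sum_{j\ge1}h_jw^j$ lies in $\mathcal{M}(K_{-1},\frak{G}_0)$ by the ``if'' part, is $z$-free, hence lies in $\mathcal{O}_{J_2\cup J_3}\oplus\mathcal{O}^+_{J_2}\mathcal{Q}_{\bar{J}_2}$, i.e. is of $g$-type. The step I expect to be most delicate is precisely this promotion of the $w^1$-coefficient (the plain $\mathrm{ad}\,1$ computation is not sharp enough there and one genuinely needs $\mathrm{ad}\,y_k$ with $k\in J_3$), together with the Koszul-sign bookkeeping pervading all of these bracket identities; the remaining scaffolding parallels the inductive arguments used for Propositions \ref{mgs1276p1}--\ref{mgs1282p1}.
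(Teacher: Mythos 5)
Your scaffolding is sound and in fact quite close to the paper's own route: the paper also works with the element $z+x$, also reduces the case $(\mathrm{ad}\,1)^2u\neq0$ by applying powers of $\mathrm{ad}\,1$ (its invariant $Z(u)$ plays the role of your $w$-adic degree), and also identifies the $z$-free part of $\mathcal{M}(K_{-1},\mathcal{M}_0(V))$ with the Hamiltonian answer of Lemma \ref{mgs121113l2}(2). Your sufficiency sketch and your observation that when $J_3=\emptyset$ the $f$-space and $g$-space coincide (so no upgrade of $h_1$ is needed there) are correct.

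The gap is exactly at the step you flag as delicate, and your argument for it does not work. When $J_3\neq\emptyset$ you must exclude a $\bar{J}_2$-variable from the $w^1$-coefficient $h_1$, and you do this by bracketing with $y_k$, $k\in J_3$, and asserting that ``$[y_k,h_0]$ carries no $\bar{J}_2$-variable.'' But at that stage nothing is known about $h_0$: its membership in $\mathcal{M}(K_{-1},\frak{G}_0)\cap\{z\text{-free}\}$ is only deduced at the end of your proof, and only because $h_1w\in\mathcal{M}(K_{-1},\frak{G}_0)$, which presupposes that $h_1$ has already been upgraded to $f$-type --- so the claim is circular. Concretely, writing $h_1=h_1'+h_1''$ with $h_1''\in\mathcal{O}^+_{J_2}\mathcal{Q}_{\bar{J}_2}$, the $w^0$-component of $[y_k,u]$ is $\pm D_{\widetilde{k}}(h_0)+y_kh_1$, and for a pair $k\neq\widetilde{k}$ in $J_3$ an unconstrained $h_0$ (e.g.\ one containing monomials of the form $y_{\widetilde{k}}y_ky_t\cdot(\text{$J_2$-part})$ with $t\in\bar{J}_2$) produces terms in $D_{\widetilde{k}}(h_0)$ that can cancel $y_kh_1''$, so from a single $k$ no conclusion follows. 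Ruling out this cancellation is precisely the paper's hardest computation: it plays the relations obtained from $[y_l,u]\in\frak{G}_0$ for $l\in J_3$ against each other and derives the incompatible pair of conditions $D_{\widetilde{k}}(h')=0$ and $D_k(h')=2\sigma(k)y_{\widetilde{k}}y_t\neq0$ (with a separate, easier contradiction when $J_3$ is single), and it then needs the degree-$i$ analogue of this exclusion in its induction. Your proposal omits this entire argument; also, the subsidiary claim that the $w^0$-component of $[y_k,u]$ itself lies in $\mathcal{M}(K_{-1},\frak{G}_0)$ needs the small extra step of subtracting the pieces $[y_k,h_jw^j]$ ($j\geq2$) and $\pm D_{\widetilde{k}}(h_1')w$, which are in $\mathcal{M}(K_{-1},\frak{G}_0)$ by the sufficiency direction --- that part is repairable, but the cancellation issue is a genuine missing argument.
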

\begin{proof} Let $\frak{G}_0=\mathcal{M}_0(V)$. Notice that $g$, $x$ and $z+x\in\mathcal{M}(K_{-1}, \frak{G}_0)$.
Firstly, for any $i\in \mathbf{I}$, one computes
\begin{align*}
&[y_i, z+x]\in \mathcal{O}^+_{J_2\cup J_3},\\
&f[y_i, z+x]\in\mathcal{O}_{J_2\cup J_3}+\mathcal{O}_{J_2}\mathcal{Q}_{\bar{J}_2},\\
&[y_i, f]\in\left\{
\begin{array}{ll}
    \mathcal{O}^+_{J_2}\mathcal{Q}^+_{\bar{J}_2}, &   V\in\mathcal{V}_{\frak{i}}^K \mbox { satisfying }  J_3 \ \mbox { is empty}; \\
  \mathcal{O}^+_{J_2\cup J_3}, & V\in\mathcal{V}_{\frak{i}}^K \mbox { satisfying }  J_3 \ \mbox { is not empty}.
\end{array}
\right.
\end{align*}
Using  induction on  $\mathrm{zd}(f)$
and $j$, respectively, we have
$$f(z+x),\
(z+x)^{j} \in \mathcal{M}(K_{-1}, \frak{G}_0).$$
Furthermore,
 $f(z+x)^j\in \mathcal{M}(K_{-1}, \frak{G}_0)$.

Conversely,  let us use induction on $Z(u)$.
When $Z(u)=1$,  we consider the following cases.\\

\noindent\textbf{Case 1.} $u\in \mathcal{M}_0(K_{-1}, \frak{G}_0).$
By Lemmas \ref{mgs1277l1}(1) and  \ref{lem331ok}, we may assume that $u=z+u_0$, where $u_0\in \frak{G}_0\cap H_0$, which means that
$u_0\in \mathcal{O}_{J_2\cup J_3}+\mathcal{O}_{J_2}\mathcal{Q}_{\bar{J}_2}$.
Thus, $u=z+x+(u_0-x)$ is desired.\\

\noindent\textbf{Case 2.} $u\in \mathcal{M}_1(K_{-1}, \frak{G}_0).$
From Remark \ref{mgsr2},
we may assume that $u=y_tz+u_1$, where $u_1\in H_1$ and $t\in\mathbf{I}$.
Notice that, when $y_t(z+x)\in  \mathcal{M}(K_{-1}, \frak{G}_0),$
$$u_1-y_tx=u-y_t(z+x)\in \mathcal{M}(K_{-1}, \frak{G}_0)\cap H,$$
which follows that $u=y_t(z+x)+(y_tx-u_1)$ is desired.
Thus, by the necessity of this lemma,
it is sufficient to consider the case of $t\in\bar{J}_2$ when $J_3$ is not empty.
From Lemmas \ref{mgs1277l1}(3) and  \ref{lem331ok}, we may  assume that
$$u_1=h^{(\frac13, 2, 2)}+h^{(0, \frac13, \frac13)}+h^{(0, \frac13, 2)}+h, $$
where
$$h^{(\alpha, \beta, \gamma)}=\mathrm{span}_{\mathbb{F}}\{y_iy_jy_k\mid \nu({y_i})=\alpha, \nu({y_j})=\beta, \nu({y_k})=\gamma\},$$
$h\in \mathcal{M}_1(K_{-1}, \frak{G}_0)\cap H$.
 For any $y_l\in K_{-1}$, we have:
$$\sigma(l)(-1)^l(D_{\widetilde{l}}(y_t)z+D_{\widetilde{l}}u_1)+y_ly_t=[y_l, u]\in \frak{G}_0,$$ which means that
\begin{equation}\label{mgs127131825eq2}
\sigma(l)(-1)^lD_{\widetilde{l}}u_1+y_ly_t\in \frak{G}_0.
\end{equation}
When $\nu(y_l)=2$, from equation (\ref{mgs127131825eq2}) we have:
$$\sigma(l)(-1)^lD_{\widetilde{l}}h^{(\frac13, 2, 2)}+y_ly_t\in \frak{G}_0,$$
which  is
a linear combination  of elements with value $\frac23$. It follows
that
\begin{equation*}\label{mgs127131901eq4}
\sigma(l)(-1)^lD_{\widetilde{l}}h^{(\frac13, 2, 2)}+y_ly_t=0.
\end{equation*}
 When $J_3$ is single,  we have $y_ly_t=0$, a contradiction. When $J_3$ is not single,
there exist distinct $k, \widetilde{k}\in J_3$  such that
\begin{equation}\label{mgs127131901eq5}
h^{(\frac13, 2, 2)}=-\sigma(k)(-1)^ky_{\widetilde{k}}y_ky_t+h',
\end{equation}
where $D_{\widetilde{k}}h'=0$ and
$$\sigma(\widetilde{k})(-1)^kD_{k}(h^{(\frac13, 2, 2)})+y_{\widetilde{k}}y_t=0.$$
From equation (\ref{mgs127131901eq5}), we have
$$D_k(h')=D_k(h^{(\frac13, 2, 2)})+\sigma(k)y_{\widetilde{k}}y_t=2\sigma(k)y_{\widetilde{k}}y_t,$$
which contradicts $D_{\widetilde{k}}h'=0$. Thus, an element of the  form $y_tz+u_1$, $t\in\bar{J}_2$ is not in $\mathcal{M}_1(K_{-1}, \frak{G}_0)$ when $J_3$ is not empty.\\

\noindent
 \textbf{Case 3.}  $u\in \mathcal{M}_i(K_{-1}, \frak{G}_0)$ for $i>1$. We may assume that
 $$u=g_{i-2}z+g_i,\
 g_{j}\in \overline{H}_{j}, \ j=i-2, i.$$
  Note that the elements of the  form $h_2z+h$ are not in $\mathcal{M}(K_{-1}, \frak{G}_0)$,
 where $h_2$ is a linear combination of monomials with value $\frac19$.
By induction on $ i$, we obtain that   $g_{i-2}$ is in $ \mathcal{O}_{J_2}\mathcal{Q}_{\bar{J}_2}$ if $J_3$ is empty;  in $\mathcal{O}_{J_2\cup J_3}$, otherwise.
Thus, $g_{i-2}(z+x)\in \mathcal{M}_i(K_{-1}, \frak{G}_0)$. Moreover,
$g_i-g_{i-2}x\in\mathcal{M}_i(K_{-1}, \frak{G}_0)\cap\overline{H}$.
Then $u=g_{i-2}(z+x)+(g_i-g_{i-2}x)$ is desired.

 When $Z(u)=k>1$, suppose
$$u=u_kz^k+u_{k-1}z^{k-1}+\cdots+u_1z+u_0, \  u_j\in\overline{H}, \ j=0,\ldots, k.$$
Obviously,
$$u_kz+u_{k-1}=2^{(1-k)}(\mathrm{ad}1)^{k-1}(u)\in \mathcal{M}(K_{-1}, \frak{G}_0).$$
 Thus,
$u_k$ is in $  \mathcal{O}^+_{J_2}\mathcal{Q}^+_{\bar{J}_2}$ when $J_3$ is empty; in $\mathcal{O}^+_{J_2\cup J_3}$, otherwise.
Consequently, $u_k(z+x)^k\in \mathcal{M}(K_{-1}, \frak{G}_0).$
Thus,
$$v=u-u_k(z+x)^k\in \mathcal{M}(K_{-1}, \frak{G}_0)$$
and $Z(v)<k$. By the inductive hypothesis, $v$ is a linear combination of the desired form. So is $u$. The proof is complete.
\end{proof}

\noindent
\textbf{Proof  of Theorem \ref{mgs12710t1}}.
For (2) and ($2'$), sufficiency  is obvious.
For necessity, suppose $\Phi$ is an isomorphism of $\mathcal{M}(L_{-1}, \mathcal{M}_0(V))$ onto $\mathcal{M}(L_{-1}, \mathcal{M}_0(V'))$.
 Then, $\Phi(L_{-1})=L_{-1}$ and $\Phi(\mathcal{M}_0(V))=\mathcal{M}_0(V')$, which implies that
 $\dim\mathcal{M}_0(V)=\dim\mathcal{M}_0(V')$. It follows that
 $V$ and $V'$ are both nondegenerate or are both isotropic.  Notice  that
 $\Phi(\mathcal{M}_0(V))\subset\mathcal{M}_0(\Phi(V))$. For the maximality of $\mathcal{M}_0(V')$, we have
 $\mathcal{M}_0(V')=\mathcal{M}_0(\Phi(V)).$
By virtue of Lemma  \ref{mgs120906l2}(2), we have $V'=\Phi(V)$ or  $V'=\Phi(V)^{\bot}$  when $V'$ and $\Phi(V)$
are both nondegenerate.
Thus, we have $\dim V=\dim V'$ or $\dim V=m+n-\dim V'$.
We can obtain the desired conclusions by a direct computation. (3) and $(3')$ are direct consequences of (2) and $(2')$.

The remaining statements hold from
Lemmas \ref{lem51},  \ref{lem52llm},  \ref{mgs120906l2},   \ref{mgs121113l2}, \ref{mgs12713l1}  and Proposition \ref{1278p408}.\qed

Finally, we consider the maximal  S-subalgebras  of $L$,  where $L=W, S, H$ or $K$.
As in the case of modular Lie algebras \cite{HM}, it easy to show the following:
\begin{lemma}
\label{mgs12714l1} Suppose  $\frak{G}_0$ is a maximal irreducible
subalgebra of $L_0$.
The subalgebra  $\mathcal{M}(L_{-1}, \frak{G}_0)$ is not maximal in $L$ if $\mathcal{M}_1(L_{-1}, \frak{G}_0)=0$.
\end{lemma}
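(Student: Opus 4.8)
The plan is to exhibit a proper $\mathbb{Z}$-graded subalgebra of $L$ that strictly contains $\mathcal{M}(L_{-1},\frak{G}_0)$; the existence of such an intermediate subalgebra shows at once that $\mathcal{M}(L_{-1},\frak{G}_0)$ cannot be maximal. First I would pin down $\mathcal{M}(L_{-1},\frak{G}_0)$ under the hypothesis $\mathcal{M}_1(L_{-1},\frak{G}_0)=0$. By definition (\ref{m33}) one has $\mathcal{M}_i(L_{-1},\frak{G}_0)=\{u\in L_i\mid [L_{-1},u]\subset \mathcal{M}_{i-1}(L_{-1},\frak{G}_0)\}$ for $i>0$, so whenever $\mathcal{M}_{i-1}(L_{-1},\frak{G}_0)=0$ we get $\mathcal{M}_i(L_{-1},\frak{G}_0)=\{u\in L_i\mid [L_{-1},u]=0\}=0$ by the transitivity of $L$ (Lemma \ref{dibu}(1)). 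Since the case $i=1$ holds by assumption, induction on $i$ yields $\mathcal{M}_i(L_{-1},\frak{G}_0)=0$ for all $i\geq 1$, hence
\[
\mathcal{M}(L_{-1},\frak{G}_0)=L_{-2}\oplus L_{-1}\oplus\frak{G}_0 .
\]

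Next I would put $\overline{M}=L_{-2}\oplus L_{-1}\oplus L_0$. From the standard $\mathbb{Z}$-grading $L=\oplus_{i\geq -2}L_i$ together with $[L_i,L_j]\subseteq L_{i+j}$ and $L_k=0$ for $k<-2$, a direct check shows that $\overline{M}$ is a $\mathbb{Z}$-graded subalgebra of $L$. It strictly contains $\mathcal{M}(L_{-1},\frak{G}_0)$ because $\frak{G}_0$, being a maximal (hence proper) subalgebra of $L_0$, satisfies $\frak{G}_0\subsetneq L_0$; and it is a proper subalgebra of $L$ because $L_1\neq 0$. Indeed, under the standing assumptions $m,n\geq 2$ and $p>3$ we have $\xi=(m+\delta_{L=K})(p-1)+n\geq 8$, so the degree-one component $L_1$ is nonzero for each of $W,S,H,K$ by the description of the standard grading. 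Consequently $\mathcal{M}(L_{-1},\frak{G}_0)\subsetneq \overline{M}\subsetneq L$, which proves that $\mathcal{M}(L_{-1},\frak{G}_0)$ is not maximal.

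The argument is essentially bookkeeping, and I expect no genuine obstacle. The only step that needs a touch of care is the inductive vanishing $\mathcal{M}_i(L_{-1},\frak{G}_0)=0$ for $i\geq 1$, which uses transitivity in the form ``$u\in L_i$, $i>0$, $[L_{-1},u]=0$ imply $u=0$'' uniformly for the four types $W,S,H,K$; this is exactly the content of Lemma \ref{dibu}(1). Note that irreducibility of $\frak{G}_0$ plays no role beyond guaranteeing that $\frak{G}_0\subsetneq L_0$, so the same proof applies verbatim to any proper subalgebra of $L_0$.
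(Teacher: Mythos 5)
Your proof is correct: using transitivity to force $\mathcal{M}_i(L_{-1},\frak{G}_0)=0$ for $i\geq 1$, so that $\mathcal{M}(L_{-1},\frak{G}_0)=L_{-2}\oplus L_{-1}\oplus\frak{G}_0$ sits strictly inside the proper graded subalgebra $L_{-2}\oplus L_{-1}\oplus L_0$ (proper since $L_1\neq 0$), is exactly the intended argument. The paper itself omits the proof, referring to the analogous statement for modular Lie algebras in \cite{HM}, and your write-up supplies precisely that standard argument, adapted without difficulty to the depth-two case $L=K$.
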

\noindent
\textbf{Proof  of Theorem \ref{mgsS-iii}}.
(1) This is nothing but Lemma \ref{lem52llm}.

(2) Let $\mathfrak{G}_0$ be a maximal irreducible subalgebra of $L_0.$

(a) Suppose $L=W$ and $\mathcal {M}(W_{-1},\mathfrak{G}_{0})$ is maximal in
$W$. Assume on the contrary  that $\mathrm{div}(\mathcal{M}_1(W_{-1},\mathfrak{G}_{0}))=0$. By induction on $i$,  one has $\mathcal{M}_{i}(W_{-1},\mathfrak{G}_{0})\subset W'_{i}$ for all $i\geq1.$
Since $\mathfrak{G}_{0}$ is a nontrivial subalgebra of $W_{0}$,  we have
\begin{equation*}\label{41}
\mathcal{M}(W_{-1},\mathfrak{G}_{0})\subsetneq W_{-1}+W_{0}+W'_{1}+W'_{2}+\cdots
\end{equation*}
By Theorem \ref{thm01}, the latter is  an MGS
of $W$. This  contradicts the maximality of $\mathcal{M}(W_{-1},\mathfrak{G}_{0})$.

Conversely, to show the maximality of $\mathcal{M}(W_{-1},\mathfrak{G}_{0})$, assume that $M$ is an MGS strictly containing  $\mathcal{M}(W_{-1},\mathfrak{G}_{0})$. By definition (\ref{m33}), it must be that $M_0\supsetneq\mathfrak{G}_0$ and therefore, $M_0=W_0$ by the maximality of $\mathfrak{G}_0$. Thus $M$  is an MGS of type (I) and thereby
\begin{equation}\label{eqllm1825t}
M_1=W_1'\; \mbox{or}\; W_1''.
\end{equation}
  Note that
$W''_{1}$ is an irreducible $\mathfrak{G}_{0}$-module,  which follows from the irreducibility of $\mathfrak{G}_{0}$ and a simple fact that, as $W_0$-modules,
$$W''_{1}\cong(W_{-1})^{*}.$$
By our assumption, there is a
$D\in\mathcal {M}_{1}(W_{-1},\mathfrak{G}_{0})\subset M_1$ with $\mathrm{div}D$ $\neq 0$. Assert that
$D\notin W''_{1}$.
Assuming on the contrary, by the irreducibility of $W''_{1}$, we have $W''_{1}\subset\mathcal
{M}_{1}(W_{-1},\mathfrak{G}_{0})$ and thereby
\begin{equation*}
W_{0}=\mathrm{alg}([W_{-1},W''_{1}])\subset \mathrm{alg}([W_{-1},\mathcal{M}_{1}(W_{-1},\mathfrak{G}_{0})])\subset \mathfrak{G}_{0}.
\end{equation*}
This contradicts the assumption that $\mathfrak{G}_{0}$ is a nontrivial subalgebra of $W_{0}$ and hence the assertion holds. This proves that $D$ belongs to neither $W_1'$ nor $W_1''$, contradicting (\ref{eqllm1825t}).

(b) For  $S$,  from Lemma \ref{mgs12714l1}, one implication is obvious.   As in (a), we have $\mathcal{M}(S_{-1},\mathfrak{G}_{0})$ is maximal when $\mathcal {M}_{1}(S_{-1},\mathfrak{G}_{0})\neq
0$.
For $H$,  the conclusion follows from  Lemmas \ref{mgsl2}(1) and  \ref{mgs12714l1}.

(c) Suppose $L=K$.
 Assume on the contrary that  $[1, u]=0$ for every $u\in\mathcal{M}_1(K_{-1}, \frak{G}_0)$.  Then,
$$ \mathcal{M}_1(K_{-1}, \frak{G}_0)\subset K_{10}.$$
As in the proof of Proposition \ref{1278p408}(2), we have  $\mathcal{M}(K_{-1}, \frak{G}_0)$ is not maximal.

Conversely,  suppose  $u=u_0+u_1$ where $u_i\in K_{1i}$, $i=0, 1$ and $u_1\not=0$. We claim that $u_0\not=0$.
Indeed, by a direct computation, $[K_{-1}, K_{11}]=K_0$ holds. Assuming on the contrary that  $u_0=0$, we have $u_1\in \mathcal{M}(K_{-1}, \frak{G}_0)$. $K_{-1}$  is an  irreducible $\frak{G}_0$-module, and so is $K_{11}$.
Moreover, $K_{11}\subset\mathcal{M}(K_{-1}, \frak{G}_0)$.
Thus,
$$[K_{-1}, K_{11}]\subset[K_{-1}, \mathcal{M}(K_{-1}, \frak{G}_0)]\subset \frak{G}_0\subsetneq K_0,$$
 which contradicts $[K_{-1}, K_{11}]=K_0$.

Put $h\in K$,  $h\not\in \mathcal{M}(K_{-1}, \frak{G}_0)$ and $\overline{M}=\mathrm{alg}{(\mathcal{M}(K_{-1}, \frak{G}_0)+\mathbb{F}h)}$.
By  definition (\ref{m33}) and the maximality of $\frak{G}_0$, we have $K_0\subset\overline{ M}$.
Since the torus $T$  is in $\overline{M}$, from Remark \ref{mgsr2}, there exists
$$v=v_0+y_iz\in\mathrm{alg}{(\mathbb{F}u+T)}\subset\overline{M}, \ i\in\mathbf{I},\ v_0\in K_{10}.$$
 For $K_0\subset\overline{ M}$, without loss of generality, we may assume that $ i\in\mathbf{I}_0.$
If $v_0=0$, we have $y_iz\in\overline{M}$. For $u_0\not=0$, the conclusion holds.  Otherwise, we claim that
 there exists a nonzero element in $ \overline{M}\cap K_{10}$.
Indeed,  it is sufficient to consider the following cases.\\

\noindent
 \textbf{Case 1.} $D_{\widetilde{i}}(v_0)\not=0$.  Note that
 $0\not=[y^{(2\varepsilon_i)}, v]\in \overline{M}\cap K_{10}.$ \\

 \noindent
  \textbf{Case 2.}  $D_{\widetilde{i}}(v_0)=0$ and there exists $t\in\mathbf{I}$, $t\not=i$,  $\widetilde{i}$, such that $D_{\widetilde{t}}(v_0)\not=0$. Note that
 $0\not=[y_ty_i, v]\in \overline{M}\cap K_{10}.$
\\

\noindent
 \textbf{Case 3.} $D_{{t}}(v_0)=0$, for all $t\in\mathbf{I}\backslash\{i\}$. Then $v_0=y^{(3\varepsilon_i)}.$
Note that
$$y^{(3\varepsilon_i)}=(2\sigma(\widetilde{i}))^{-1}([y_iy_{\widetilde{i}}, v]-\sigma(\widetilde{i})v)\in \overline{M}\cap K_{10}.$$
By Lemmas \ref{dibu}(2), \ref{mgsl2}(2) and  $u_i\not=0$  for  $i=0, 1$, the conclusion follows.\qed

\end{document}